\newif\ifdraft
\definecolor{labelkey}{gray}{0.5}
\newlength{\myarrowsize} 
\newenvironment{diagram*}[2]{%
\[%
\begin{tikzpicture}[>=cmto,baseline=(current bounding box.center),%
	to/.style={->,font=\scriptsize,cap=round},%
	into/.style={cmhook->,font=\scriptsize,cap=round},%
	onto/.style={-cmonto,font=\scriptsize,cap=round},%
	math/.style={matrix of math nodes, row sep=#2, column sep=#1,%
		text height=1.5ex, text depth=0.25ex}]%
}{%
\end{tikzpicture}%
\]%
\ignorespacesafterend%
}
\newcommand{\Dmod}{\mathscr{D}}
\newcommand{\Mmod}{\mathcal{M}}
\newcommand{\Nmod}{\mathcal{N}}
\newcommand{\shT}{\mathscr{T}}
\newcommand{\derR}{\mathbf{R}}
\newcommand{\derL}{\mathbf{L}}
\newcommand{\tensor}{\otimes}
\newcommand{\NN}{\mathbb{N}}
\newcommand{\ZZ}{\mathbb{Z}}
\newcommand{\QQ}{\mathbb{Q}}
\newcommand{\CC}{\mathbb{C}}
\newcommand{\PP}{\mathbb{P}}
\DeclareMathOperator{\codim}{codim}
\DeclareMathOperator{\gr}{gr}
\DeclareMathOperator{\DR}{DR}
\DeclareMathOperator{\Pic}{Pic}
\newcommand{\shf}[1]{\mathscr{#1}}
\newcommand{\OX}{\shf{O}_X}
\def\overbar#1#2#3{{%
	\setbox0=\hbox{\displaystyle{#1}}%
	\dimen0=\wd0
	\advance\dimen0 by -#2 
	\vbox {\nointerlineskip \moveright #3 \vbox{\hrule height 0.3pt width \dimen0}%
		\nointerlineskip \vskip 1.5pt \box0}%
}}
\newcommand{\shF}{\shf{F}}
\newcommand{\shO}{\shf{O}}
\let\@@seccntformat\@seccntformat
\renewcommand*{\@seccntformat}[1]{%
  \expandafter\ifx\csname @seccntformat@#1\endcsname\relax
    \expandafter\@@seccntformat
  \else
    \expandafter
      \csname @seccntformat@#1\expandafter\endcsname
  \fi
    {#1}%
}
\newcommand*{\@seccntformat@subsection}[1]{%
  \textbf{\csname the#1\endcsname.}
}
\let\@paragraph\paragraph
\renewcommand*{\paragraph}[1]{%
	\vspace{0.3\baselineskip}%
	\@paragraph{\textit{#1}}%
}
\newtheorem{theorem}[equation]{Theorem}
\newtheorem*{theorem*}{Theorem}
\newtheorem{lemma}[equation]{Lemma}
\newtheorem*{lemma*}{Lemma}
\newtheorem{corollary}[equation]{Corollary}
\newtheorem{proposition}[equation]{Proposition}
\newtheorem*{proposition*}{Proposition}
\newtheorem{conjecture}[equation]{Conjecture}
\theoremstyle{definition}
\newtheorem{definition}[equation]{Definition}
\newtheorem*{definition*}{Definition}
\newtheorem{remark}[equation]{Remark}
\newtheorem{example}[equation]{Example}
\newtheorem*{example*}{Example}
\newtheorem*{problem*}{Problem}
\theoremstyle{plain}
\newcommand{\theoremref}[1]{\hyperref[#1]{Theorem~\ref*{#1}}}
\newcommand{\lemmaref}[1]{\hyperref[#1]{Lemma~\ref*{#1}}}
\newcommand{\definitionref}[1]{\hyperref[#1]{Definition~\ref*{#1}}}
\newcommand{\propositionref}[1]{\hyperref[#1]{Proposition~\ref*{#1}}}
\newcommand{\conjectureref}[1]{\hyperref[#1]{Conjecture~\ref*{#1}}}
\newcommand{\corollaryref}[1]{\hyperref[#1]{Corollary~\ref*{#1}}}
\newcommand{\exampleref}[1]{\hyperref[#1]{Example~\ref*{#1}}}
\let\old@caption\caption
\renewcommand*{\caption}[1]{%
	\setcounter{figure}{\value{equation}}%
	\stepcounter{equation}%
	\old@caption{#1}\relax%
}
\newcounter{intro}
\newtheorem{intro-conjecture}[intro]{Conjecture}
\newtheorem{intro-corollary}[intro]{Corollary}
\newtheorem{intro-theorem}[intro]{Theorem}
\newcommand{\OY}{\shO_Y}
\newcommand{\parref}[1]{\hyperref[#1]{\S\ref*{#1}}}
\newcommand*\if@single[3]{%
  \setbox0\hbox{${\mathaccent"0362{#1}}^H$}%
  \setbox2\hbox{${\mathaccent"0362{\kern0pt#1}}^H$}%
  \ifdim\ht0=\ht2 #3\else #2\fi
  }
\newcommand*\rel@kern[1]{\kern#1\dimexpr\macc@kerna}
\newcommand*\widebar[1]{\@ifnextchar^{{\wide@bar{#1}{0}}}{\wide@bar{#1}{1}}}
\newcommand*\wide@bar[2]{\if@single{#1}{\wide@bar@{#1}{#2}{1}}{\wide@bar@{#1}{#2}{2}}}
\newcommand*\wide@bar@[3]{%
  \begingroup
  \def\mathaccent##1##2{%
    \if#32 \let\macc@nucleus\first@char \fi
    \setbox\z@\hbox{$\macc@style{\macc@nucleus}_{}$}%
    \setbox\tw@\hbox{$\macc@style{\macc@nucleus}{}_{}$}%
    \dimen@\wd\tw@
    \advance\dimen@-\wd\z@
    \divide\dimen@ 3
    \@tempdima\wd\tw@
    \advance\@tempdima-\scriptspace
    \divide\@tempdima 10
    \advance\dimen@-\@tempdima
    \ifdim\dimen@>\z@ \dimen@0pt\fi
    \rel@kern{0.6}\kern-\dimen@
    \if#31
      \overline{\rel@kern{-0.6}\kern\dimen@\macc@nucleus\rel@kern{0.4}\kern\dimen@}%
      \advance\dimen@0.4\dimexpr\macc@kerna
      \let\final@kern#2%
      \ifdim\dimen@<\z@ \let\final@kern1\fi
      \if\final@kern1 \kern-\dimen@\fi
    \else
      \overline{\rel@kern{-0.6}\kern\dimen@#1}%
    \fi
  }%
  \macc@depth\@ne
  \let\math@bgroup\@empty \let\math@egroup\macc@set@skewchar
  \mathsurround\z@ \frozen@everymath{\mathgroup\macc@group\relax}%
  \macc@set@skewchar\relax
  \let\mathaccentV\macc@nested@a
  \if#31
    \macc@nested@a\relax111{#1}%
  \else
    \def\gobble@till@marker##1\endmarker{}%
    \futurelet\first@char\gobble@till@marker#1\endmarker
    \ifcat\noexpand\first@char A\else
      \def\first@char{}%
    \fi
    \macc@nested@a\relax111{\first@char}%
  \fi
  \endgroup
}
\newcommand{\I}{\mathcal{I}}
\DeclareMathOperator{\ord}{ord}
\newcommand{\Gr}{\operatorname{gr}}
\def\ZZ{{\mathbf Z}}
\def\NN{{\mathbf N}}
\def\CC{{\mathbf C}}
\def\AAA{{\mathbf A}}
\def\QQ{{\mathbf Q}}
\def\PP{{\mathbf P}}
\def\Pic{{\rm Pic}}
\begin{document}

\vspace{\baselineskip}

\title{Hodge ideals}

\author[M. Musta\c{t}\v{a}]{Mircea~Musta\c{t}\u{a}}
\address{Department of Mathematics, University of Michigan,
Ann Arbor, MI 48109, USA}
\email{{\tt mmustata@umich.edu}}

\author[M.~Popa]{Mihnea~Popa}
\address{Department of Mathematics, Northwestern University, 
2033 Sheridan Road, Evanston, IL
60208, USA} \email{{\tt mpopa@math.northwestern.edu}}

\thanks{MM was partially supported by NSF grants DMS-1401227 and DMS-1265256; MP was partially supported by NSF grant
DMS-1405516 and a Simons Fellowship.}

\subjclass[2010]{14J17, 32S25, 14D07, 14F17}

\begin{abstract}
We use methods from birational geometry to study the Hodge filtration on the localization along a hypersurface.
This filtration leads to a sequence of ideal sheaves, called Hodge ideals, the first of which is a multiplier ideal. We analyze 
their local and global properties, and use them for applications related to the singularities and Hodge theory of
hypersurfaces and their complements.
\end{abstract}

\maketitle

\makeatletter
\newcommand\@dotsep{4.5}
\def\@tocline#1#2#3#4#5#6#7{\relax
  \ifnum #1>\c@tocdepth 
  \else
    \par \addpenalty\@secpenalty\addvspace{#2}%
    \begingroup \hyphenpenalty\@M
    \@ifempty{#4}{%
      \@tempdima\csname r@tocindent\number#1\endcsname\relax
    }{%
      \@tempdima#4\relax
    }%
    \parindent\z@ \leftskip#3\relax
    \advance\leftskip\@tempdima\relax
    \rightskip\@pnumwidth plus1em \parfillskip-\@pnumwidth
    #5\leavevmode\hskip-\@tempdima #6\relax
    \leaders\hbox{$\m@th
      \mkern \@dotsep mu\hbox{.}\mkern \@dotsep mu$}\hfill
    \hbox to\@pnumwidth{\@tocpagenum{#7}}\par
    \nobreak
    \endgroup
  \fi}
\def\l@section{\@tocline{1}{0pt}{1pc}{}{\bfseries}}
\def\l@subsection{\@tocline{2}{0pt}{25pt}{5pc}{}}
\makeatother

\tableofcontents

\section{Introduction}

Let $X$ be a smooth complex variety of dimension $n$. To a reduced effective divisor $D$ on $X$ one associates the left $\Dmod_X$-module of functions with poles along $D$,
$$\shO_X (*D) = \bigcup_{k \ge 0} \shO_X (kD),$$
i.e. the localization of $\shO_X$ along $D$. In Saito's theory \cite{Saito-MHM}, this $\Dmod$-module underlies the mixed
Hodge module $j_* \QQ_U^H [n]$, where $U = X \smallsetminus D$ and $j: U \hookrightarrow X$ is the inclusion map. It therefore comes
with an attached Hodge filtration $F_k \shO_X(*D)$. Saito \cite{Saito-B} shows that this filtration is contained in the pole order filtration, namely 
\begin{equation}\label{HP-inclusion}
F_k \shO_X(*D) \subseteq \shO_X \big( (k+1) D\big) \,\,\,\,\,\, {\rm for ~all} \,\,\,\, k \ge 0.
\end{equation}
The problem of how far these filtrations are from being equal is of great interest in the study of the singularities of $D$, and also 
in that of Deligne's Hodge filtration on the singular cohomology $H^{\bullet} (U, \CC)$. The inclusion 
above leads to defining for each $k \ge 0$ a coherent sheaf of ideals $I_k (D)$ by the formula
$$F_k \shO_X(*D) =  \shO_X \big( (k+1) D\big) \otimes I_k (D).$$

Our main goal in this paper is to approach the definition and study of these ideal sheaves
using methods from birational geometry, and to put them to use in a number of applications regarding singularities and Hodge theory.
In sequels to this article we will present a framework for Hodge ideals associated to $\QQ$-divisors and ideal sheaves, leading to further
applications.

Given a log resolution $f\colon Y  \rightarrow X$ of the pair $(X, D)$ which is an isomorphism over $X\smallsetminus D$, we 
define $E :=(f^*D)_{\rm red}$.
 We will see in \S\ref{filtrations} that there is a filtered complex of right $f^{-1} \Dmod_X$-modules 
$$0 \longrightarrow f^* \Dmod_X \longrightarrow \Omega_Y^1(\log E) \otimes_{\shO_Y} f^* \Dmod_X \longrightarrow \cdots $$
$$\cdots \longrightarrow \Omega_Y^{n-1}(\log E) \otimes_{\shO_Y} f^* \Dmod_X \longrightarrow \omega_Y(E) \otimes_{\shO_Y} 
f^*\Dmod_X \longrightarrow 0$$
which is exact except at the rightmost term, where the cohomology is $\omega_Y(*E) \otimes_{\Dmod_Y}  \Dmod_{Y\to X}$; 
here $ \Dmod_{Y\to X}=f^*\Dmod_X$ is the transfer module of $f$.
Denoting it by $A^\bullet$, its filtration is provided by the subcomplexes $F_{k-n}A^\bullet$, for every $k\geq 0$, given by 
$$0 \longrightarrow f^* F_{k-n} \Dmod_X \longrightarrow \Omega_Y^1(\log E) \otimes_{\shO_Y} f^* F_{k-n+1} \Dmod_X \longrightarrow \cdots $$
$$\cdots \longrightarrow \Omega_Y^{n-1} (\log E) \otimes{\shO_Y} f^* F_{k-1} \Dmod_X \longrightarrow  \omega_Y(E) \otimes_{\shO_Y} f^* F_k \Dmod_X\longrightarrow 0.$$

We define the \emph{$k$-th Hodge ideal} $I_k(D)$ associated to $D$ by the formula
$$\omega_X \big( (k +1)D \big) \otimes I_k(D) = {\rm Im} \left[R^0 f_* F_{k-n}A^\bullet \longrightarrow R^0 f_* A^\bullet \right],$$
after proving that this image is contained in $\omega_X \big( (k +1)D \big)$. We show that this definition is independent of the 
choice of log resolution, and that it indeed coincides with the ideals defined by Saito's Hodge filtration.

The $0^{\rm th}$ Hodge ideal belongs to a class of ideal sheaves that is quite well understood, and has proved to be extremely useful; 
it is not hard to show that
$$I_ 0 (D) = \I \big(X, (1- \epsilon)D\big),$$
the multiplier ideal associated to the $\QQ$-divisor $(1-\epsilon)D$ with $0 < \epsilon \ll 1$. In particular, 
$I_0 (D) = \shO_X$ if and only if the pair $(X, D)$ is log-canonical. Thus the sequence of ideals $I_k (D)$ can be seen as a refinement of this type of multiplier ideal. 

Hodge ideals can be computed concretely when $D$ is a simple normal crossing divisor; see Proposition \ref{description_SNC_case}. In particular, if $D$ is smooth, then $I_k (D) = \shO_X$ for all $k \ge 0$, which corresponds to equality between the Hodge filtration and the pole order filtration in ($\ref{HP-inclusion}$). One of the main applications of the results below is an effective converse to this statement.

\begin{intro-theorem}\label{smoothness_criterion}
Let $X$ be a smooth complex variety of dimension $n$, and $D$ a reduced effective divisor on $X$. Then the following 
are equivalent:

\begin{enumerate}
\item $D$ is smooth.
\item the Hodge filtration and pole order filtration on $\shO_X(*D)$ coincide.
\item $I_k(D) = \shO_X$ for all $k \ge 0$.
\item $I_k(D) = \shO_X$ for some $k \ge \frac{n-1}{2}$.
\end{enumerate}
\end{intro-theorem}

Saito introduced in \cite{Saito-HF} a measure of the complexity of the Hodge filtration, and proved several results in the case of $\shO_X(*D)$ 
(see e.g. Remark \ref{criterion_Bernstein_Sato}). Concretely, one says that the filtration $F_\bullet \shO_X(*D)$ is generated at level $k$ if
$$F_{\ell} \Dmod_X \cdot F_k\shO_X(*D)=F_{k+\ell}\shO_X(*D)\quad\text{for all}\quad \ell\geq 0.$$
Equivalently, the ideal $I_k (D)$ and the local equation of $D$ determine all higher Hodge ideals, i.e. for $\ell \ge 0$, by the formula
$$ F_{\ell} \Dmod_X \cdot \big(\shO_X \big((k+1)D\big)\otimes I_k(D)\big)= \shO_X \big((k+\ell+ 1)D\big)\otimes I_{k+ \ell}(D).$$
If $D$ has simple normal crossings, the filtration is generated at level $0$. It turns out that the same is true when $X$ is a surface. This is a special case of the following general result, which is a consequence of our main criterion for detecting the generation level, Theorem \ref{generation_filtration} below; 
there exist simple examples in which one cannot do better.

\begin{intro-theorem}\label{cor2_generation_filtration}
If $X$ has dimension $n\geq 2$, the Hodge filtration on
$\shO_X(*D)$ is generated at level $n-2$. 
More generally, for every $k\geq 0$ there exists an open subset $U_k$ in $X$
whose complement has codimension  $\geq k+3$, such that the induced filtration on $\shO_X(*D)\vert_{U_k}$ is generated at level $k$. 
\end{intro-theorem}

Going back to the study of the singularities of the pair $(X, D)$, the notion of log-canonical singularity is refined by the following:

\begin{definition*}
If $D$ is a reduced effective divisor on the smooth variety $X$, we say that the pair $(X, D)$ is  \emph{$k$-log-canonical} if  
$$I_0(D) = I_1 (D) = \cdots = I_k (D) = \shO_X.$$
\end{definition*}
We show in Proposition \ref{inclusion_between_ideals} that there is in fact a chain of inclusions 
$$\cdots I_k (D) \subseteq \cdots \subseteq I_1 (D) \subseteq I_0 (D).$$
(Note that the definition gives automatically only an inclusion in the opposite direction, namely $I_{k-1}(D) \otimes \shO_X (-D) \subseteq I_k(D)$.) Thus being $k$-log-canonical is equivalent to $I_k (D) = \shO_X$.

Being log-canonical is of course equivalent to being $0$-log-canonical in the above sense, while Theorem \ref{smoothness_criterion} says that $(n-1)/2$-log-canonical or higher is equivalent to $D$ being smooth.
It turns out that any intermediate level of log-canonicity refines another basic notion, namely that of rational singularities. Recall that to $D$ one can also associate the adjoint ideal ${\rm adj}(D)$, see \cite[\S9.3.E]{Lazarsfeld}, which is a concrete measure of the failure of $D$ to have normal rational singularities.

\begin{intro-theorem}\label{adjoint_inclusion}
For every $k \ge 1$ we have an inclusion
$$I_k(D) \subseteq {\rm adj}(D).$$
Hence if $I_k(D) = \shO_X$ for some $k \ge 1$, then $D$ is normal with rational singularities.
\end{intro-theorem}

The condition of being $k$-log-canonical has Hodge-theoretic consequences for the cohomology 
$H^\bullet (U, \CC)$, where $U = X \smallsetminus D$. Using the definition of Hodge ideals and Lemma \ref{equality} below, 
if $X$ is smooth projective of dimension $n$ we have that
\begin{equation}\label{klc_ht}
D \,\,\,\, k{\rm-log-canonical} \,\, \implies \,\, F_p H^i (U, \CC) = P_p H^i (U, \CC) \,\,\,\, \forall ~ p \le k-n,
\end{equation}
for all $i$, where $F_\bullet$ is the Hodge filtration and $P_\bullet$ is the pole order filtration on 
$H^i (U, \CC)$; see \S\ref{filtration_complement} for the definitions. One (difficult) calculation that we perform in 
\S\ref{scn_examples} is the following; for the purpose of this paper, an ordinary singular point is a point whose projectivized tangent cone is smooth.

\begin{intro-theorem}\label{thm_ordinary_singularities}
Let $D$ be a reduced effective divisor on a smooth variety $X$ of dimension $n$, and let $x \in D$ be an ordinary singular 
point of multiplicity $m\geq 2$. Then
$$I_k (D)_x = \shO_{X,x} \iff k \le \left[\frac{n}{m}\right] - 1.$$
In particular, if $X$ is projective and $D$ has only such singularities, then
$$F_p H^\bullet (U, \CC) = P_p H^\bullet (U, \CC) \,\,\,\,\,\,{\rm for~all} \,\,\,\, p \le \left[\frac{n}{m}\right] - n - 1.$$
\end{intro-theorem}
When all the singularities of $D$ are nodes, the equivalence in the theorem was established already in \cite[\S1.4]{DSW}, where all 
Hodge ideals were computed concretely; see Example \ref{example_ODP}.\footnote{The paper \cite{DSW} also obtains a range where the equality $F_p H^\bullet (U, \CC) = P_p H^\bullet (U, \CC)$ does not hold, for a general singular, hence nodal, hypersurface in $\PP^n$. In the case of nodal surfaces in $\PP^3$, a more precise result can be found in 
\cite[Theorem 5.1]{DS2}.}

It turns out that the nontriviality bound in Theorem \ref{thm_ordinary_singularities}, i.e. the fact that $I_k(D)_x \neq \shO_{X,x}$  
for $k \ge \left[\frac{n}{m}\right]$, holds for any point $x\in D$ of multiplicity $m$; see Example \ref{same_bound}.
This, as well as Theorem \ref{smoothness_criterion}, follows from the following statement, proved in 
\S\ref{order_closed_subset} using deformation to ordinary singularities. In most cases, examples given in 
\S\ref{scn_examples} show its optimality.

\begin{intro-theorem}\label{new_criterion_nontriviality}
Let $D$ be a reduced effective divisor on a smooth variety $X$, and let $W$ be an irreducible closed subset of codimension $r$,
defined by the ideal sheaf $I_W$. If $m={\rm mult}_W(D)$, then for every $k$ we have
$$I_k(D)\subseteq I_W^{(q)},\quad\text{where}\text\quad q=\min\{m-1,(k+1)m-r\}.$$
Here $I_W^{(q)}$ is the $q^{\rm th}$ symbolic power of $I_W$, and $I_W^{(q)}=\shO_X$ if $q\leq 0$.
\end{intro-theorem}

One ingredient in the proof of this result is the analogue of the Restriction Theorem for multiplier ideals, 
\cite[Theorem 9.5.1]{Lazarsfeld}, which holds for all Hodge ideals as well:  if $H$ is a smooth hypersurface with 
$H \not\subseteq {\rm Supp}(D)$, such that $D|_{H}$ is reduced, then
\begin{equation}\label{eqn:restriction}
I_k (H, D|_H) \subseteq I_k(X, D) \cdot \shO_H,
\end{equation}
with equality for $H$ sufficiently general. Thus there is an inversion of adjunction for $k$-log-canonicity. 
The proof requires tools from the theory of mixed Hodge modules, and will be given in a separate paper \cite{MP}. We do include however a proof using the methods of this paper in the generic case, see Theorem \ref{restriction_general_hypersurfaces}. 

On the other hand, Theorem \ref{adjoint_inclusion} is a consequence of another one of our main local results, Theorem \ref{criterion_nontriviality1}, giving a lower bound for the order of vanishing of $I_k(D)$ along exceptional divisors over $X$ on carefully chosen log resolutions.   We leave the slightly technical statement for the text, and note that it also leads to another 
nontriviality criterion, Corollary \ref{cor1_criterion_nontriviality1},  that complements Theorem \ref{new_criterion_nontriviality}.
Just as in the theory of multiplier ideals, a precise measure of the nontriviality of Hodge ideals is crucial for applications, especially when combined with vanishing theorems; this is what we focus on next.

Multiplier ideals satisfy the celebrated Nadel vanishing theorem; see \cite[Theorem 9.4.8]{Lazarsfeld}. For the ideal 
$I_0 (D)$, this says that given any ample line bundle $L$, one has 
$$H^i \big(X, \omega_X (D) \otimes L \otimes I_0 (D) \big) = 0 \,\,\,\,\,\, {\rm  for~ all}\,\,\,\, i \ge 1.$$
We obtain an analogous result for the entire sequence of Hodge ideals $I_k(D)$. Things however necessarily get more complicated; in brief, in order to have full vanishing, higher log-canonicity conditions and borderline Nakano vanishing type properties need to be satisfied. 

\begin{intro-theorem}\label{vanishing_Hodge_ideals}
Let $X$ be a smooth projective variety of dimension $n$, $D$ a reduced effective divisor, and $L$ a line bundle on $X$. Then, for each $k \ge 1$, assuming that the pair $(X,D)$ is $(k-1)$-log-canonical we have:

\begin{enumerate}
\item If $k \le \frac{n}{2}$, and $L$ is a line bundle such that $L (p D)$ is ample for all $0 \le p \le k$, then 
$$H^i \big(X, \omega_X ( (k+1)D) \otimes L \otimes I_k (D) \big) = 0$$
for all $i \ge 2$. Moreover, 
$$H^1 \big(X, \omega_X ((k+1)D) \otimes L \otimes I_k (D) \big) =  0$$
holds if $H^j \big(X, \Omega_X^{n-j} \otimes L ((k - j +1)D )\big) = 0$ for all $1 \le j \le k$.

\medskip

\item If $k \ge \frac{n+1}{2}$, then $D$ is smooth by Theorem \ref{smoothness_criterion}, and so $I_k (D) = \shO_X$.
 In this case, if $L$ is a line bundle such that $L (k D)$ is ample, then 
$$H^i \big(X, \omega_X ( (k+1)D) \otimes L  \big) = 0 \,\,\,\,\,\, {\rm for ~all}\,\,\,\, i >0.$$

\medskip

\item If $D$ is ample, then (1) and (2) also hold with $L = \shO_X$.
\end{enumerate}
\end{intro-theorem}

A slightly more precise statement for $I_1(D)$ is given in Theorem \ref{vanishing-I_1}. The proof of Theorem \ref{vanishing_Hodge_ideals} relies on the Kodaira-Saito vanishing theorem in the theory of mixed Hodge modules; 
at the moment we know how to give a more elementary proof  only 
for $I_1(D)$. We explain in Corollary \ref{effective_vanishing} how one can avoid the Nakano-type requirement in Theorem \ref{vanishing_Hodge_ideals} by assuming that $D$ is sufficiently positive with respect to an ample divisor $A$ such 
that $T_X (A)$ is nef. It is worth noting that Hodge ideals also satisfy a local vanishing statement, 
Corollary \ref{local_vanishing_Hodge}, due to the strictness of the Hodge filtration.

Vanishing for Hodge ideals takes a particularly simple form on $\PP^n$ (see Corollary \ref{vanishing_Pn}) or more generally on smooth toric varieties (see Corollary \ref{toric_vanishing}), and on abelian varieties (see Theorem \ref{strong_vanishing}), as in these cases the hypotheses are automatically satisfied or can be relaxed. As mentioned above, in combination with Theorem \ref{new_criterion_nontriviality} and related results, this leads to interesting applications. We present a few here, while further applications, as well as theoretical statements, will be treated elsewhere.

For instance, on $\PP^n$ it is a consequence of Nadel vanishing that if an integral hypersurface $D$ of degree $d$ is not log-canonical, then its singular locus has dimension $\ge n - d + 1$. Our vanishing theorem leads to a simultaneous extension of this fact and of a result of Deligne on the Hodge filtration on complements of hypersurfaces with isolated singularities. When the Hodge ideals are nontrivial, it imposes further restrictions on the corresponding subschemes 
in $\PP^n$.

\begin{intro-theorem}\label{Deligne}
Let $D$ be a reduced hypersurface of degree $d$ in $\PP^n$, and for each $k$ denote by $Z_k$ the subscheme
associated to the ideal 
$I_k (D)$, and by $z_k$ its dimension. Then: 
\begin{enumerate}
\item If $z_k < n - (k+1)d + 1$, then in fact $Z_k = \emptyset$, i.e. $(X, D)$ is $k$-log-canonical. The converse is of course
true if $n - (k+1)d + 1 \ge 0$. 
\item If $z_k \ge n - (k+1)d + 1$, then 
$$\deg Z_k \le {(k+1)d -1 \choose n - z_k},$$
with the convention that $\dim \emptyset$ and $\deg \emptyset$ are $-1$. 
\item The dimension $0$ part of $Z_k$ imposes independent conditions on hypersurfaces of degree at least $(k+1)d - n - 1$.
\end{enumerate}
\end{intro-theorem}

Part (1) says in particular that if $D$ has isolated singularities, then $I_k (D) = \shO_X$ whenever $n - (k+1) d + 1 > 0$; this is a result 
of Deligne, see  \cite[4.6(iii)]{Saito-B}, that was originally phrased in terms of the equality $F_k H^i (U, \CC) = P_k H^i (U, \CC)$, where $U = \PP^n \smallsetminus D$. 
More generally,  according to the theorem and ($\ref{klc_ht}$), whenever 
$\dim Z_k + (k+1)d < n+1$ we have that 
$$F_{k-n} H^i (U, \CC) = P_{k-n} H^i (U, \CC) \,\,\,\, {\rm ~for~all~} \,\, i.$$
A related local result was proved by Saito \cite[Theorem 0.11]{Saito-B} in terms of the roots of the Bernstein-Sato
polynomial of $D$; see Remark \ref{saito_result}.

When combined with nontriviality criteria like Theorem \ref{new_criterion_nontriviality} or Theorem \ref{thm_ordinary_singularities}, 
part (3) in Theorem \ref{Deligne} has a number of basic consequences describing the behavior of isolated singular points on hypersurfaces in $\PP^n$, with $n\ge 3$. These are collected in \S\ref{hypersurface_singularities}; here is the most easily stated:

\begin{intro-corollary}\label{corH}
Let $D$ be a reduced hypersurface of degree $d$ in $\PP^n$, with $n \ge 3$, and denote by $S_m$ the set of isolated singular points on $D$ of multiplicity $m\ge 2$. Then $S_m$ imposes independent conditions on 
hypersurfaces of degree at least $([\frac{n}{m}]+ 1)d - n -1$.
\end{intro-corollary}

To put this in perspective, recall that a classical theorem of Severi \cite{Severi} states that if a surface $S \subset \PP^3$  of degree $d$ has only nodes as singularities, then the set of nodes imposes independent conditions on hypersurfaces of degree $2d-5$. The bound above is one worse in this case, but it becomes better than what is 
known for most other $n$ and $m$. For further discussion see \S\ref{hypersurface_singularities}. Results of a similar flavor hold on abelian varieties; see \S\ref{abelian_singularities}.

On principally polarized abelian varieties (ppav's) we obtain an upper bound on the multiplicity of points on theta divisors whose
singularities are isolated. 

\begin{intro-theorem}\label{intro_theta_isolated}
Let $(X, \Theta)$ be ppav of dimension $g$ such that $\Theta$ has isolated singularities. 
Then:

\begin{enumerate}
\item For every $x \in \Theta$ we have
${\rm mult}_x (\Theta) \le \frac{g + 1}{2}$, and also ${\rm mult}_x (\Theta) \le \epsilon(\Theta) + 2$, where $\epsilon (\Theta)$ is the Seshadri constant of the principal polarization.
\item 
Moreover, there is at most one point $x \in \Theta$ with ${\rm mult}_x (\Theta) = \frac{g + 1}{2}$.
\end{enumerate}
\end{intro-theorem}

See \S\ref{scn_theta} for a detailed discussion, including the conjectural context in which this result is placed, and for further numerical bounds. Suffice it to say here that, in the case of isolated singularities, this improves a well-known bound of Koll\'ar saying that the multiplicity of each point is at most $g$.
See also Remark \ref{theta_comments} (1) for related work of Codogni-Grushevsky-Sernesi and communications from Lazarsfeld.

We conclude by noting that some of the statements in the text rely on local vanishing theorems of Akizuki-Nakano type for higher direct images of sheaves of differentials with log poles; some of these can already be found in \cite{EV} and \cite{Saito-LOG}, while some are new and hopefully be of interest beyond the applications in this paper. We provide a uniform approach in the Appendix, using rather elementary arguments.

Finally, a word about the use of results from the theory of mixed Hodge modules; in the treatment given here many of the definitions, 
including that of Hodge ideals, as well as the proofs of most theorems, do not a priori depend of it. However, this is not always the case. For instance, the proof of Theorem \ref{vanishing_Hodge_ideals} on vanishing, and that of Proposition \ref{inclusion_between_ideals}, rely essentially on results regarding Hodge modules. One topic that does not appear in this paper though, is the connection with the theory of the $V$-filtration. This is used in \cite{MP} in order to prove Theorem \ref{restriction_hypersurfaces} and further results. It is treated systematically in the more recent preprint of Saito \cite{Saito-MLCT}, where in particular it leads to a different approach to many of the results in Theorems \ref{smoothness_criterion}, \ref{adjoint_inclusion} and \ref{thm_ordinary_singularities}; see also Remark \ref{criterion_Bernstein_Sato}. Both points of view will continue to play an important role in the sequels mentioned above.

\medskip

\noindent
{\bf Acknowledgements.}
This paper owes a great deal of inspiration to Morihiko Saito's work on the Hodge filtration on localizations along hypersufaces \cite{Saito-B}, \cite{Saito-LOG}, \cite{Saito-HF}. We are grateful to Christian Schnell for making us aware of these papers, which is one of the reasons  our project got started. He also asked whether the equivalence between (1) and (2) in Theorem \ref{smoothness_criterion} might hold. We thank him, Rob Lazarsfeld, and Morihiko Saito for many other useful comments, Lawrence Ein for discussions regarding the results in the Appendix, and H\'{e}l\`{e}ne Esnault, Sam Grushevsky, Sam Payne, and Claire Voisin for comments and references. The second author would like to thank the math departments at the University of Michigan and Stony Brook University for hospitality during the preparation of the paper, and the Simons Foundation for fellowship support.

\section{Preliminaries}

Let $X$ be a smooth complex algebraic variety. We denote by $\Dmod_X$ the sheaf of differential operators on $X$. 
A left or right $\Dmod$-module on $X$ is simply a left, respectively right, $\Dmod_X$-module.

\subsection{Background on filtered $\Dmod$-modules}\label{dmod}
We briefly recall some standard definitions from the theory of $\Dmod$-modules that will be used in this paper. 
A very good source for further details is \cite{HTT}.

\noindent
{\bf Left-right correspondence.}
We will work with both left and right $\Dmod$-modules; while most definitions and results 
are best stated for left $\Dmod$-modules, push-forwards are most natural in the context of right $\Dmod$-modules.
The standard one-to-one correspondence between left and right $\Dmod_X$-modules is given by 
$$\Mmod \mapsto \Nmod := \Mmod \otimes{_{\shO_X}} \omega_X \,\,\,\, {\rm and} \,\,\,\, \Nmod \mapsto \Mmod := \mathcal{H}om_{\shO_X} (\omega_X, \Nmod),$$
where $\omega_X$ is endowed with its natural right $\Dmod_X$-module structure; see \cite[\S1.2]{HTT}.

\noindent
{\bf Filtrations and the de Rham complex.}
A filtered left $\Dmod$-module on $X$ is a left $\Dmod_X$-module $\Mmod$ with an increasing filtration $F = F_\bullet \Mmod$ by coherent 
 $\shO_X$-modules, bounded from below and satisfying
$$F_k \Dmod_X \cdot F_\ell \Mmod \subseteq F_{k+\ell} \Mmod \,\,\,\,{\rm for~all~} k, \ell \in \ZZ,$$
where $F_k\Dmod_X$ is the sheaf of differential operators on $X$ of order $\leq k$.
We use the notation $(\Mmod, F)$ for this data. The filtration is called good if the inclusions above are equalities for $\ell \gg 0$, which is in turn equivalent to the fact that the total 
associated graded object
$$\Gr^F_{\bullet} \Mmod = \bigoplus_k \Gr_k^F \Mmod = \bigoplus_k F_k \Mmod / F_{k-1} \Mmod$$
is finitely generated over $\Gr_{\bullet}^F \Dmod_X \simeq {\rm Sym}~T_X$. With the analogous definitions for a right 
$\Dmod_X$-module $\Nmod$, in case it corresponds to $\Mmod$ via the left-right operation, 
the corresponding rule on filtrations is
$$F_p \Mmod = F_{p- n} \Nmod \otimes_{\shO_X} \omega_X^{-1}.$$

Given a left $\Dmod_X$-module $\Mmod$, the associated de Rham complex is:
\[
\DR (\Mmod) = \Bigl\lbrack
		\Mmod \to \Omega_X^1 \tensor \Mmod \to \dotsb \to \Omega_X^n \tensor \Mmod
	\Bigr\rbrack.
\]
It is a $\CC$-linear complex with differentials induced by the corresponding integrable connection
$\nabla\colon \Mmod \rightarrow \Omega_X^1\otimes\Mmod$. We consider it to be placed in degrees $-n, \ldots, 0$. 
(Strictly speaking, as such it is the de Rham complex associated to the corresponding right $\Dmod$-module.)
 A filtration $F_{\bullet} \Mmod$ on $\Mmod$ induces a 
filtration on the de Rham complex of $\Mmod$ by the formula
$$
	F_k \DR(\Mmod) = \Bigl\lbrack
		F_k \Mmod \to \Omega_X^1 \otimes F_{k+1} \Mmod \to \dotsb 
			\to \Omega_X^n \otimes F_{k+n} \Mmod
	\Bigr\rbrack.
$$
For any integer $k$, the associated graded complex for this filtration is 
$$
	\gr_k^F \DR(\Mmod) = \Big\lbrack
		\gr_k^F \Mmod \to \Omega_X^1 \otimes \gr_{k+1}^F \Mmod \to \dotsb \to
			\Omega_X^n \otimes \gr_{k+n}^F \Mmod
	\Big\rbrack.
$$
This is now a complex of coherent $\OX$-modules in degrees $-n, \dotsc, 0$, 
providing an object in ${\bf D}^b (X)$, the bounded derived category of coherent sheaves on $X$.

\noindent
{\bf Transfer modules and pushforward.}
If $f\colon Y \rightarrow X$ is a morphism of smooth complex varieties, we consider the associated transfer module
$$\Dmod_{Y\to X} : = \shO_Y \otimes_{f^{-1} \shO_X} f^{-1} \Dmod_X.$$
It has the structure of a $\Dmod_Y - f^{-1} \Dmod_X$-bimodule; moreover, it is filtered by 
$f^* F_k \Dmod_X$.  It is simply $f^* \Dmod_X$ as an $\shO_Y$-module, and we will use this notation rather 
than $\Dmod_{Y \to X}$ when thinking of it as such.
For a right $\Dmod_Y$-module $\Mmod$, due to the left exactness of $f_*$ versus the right exactness of $\otimes$,  the 
appropriate pushforward for $\Dmod$-modules is at the level of derived categories, namely
$$f_+ : {\bf D} (\Dmod_Y) \longrightarrow {\bf D} (\Dmod_X), \,\,\,\,\, \Mmod^{\bullet} \mapsto 
\derR f_* \big(\Mmod^{\bullet} \overset{\derL}{\otimes}_{\Dmod_Y} \Dmod_{Y\to X} \big).$$
See \cite[\S1.5]{HTT} for more details, where this last functor is denoted by $\int_f$.

\subsection{Localization along a divisor}
In this section, $X$ will always be a smooth complex variety of dimension $n$, and $D$ a reduced effective divisor on $X$. We denote the localization of $\shO_X$ along $D$, as a left $\Dmod_X$-module, by $\shO_X(*D)$. If $h$ is a local equation of $D$, then this is $\shO_X[\frac{1}{h}]$, with the obvious action of differential operators. The associated right $\Dmod_X$-module is denoted $\omega_X (*D)$. 

We will use the following well-known observation; see \cite[Lemma 5.2.7]{HTT}.

\begin{lemma}\label{support}
If $\shF$ is a coherent $\shO_X(*D)$-module supported on $D$, then $\shF = 0$.
\end{lemma}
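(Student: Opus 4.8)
The statement is local on $X$, so I would fix a point $x\in D$ and reduce to showing that the stalk $\shF_x$ vanishes; since the hypothesis $\Supp\shF\subseteq D$ already gives $\shF_y=0$ for every $y\in U:=X\smallsetminus D$, this suffices. On a neighborhood of $x$ the divisor $D$ is cut out by a single reduced equation $h$, and the stalk of the ring is $R:=\shO_{X,x}[1/h]=(\shO_{X,x})_h$. Since $\shO_{X,x}$ is a regular local ring, hence Noetherian, $R$ is Noetherian, and since $\shF$ is coherent over $\shO_X(*D)$ its stalk $M:=\shF_x$ is a finitely generated $R$-module. The plan is then to prove $M=0$ by checking that every localization of $M$ at a prime of $R$ is zero.

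The key translation is the following. The prime ideals of $R=(\shO_{X,x})_h$ are exactly the primes $\mathfrak{p}$ of $\shO_{X,x}$ with $h\notin\mathfrak{p}$, and each such $\mathfrak{p}$ is the local ring of a scheme point $y$ of $X$ that specializes to $x$ and lies in $U$ (because $h(y)\neq 0$). For such $y$ the hypothesis $\Supp\shF\subseteq D$ forces $\shF_y=0$. On the other hand, localizing the $R$-module $M$ at the prime $\mathfrak{q}=\mathfrak{p}R$ agrees with localizing the $\shO_{X,x}$-module $\shF_x$ at $\mathfrak{p}$, which is precisely the stalk $\shF_y$. Hence $M_{\mathfrak{q}}=0$ for every prime $\mathfrak{q}$ of $R$.

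To finish, I would invoke the elementary fact that a module over a commutative ring all of whose localizations at prime ideals vanish is itself zero (equivalently $\Supp_R M=\emptyset$, i.e.\ $\operatorname{Ann}_R M=R$). Applying this to the finitely generated $R$-module $M$ gives $\shF_x=M=0$. As $x\in D$ was arbitrary and the stalks over $U$ vanish by hypothesis, every stalk of $\shF$ vanishes, so $\shF=0$.

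The routine ingredients here—Noetherianity of $R$, finite generation of the stalk, and the vanishing criterion for modules—are standard. The only point requiring care, and the step I would check most carefully, is the middle one: matching $\Spec R$ with the points of the open set $U$ and confirming that the sheaf-theoretic condition $\Supp\shF\subseteq D$ genuinely kills every localization $M_{\mathfrak{q}}$. Once that dictionary is set up correctly the conclusion is immediate, and it uses nothing about the $\Dmod_X$-module structure beyond the underlying $\shO_X(*D)$-module structure (in particular, beyond the fact that $h$ acts invertibly).
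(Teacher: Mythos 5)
Your proof is correct; note that the paper itself does not prove this lemma at all, but quotes it from \cite[Lemma 5.2.7]{HTT}. The argument behind that citation is the annihilator form of yours: locally $D=V(h)$, and a coherent module supported in $V(h)$ satisfies $h^N\shF_x=0$ for some $N$ (since $\Supp M=V(\operatorname{Ann}M)$ for finitely generated modules over a Noetherian ring), while $h$ is a unit in $\shO_X(*D)$ and hence acts invertibly on any $\shO_X(*D)$-module, forcing $\shF_x=0$. Your route through $\Spec\bigl(\shO_{X,x}[1/h]\bigr)$ repackages the same commutative algebra: the invertibility of $h$ is encoded in the fact that this spectrum contains no point of $D$, and the support hypothesis then empties the entire spectrum, so the module dies. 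What your version buys is that coherence becomes irrelevant, since the fact that a module with vanishing localizations at all primes is zero holds for arbitrary modules; so the statement is really true for any quasi-coherent $\shO_X(*D)$-module. What it costs is the stalk dictionary, and the one point you should spell out there is that $\shF$ is quasi-coherent as an $\shO_X$-module (immediate from a local presentation $\bigoplus\shO_X(*D)\to\bigoplus\shO_X(*D)\to\shF\to 0$, as $\shO_X(*D)$ is quasi-coherent); this is what justifies both the identification $(\shF_x)_{\mathfrak{p}}\simeq\shF_y$ of localizations with stalks at non-closed points, and the passage from vanishing of stalks at closed points of $X\smallsetminus D$ (which is what the support hypothesis literally gives on a variety) to vanishing at all scheme points of $X\smallsetminus D$.
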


Let us now fix a proper morphism 
$$f\colon Y \longrightarrow X$$
which is an isomorphism over $U:=X\smallsetminus D$. We assume that
 $Y$ is smooth, and let $E=(f^*D)_{\rm red}$ and $V=Y\smallsetminus E=f^{-1}(U)$.
We denote by $j_U$ and $j_V$ the inclusions of $U$ and $V$ into $X$ and $Y$ 
respectively. Note that $ {j_U}_+\omega_U\simeq \omega_X (*D)$ and  ${j_V}_+\omega_V\simeq \omega_Y (*E)$.
Since $j_U = f\circ j_V$, we have:

\begin{lemma}\label{preservation}
There is a natural isomorphism
$$ f_+ \omega_Y (*E) \simeq H^0 f_+ \omega_Y (*E) \simeq \omega_X (*D).$$
\end{lemma}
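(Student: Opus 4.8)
The plan is to deduce the lemma from the functoriality of the $\Dmod$-module direct image under composition, applied to the factorization $j_U = f \circ j_V$. Since $j_V$ is the open embedding of the complement of the divisor $E$, it is affine, so ${j_V}_+ \omega_V$ is concentrated in degree $0$, where it equals $\omega_Y(*E)$; likewise ${j_U}_+ \omega_U \simeq \omega_X(*D)$ sits in degree $0$. Both of these identifications are recorded immediately above the statement, so I may take them as given.

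First I would invoke the standard isomorphism $f_+ \circ {j_V}_+ \simeq (f \circ j_V)_+$ of functors ${\bf D}(\Dmod_V) \to {\bf D}(\Dmod_X)$ (see \cite[\S1.5]{HTT}), which holds for arbitrary composable morphisms of smooth varieties. Evaluating on $\omega_V$ and using $f \circ j_V = j_U$ yields
$$ f_+ \omega_Y(*E) \;\simeq\; f_+ {j_V}_+ \omega_V \;\simeq\; {j_U}_+ \omega_U \;\simeq\; \omega_X(*D),$$
where the first isomorphism is ${j_V}_+\omega_V \simeq \omega_Y(*E)$, the middle one is functoriality, and the last is ${j_U}_+\omega_U \simeq \omega_X(*D)$.

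Since the right-hand side $\omega_X(*D)$ is a single $\Dmod_X$-module placed in degree $0$, the object $f_+\omega_Y(*E) \in {\bf D}(\Dmod_X)$ has cohomology concentrated in degree $0$. Consequently the natural map $H^0 f_+\omega_Y(*E) \to f_+\omega_Y(*E)$ is an isomorphism, which supplies both identifications asserted in the statement.

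The only point requiring real care is the functoriality isomorphism $f_+ \circ {j_V}_+ \simeq (f\circ j_V)_+$ together with the compatibility of the two open-embedding identifications with it; I expect this to be the main obstacle. Concretely, one must check that the canonical transfer-module isomorphism $\Dmod_{V \to Y} \Ltensor_{\Dmod_Y} \Dmod_{Y \to X} \simeq \Dmod_{V \to X}$ is compatible with the projection formula computing $f_+$, and that under it the localized right $\Dmod$-modules $\omega_V$, $\omega_Y(*E)$, and $\omega_X(*D)$ match up correctly. This is precisely the content of the composition theorem for $\Dmod$-module pushforwards, so once it is cited the argument is immediate.
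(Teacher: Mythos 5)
Your proposal is correct and follows essentially the same route as the paper: the lemma there is stated as an immediate consequence of the identifications ${j_U}_+\omega_U\simeq \omega_X(*D)$ and ${j_V}_+\omega_V\simeq \omega_Y(*E)$ together with the factorization $j_U = f\circ j_V$ and the composition theorem for $\Dmod$-module direct images. Your write-up simply makes explicit the functoriality isomorphism $f_+\circ {j_V}_+\simeq (f\circ j_V)_+$ and the degree-zero concentration, which is exactly what the paper leaves implicit.
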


Given the morphism $f\colon Y \rightarrow X$, since $\Dmod_{Y\to X}$ is a left $\Dmod_Y$-module, 
we have a canonical morphism of left $\Dmod_Y$-modules
$$\varphi\colon \Dmod_Y \longrightarrow f^* \Dmod_X$$
that maps $1$ to $1$. This is in fact a morphism of $\Dmod_Y-f^{-1}\shO_X$ bimodules.
It is clearly an isomorphism over $Y\smallsetminus E$. Since $\Dmod_Y$ is torsion-free, we conclude that
$\varphi$ is injective, with cokernel supported on $E$.
For each $k$, we have induced inclusions $F_k \Dmod_Y \hookrightarrow 
f^* F_k \Dmod_X$  of $\shO_Y-f^{-1}\shO_X$ bimodules.

\begin{lemma}\label{lem_inclusion}
The canonical map of right $f^{-1}\shO_X$-modules
$$\omega_Y(*E)\longrightarrow \omega_Y(*E)\otimes_{\Dmod_Y}\Dmod_{Y\to X}$$
induced by $\varphi$ is a split injection.
\end{lemma}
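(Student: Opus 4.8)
The plan is to prove something slightly stronger than a split injection: I would show that after localizing differential operators along $E$, the morphism $\varphi$ becomes an isomorphism, and then read off an explicit right $f^{-1}\shO_X$-linear retraction. Write $\Dmod_Y(\ast E)$ for the sheaf of differential operators on $Y$ with poles along $E$, i.e. the localization $\shO_Y(\ast E)\otimes_{\shO_Y}\Dmod_Y$, regarded as the Ore localization of $\Dmod_Y$ at the powers of a local equation of $E$. Since the right action of such an equation on $\omega_Y(\ast E)$ is multiplication, hence invertible, $\omega_Y(\ast E)$ is naturally a right $\Dmod_Y(\ast E)$-module. Localizing $\varphi$ in the same way produces a filtered morphism of $\Dmod_Y(\ast E)$--$f^{-1}\shO_X$ bimodules
$$\varphi_E\colon \Dmod_Y(\ast E)\longrightarrow \Dmod_{Y\to X}(\ast E),$$
and the goal is to invert it.

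The key step is that $\varphi_E$ is an isomorphism. Working in local coordinates $y_1,\dots,y_n$ on $Y$ and $x_1,\dots,x_n$ on $X$ with $x_i=f_i(y)$, the chain rule gives $\varphi(\partial_{y_j})=\sum_i (\partial f_i/\partial y_j)\,\partial_{x_i}$, so on associated graded for the order filtration $\gr\varphi$ is the linear substitution $\xi_j\mapsto \sum_i (\partial f_i/\partial y_j)\,\eta_i$ determined by the Jacobian matrix $J=(\partial f_i/\partial y_j)$. Because $f$ restricts to an isomorphism over $U$, the determinant $\det J$ is a unit on $V=Y\smallsetminus E$ and its zero locus is contained in $E$; hence $1/\det J\in\shO_Y(\ast E)$ and $J$ is invertible over $\shO_Y(\ast E)$. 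Consequently $\gr\varphi_E$ is an invertible $\shO_Y(\ast E)$-linear substitution of the symbol variables, and since both sides carry exhaustive, bounded-below order filtrations, $\varphi_E$ is itself a filtered bimodule isomorphism. Concretely, $\varphi_E^{-1}$ is nothing but the inverse chain rule, expressing each $\partial_{x_i}$ as an $\shO_Y(\ast E)$-linear combination of the $\partial_{y_j}$.

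With $\varphi_E^{-1}$ in hand I would define the retraction
$$r\colon \omega_Y(\ast E)\otimes_{\Dmod_Y}\Dmod_{Y\to X}\longrightarrow \omega_Y(\ast E),\qquad r(\eta\otimes P)=\eta\cdot\varphi_E^{-1}(P),$$
where $\varphi_E^{-1}(P)\in\Dmod_Y(\ast E)$ acts on $\eta$ through the right $\Dmod_Y(\ast E)$-module structure noted above. The only point requiring care is well-definedness: the defining relations of the tensor product are $\eta\cdot\theta\otimes P=\eta\otimes\theta\cdot P$ for $\theta\in\Dmod_Y$, and left $\Dmod_Y(\ast E)$-linearity of $\varphi_E^{-1}$ gives $\varphi_E^{-1}(\theta\cdot P)=\theta\cdot\varphi_E^{-1}(P)$, so both sides are sent to $(\eta\cdot\theta)\,\varphi_E^{-1}(P)$. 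Right $f^{-1}\shO_X$-linearity of $r$ follows from that of $\varphi_E^{-1}$, which holds because $\varphi$ (hence $\varphi_E$) is a $\Dmod_Y$--$f^{-1}\shO_X$ bimodule map. Finally $r(\eta\otimes 1)=\eta\cdot\varphi_E^{-1}(1)=\eta$, so $r$ splits the canonical map; the map is therefore a split injection. (In fact, since $\varphi_E$ is an isomorphism and $\omega_Y(\ast E)$ is already a $\Dmod_Y(\ast E)$-module, associativity of tensor products identifies $\omega_Y(\ast E)\otimes_{\Dmod_Y}\Dmod_{Y\to X}$ with $\omega_Y(\ast E)\otimes_{\Dmod_Y(\ast E)}\Dmod_{Y\to X}(\ast E)\cong\omega_Y(\ast E)$, so the map is an isomorphism.)

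The main obstacle is concentrated entirely in the middle step, namely the claim that $\varphi_E$ is an isomorphism. Everything rests on the invertibility of the Jacobian over $\shO_Y(\ast E)$ — which uses precisely that $f$ is an isomorphism over $U$, so that $\det J$ vanishes only along $E$ — and on making the associated-graded argument rigorous, i.e. checking that the order filtrations are preserved and exhaustive so that an isomorphism on $\gr$ lifts to an isomorphism. A subsidiary technical point is that the $g_E$-localization of $\Dmod_Y$ is well behaved (the Ore conditions hold, and $\omega_Y(\ast E)$ genuinely extends to a $\Dmod_Y(\ast E)$-module), which is what legitimizes writing $\varphi_E^{-1}(P)$ and letting it act on forms.
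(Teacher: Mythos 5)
Your proof is correct, but it follows a genuinely different route from the paper's. The paper's proof of this lemma is soft and coordinate-free: writing $j=j_V\colon V=Y\smallsetminus E\hookrightarrow Y$, it compares the canonical map $\alpha\colon\omega_Y(*E)\to\omega_Y(*E)\otimes_{\Dmod_Y}\Dmod_{Y\to X}$ with $j_*j^*(\alpha)$ via the unit maps $u\colon\omega_Y(*E)\to j_*j^*\omega_Y(*E)$ and $\psi\colon\omega_Y(*E)\otimes_{\Dmod_Y}\Dmod_{Y\to X}\to j_*j^*\bigl(\omega_Y(*E)\otimes_{\Dmod_Y}\Dmod_{Y\to X}\bigr)$; since $\Dmod_{Y\to X}\vert_V=\Dmod_V$ the map $j_*j^*(\alpha)$ is an isomorphism, and since $\omega_Y(*E)\simeq j_*\omega_V$ as $\shO_Y$-modules the map $u$ is an isomorphism, so $\psi\circ\alpha$ is an isomorphism and $(\psi\circ\alpha)^{-1}\circ\psi$ is the desired retraction. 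In particular the paper proves nothing stronger than split injectivity at this stage, deferring the isomorphism statement to Proposition~\ref{identification2}. You instead prove the isomorphism first: your claim that $\varphi_E$ is an isomorphism is precisely the paper's Lemma~\ref{identification} (equivalently Lemma~\ref{cancellation}), and your concluding parenthetical is the underived form of Proposition~\ref{identification2}. Where the paper deduces that isomorphism from Lemma~\ref{support} --- the filtered pieces $\shO_Y(*E)\otimes_{\shO_Y}F_k\Dmod_Y\to\shO_Y(*E)\otimes_{\shO_Y}f^*F_k\Dmod_X$ are injective by flatness of $\shO_Y(*E)$ and their cokernels are coherent $\shO_Y(*E)$-modules supported on $E$, hence zero --- you argue via the symbol map: $\gr\varphi_E$ is $\shO_Y(*E)\otimes\Sym(df)$, and $df$ is invertible over $\shO_Y(*E)$ because $\det J$ vanishes only along $E$, so $1/\det J\in\shO_Y(*E)$ by the Nullstellensatz; an exhaustive, bounded-below filtered map that is an isomorphism on graded pieces is then an isomorphism. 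Both arguments are valid. The support-lemma route is shorter and avoids choosing coordinates and the filtration bookkeeping; yours buys an explicit inverse (the inverse chain rule) and hence an explicit retraction $r(\eta\otimes P)=\eta\cdot\varphi_E^{-1}(P)$, whose well-definedness you check correctly via left $\Dmod_Y$-linearity of $\varphi_E^{-1}$, and whose right $f^{-1}\shO_X$-linearity indeed follows from the bimodule property of $\varphi$.
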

\begin{proof}
Denoting for simplicity $j = j_V$, we have a commutative diagram
$$
\begin{tikzcd}
\omega_Y(*E)\rar{\alpha} \dar{\varphi} & \omega_Y(*E)\otimes_{\Dmod_Y}\Dmod_{Y\to X}
\dar{\psi} \\
j_*j^* \omega_Y(*E) \rar{\beta} & j_*j^*\big(\omega_Y(*E)\otimes_{\Dmod_Y}\Dmod_{Y\to X}\big),
\end{tikzcd}
$$
where $\varphi$ and $\psi$ are the canonical maps.
Since $\Dmod_{Y\to X}\vert_V=\Dmod_V$, it is clear that $ \beta =j_*j^*(\alpha)$ is an isomorphism. 
On the other hand, as $\shO_Y$-modules we have $\omega_Y(*E)\simeq j_* \omega_V $, hence
$\varphi$ is an isomorphism. It follows from the above diagram that the composition $\psi\circ\alpha$ is an isomorphism,
hence $\alpha$ is a split injection. 
\end{proof}

The main result we are aiming for here is the following enhancement:

\begin{proposition}\label{identification2}
The canonical morphism induced by $\varphi$ in the derived category of right $f^{-1}\shO_X$-modules
 $$\omega_Y(*E) \longrightarrow \omega_Y (*E) \overset{\derL}{\otimes}_{\Dmod_Y} \Dmod_{Y\to X} $$
is an isomorphism. 
 \end{proposition}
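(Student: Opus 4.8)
The plan is to promote the split injection of Lemma~\ref{lem_inclusion} to a derived isomorphism by controlling the cokernel of $\varphi$. Recall that $\varphi\colon\Dmod_Y\to f^*\Dmod_X=\Dmod_{Y\to X}$ is injective with cokernel supported on $E$, so we have a short exact sequence of left $\Dmod_Y$-modules
$$0\longrightarrow\Dmod_Y\overset{\varphi}{\longrightarrow}\Dmod_{Y\to X}\longrightarrow Q\longrightarrow 0,$$
where $Q$ is $\shO_Y$-quasi-coherent with $Q\vert_V=0$. Applying $\omega_Y(*E)\overset{\derL}{\otimes}_{\Dmod_Y}(\argbl)$ yields a distinguished triangle whose first map is
$$\omega_Y(*E)\simeq\omega_Y(*E)\overset{\derL}{\otimes}_{\Dmod_Y}\Dmod_Y\longrightarrow\omega_Y(*E)\overset{\derL}{\otimes}_{\Dmod_Y}\Dmod_{Y\to X},$$
and this is precisely the canonical morphism in the statement. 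Hence it is enough to prove the vanishing
$$\omega_Y(*E)\overset{\derL}{\otimes}_{\Dmod_Y}Q=0.$$

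To establish this I would pass to the sheaf of differential operators with poles along $E$, namely $\Dmod_Y(*E):=\shO_Y(*E)\otimes_{\shO_Y}\Dmod_Y$. Two facts drive the argument. First, $\Dmod_Y(*E)$ is flat as a right $\Dmod_Y$-module: for any left $\Dmod_Y$-module $\shF$ one has $\Dmod_Y(*E)\otimes_{\Dmod_Y}\shF\simeq\shO_Y(*E)\otimes_{\shO_Y}\shF$, which is exact in $\shF$ because $\shO_Y(*E)$ is $\shO_Y$-flat. Second, $\omega_Y(*E)$ is already a module over $\Dmod_Y(*E)$, since its right $\Dmod_Y$-action extends over the localization. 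Base change along the flat map $\Dmod_Y\to\Dmod_Y(*E)$ therefore gives
$$\omega_Y(*E)\overset{\derL}{\otimes}_{\Dmod_Y}Q\simeq\omega_Y(*E)\overset{\derL}{\otimes}_{\Dmod_Y(*E)}\big(\shO_Y(*E)\otimes_{\shO_Y}Q\big).$$
But $Q$ is quasi-coherent and supported on $E$, so $\shO_Y(*E)\otimes_{\shO_Y}Q=0$ (compare Lemma~\ref{support}), and the right-hand side vanishes.

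The delicate point, and the reason one cannot argue more naively, is that a local equation $g$ of $E$ is \emph{not} central in $\Dmod_Y$: although $g$ acts invertibly on $\omega_Y(*E)$ and locally nilpotently on $Q$, one cannot simply cancel these actions inside the tensor product over $\Dmod_Y$. The honest substitute is the localized ring $\Dmod_Y(*E)$ together with its flatness over $\Dmod_Y$, which is the only genuinely technical input; once this is in place the support condition $Q\vert_V=0$ finishes the proof. As a byproduct, taking $H^0$ of the resulting isomorphism shows that the map $\alpha$ of Lemma~\ref{lem_inclusion} is not merely split injective but bijective.
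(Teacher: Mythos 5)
Your proof is correct and takes essentially the same route as the paper: both arguments hinge on the localized ring $\Dmod_Y(*E)$, its flatness over $\Dmod_Y$ (so that tensoring with it over $\Dmod_Y$ is just $\shO_Y(*E)\otimes_{\shO_Y}-$), the right $\Dmod_Y(*E)$-module structure on $\omega_Y(*E)$, and the vanishing of $\shO_Y(*E)\otimes_{\shO_Y}(-)$ on quasi-coherent sheaves supported on $E$. Your packaging via the cokernel $Q$ and a distinguished triangle is a reformulation of the paper's Lemmas \ref{identification} and \ref{cancellation}, which record the equivalent fact that $\Dmod_Y(*E)\overset{\derL}{\otimes}_{\Dmod_Y}\Dmod_{Y\to X}\simeq\Dmod_Y(*E)$, after which both proofs conclude by applying $\omega_Y(*E)\overset{\derL}{\otimes}_{\Dmod_Y(*E)}-$.
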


The proof we give below is inspired in part by arguments in \cite[\S5.2]{HTT}.

\begin{lemma}\label{identification}
The induced morphism 
$$ {\rm Id} \otimes \varphi\colon \shO_Y (*E)  \otimes_{\shO_Y} \Dmod_Y \longrightarrow 
 \shO_Y (*E) \otimes_{\shO_Y}  f^* \Dmod_X$$
is an isomorphism. 
\end{lemma}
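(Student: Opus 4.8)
The plan is to realize the source as a sub-object of the target and to show that the cokernel dies after inverting a local equation of $E$. Recall from the discussion preceding the proposition that $\varphi\colon \Dmod_Y \to f^*\Dmod_X$ is injective with cokernel, say $C$, supported on $E$, so that there is a short exact sequence of $\shO_Y$-modules
\[
0 \longrightarrow \Dmod_Y \overset{\varphi}{\longrightarrow} f^*\Dmod_X \longrightarrow C \longrightarrow 0 .
\]
The key observation is that $\shO_Y(*E)\tensor_{\shO_Y}(\argbl)$ is nothing but localization at the multiplicative system generated by a local equation $h$ of $E$; in particular it is an exact functor. Applying it to the sequence above, I would reduce the entire statement to the single claim that $\shO_Y(*E)\tensor_{\shO_Y} C = 0$, since exactness then makes ${\rm Id}\tensor\varphi$ injective, while its surjectivity is immediate from right-exactness.

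To prove that $C$ localizes to zero, I would exploit the exhaustive filtration $f^*\Dmod_X = \bigcup_k f^*F_k\Dmod_X$ by coherent $\shO_Y$-modules. Writing $q\colon f^*\Dmod_X \to C$ for the quotient map, one has $C = \bigcup_k q(f^*F_k\Dmod_X)$, and each $q(f^*F_k\Dmod_X)$ is a coherent $\shO_Y$-module, being a quotient of the coherent module $f^*F_k\Dmod_X$. Since $\varphi$ is an isomorphism over $V = Y\smallsetminus E$, every such piece has $\Supp$ contained in $E$; a coherent sheaf supported on $E$ is locally annihilated by a power of $h$, equivalently it vanishes after tensoring with $\shO_Y(*E)$ (this is exactly the content of \lemmaref{support}). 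As localization commutes with increasing unions, I would then conclude $\shO_Y(*E)\tensor_{\shO_Y} C = \bigcup_k \shO_Y(*E)\tensor_{\shO_Y} q(f^*F_k\Dmod_X) = 0$.

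The one point that needs genuine care—and the main potential pitfall—is precisely this last step: $C$ itself is not coherent over $\shO_Y$, so the mere fact that it is set-theoretically supported on $E$ does \emph{not} by itself guarantee that every section is killed by a power of $h$. This is why the reduction to the coherent filtered pieces $f^*F_k\Dmod_X$ is essential, and why one cannot apply \lemmaref{support} to $C$ directly. Once $\shO_Y(*E)\tensor_{\shO_Y}C=0$ is established, exactness of localization upgrades the right-exact sequence obtained from $\varphi$ to an isomorphism $\shO_Y(*E)\tensor_{\shO_Y}\Dmod_Y \xrightarrow{\sim} \shO_Y(*E)\tensor_{\shO_Y} f^*\Dmod_X$, which is the assertion.
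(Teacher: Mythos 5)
Your proof is correct and follows essentially the same route as the paper's: both arguments come down to reducing to the coherent filtered pieces $f^* F_k \Dmod_X$, using flatness of $\shO_Y(*E)$ over $\shO_Y$ together with Lemma~\ref{support} (a coherent module supported on $E$ dies after tensoring with $\shO_Y(*E)$), and passing to the union over $k$. The paper merely packages this level by level — showing each map $\shO_Y(*E)\otimes_{\shO_Y} F_k\Dmod_Y \to \shO_Y(*E)\otimes_{\shO_Y} f^*F_k\Dmod_X$ is an isomorphism and taking the colimit — whereas you run the same reduction through the cokernel $C$ of $\varphi$; the mathematical content, including your (correct) caveat that Lemma~\ref{support} cannot be applied to the non-coherent $C$ directly, is the same.
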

\begin{proof}
It suffices to show that the induced mappings 
$$ \shO_Y (*E) \otimes_{\shO_Y} F_k \Dmod_Y \longrightarrow  \shO_Y (*E) \otimes_{\shO_Y}  f^* F_k \Dmod_X$$
are all isomorphisms for $k \ge 0$. But this follows immediately from Lemma \ref{support} (note that since $\shO_Y (*E)$ is
flat over $\shO_Y$, these maps are injective).
\end{proof}

We will use the notation
$$\Dmod_Y (*E) : = \shO_Y(*E) \otimes_{\shO_Y} \Dmod_Y.$$
This is a sheaf of rings, and one can identify it with the subalgebra of ${\rm End}_{\CC}  \big(\shO_Y(*E)\big)$ generated by $\Dmod_Y$
and $\shO_Y(*E)$. Note that since $\shO_Y(*E)$ is a flat $\shO_Y$-module, we have $\Dmod_Y (*E)\simeq \shO_Y(*E) \overset{\derL}{\otimes}_{\shO_Y} \Dmod_Y$. A basic fact is the following:

\begin{lemma}\label{cancellation}
The canonical morphism
$$ \Dmod_Y(*E)\longrightarrow \Dmod_Y(*E)  \overset{\derL}{\otimes}_{\Dmod_Y} \Dmod_{Y \to X}$$
induced by $\varphi$ is an isomorphism.
\end{lemma}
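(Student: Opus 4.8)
The plan is to reduce the statement to the already-proven \lemmaref{identification} by transporting every derived tensor product back to $\shO_Y$, where the requisite flatness is available. The starting point is the presentation noted in the text: since $\shO_Y(*E)$ is flat over $\shO_Y$ one has $\Dmod_Y(*E)\simeq\shO_Y(*E)\overset{\derL}{\otimes}_{\shO_Y}\Dmod_Y$, where $\Dmod_Y$ is regarded as an $(\shO_Y,\Dmod_Y)$-bimodule (left $\shO_Y$-action by multiplication, right $\Dmod_Y$-action by right multiplication). The right $\Dmod_Y$-module structure on $\Dmod_Y(*E)$ entering the statement is exactly the one coming from this presentation.

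First I would invoke associativity of the derived tensor product, with $\shO_Y(*E)$ a right $\shO_Y$-module, $\Dmod_Y$ the bimodule above, and $\Dmod_{Y\to X}$ a left $\Dmod_Y$-module:
$$\Dmod_Y(*E)\overset{\derL}{\otimes}_{\Dmod_Y}\Dmod_{Y\to X}\simeq\shO_Y(*E)\overset{\derL}{\otimes}_{\shO_Y}\big(\Dmod_Y\overset{\derL}{\otimes}_{\Dmod_Y}\Dmod_{Y\to X}\big)\simeq\shO_Y(*E)\overset{\derL}{\otimes}_{\shO_Y}\Dmod_{Y\to X},$$
the last step because $\Dmod_Y\overset{\derL}{\otimes}_{\Dmod_Y}\Dmod_{Y\to X}=\Dmod_{Y\to X}$ is concentrated in degree $0$. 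Next I would observe that $\Dmod_{Y\to X}=f^*\Dmod_X$ is flat over $\shO_Y$ — being the filtered union of the locally free sheaves $f^*F_k\Dmod_X$ — so this last derived tensor product is underived, $\shO_Y(*E)\overset{\derL}{\otimes}_{\shO_Y}\Dmod_{Y\to X}\simeq\shO_Y(*E)\otimes_{\shO_Y} f^*\Dmod_X$, placed in degree $0$. \lemmaref{identification} then identifies this with $\shO_Y(*E)\otimes_{\shO_Y}\Dmod_Y=\Dmod_Y(*E)$ via $\mathrm{Id}\otimes\varphi$, and composing the displayed isomorphisms yields $\Dmod_Y(*E)\overset{\derL}{\otimes}_{\Dmod_Y}\Dmod_{Y\to X}\simeq\Dmod_Y(*E)$.

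The one point needing care — and the step I expect to be the main obstacle — is checking that under this chain of isomorphisms the canonical map induced by $\varphi$ really is identified with $\mathrm{Id}\otimes\varphi$, so that its bijectivity follows from \lemmaref{identification} and not from some incidental comparison. The clean way is functoriality of $\shO_Y(*E)\overset{\derL}{\otimes}_{\shO_Y}(-)$ applied to $\varphi\colon\Dmod_Y\to\Dmod_{Y\to X}$: on one side this produces precisely $\mathrm{Id}\otimes\varphi\colon\Dmod_Y(*E)\to\shO_Y(*E)\otimes_{\shO_Y} f^*\Dmod_X$, while on the other, after the associativity isomorphism, it is exactly the canonical map $\Dmod_Y(*E)\to\Dmod_Y(*E)\overset{\derL}{\otimes}_{\Dmod_Y}\Dmod_{Y\to X}$ of the statement. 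The only genuine difficulty is keeping the left and right bimodule structures straight when invoking associativity; beyond that, no input is required other than \lemmaref{identification} and the two flatness facts.
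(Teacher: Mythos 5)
Your proof is correct and takes essentially the same route as the paper's: both rewrite $\Dmod_Y(*E)$ as $\shO_Y(*E)\overset{\derL}{\otimes}_{\shO_Y}\Dmod_Y$, cancel against $\Dmod_Y$ by associativity of the derived tensor product, and reduce via flatness to the isomorphism $\mathrm{Id}\otimes\varphi$ of Lemma~\ref{identification}. The only cosmetic difference is that you use flatness of $f^*\Dmod_X$ over $\shO_Y$ to see that the remaining tensor product is underived, whereas the paper invokes flatness of $\shO_Y(*E)$; both are valid, and your explicit functoriality check of the identification of the canonical map with $\mathrm{Id}\otimes\varphi$ is exactly what the paper leaves implicit.
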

\begin{proof}
Via the isomorphism $\Dmod_Y(*E)\simeq \shO_Y(*E)\overset{\derL}\otimes_{\shO_Y}\Dmod_Y$, the morphism in the statement gets identified to the morphism
\begin{equation}\label{eq_cancellation}
\shO_Y(*E)  \overset{\derL}{\otimes}_{\shO_Y} \Dmod_Y\longrightarrow \shO_Y(*E)  \overset{\derL}{\otimes}_{\shO_Y} \Dmod_Y\overset{\derL}\otimes_{\Dmod_Y}\Dmod_{Y\to X}
=\shO_Y(*E)  \overset{\derL}{\otimes}_{\shO_Y}f^*\Dmod_X
\end{equation}
induced by $\varphi$. Moreover, since $\shO_Y(*E)$ is a flat $\shO_Y$-module, the morphism (\ref{eq_cancellation})
gets identified with the isomorphism in 
Lemma \ref{identification}.
\end{proof}

\begin{proof}[Proof of Proposition \ref{identification2}]
Via the right $\Dmod$-module structure on $\omega_Y$, we have that $\omega_Y(*E)$ has a natural right
$\Dmod_Y(*E)$-module structure. The morphism in the proposition gets identified with the morphism
$$\omega_Y(*E) \overset{\derL}{\otimes}_{\Dmod_Y(*E)}  \Dmod_Y(*E)\longrightarrow
\omega_Y(*E) \overset{\derL}{\otimes}_{\Dmod_Y(*E)}  \Dmod_Y(*E)\overset{\derL}\otimes_{\Dmod_Y}\Dmod_{Y\to X}$$
induced by $\varphi$. In turn, this is obtained by applying $\omega_Y(*E)\overset{\derL}\otimes_{\Dmod_Y(*E)}-$ to the isomorphism in 
Lemma \ref{cancellation},  hence it is an isomorphism.
\end{proof}


\subsection{Filtrations on localizations and tensor products}\label{filtrations}

We next include filtrations in the discussion.  We fix again a smooth variety $X$ of dimension $n$, and a reduced 
effective divisor $D$ on $X$. Most obviously, on $\shO_X (*D)$ there is a \emph{pole order filtration}, whose nonzero terms are 
$$P_k  \shO_X(*D) = \shO_X \big( (k+1)D \big) \,\,\,\,\,\, {\rm for} \,\,\,\, k \ge 0.$$ 

Less obvious is the \emph{Hodge filtration} $F_k \shO_X (*D)$, again with nonzero terms for $k \ge 0$. 
This is our main topic of study in this paper. Its existence is guaranteed by general results on Hodge modules (see \S\ref{HM}). However,
we will take a hands-on approach and describe it explicitly now in the simple normal crossings case,  and later in the general case via log resolutions.

 For simple normal crossing divisors we fix different notation, in view of later use on log resolutions (note also the shift from left to right $\Dmod$-modules).
Let $E$ be a reduced simple normal crossing (SNC) divisor on a smooth $n$-dimensional variety $Y$. 
We define the Hodge filtration on the right $\Dmod_Y$-module $\omega_Y(*E)$ to be given by
$$F_k \omega_Y (*E) = \omega_Y(E) \cdot F_{k+n} \Dmod_Y.$$
For instance,  the first two nonzero terms are
$$F_{-n}  \omega_Y (*E) = \omega_Y (E) \,\,\,\,\,\, {\rm and} \,\,\,\,\,\, F_{-n+1} \omega_Y (*E) = \omega_Y (2E) \otimes {\rm Jac} (E),$$
where  ${\rm Jac}(E)$ is the Jacobian ideal of $E$, i.e. $F_1\Dmod_Y \cdot \shO_Y(-E)$. See also Proposition \ref{description_SNC_case} for a general local description.

Recall that the right $\Dmod$-module $\omega_Y$ has a standard resolution  
$$0 \rightarrow \Dmod_Y \rightarrow \Omega_Y^1 \otimes_{\shO_Y} \Dmod_Y \rightarrow \cdots  \rightarrow \omega_Y\otimes_{\shO_Y} \Dmod_Y \rightarrow \omega_Y \rightarrow 0$$
by induced $\Dmod_Y$-modules; see \cite[Lemma 1.2.57]{HTT}. The following generalization will be a crucial technical point later on; cf. also \cite[Proposition 3.11(ii)]{Saito-MHM}, where this is part of a more general picture.

\begin{proposition}\label{resolution1}
The right $\Dmod_Y$-module $\omega_Y (*E)$ has a filtered resolution with induced $\Dmod_Y$-modules given by
$$0 \rightarrow \Dmod_Y \rightarrow \Omega_Y^1(\log E) \otimes_{\shO_Y} \Dmod_Y \rightarrow \cdots  \rightarrow \omega_Y(E) \otimes_{\shO_Y} \Dmod_Y \rightarrow \omega_Y(*E) \rightarrow 0.$$
Here the morphism 
$$\omega_Y(E) \otimes_{\shO_Y} \Dmod_Y \longrightarrow \omega_Y(*E)$$
is given by $\frac{\omega}{f} \otimes P \to \frac{\omega}{f} \cdot P$, and for each $p$ the morphism
$$\Omega_Y^p (\log E) \otimes_{\shO_Y} \Dmod_Y \longrightarrow \Omega_Y^{p+1}(\log E) \otimes_{\shO_Y} \Dmod_Y $$
is given by $\omega \otimes P \to d\omega \otimes P + \sum_{i=1}^n (dz_i \wedge \omega) \otimes \partial_i P$, in local 
coordinates $z_1, \ldots, z_n$.
\end{proposition}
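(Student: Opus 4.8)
The plan is to realize the displayed sequence as a filtered complex for the order filtration and then to check everything after passing to the associated graded, where it becomes a Koszul complex whose exactness is a regular-sequence computation.

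First I would verify that the sequence is genuinely a complex of right $\Dmod_Y$-modules and that the augmentation $\omega_Y(E)\otimes_{\shO_Y}\Dmod_Y\to\omega_Y(*E)$ kills the image of the preceding differential; both are local, formula-level checks. Writing the differential as $\omega\otimes P\mapsto d\omega\otimes P+\sum_i(dz_i\wedge\omega)\otimes\partial_i P$, one notes that $d\omega$ and $dz_i\wedge\omega$ again lie in $\Omega_Y^{p+1}(\log E)$, so the maps are well defined on logarithmic forms, and $d\circ d=0$ follows from $dd=0$ on forms, from $[\partial_i,\partial_j]=0$, and from the antisymmetry $dz_i\wedge dz_j=-dz_j\wedge dz_i$, exactly as for the standard Spencer resolution of $\omega_Y$. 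Since the assertion is local and $E$ is SNC, I may then fix coordinates $z_1,\dots,z_n$ with $E=\{z_1\cdots z_r=0\}$, so that $\Omega_Y^1(\log E)$ is free on $\eta_i:=dz_i/z_i$ for $i\le r$ and $\eta_i:=dz_i$ for $i>r$.

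Next I would put on the term $\Omega_Y^p(\log E)\otimes_{\shO_Y}\Dmod_Y$ (in degree $p-n$) the filtration $F_k=\Omega_Y^p(\log E)\otimes F_{k+p}\Dmod_Y$, chosen so that in degree $0$ the augmentation carries $\omega_Y(E)\otimes F_{k+n}\Dmod_Y$ onto $F_k\omega_Y(*E)=\omega_Y(E)\cdot F_{k+n}\Dmod_Y$. The differentials are filtered, and in $\gr^F$ the summand $d\omega\otimes P$ drops order; thus $\gr^F$ of the complex is the Koszul-type complex $\Omega_Y^\bullet(\log E)\otimes_{\shO_Y}\Sym T_Y$ with differential $\omega\otimes Q\mapsto\sum_i(dz_i\wedge\omega)\otimes\xi_i Q$, i.e. wedging with the tautological element $\sum_i dz_i\otimes\xi_i$. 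In the frame above this element equals $\sum_{i\le r}\eta_i\otimes z_i\xi_i+\sum_{i>r}\eta_i\otimes\xi_i$, so the graded complex is precisely the Koszul complex on the sequence $(z_1\xi_1,\dots,z_r\xi_r,\xi_{r+1},\dots,\xi_n)$ in $\Sym T_Y\cong\shO_Y[\xi_1,\dots,\xi_n]$, twisted by the line $\omega_Y(E)=\shO_Y\cdot(\eta_1\wedge\cdots\wedge\eta_n)$.

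I would then observe that this sequence is regular: locally $\shO_Y[\xi_1,\dots,\xi_n]$ is Cohen--Macaulay of dimension $2n$, while the quotient by these $n$ monomials has dimension $n$ (each pair $z_i\xi_i$ drops the dimension by one), so they form a complete intersection and the Koszul complex resolves $\omega_Y(E)\otimes\Sym T_Y/(z_1\xi_1,\dots,z_r\xi_r,\xi_{r+1},\dots,\xi_n)$. A parallel direct computation of the Hodge filtration, using the right action $\tfrac{dz}{z_1\cdots z_r}\cdot\partial_i=\tfrac{dz}{z_i z_1\cdots z_r}$ for $i\le r$ and $=0$ for $i>r$, identifies this quotient with $\gr^F\omega_Y(*E)$: the relations $z_i\xi_i$ record that $z_i\partial_i$ lowers the filtration, and the relations $\xi_j$ ($j>r$) record that $\partial_j$ annihilates $\omega_Y(E)$. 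Hence $\gr^F$ of the augmented complex is a resolution of $\gr^F\omega_Y(*E)$, and since the filtration is bounded below and exhaustive, the standard principle that graded-exactness of a filtered complex forces exactness and strictness gives that the original sequence is a filtered resolution. The main obstacle is this last identification step: correctly computing $\gr^F\omega_Y(*E)$ in coordinates and matching it, with the $\omega_Y(E)$-twist and the augmentation, to the top homology of the Koszul complex. Once the sequence $(z_i\xi_i,\xi_j)$ is recognized as regular, the passage from the graded to the filtered statement is routine.
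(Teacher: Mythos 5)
Your proposal is correct, and at its core it is the same proof as the paper's: both put the order filtration on the complex, pass to the associated graded, and reduce exactness to a local commutative-algebra statement about the inclusion $T_Y(-\log E)\to T_Y$, which in coordinates is ${\rm Diag}(z_1,\ldots,z_r,1,\ldots,1)$. The differences are in packaging, but worth recording. First, where the paper treats each graded piece separately as an Eagon--Northcott complex $(EN_k)$ and cites the degeneracy-locus acyclicity criterion from Lazarsfeld, you assemble all graded pieces into a single Koszul complex on $(z_1\xi_1,\ldots,z_r\xi_r,\xi_{r+1},\ldots,\xi_n)$ in $\Sym T_Y$ and verify acyclicity by a complete-intersection dimension count; these are equivalent (each $(EN_k)$ is precisely the internal-degree-$k$ piece of your Koszul complex, and the degeneracy-locus condition is the same dimension count), though your version is more self-contained. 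Second, the paper never computes $\gr^F\omega_Y(*E)$: it proves exactness at the two rightmost terms directly at the \emph{filtered} level --- exactness at $F_k\omega_Y(*E)$ holds by definition of that filtration, and exactness at $\omega_Y(E)\otimes_{\shO_Y} F_k\Dmod_Y$ is a Laurent-monomial descending induction --- and uses the graded complexes only at the remaining spots. In your version this computation does not disappear: it reappears as the claim that the annihilator of the class of $dz/(z_1\cdots z_r)$ in $\gr^F\omega_Y(*E)$ is exactly the ideal $(z_1\xi_1,\ldots,z_r\xi_r,\xi_{r+1},\ldots,\xi_n)$, where surjectivity of your comparison map is immediate but injectivity requires the same Laurent-monomial argument; you correctly flag this as the main remaining step, but be aware it is not softer than the paper's computation, only relocated. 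Finally, your appeal to the principle that graded exactness of a bounded-below, exhaustively filtered complex forces exactness and strictness is legitimate, and is what the paper does implicitly by working with the subcomplexes $F_{k-n}A^{\bullet}$ one filtration level at a time.
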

\begin{proof}
It is not hard to check that the expression in the statement is indeed a complex, which we call $A^\bullet$. We consider on 
$\Omega_Y^p(\log E)$ the filtration 
$$
	F_i \Omega_Y^p(\log E) = 
\begin{cases}
\Omega_Y^p(\log E) & \mbox{if} \,\,\,\, i \ge -p \\ 
0 &\mbox{if} \,\,\,\, i < -p,
\end{cases}
$$
and on $\Omega_Y^p(\log E)\otimes_{\shO_Y} \Dmod_Y$ 
the tensor product filtration. This filters $A^\bullet$ by subcomplexes $F_{k-n} A^\bullet$ given by 
$$\cdots \rightarrow  \Omega_Y^{n-1}(\log E) \otimes_{\shO_Y} F_{k-1}\Dmod_Y \rightarrow \omega_Y(E) \otimes_{\shO_Y} F_k \Dmod_Y \rightarrow F_k \omega_Y(*E) \rightarrow 0$$
for each $k \ge 0$. Note that they can be rewritten as 
$$\cdots \rightarrow  \omega_Y(E) \otimes T_Y(- \log E) \otimes_{\shO_Y} F_{k-1}\Dmod_Y \overset{\beta_k}{\rightarrow} \omega_Y(E) \otimes_{\shO_Y} F_k \Dmod_Y \rightarrow F_k \omega_Y(*E) \rightarrow 0, $$
where $T_Y(- \log E)$ is the dual of $\Omega_Y^1 (\log E)$, and we use the isomorphisms $\omega_Y(E) \otimes \wedge^i T_Y (- \log E) \simeq \Omega_Y^{n-i} (\log E)$.

It is clear directly from the definition that every such complex is exact at the term $F_k \omega_Y(*E)$.
We now check that they are exact at the term $\omega_Y(E) \otimes_{\shO_Y} 
F_k \Dmod_Y$. Let us assume that, in the local coordinates $z_1, \ldots, z_n$, the divisor $E$ is given by $z_1 \cdots z_r = 0$.
Using the notation $\omega = dz_1 \wedge \cdots \wedge dz_n$, we consider an element 
$$u = \frac{\omega}{z_1 \cdots z_r } \otimes \sum_{|\alpha| \le k} g_\alpha \partial^\alpha$$
mapping to $0$ in $F_k \omega_Y(*E) = \omega_Y(E) \cdot F_k \Dmod_Y$. This means that 
$$ \sum_{|\alpha|\le k, ~ \alpha_i = 0 {\rm ~if~} i >r}  \alpha_1 ! \cdots \alpha_r ! \cdot g_\alpha \cdot z_1^{- \alpha_1} 
\cdots z_r^{-\alpha_r} = 0.$$
We show that $u$ is in the image of the morphism $\beta_k$ by using a descending induction on $|\alpha|$.
What we need to prove is the following claim: for each $\alpha$ in the sum above, with $|\alpha| = k$,  
there exists some $i$ with $\alpha_i > 0$ such that 
$z_i$ divides $g_\alpha$. If so, an easy calculation shows that the term $u_{\alpha} = \frac{\omega}{z_1 \cdots z_r } \otimes g_\alpha \partial^\alpha$ is in the image of $\beta_k$, and hence it is enough to prove the statement for $u - u_{\alpha}$. Repeating this a 
finite number of times, we can reduce to the case when all $|\alpha| \le k-1$. 
But the claim is clear: if $z_i$ did not divide $g_\alpha$ for all $i$ with $\alpha_i > 0$, then 
the Laurent monomial $z_1^{-\alpha_1} \cdots z_r^{-\alpha_r} $ would appear in the term $ g_\alpha \cdot z_1^{-\alpha_1} 
\cdots z_r^{-\alpha_r}$ of the sum above, but in none of the other terms.

To check the rest of the statement, note that after discarding the term on the right, the associated graded complexes 
$$\cdots \longrightarrow \omega_Y (E) \otimes \bigwedge^2 T_Y (- \log E) \otimes_{\shO_Y} S^{k-2} T_Y \longrightarrow  $$
$$ \longrightarrow
\omega_Y(E) \otimes T_Y(- \log E) \otimes_{\shO_Y} S^{k-1} T_Y \longrightarrow \omega_Y(E) \otimes_{\shO_Y} S^k T_Y \longrightarrow  0$$
are acyclic. Indeed, each such complex is, up to a twist, an Eagon-Northcott complex associated to the inclusion of vector bundles of the same rank 
$$\varphi\colon T_Y (- \log E) \rightarrow T_Y.$$
Concretely, in the notation on \cite[p.323]{Lazarsfeld}, the complex above is $(EN_k)$ tensored by $\omega_Y(E)$. 
According to \cite[Theorem B.2.2(iii)]{Lazarsfeld}, $(EN_k)$ is acyclic provided that 
$${\rm codim}~ D_{n- \ell}(\varphi) \ge \ell \,\,\,\,\,\,{\rm for~all}\,\,\,\,1 \le \ell \le {\rm min}\{k, n\},$$
where  
$$D_{s}(\varphi) = \{y \in Y ~|~ {\rm rk}(\varphi_y) \le s\}$$
are the deneracy loci  of $\varphi$. But locally $\varphi$ is given by the diagonal matrix 
$${\rm Diag}(z_1, \ldots, z_r, 1,\ldots, 1)$$ 
so this condition is verified by a simple calculation.
\end{proof}

Let now $X$ and $D$ be as at the beginning of the section, and let 
$f\colon Y\to X$
be a log resolution of the pair $(X, D)$ which is an isomorphism over $X\smallsetminus D$. Under the latter assumption, the \emph{log resolution} condition simply
means that $f$ is a projective morphism, $Y$ is smooth, and $E:=(f^*D)_{\rm red}$ 
has simple normal crossings. 
On the tensor product $\omega_Y(*E) \otimes_{\Dmod_Y} \Dmod_{Y\to X}$ we consider the tensor product filtration, that is,  
$$F_k \big(  \omega_Y(*E) \otimes_{\Dmod_Y} \Dmod_{Y\to X} \big) : = $$
$$ = {\rm Im} \left[\bigoplus_{i \geq-n} F_i \omega_Y(*E) \otimes_{\shO_Y} f^* F_{k -i} \Dmod_X \to \omega_Y(*E) \otimes_{\Dmod_Y} 
\Dmod_{Y\to X}\right],$$
where the map in the parenthesis is the natural map between the tensor product over $\shO_Y$ and that over $\Dmod_Y$.

\begin{lemma}
The definition above simplifies to
$$F_k \big(  \omega_Y(*E) \otimes_{\Dmod_Y} \Dmod_{Y\to X} \big) =
{\rm Im} \big[\omega_Y (E) \otimes_{\shO_Y} f^* F_{k+n} \Dmod_X \to \omega_Y(*E) \otimes_{\Dmod_Y} 
\Dmod_{Y\to X}\big].$$
\end{lemma}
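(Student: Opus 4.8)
The plan is to show that the single summand of the direct sum corresponding to $i=-n$ already accounts for the entire image, so that all the higher-index terms are redundant. First I would recall that, by the definition of the Hodge filtration in the SNC case, $F_i\omega_Y(*E)=\omega_Y(E)\cdot F_{i+n}\Dmod_Y$ for every $i\geq -n$, while $F_i\omega_Y(*E)=0$ for $i<-n$; in particular $F_{-n}\omega_Y(*E)=\omega_Y(E)$. Thus the $i=-n$ summand is precisely $\omega_Y(E)\otimes_{\shO_Y} f^* F_{k+n}\Dmod_X$, and since it is one of the terms of the direct sum, the inclusion ``$\supseteq$'' in the asserted equality is automatic.

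For the reverse inclusion I would fix an index $i\geq -n$ and consider a local generator of $F_i\omega_Y(*E)\otimes_{\shO_Y} f^* F_{k-i}\Dmod_X$ of the form $(\beta\cdot Q)\otimes P$, where $\beta\in\omega_Y(E)$, $Q\in F_{i+n}\Dmod_Y$, and $P\in f^* F_{k-i}\Dmod_X$; such elements span that term precisely because $F_i\omega_Y(*E)=\omega_Y(E)\cdot F_{i+n}\Dmod_Y$. The key point is that in the tensor product $\omega_Y(*E)\otimes_{\Dmod_Y}\Dmod_{Y\to X}$ the operator $Q$ may be transferred across the tensor sign: the image of $(\beta\cdot Q)\otimes P$ coincides with the image of $\beta\otimes(Q\cdot P)$, where $Q\cdot P$ denotes the left $\Dmod_Y$-action on $\Dmod_{Y\to X}=f^*\Dmod_X$.

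It then remains to locate $Q\cdot P$ in the filtration. Since $\Dmod_{Y\to X}$ is a filtered left $\Dmod_Y$-module with filtration $f^* F_\bullet\Dmod_X$, one has $F_a\Dmod_Y\cdot f^* F_b\Dmod_X\subseteq f^* F_{a+b}\Dmod_X$, and applying this with $a=i+n$ and $b=k-i$ gives $Q\cdot P\in f^* F_{k+n}\Dmod_X$, so that $\beta\otimes(Q\cdot P)$ lies in the image of $\omega_Y(E)\otimes_{\shO_Y} f^* F_{k+n}\Dmod_X$. Summing over the generators of all terms $i\geq -n$ shows that the full tensor product filtration is contained in that image, which closes the argument. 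I do not expect a genuine obstacle here: the only thing to be careful about is the bookkeeping of the two filtration degrees, which always sum to $k+n$ independently of $i$, together with the verification that the relation defining $\otimes_{\Dmod_Y}$ legitimately moves $Q$ from the left factor onto the right factor.
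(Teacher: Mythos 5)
Your proposal is correct and follows essentially the same route as the paper: both arguments move the factor $F_{i+n}\Dmod_Y$ across the tensor product over $\Dmod_Y$ (the defining relation $(\beta\cdot Q)\otimes P=\beta\otimes(Q\cdot P)$) and then invoke the inclusion $F_{i+n}\Dmod_Y\cdot f^*F_{k-i}\Dmod_X\subseteq f^*F_{k+n}\Dmod_X$ to see that the image of each summand $F_i\omega_Y(*E)\otimes_{\shO_Y}f^*F_{k-i}\Dmod_X$ is contained in the image of $\omega_Y(E)\otimes_{\shO_Y}f^*F_{k+n}\Dmod_X$. Your explicit remark that the $i=-n$ summand gives the reverse inclusion is left implicit in the paper but is the same observation.
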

\begin{proof}
Fix $i \geq-n$ and recall that $F_i \omega_Y(*E) = \omega_Y(E) \cdot F_{i+n} \Dmod_Y$. The
factor $F_{i+n} \Dmod_Y$ can be moved over the tensor product once we pass to the image in the tensor product 
over $\Dmod_Y$, and moreover we have an inclusion
$$F_{i+n} \Dmod_Y \cdot f^* F_{k -i } \Dmod_X \subseteq f^* F_{k+n} \Dmod_X.$$
Therefore inside $ \omega_Y(*E) \otimes_{\Dmod_Y} 
\Dmod_{Y\to X}$,  the image of $F_i \omega_Y(*E) \otimes_{\shO_Y} f^* F_{k -i } \Dmod_X$ is contained in the image of 
$\omega_Y (E) \otimes_{\shO_Y} f^* F_{k+n} \Dmod_X$.
\end{proof}

Propositions~\ref{resolution1} and \ref{identification2} have the following immediate consequence:

\begin{corollary}\label{resolution2}
On $Y$ there is a filtered complex of right $f^{-1} \Dmod_X$-modules 
$$0 \rightarrow f^* \Dmod_X \rightarrow \Omega_Y^1(\log E) \otimes_{\shO_Y} f^* \Dmod_X \rightarrow \cdots $$
$$\cdots \to \Omega_Y^{n-1}(\log E) \otimes_{\shO_Y} f^* \Dmod_X \to \omega_Y(E) \otimes_{\shO_Y} f^*\Dmod_X \to \omega_Y(*E) \otimes_{\Dmod_Y} \Dmod_{Y\to X}\to 0$$
which is exact (though not necessarily filtered exact).
\end{corollary}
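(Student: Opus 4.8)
The plan is to obtain the complex by applying the functor $-\otimes_{\Dmod_Y}\Dmod_{Y\to X}$ to the filtered resolution of Proposition~\ref{resolution1}, and then to read off exactness from Proposition~\ref{identification2}. First I would record that the resolution
$$0 \rightarrow \Dmod_Y \rightarrow \Omega_Y^1(\log E) \otimes_{\shO_Y} \Dmod_Y \rightarrow \cdots \rightarrow \omega_Y(E) \otimes_{\shO_Y} \Dmod_Y \rightarrow \omega_Y(*E) \rightarrow 0$$
is a resolution by \emph{induced} right $\Dmod_Y$-modules: each term $\Omega_Y^p(\log E)\otimes_{\shO_Y}\Dmod_Y$ is, locally on $Y$, a finite direct sum of copies of $\Dmod_Y$, hence locally free and in particular flat as a right $\Dmod_Y$-module. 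Discarding the cokernel $\omega_Y(*E)$ and placing the remaining terms in degrees $-n,\dots,0$, I obtain a locally free resolution $B^\bullet$ of $\omega_Y(*E)$, with $\Dmod_Y$ in degree $-n$ and $\omega_Y(E)\otimes_{\shO_Y}\Dmod_Y$ in degree $0$.

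Next I would compute the derived tensor product using this flat resolution, so that
$$\omega_Y(*E) \overset{\derL}{\otimes}_{\Dmod_Y} \Dmod_{Y\to X} \;\simeq\; B^\bullet \otimes_{\Dmod_Y} \Dmod_{Y\to X}.$$
Because $\big(\Omega_Y^p(\log E)\otimes_{\shO_Y}\Dmod_Y\big)\otimes_{\Dmod_Y}\Dmod_{Y\to X}\cong \Omega_Y^p(\log E)\otimes_{\shO_Y}f^*\Dmod_X$, the right-hand complex is exactly
$$C^\bullet:\quad 0 \rightarrow f^*\Dmod_X \rightarrow \Omega_Y^1(\log E)\otimes_{\shO_Y}f^*\Dmod_X \rightarrow \cdots \rightarrow \omega_Y(E)\otimes_{\shO_Y}f^*\Dmod_X \rightarrow 0,$$
that is, the complex in the statement with its final term removed. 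The filtration demanded by the corollary is then simply the tensor-product filtration transported along this identification from the filtered resolution of Proposition~\ref{resolution1}.

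Now I would invoke Proposition~\ref{identification2}: the canonical map $\omega_Y(*E)\to \omega_Y(*E)\overset{\derL}{\otimes}_{\Dmod_Y}\Dmod_{Y\to X}\simeq C^\bullet$ is an isomorphism in the derived category. Hence $C^\bullet$ has cohomology only in degree $0$, so it is exact at every term except the rightmost one $\omega_Y(E)\otimes_{\shO_Y}f^*\Dmod_X$; and its degree-$0$ cohomology there is $H^0\big(\omega_Y(*E)\overset{\derL}{\otimes}_{\Dmod_Y}\Dmod_{Y\to X}\big)=\omega_Y(*E)\otimes_{\Dmod_Y}\Dmod_{Y\to X}$, the honest tensor product. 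Appending this cokernel as the final term produces precisely the complex of the corollary, now exact at every term. The qualification ``not necessarily filtered exact'' reflects that Proposition~\ref{identification2} controls only the underived cohomology and not the strictness of the induced filtration.

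There is no serious obstacle here, which is why the text flags the statement as an immediate consequence; the only points that demand care are bookkeeping ones — checking that the induced modules are flat so that $B^\bullet$ genuinely computes the derived tensor product, keeping the cohomological degrees aligned so that the single surviving cohomology class lands at the right-hand end, and matching $H^0$ of the derived tensor product with the honest cokernel $\omega_Y(*E)\otimes_{\Dmod_Y}\Dmod_{Y\to X}$ appearing as the last term of the asserted complex.
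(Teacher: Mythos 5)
Your proposal is correct and follows exactly the paper's own argument: Proposition~\ref{resolution1} gives an induced (hence flat) resolution of $\omega_Y(*E)$, so tensoring it over $\Dmod_Y$ with $\Dmod_{Y\to X}$ represents $\omega_Y(*E) \overset{\derL}{\otimes}_{\Dmod_Y} \Dmod_{Y\to X}$, and Proposition~\ref{identification2} then forces the cohomology to be concentrated in degree $0$, where it equals the honest tensor product. The paper compresses all of this into two lines; your version merely makes explicit the flatness, degree, and $H^0$ bookkeeping that the paper leaves implicit.
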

\begin{proof}
It follows from Proposition~\ref{resolution1} that
the complex
$$0 \rightarrow f^* \Dmod_X \rightarrow \Omega_Y^1(\log E) \otimes_{\shO_Y} f^* \Dmod_X \rightarrow \cdots \to \omega_Y(E) \otimes_{\shO_Y} f^*\Dmod_X\rightarrow 0$$
represents the object $\omega_Y(*E) \overset{\derL}{\otimes}_{\Dmod_Y} \Dmod_{Y\to X}$ in the derived category, hence
Proposition \ref{identification2} implies the exactness of the entire complex in the statement.
\end{proof}


We record here the following lemma for later use.

\begin{lemma}\label{filtration1}
The morphism 
$$\Omega_Y^{n-1}(\log E) \longrightarrow \omega_Y(E) \otimes_{\shO_Y} f^* F_1\Dmod_X$$
is injective.
\end{lemma}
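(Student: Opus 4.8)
The plan is to recognize this morphism as the restriction of a differential already built in Corollary~\ref{resolution2}, and then to prove injectivity by reducing to the generic point of $Y$, where the computation becomes elementary. First I would make the map explicit. Since $f^* F_0\Dmod_X = \shO_Y$, the sheaf $\Omega_Y^{n-1}(\log E)$ sits naturally as the $F_0$-piece $\Omega_Y^{n-1}(\log E)\otimes_{\shO_Y} f^* F_0\Dmod_X$ inside the term $\Omega_Y^{n-1}(\log E)\otimes_{\shO_Y} f^*\Dmod_X$ of the complex of Corollary~\ref{resolution2}, and the morphism in the statement is the restriction of the last differential of that complex. By the formula in Proposition~\ref{resolution1} it sends, in local coordinates $z_1,\dots,z_n$,
$$\omega \longmapsto d\omega\otimes 1 + \sum_{i=1}^n (dz_i\wedge\omega)\otimes\varphi(\partial_i),$$
where $\varphi(\partial_i)\in f^* F_1\Dmod_X$ is the image of $\partial_i$ under $\varphi\colon\Dmod_Y\to f^*\Dmod_X$; this is exactly why the target is the $F_1$-piece $\omega_Y(E)\otimes_{\shO_Y} f^* F_1\Dmod_X$.

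Next I would set up the reduction. Because $E$ has simple normal crossings, $\Omega_Y^{n-1}(\log E)$ is locally free, hence torsion-free, and $Y$ is integral; so the kernel of this morphism is a torsion-free subsheaf and it suffices to prove injectivity at the generic point $\eta$ of $Y$. That point lies over the generic point of $X$, which lies in $U = X\smallsetminus D$, and $f$ is an isomorphism over $U$; so near $\eta$ I may choose coordinates of the form $z_i = w_i\circ f$ with $w_1,\dots,w_n$ coordinates on $X$. Then $E$ is absent at $\eta$, each $\varphi(\partial_i) = f^*\partial_{w_i}$ is an honest order-one operator, and $\{1, f^*\partial_{w_1},\dots,f^*\partial_{w_n}\}$ is an $\shO$-basis of $f^* F_1\Dmod_X$ at $\eta$.

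With $\nu = dz_1\wedge\cdots\wedge dz_n$ and $\omega = \sum_{l=1}^n (-1)^{l-1} a_l\, dz_1\wedge\cdots\wedge\widehat{dz_l}\wedge\cdots\wedge dz_n$, a direct computation gives $dz_i\wedge\omega = a_i\,\nu$, so the image of $\omega$ equals
$$\Bigl(\sum_{l} \partial_l a_l\Bigr)\,\nu\otimes 1 \;+\; \nu\otimes\Bigl(\sum_{i} a_i\,\partial_i\Bigr).$$
The coefficient of $\nu\otimes\partial_i$ in this basis is precisely $a_i$, so vanishing of the image forces all $a_i=0$, i.e.\ $\omega=0$. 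This gives injectivity at $\eta$, and hence injectivity of the morphism of sheaves. The one genuinely delicate point is the reduction to $\eta$: over the exceptional locus the symbol map $df\colon T_Y\to f^*T_X$ drops rank, so the $\partial_i$-components no longer recover $\omega$ there and injectivity is invisible fiberwise; torsion-freeness of $\Omega_Y^{n-1}(\log E)$ together with integrality of $Y$ is exactly what lets me discard that locus and argue at the generic point alone.
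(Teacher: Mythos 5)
Your proof is correct, but it takes a genuinely different route from the paper's. The paper's argument is a two-line diagram chase: it factors the map in Lemma~\ref{filtration1} as the composition of $\beta\colon \Omega_Y^{n-1}(\log E)\to\omega_Y(E)\otimes_{\shO_Y}F_1\Dmod_Y$ with the canonical inclusion $\gamma\colon \omega_Y(E)\otimes_{\shO_Y}F_1\Dmod_Y\hookrightarrow\omega_Y(E)\otimes_{\shO_Y}f^*F_1\Dmod_X$; here $\beta$ is injective because it is the first nonzero map of the $k=1$ filtered piece of the resolution in Proposition~\ref{resolution1} (whose filtered exactness rests on the Eagon--Northcott acyclicity proved there), and $\gamma$ is injective because $F_1\Dmod_Y\hookrightarrow f^*F_1\Dmod_X$ and $\omega_Y(E)$ is a line bundle. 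You instead work directly with the $f^*\Dmod_X$-complex of Corollary~\ref{resolution2}: you identify the map as the restriction of its last differential, reduce to the generic point using that the kernel is a torsion-free subsheaf of the locally free sheaf $\Omega_Y^{n-1}(\log E)$ on the integral variety $Y$, and then read off injectivity from the coefficients $a_i$ of $\nu\otimes\partial_i$ in the pulled-back coordinate basis. Both arguments are sound. What the paper's route buys is brevity and reuse of Proposition~\ref{resolution1}; what yours buys is self-containedness --- you never invoke the injectivity of $\beta$, i.e. the $k=1$ filtered exactness of the $\Dmod_Y$-complex, only the explicit formula for the differential --- and your computation in fact establishes injectivity over all of $f^{-1}(X\smallsetminus D)$, not merely at the generic point. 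Your closing observation is also on target: fiberwise over the exceptional locus the map genuinely fails to be injective (the symbol $T_Y\to f^*T_X$ drops rank), so some global mechanism --- torsion-freeness for you, the factorization through the $\Dmod_Y$-term for the paper --- is unavoidable.
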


\begin{proof}
Note that we have a commutative diagram
$$
\begin{tikzcd}
\Omega_Y^{n-1}(\log E)\dar{{\rm Id}}\rar{\beta} & \omega_Y(E) \otimes_{\shO_Y} F_1\Dmod_Y\dar{\gamma}\\
\Omega_Y^{n-1}(\log E)\rar{\alpha} & \omega_Y(E) \otimes_{\shO_Y} f^* F_1\Dmod_X,
\end{tikzcd}
$$
in which $\gamma$ is the canonical inclusion. Since $\beta$ is injective by Proposition \ref{resolution1}, it follows that $\alpha$ is injective, too.
\end{proof}



\section{Saito's Hodge filtration and Hodge modules}\label{section_Hodge_modules}

This section, unlike the previous one, only  contains review material. The reader familiar with the topics it covers can skip to 
Section \ref{scn:birational}, and use it as a reference.

The study of Hodge ideals relies in part on the fact that the filtered $\Dmod_X$-module $\omega_X(*D)$ underlies a mixed Hodge module. For simplicity, we will call such objects \emph{Hodge $\Dmod$-modules}. It is not the place here to give a detailed  account of
the theory of Hodge modules; for details we refer to the original \cite{Saito-MHP}, \cite{Saito-MHM}, for 
summaries of the results needed here to the surveys \cite{Saito-YPG} and \cite{Schnell-MHM}, and for a review of how they have been recently used for
geometric applications to \cite{Popa2}. We will however review properties of 
Hodge $\Dmod$-modules that make them special among all filtered $\Dmod$-modules, and that will be used here, as well as some 
results specific to $\omega_X (*D)$. 

\subsection{Hodge $\Dmod$-modules and strictness}\label{HM}
If we denote by ${\rm FM}(\Dmod_X)$ the category of filtered $\Dmod_X$-modules, one can construct an associated bounded derived category 
 ${\bf D}^b \big({\rm FM}(\Dmod_X)\big)$. 
Assuming that we are given a projective morphism of smooth varieties $f\colon Y \rightarrow X$, and that we are working with right $\Dmod$-modules, Saito constructs in \cite[\S2.3]{Saito-MHP} a filtered direct image functor 
$$f_+ \colon {\bf D}^b \big({\rm FM}(\Dmod_Y)\big) \rightarrow {\bf D}^b \big({\rm FM}(\Dmod_X)\big),$$
compatible with the usual direct image functor for right $\Dmod$-modules.

A fundamental result about Hodge $\Dmod$-modules is Saito's Stability Theorem for direct images under projective morphisms, 
\cite[Th\'eor\`eme 5.3.1]{Saito-MHP}. This says that, in the above setting, if $(\Mmod, F)$ is a Hodge $\Dmod_Y$-module, then $f_+ (\Mmod, F)$ is strict 
as an object in ${\bf D}^b \big({\rm FM}(\Dmod_X)\big)$ (and moreover, each $H^i f_+ (\Mmod, F)$ is a Hodge $\Dmod_X$-module). 
This means that the natural mapping 
\begin{equation}\label{strictness_formula}
R^i f_* \big(F_k (\Mmod \overset{\derL}{\otimes}_{\Dmod_Y} \Dmod_{Y\to X}) \big) \longrightarrow 
R^i f_* (\Mmod \overset{\derL}{\otimes}_{\Dmod_Y} \Dmod_{Y\to X})
\end{equation}
is injective for every $i, k\in\ZZ$. 
Taking $F_kH^if_+(\Mmod,F)$ to be the image of this map, we get the filtration on $H^if_+(\Mmod,F)$.

\medskip
 
\begin{example}\label{degeneration}
Strictness in this context can be seen as a generalization of the degeneration at $E_1$ of the classical Hodge-to-de Rham spectral 
sequence. Concretely, let $X$ be a smooth projective variety, and $(\Mmod, F)$ a Hodge $\Dmod$-module on $X$. 
The natural inclusion of complexes $F_k \DR ( \Mmod) \hookrightarrow \DR ( \Mmod)$
induces, after passing to cohomology, a morphism
$$\varphi_{k,i} \colon H^i \big(X, F_k \DR ( \Mmod)\big) \longrightarrow H^i \big(X, \DR ( \Mmod)\big).$$
Now for the constant map $f\colon X \rightarrow {\rm pt}$, the definition of pushforward gives 
$$f_+ \Mmod \simeq \derR \Gamma \big(X, \DR ( \Mmod)\big),$$ 
and by the discussion above, the  image of 
$\varphi_{k,i}$ is $F_k H^i \big(X, \DR ( \Mmod)\big)$.
Saito's result on the strictness of $f_+ (\Mmod, F)$ then implies that $\varphi_{k, i}$ is injective for all $k$ and $i$, which is 
in turn equivalent to 
$$\gr_k^F H^i \big(X, \DR ( \Mmod)\big) \simeq  {\bf H}^i \big(X, \gr^F_k \DR ( \Mmod)\big).$$
This is the same as the $E_1$-degeneration 
of the  Hodge-to-de Rham spectral sequence  
$$E_1^{p,q} = \mathbf{H}^{p+ q}  \big(X, \gr_{-q}^F \DR ( \Mmod) \big) \implies  H^{p+q} \big(X, \DR ( \Mmod)\big).$$
\end{example}

\subsection{Vanishing theorem}
Recall from \S\ref{dmod} that given a $\Dmod_X$-module with good filtration $(\Mmod, F)$ on a smooth variety $X$,
for any integer $k$ the associated graded complex for the induced filtration on the de Rham complex is 
$$
	\gr_k^F \DR(\Mmod) = \Big\lbrack
		\gr_k^F \Mmod \to \Omega_X^1 \otimes \gr_{k+1}^F \Mmod \to \dotsb \to
			\Omega_X^n \otimes \gr_{k+n}^F \Mmod
	\Big\rbrack,
$$
seen as a complex of coherent $\OX$-modules in degrees $-n, \dotsc, 0$. When $X$ is projective and $(\Mmod, F)$ underlies a mixed Hodge module, these complexes satisfy the following Kodaira-type vanishing theorem \cite[\S2.g]{Saito-MHM} (see also \cite{Popa} and \cite{Schnell}). A similar result can be formulated more generally, on singular projective varieties, but we will not make use of this here.

\begin{theorem}[Saito]\label{saito_vanishing}
Let $(\Mmod, F)$ be a Hodge $\Dmod$-module
on a smooth projective variety $X$, and let $L$ be any ample line bundle. Then:
\begin{enumerate}
\item $\mathbf{H}^i \bigl( X, \gr_k^F \DR(\Mmod) \otimes L \bigr) = 0$ for
all $i > 0$.
\item $\mathbf{H}^i \bigl( X, \gr_k^F \DR(\Mmod) \otimes L^{-1} \bigr) = 0$ for
all $i < 0$.
\end{enumerate}
\end{theorem}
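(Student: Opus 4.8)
\section*{Proof proposal}

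The plan is to recognize this as the Kodaira--Saito vanishing theorem and to derive it from two properties of Hodge $\Dmod$-modules recalled above: the duality theory together with Saito's Stability (strictness) Theorem, and Artin-type vanishing on affine varieties. First I would dispose of two formal reductions. Statements (1) and (2) are interchanged by Saito's duality functor $\DD$ combined with Serre duality: since $\DD\Mmod$ again underlies a Hodge module and $\gr_{-k}^F\DR(\DD\Mmod)$ is the Grothendieck--Serre dual complex of $\gr_k^F\DR(\Mmod)$, the identity $\mathbf H^i(X,\mathcal C\otimes L)^\vee\cong\mathbf H^{-i}(X,\mathcal C^\vee\otimes L^{-1})$ turns (1) for $\Mmod$ into (2) for $\DD\Mmod$. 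Hence it suffices to establish (2), which is the form the covering construction below produces directly. Moreover the weight filtration $W_\bullet\Mmod$ is strict and its graded pieces are polarizable pure Hodge modules, while $\gr_k^F\DR$ carries the resulting strict short exact sequences to distinguished triangles; so the long exact hypercohomology sequence reduces the general vanishing to that for the pure graded pieces, and I may assume $(\Mmod,F)$ pure.

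Next I would bring the ample bundle $L$ into play through a construction that stays inside the category of Hodge modules. Choosing $m\gg0$ so that $L^m$ is very ample and a general smooth member $D\in|L^m|$, the complement $U:=X\smallsetminus D$ is affine. Passing to the degree-$m$ cyclic cover $\pi\colon Y\to X$ branched along $D$, with $Y$ smooth and $\pi_*\shO_Y=\bigoplus_{j=0}^{m-1}L^{-j}$, the projection formula gives $\pi_*\gr_k^F\DR_Y(\pi^*\Mmod)\cong\bigoplus_{j=0}^{m-1}\gr_k^F\DR(\Mmod)\otimes L^{-j}$ for the (finite) pullback $\pi^*\Mmod$; in this way every negative twist of the graded de Rham complex is assembled into a single Hodge-theoretic object on the smooth projective $Y$, and the positivity of $L$ is converted into the affineness of $U$ (and of its preimage under the étale part of $\pi$).

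The vanishing is then extracted from the interplay of the two inputs. On one hand, the perverse sheaf underlying $\Mmod|_U$ lives on an affine variety, so Artin's vanishing theorem concentrates its hypercohomology in a half-range of degrees, and dually in the complementary half. On the other hand, Saito's Stability Theorem \eqref{strictness_formula}, applied to the structural map $a\colon Y\to\mathrm{pt}$, identifies $\gr_k^F\mathbf H^i\bigl(Y,\DR(\pi^*\Mmod)\bigr)$ with $\mathbf H^i\bigl(Y,\gr_k^F\DR(\pi^*\Mmod)\bigr)$, exactly as in Example \ref{degeneration}; this $E_1$-degeneration is what lets the degree-concentration of the full de Rham cohomology descend to each graded piece separately. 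Combining the affine concentration with the degeneration and reading off the eigensheaf summand corresponding to $L^{-1}$ yields the stated vanishing (2), and hence (1) by the duality of the first step.

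The genuinely delicate step --- the \emph{main obstacle} --- is the middle one: the operation ``$\otimes L$'' on the graded de Rham complex has no intrinsic meaning in the category of Hodge modules, and it must be realized through the cyclic covering $\pi$ (or, alternatively, through the localization functors $j_*,\,j_!$ along $D$, whose Hodge filtrations involve precisely the pole-order twists studied in this paper) in a way that is strictly compatible with the Hodge filtrations. Controlling how the Hodge filtration on $\pi^*\Mmod$ breaks up into the summands $\otimes L^{-j}$ near the ramification $D$, and verifying that no strictness is lost in this identification, is where all the real work lies; everything else is formal once duality, the weight filtration, and the Stability Theorem are in hand.
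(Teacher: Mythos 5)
You should know at the outset that the paper does not prove Theorem \ref{saito_vanishing} at all: it sits in a section the authors explicitly describe as review material, and is quoted from Saito \cite[\S 2.g]{Saito-MHM}, with \cite{Popa} and \cite{Schnell} cited as expositions. So your proposal can only be compared with the proofs in those references, and in outline you do reproduce their strategy correctly: reduction to pure $\Mmod$ via the weight filtration, interchange of (1) and (2) through Saito's duality functor plus Serre duality, a degree-$m$ cyclic cover $\pi\colon Y\to X$ branched along a general $D\in |L^m|$, and Artin vanishing on the affine complement combined with strictness of the pushforward to a point.

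The gap is in the pivotal covering step, and it is not a fillable technicality. The asserted identity $\pi_*\gr_k^F\DR_Y(\pi^*\Mmod)\cong\bigoplus_{j=0}^{m-1}\gr_k^F\DR(\Mmod)\otimes L^{-j}$ presupposes $\gr_k^F\DR_Y(\pi^*\Mmod)\cong\pi^*\gr_k^F\DR_X(\Mmod)$, which is false: $\DR_Y$ is built from $\Omega_Y^\bullet$, not from $\pi^*\Omega_X^\bullet$, and the two differ exactly along the ramification divisor $E=\pi^{-1}(D)_{\rm red}$. Already for $\Mmod=\QQ_X^H[n]$ the claim fails, since $\pi_*\Omega_Y^p\not\cong\bigoplus_j\Omega_X^p\otimes L^{-j}$; the clean eigensheaf decomposition holds only for log forms, $\pi_*\Omega_Y^p(\log E)\cong\bigoplus_j\Omega_X^p(\log D)\otimes L^{-j}$, which is precisely why the paper's own Appendix (proof of Theorem \ref{thm_vanishing2}) runs its cyclic-cover argument through the sheaves $\Omega^q(\log)$. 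In the Hodge-module setting the correct analogue is a statement about the localized objects $(\pi^*\Mmod)(*E)$ versus $\Mmod(*D)$ (equivalently about the $j_*j^*$ and $j_!j^*$ extensions), not about $\pi^*\Mmod$ and $\Mmod$ themselves. A second, related jump occurs at the end: Artin vanishing plus strictness says nothing about $\pi^*\Mmod$ on the projective variety $Y$ (its de Rham cohomology does not vanish in any half-range; same example), but only about $j_{V*}j_V^*(\pi^*\Mmod)$ and $j_{V!}j_V^*(\pi^*\Mmod)$, where $V=Y\smallsetminus E$ is affine. Consequently, what the repaired covering trick yields is vanishing for the graded de Rham complexes of $j_!j^*\Mmod$ (resp. $j_*j^*\Mmod$) twisted by the $L^{-j}$, and a genuine further step remains: the kernel and cokernel of $j_!j^*\Mmod\to\Mmod$ are mixed Hodge modules supported on $D$, and transferring the vanishing from $j_!j^*\Mmod$ to $\Mmod$ requires handling them by induction on dimension. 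This localization comparison and induction form the backbone of the arguments in \cite[\S 2.g]{Saito-MHM} and \cite{Schnell}, and they are exactly the content you defer as ``where all the real work lies''; in other words, your text is a correct road map to Saito's theorem rather than a proof of it.
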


\subsection{Localization as a Hodge $\Dmod$-module}\label{localization_HM}
If $X$ is a smooth variety, and $D$ is a reduced effective divisor on $X$, then the right $\Dmod_X$-module $\omega_X(*D)$ 
is a Hodge $\Dmod$-module. Indeed, it underlies the mixed Hodge module $j_* \QQ_U^H[n]$, where $j\colon U = X \smallsetminus D \hookrightarrow X$ is the inclusion, and $\QQ_U^H[n]$ is the trivial Hodge module on $U$; see e.g. \cite[Example 5.4]{Schnell}.

Let $f\colon Y \rightarrow X$ be a log resolution of the pair $(X, D)$ which is an isomorphism over $X\smallsetminus D$,  and let $E = (f^*D)_{\rm red}$.
If $V = Y \smallsetminus E$, the proof of Lemma \ref{preservation} shows more generally that we have an isomorphism of mixed Hodge modules
$$f_* {j_V}_* \QQ_V^H [n] \simeq {j_U}_* \QQ_U^H [n],$$
so in particular there is an isomorphism 
$$f_+ \big(\omega_Y(*E), F \big) \simeq  \big(\omega_X (*D), F \big)$$
in ${\bf D}^b \big({\rm FM}(\Dmod_X)\big)$. According to \S\ref{HM},  the filtration on the right hand side is given by
$$ F_{k-n} \omega_X (*D) \simeq F_{k-n} H^0 f_+ \omega_Y(*E) = $$
$$= {\rm Im} \left[R^0 f_* F_{k-n} \big(\omega_Y(*E) \overset{\derL}{\otimes}_{\Dmod_Y} 
\Dmod_{Y\to X}\big) \to  R^0 f_* \big(\omega_Y(*E) \overset{\derL}{\otimes}_{\Dmod_Y} \Dmod_{Y\to X}\big) \right]$$
and the mapping in the parenthesis 
is in fact injective.

We note also that, although not used in the sequel, the following result of Saito is suggestive for some of the 
constructions below. 

\begin{theorem}[{\cite[Theorem 1]{Saito-LOG}}]\label{Saito-LOG}
There is an isomorphism 
$$\derR f_* \bigl( \Omega_Y^{\bullet + n}(\log E), F \bigr) \simeq \DR \bigl( \OX(* D), F \bigr)$$
in the derived category of filtered complexes of $\CC$-vector spaces (and more generally of 
filtered differential complexes), where the filtration on the log-de Rham complex on the left is the ``stupid" filtration.
\end{theorem}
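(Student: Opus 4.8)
The plan is to factor the statement through the comparison between the de Rham complex of $\OX(*D)$ and the direct image of the de Rham complex on $Y$, and to reduce the remaining content to the local, simple normal crossings computation already packaged in Proposition~\ref{resolution1}. Recall from \S\ref{localization_HM} that $f_+(\omega_Y(*E),F)\simeq(\omega_X(*D),F)$ as filtered $\Dmod_X$-modules. Applying the de Rham functor and using the standard compatibility $\DR_X\circ f_+\simeq \derR f_*\circ\DR_Y$ (see \cite{HTT}), which in Saito's setting is realized at the level of filtered complexes, the right-hand side $\DR(\OX(*D),F)$ becomes $\derR f_*\DR_Y(\omega_Y(*E),F)$. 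Everything then comes down to identifying the filtered de Rham complex of $\omega_Y(*E)$ on $Y$ with the log-de Rham complex $(\Omega_Y^{\bullet+n}(\log E),F)$ carrying the stupid filtration. Since the de Rham complex as defined in \S\ref{dmod} is that of the underlying right $\Dmod$-module, the left-right correspondence lets me pass freely between $\omega_Y(*E)$ and $\shO_Y(*E)$ here.

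First I would carry out this local identification. Proposition~\ref{resolution1} provides a filtered resolution of $\omega_Y(*E)$ by induced $\Dmod_Y$-modules whose underlying $\shO_Y$-modules are the $\Omega_Y^p(\log E)$, with differential of de Rham type. Applying $\DR_Y$, equivalently $\argbl\Ltensor_{\Dmod_Y}\shO_Y$, to this resolution collapses each induced term $\Omega_Y^p(\log E)\otimes_{\shO_Y}\Dmod_Y$ onto $\Omega_Y^p(\log E)$ in the appropriate degree, and the associated total complex is exactly the shifted log-de Rham complex $\Omega_Y^{\bullet+n}(\log E)$; unfiltered, this recovers Deligne's comparison $\Omega_Y^\bullet(\log E)\simeq \derR {j_V}_*\CC_V$. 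The point is then to track filtrations: since $F_k\omega_Y(*E)=\omega_Y(E)\cdot F_{k+n}\Dmod_Y$ and the de Rham differential raises the order filtration on $\Dmod_Y$ by one, the induced filtration on the de Rham complex isolates the individual bundles $\Omega_Y^p(\log E)$, which is precisely the stupid filtration. The filtered subcomplexes $F_{k-n}A^\bullet$ written out in Proposition~\ref{resolution1} are what make this matching transparent.

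Finally I would push forward and promote the identification to the filtered derived category. Applying $\derR f_*$ to the filtered quasi-isomorphism on $Y$ and combining with the first paragraph yields the desired isomorphism once filtrations are forgotten; the real content of the theorem is that it is \emph{filtered}. The main obstacle is exactly this strictness: a priori $\derR f_*$ need not respect the naive filtration, and one must know that no cohomology of a graded piece is lost along the way. This is supplied by Saito's Stability Theorem recalled in \S\ref{HM}: the filtered direct image $f_+(\omega_Y(*E),F)$ is strict, so the maps \eqref{strictness_formula} are injective and $F_\bullet\omega_X(*D)$ is computed as the image of the filtered pieces. It is this Hodge-theoretic input, rather than the formal manipulations above, that forces the comparison to hold with filtrations, and it is where I expect the genuine work to lie.
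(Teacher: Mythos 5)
You should first be aware that the paper does not prove this statement at all: it is quoted from \cite[Theorem 1]{Saito-LOG}, and the surrounding text explicitly says it is ``not used in the sequel.'' So there is no internal proof to compare against; the comparison can only be with the argument the paper's own machinery suggests, which is also how Saito's original proof runs.

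Measured against that, your outline is correct and is essentially the expected argument, assembled from the right ingredients in the paper. Your step (a), the local filtered identification on $Y$, does work via Proposition~\ref{resolution1}: the crucial point, which you should make explicit, is that the resolution there is a \emph{filtered} resolution (its proof checks exactness of each subcomplex $F_{k-n}A^\bullet$, not just of $A^\bullet$), and applying $\argbl\Ltensor_{\Dmod_Y}\shO_Y$ to each filtered induced module $\Omega_Y^p(\log E)\otimes_{\shO_Y}\Dmod_Y$ collapses it onto $\Omega_Y^p(\log E)$ with filtration jumping in a single index, which is exactly what the convention $F_i\Omega_Y^p(\log E)$ in that proof encodes; this is what produces the stupid filtration. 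For step (b), your citation is slightly off: \cite{HTT} only gives the unfiltered compatibility $\DR_X\circ f_+\simeq\derR f_*\circ\DR_Y$; the filtered version is not a separate theorem but is built into Saito's construction of the filtered direct image in \cite[\S 2.3]{Saito-MHP}, which is defined via filtered induced resolutions precisely so that this compatibility holds. For step (c), one small imprecision: the isomorphism recalled in \S\ref{localization_HM} is an isomorphism $f_+\big(\omega_Y(*E),F\big)\simeq\big(\omega_X(*D),F\big)$ in ${\bf D}^b\big({\rm FM}(\Dmod_X)\big)$, not literally ``as filtered $\Dmod_X$-modules''; the strictness you invoke in your last paragraph is exactly what allows one to replace the complex $f_+\big(\omega_Y(*E),F\big)$ by its zeroth cohomology with the induced filtration, so strictness and the mixed Hodge module identification are two faces of the same ingredient rather than independent steps. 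With these adjustments your proposal is a faithful reconstruction of the intended proof, built entirely from the paper's tools; the paper itself tacitly confirms this reading when, in the discussion of the elementary approach to vanishing for $I_1$, it describes the sequence obtained from Lemma~\ref{filtration1} as a lift of the quasi-isomorphism in Theorem~\ref{Saito-LOG} in the case $k=1$.
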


The log-de Rham complex on the left is of course also filtered quasi-isomorphic  to $\DR  \big( \OY (* E), F \big)$, 
by a well-known result of Deligne \cite{Deligne}.

\begin{remark}
In particular, Saito deduces from Theorem \ref{Saito-LOG} that the object $\derR f_* \Omega_Y^{\bullet} (\log E)$ is independent of the log resolution, and that $R^q f_* \Omega_Y^p (\log E)$ is $0$ for $p + q > n$. A different proof, together with other results on forms with log-poles, is given in the Appendix.
\end{remark}

\subsection{Hodge filtration on the complement}\label{filtration_complement}
In this section we assume that $X$ is a smooth projective variety.
In Hodge theory it is of interest to understand Deligne's Hodge filtration on 
$H^\bullet (U, \CC)$; see for instance \cite{DD} and \cite{Saito-B}. Saito showed that there is a close 
relationship between this Hodge filtration and the ideals $I_k (D)$, or equivalently $F_k \shO_X(*D)$. 

\begin{lemma}[{\cite[4.6(ii)]{Saito-B}}]\label{lift}
For every integer $i$ there is a natural morphism 
$$\varphi_i \colon H^i \big(X, \DR (\shO_X(*D)) \big) \longrightarrow H^{i + n} (U, \CC),$$
which is a filtered isomorphism. Here the right hand side is endowed with Deligne's Hodge filtration, 
and the left hand side with Saito's filtration given by the image of $H^i \big(X, F_{\bullet} \DR (\shO_X(*D)) \big)$.
\end{lemma}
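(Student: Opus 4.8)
The plan is to reduce the statement to Saito's theory, exploiting the fact, recalled in \S\ref{localization_HM}, that $\omega_X(*D)$ underlies the Hodge $\Dmod$-module $M := {j_U}_* \QQ_U^H[n]$ for the open inclusion $j_U\colon U = X\smallsetminus D \hookrightarrow X$. I would treat the isomorphism of underlying vector spaces first, and then upgrade it to a filtered isomorphism using strictness.

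To construct $\varphi_i$ and see that it is an isomorphism of $\CC$-vector spaces, note that $\Omega_X^p \otimes_{\shO_X} \shO_X(*D)$ is the sheaf $\Omega_X^p(*D)$ of meromorphic $p$-forms with poles along $D$, so that $\DR(\shO_X(*D))$ is just the meromorphic de Rham complex $\Omega_X^\bullet(*D)$, placed in degrees $-n, \dots, 0$. Restriction of forms to $U$ followed by the holomorphic Poincaré lemma gives a natural morphism of complexes $\Omega_X^\bullet(*D) \to \derR {j_U}_* \CC_U$; this is the Riemann--Hilbert identification $\DR(\shO_X(*D)) \simeq \derR {j_U}_* \CC_U[n]$ of the de Rham complex of the localization with the constructible complex underlying $M$. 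Taking hypercohomology on the projective variety $X$ produces $\varphi_i$, and Grothendieck's comparison theorem for meromorphic forms shows it is an isomorphism; equivalently, since $j_U$ is affine, $\derR {j_U}_* \CC_U[n]$ is a perverse sheaf whose hypercohomology on $X$ is $H^{i+n}(U,\CC)$.

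For the filtered refinement I would push $M$ forward to a point. With $a\colon X \to \pt$ and $b := a\circ j_U\colon U \to \pt$, functoriality of the mixed Hodge module direct image ($a_+ \circ {j_U}_* = b_+$) shows that $H^i a_+ M$ is precisely the mixed Hodge structure Saito's formalism assigns to $H^{i+n}(U)$. On the other hand, Saito's Stability Theorem---the strictness recalled in \S\ref{HM} and illustrated in Example~\ref{degeneration}---guarantees that $a_+(M,F)$ is strict, so that the Hodge filtration on $H^i a_+ M = H^i\big(X, \DR(\shO_X(*D))\big)$ is exactly the image of $H^i\big(X, F_\bullet \DR(\shO_X(*D))\big)$, i.e. Saito's filtration on the left-hand side of the lemma. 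Thus $\varphi_i$ carries Saito's filtration to the Hodge filtration of Saito's mixed Hodge structure on $H^{i+n}(U)$.

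The decisive step, which I expect to be the main obstacle, is the comparison of this filtration with Deligne's. Here one invokes Saito's theorem \cite{Saito-MHM} that the mixed Hodge structure his theory attaches to the cohomology of a complex algebraic variety coincides with Deligne's; granting it, the filtration transported by $\varphi_i$ is Deligne's, and $\varphi_i$ is a filtered isomorphism. I expect this comparison, together with the bookkeeping required to match conventions (the shift by $n$, and the passage between Saito's increasing filtration $F_\bullet$ on the $\Dmod$-module side and Deligne's decreasing $F^\bullet$), to be the genuinely delicate part, since it is exactly the point where Saito's construction of the Hodge filtration must be matched with the classical one; by contrast, the vector-space isomorphism and the strictness input are formal once the machinery of \S\ref{section_Hodge_modules} is available.
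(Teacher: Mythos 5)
The paper does not prove this lemma at all: it is quoted verbatim from Saito \cite[4.6(ii)]{Saito-B}, and the surrounding text in \S\ref{filtration_complement} only unpacks its meaning (identifying the left-hand side with $H^i {a_X}_*(\shO_X(*D),F)$ and invoking strictness, as in Example~\ref{degeneration}). So there is no in-paper argument to compare yours against; what can be judged is whether your reconstruction is a sound derivation from the cited machinery, and it is. Your three steps are the right ones: the underlying vector-space isomorphism via the identification $\DR(\shO_X(*D)) \simeq \derR {j_U}_*\CC_U[n]$ (Grothendieck comparison / Riemann--Hilbert, using that $\shO_X(*D)$ is the regular holonomic direct image of $\shO_U$ and that $j_U$ is affine); the identification of Saito's filtration on the source via strictness of $a_+(M,F)$, which is exactly the content recalled in \S\ref{HM}; and the compatibility of composition of direct images, $a_+ \circ {j_U}_* = b_+$, which is legitimate here because $j_U$ is an affine open immersion, so ${j_U}_*\QQ_U^H[n]$ is a single mixed Hodge module and the derived composition collapses as you state.

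You are also right to flag the final step as the one carrying the real content: the assertion that the mixed Hodge structure Saito's formalism places on $H^{i+n}(U,\QQ)$ agrees with Deligne's is a genuine theorem of \cite{Saito-MHM}, not something that can be rederived in a page, and your proof necessarily invokes it as a black box. That is not a gap in your argument so much as the same deferral the paper itself makes by citing \cite{Saito-B}; the lemma is, in both treatments, an import from Saito's theory rather than something proved from scratch. The only caution worth stating is that if one wanted a self-contained proof, steps one and two are formal, but step three is not, and no amount of bookkeeping about shifts by $n$ or the passage between increasing $F_\bullet$ and decreasing $F^\bullet$ substitutes for Saito's comparison theorem.
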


Now according to Saito's theory, the left-hand side in the isomorphism above is the cohomology 
$$H^i {a_X}_* \big( \shO_X(*D), F \big)$$
of the filtered direct image, where $a_X \colon X \rightarrow {\rm pt}$. This filtration is strict, and we saw in 
Example \ref{degeneration} that this is 
equivalent to the degeneration at $E_1$ of the Hodge-to-de Rham spectral sequence
$$E_1^{p,q} = \mathbf{H}^{p+ q}  \big(X, \gr_{-q}^F \DR ( \shO_X(*D)) \big) \implies  
H^{p+q} \big(X, \DR (\shO_X(*D))\big).$$
As a consequence one obtains

\begin{corollary}\label{open_decomposition}
For every integer $k$ there is a decomposition
$$H^{i + n} (U, \CC) \simeq \bigoplus_{q \in \ZZ}  \mathbf{H}^i   \big(X, \gr_{-q}^F \DR ( \shO_X(*D)) \big).$$
\end{corollary}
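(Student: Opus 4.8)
The plan is to combine the $E_1$-degeneration of the Hodge-to-de Rham spectral sequence, already recorded in Example~\ref{degeneration}, with the filtered isomorphism of Lemma~\ref{lift}. The statement is really a formal consequence of these two inputs, so the work is in assembling them cleanly rather than in any new computation.

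First I would apply the discussion of Example~\ref{degeneration} to the Hodge $\Dmod$-module $\Mmod = \shO_X(*D)$; here $X$ is smooth projective, and $\omega_X(*D)$, equivalently $\shO_X(*D)$, underlies a mixed Hodge module by \S\ref{localization_HM}, so the hypotheses are met. Saito's strictness for the pushforward along $a_X\colon X \to \pt$ then yields, for every $q \in \ZZ$ and every total degree $i$, the canonical isomorphism
$$\gr_{-q}^F H^i\big(X, \DR(\shO_X(*D))\big) \simeq \mathbf{H}^i\big(X, \gr_{-q}^F \DR(\shO_X(*D))\big),$$
which is exactly the content of the $E_1$-degeneration of the spectral sequence displayed just above the corollary.

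Next, since $X$ is projective, each $H^i\big(X, \DR(\shO_X(*D))\big)$ is a finite-dimensional $\CC$-vector space, and the Hodge filtration induced on it is finite: it is bounded below because $F_\bullet \shO_X(*D)$ is, and it is exhaustive. Consequently the space is (noncanonically) isomorphic to the direct sum of its graded pieces, and combining this with the previous step gives
$$H^i\big(X, \DR(\shO_X(*D))\big) \simeq \bigoplus_{q \in \ZZ} \gr_{-q}^F H^i\big(X, \DR(\shO_X(*D))\big) \simeq \bigoplus_{q \in \ZZ} \mathbf{H}^i\big(X, \gr_{-q}^F \DR(\shO_X(*D))\big),$$
the second isomorphism being the one from the previous step applied termwise.

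Finally I would invoke Lemma~\ref{lift}, whose map $\varphi_i$ is in particular a $\CC$-linear isomorphism $H^i\big(X, \DR(\shO_X(*D))\big) \simeq H^{i+n}(U, \CC)$; composing it with the displayed isomorphism produces the asserted decomposition. I do not expect a genuine obstacle here. The only point deserving care is that the decomposition is not canonical, arising from a choice of splitting of a finite filtration; what is canonical is the identification of the graded pieces $\gr_{-q}^F H^{i+n}(U,\CC)$ of Deligne's Hodge filtration with the hypercohomology groups $\mathbf{H}^i\big(X, \gr_{-q}^F \DR(\shO_X(*D))\big)$, which is what the argument actually establishes.
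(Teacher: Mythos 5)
Your proof is correct and follows exactly the route the paper intends: the corollary is stated there as an immediate consequence of the strictness of the filtered pushforward along $a_X$ (equivalently, the $E_1$-degeneration of Hodge-to-de Rham from Example \ref{degeneration}) combined with the filtered isomorphism $\varphi_i$ of Lemma \ref{lift}, which is precisely how you assemble it. Your added remarks on finiteness of the filtration and the noncanonical nature of the splitting are accurate refinements of what the paper leaves implicit.
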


The spaces $H^\bullet (U, \CC)$ also have a \emph{pole order filtration}. Indeed, using the pole order filtration on $\shO_X(*D)$, 
i.e.
$$P_k \shO_X(*D) :=  \shO_X \big( (k+1) D \big),$$
we obtain a filtration on the de Rham complex
$$
	P_k \DR(\shO_X (*D)) = $$
$$ =	\Bigl\lbrack
		P_k \shO_X(*D) \to \Omega_X^1 \otimes P_{k+1}\shO_X(*D) \to \dotsb 
			\to \Omega_X^n \otimes P_{k+n} \shO_X(*D)
	\Bigr\rbrack.
$$
For each $i \in \ZZ$, we define the pole order filtration on $H^{i + n} (U, \CC)$ by
$$P_{\bullet} H^{i + n} (U, \CC) := {\rm Im} \left[H^i \big(X, P_{\bullet} \DR (\shO_X(*D)) \big) \longrightarrow H^{i + n} (U, \CC) \right],$$
where the image is considered via the isomorphism $\varphi_i$. This is defined in a slightly different, but equivalent fashion by 
Deligne-Dimca \cite{DD}, and the main result of that paper is the inclusion 
$$F_k H^j (U, \CC) \subseteq P_k H^j (U, \CC)$$
for all $j$ and $k$. Using Lemma \ref{lift}, this also follows from the stronger statement $F_k \shO_X(*D) \subseteq P_k \shO_X(*D)$, which is proved in \cite[Proposition 0.9]{Saito-B}; for a different proof see also Lemma \ref{HP} below. 

\begin{remark}
The lowest $k$ for which $F_k H^\bullet (U, \CC)$  is not automatically zero is $k = -n$, and similarly for $P_k$. Note that 
$$P_{-n} H^{i + n} (U, \CC) = {\rm Im} \left[ H^i \big(X, \omega_X (D)\big) \longrightarrow H^{i + n} (U, \CC) \right].$$
On the other hand, we will see later that 
$$F_{-n} H^{i + n}  (U, \CC) \simeq H^i \big(X, \omega_X (D)\otimes I_0 (D)\big),$$
where $I_0 (D) = \mathcal{I} \big(X, (1-\epsilon)D \big)$, the multiplier ideal of the $\QQ$-divisor $(1-\epsilon)D$, with 
$0 < \epsilon \ll 1$. By Corollary \ref{open_decomposition} this is a direct summand of $H^{i +n}  (U, \CC)$; it may 
be different from $P_{-n} H^{i +n}  (U, \CC)$ if the pair $(X, D)$ is not log-canonical.
\end{remark}

In any case, it is clear from the descriptions above that any statement relating the two filtrations on $\shO_X(*D)$ automatically leads to a similar statement for those on $H^\bullet (U, \CC)$. For instance:

\begin{lemma}\label{equality}
If $F_k \shO_X(*D) = P_k \shO_X(*D)$ for $k \le \ell + n$, then 
$$F_k H^j (U, \CC) = P_k H^j (U, \CC)$$
for all $j$ and all $k \le \ell$.
\end{lemma}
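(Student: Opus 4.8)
The plan is to reduce the statement to an equality of two filtered subcomplexes of $\DR(\shO_X(*D))$, and then to invoke the fact that \emph{both} filtrations on the cohomology of $U$ are, by their very definitions, images of the hypercohomology of these subcomplexes. Recall from the definition preceding this lemma together with Lemma \ref{lift} that, for every $i$,
$$F_k H^{i+n}(U,\CC) = {\rm Im}\big[H^i(X, F_k \DR(\shO_X(*D))) \to H^{i+n}(U,\CC)\big]$$
and
$$P_k H^{i+n}(U,\CC) = {\rm Im}\big[H^i(X, P_k \DR(\shO_X(*D))) \to H^{i+n}(U,\CC)\big],$$
where in both cases the map is induced by the inclusion of the subcomplex into $\DR(\shO_X(*D))$ followed by the filtered isomorphism $\varphi_i$.

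First I would write out the two filtered de Rham subcomplexes term by term. In cohomological degree $-n+p$ (for $0 \le p \le n$), the term of $F_k \DR(\shO_X(*D))$ is $\Omega_X^p \otimes F_{k+p}\shO_X(*D)$, while that of $P_k \DR(\shO_X(*D))$ is $\Omega_X^p \otimes P_{k+p}\shO_X(*D)$; the differentials of both are the restrictions of the differential of the ambient complex $\DR(\shO_X(*D))$. Thus the only thing distinguishing the two subcomplexes is the comparison of $F_{k+p}\shO_X(*D)$ with $P_{k+p}\shO_X(*D)$ for $0 \le p \le n$.

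Now I would feed in the hypothesis. Assume $k \le \ell$. Then for every $p$ with $0 \le p \le n$ we have $k+p \le \ell + n$, so the hypothesis $F_m\shO_X(*D) = P_m\shO_X(*D)$ for $m \le \ell+n$ gives $F_{k+p}\shO_X(*D) = P_{k+p}\shO_X(*D)$ in every term. Hence $F_k \DR(\shO_X(*D)) = P_k \DR(\shO_X(*D))$ as subcomplexes of $\DR(\shO_X(*D))$, for all $k \le \ell$. Since the two subcomplexes coincide, so do their hypercohomology groups $H^i(X,-)$, and the maps to $H^{i+n}(U,\CC)$ are literally the same map (both being induced by the same inclusion into $\DR(\shO_X(*D))$ together with $\varphi_i$). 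Therefore the images agree, giving $F_k H^{i+n}(U,\CC) = P_k H^{i+n}(U,\CC)$ for all $i$ and all $k \le \ell$; writing $j = i+n$ yields the assertion for all $j$.

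There is no genuine obstacle here beyond tracking the index shift: the content is entirely the observation that the degree $-n+p$ term of the level-$k$ filtered de Rham complex sees only the level $k+p \le k+n$ piece of the filtration on $\shO_X(*D)$, so that agreement of the two filtrations up to level $\ell+n$ is exactly what is needed to force agreement up to level $\ell$ on cohomology. I note in particular that this argument does not require the strictness of the Hodge filtration (i.e. the $E_1$-degeneration of Corollary \ref{open_decomposition}); it uses only that both filtrations on $H^\bullet(U,\CC)$ are by definition images of the corresponding filtered de Rham complexes, so that equal subcomplexes produce equal subspaces.
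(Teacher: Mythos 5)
Your proof is correct and is exactly the argument the paper has in mind: the paper gives no explicit proof, remarking only that "it is clear from the descriptions above," and those descriptions are precisely the two facts you use — that by Lemma \ref{lift} and the definition of $P_\bullet$, both filtrations on $H^{i+n}(U,\CC)$ are images of the hypercohomology of the corresponding filtered subcomplexes of $\DR(\shO_X(*D))$, whose degree $-n+p$ terms involve only levels $k+p \le \ell+n$ of the filtrations on $\shO_X(*D)$. Your added observation that strictness/$E_1$-degeneration is not needed is also accurate.
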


\section{Birational definition of Hodge ideals}\label{scn:birational}

Throughout this section $X$ is a smooth complex variety of dimension $n$ and $D$ is a reduced effective divisor on $X$. 
We define the Hodge ideals $I_k(D)$ associated to $D$, for $k \ge 0$, first explicitly in the simple normal crossing case, and 
then in general in terms of log resolutions. We show directly that they are independent of the choice of log resolution, and then 
note that they coincide with the ideals defined by Saito's Hodge filtration. We establish a few first properties of these ideals.

\subsection{The simple normal crossing case}\label{Section_The simple normal crossings case}
When $D$ is a simple normal crossing divisor, we define the ideals $I_k(D)$ by the following expression:
\begin{equation}\label{SNC_def}
\shO_X (\big(k+1)D\big) \otimes I_k (D) = F_k \Dmod_X \cdot \shO_X(D) \,\,\,\,\,\, {\rm  for~ all} \,\,\,\, k \ge 0,
\end{equation}
where the left-hand side is considered via the natural injective image in $\shO_X (*D)$.
Note that this includes the statement $I_0 (D) = \shO_X$, and that a simple local calculation shows that 
$F_k \Dmod_X \cdot \shO_X(D) \subseteq \shO_X \big( (k+1)D\big)$.
It is clear from definition that if we use the filtration on $\omega_X(*D)$ introduced in \S\ref{filtrations}, then
$$F_{k-n}\omega_X(*D)=\omega_X((k+1)D)\otimes I_k(D).$$

\begin{proposition}\label{description_SNC_case}
Suppose that around a point $p\in X$ we have coordinates $x_1,\ldots,x_n$ such that $D$ is defined by $(x_1\cdots x_r = 0)$.
Then, for every $k\geq 0$, the ideal $I_k(D)$ is generated around $p$ by
$$\{x_1^{a_1}\cdots x_r^{a_r}\mid 0\leq a_i\leq k,\sum_ia_i=k(r-1)\}.$$
In particular, if $r=1$ (that is, when $D$ is smooth), we have $I_k(D)=\shO_X$ and if $r=2$, then $I_k(D)=(x_1,x_2)^k$.
\end{proposition}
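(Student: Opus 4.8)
The plan is to work locally near $p$ and compute the right-hand side of the defining formula \eqref{SNC_def} by hand. Writing $h = x_1\cdots x_r$, we have locally $\shO_X(D) = \frac1h\shO_X$ and $\shO_X\big((k+1)D\big) = \frac{1}{h^{k+1}}\shO_X$, so by \eqref{SNC_def} it suffices to identify the $\shO_X$-submodule $F_k\Dmod_X\cdot\frac1h\shO_X$ of $\shO_X(*D)$ and then read off $I_k(D)$ as $h^{k+1}$ times it. The recurring (and elementary) computation underlying everything is the derivative identity $\partial_1^{\beta_1}\cdots\partial_r^{\beta_r}\big(\frac1h\big) = (-1)^{|\beta|}\beta_1!\cdots\beta_r!\cdot\frac{1}{x_1^{\beta_1+1}\cdots x_r^{\beta_r+1}}$.

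First I would reduce to pure partials. Since $F_k\Dmod_X$ is generated over $\shO_X$ by the operators $\partial^\beta$ with $|\beta|\le k$, the Leibniz rule gives $F_k\Dmod_X\cdot\frac1h\shO_X = \sum_{|\beta|\le k}\shO_X\cdot\partial^\beta\big(\frac1h\big)$: each $\partial^\beta(u/h)$ expands as an $\shO_X$-combination of the $\partial^\gamma(1/h)$ with $\gamma\le\beta$, and conversely each $\partial^\beta(1/h)$ already lies in the module. Because $h$ does not involve $x_{r+1},\ldots,x_n$, only multi-indices $\beta$ supported on $\{1,\ldots,r\}$ contribute, and for those the identity above shows that $\partial^\beta(1/h)$ is a unit times $\frac{1}{x_1^{\beta_1+1}\cdots x_r^{\beta_r+1}}$.

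Next I would convert back to an ideal. Multiplying by $h^{k+1}$ sends $\frac{1}{x_1^{\beta_1+1}\cdots x_r^{\beta_r+1}}$ to the monomial $x_1^{k-\beta_1}\cdots x_r^{k-\beta_r}$, so setting $a_i = k-\beta_i$ yields
$$I_k(D) = \big(x_1^{a_1}\cdots x_r^{a_r}\ \big|\ 0\le a_i\le k,\ \textstyle\sum_i a_i\ge k(r-1)\big),$$
where the constraint $a_i\ge 0$ comes from $\beta_i\le k$ (automatic since $\beta_i\le|\beta|\le k$), the constraint $a_i\le k$ from $\beta_i\ge 0$, and $\sum_i a_i\ge k(r-1)$ from $|\beta|\le k$. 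Finally a small combinatorial step removes the inequality: any generator with $\sum_i a_i>k(r-1)$ and all $a_i\le k$ is divisible, after lowering some exponents, by one with $\sum_i a_i = k(r-1)$ and all $a_i\le k$, since the total degree can be decreased one coordinate at a time while staying within the box $[0,k]^r$. Hence the generators with $\sum_i a_i = k(r-1)$ already generate $I_k(D)$, which is the asserted description. The cases $r=1$ (forcing $a_1=0$, so $I_k(D)=\shO_X$) and $r=2$ (generators $x_1^{a_1}x_2^{a_2}$ with $a_1+a_2=k$, i.e.\ $(x_1,x_2)^k$) then drop out immediately.

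The derivative identity and the Leibniz reduction are routine bookkeeping; the genuinely non-formal point is the passage from the ``all $\sum_i a_i\ge k(r-1)$'' description to the minimal generators, and the hard part will be checking there that the box constraint $a_i\le k$ never obstructs lowering the total degree to exactly $k(r-1)$. I expect that to be the main (though still elementary) obstacle.
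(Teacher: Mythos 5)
Your proposal is correct and is essentially the paper's own (much terser) proof: the paper asserts directly that $F_k\Dmod_X\cdot \shO_X(D)$ is generated as an $\shO_X$-module by the monomials $x_1^{-b_1}\cdots x_r^{-b_r}$ with $b_i\geq 1$ and $\sum_i b_i=r+k$ exactly, and then multiplies by $(x_1\cdots x_r)^{k+1}$; your passage from $\sum_i a_i\geq k(r-1)$ down to equality is the same normalization, just performed after rather than before that multiplication (via $a_i=k+1-b_i$). Note also that the step you flagged as the likely obstacle is in fact vacuous: since you only ever lower exponents, the box constraint $a_i\leq k$ is automatically preserved, so it can never obstruct reaching total degree exactly $k(r-1)$.
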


\begin{proof}
It is clear that 
$F_k\Dmod_X\cdot \shO_X(D)$ is generated as an $\shO_X$-module by
$$\{x_1^{-b_1}\cdots x_r^{-b_r}\mid b_i\geq 1,~\sum_ib_i=r+k\}.$$
According to ($\ref{SNC_def}$), the expression for $I_k(D)$ now follows by multiplying these generators by $(x_1\cdots x_r)^{k+1}$.
The assertions in the special cases $r=1$ and $r=2$ are clear.
\end{proof}

\subsection{The general case}\label{section_general_case}
When $D$ is arbitrary, we consider a log resolution 
$f\colon Y\to X$ of the pair $(X,D)$ which is an isomorphism over $X\smallsetminus D$,
and let $E=(f^*D)_{\rm red}$. Note that by assumption $E$ has simple normal crossings.
Because we need to deal with pushforwards,  we will work in the setting of right $\Dmod$-modules.

We denote by $A^{\bullet}$ the complex 
$$0 \rightarrow f^* \Dmod_X \rightarrow \Omega_Y^1(\log E) \otimes_{\shO_Y} f^* \Dmod_X \rightarrow \cdots \to  \omega_Y(E) \otimes_{\shO_Y} f^* \Dmod_X\to 0 $$
placed in degrees $-n, \ldots, 0$. 
We have seen in \S\ref{filtrations} that it comes with a filtration, and that as such it
represents the object $\omega_Y (*E) \overset{\derL}{\otimes}_{\Dmod_Y} \Dmod_{Y\to X}$ in the derived category of filtered right $f^{-1}\Dmod_X$-modules. In particular, we have 
$$H^0 A^\bullet \simeq \omega_Y (*E) \otimes_{\Dmod_Y} \Dmod_{Y\to X}.$$
Moreover, by Corollary~\ref{resolution2}, we know that if we ignore the filtration, $A^\bullet$ is exact everywhere except at the last term on the right, i.e. 
the natural mapping
$$A^\bullet \longrightarrow H^0 A^\bullet$$
is a quasi-isomorphism. 
For every $k\geq 0$, we also consider the subcomplex $C_{k-n}^\bullet = F_{k-n} A^\bullet$ of $A^{\bullet}$, given by 
$$0 \rightarrow f^* F_{k-n} \Dmod_X \rightarrow \Omega_Y^1(\log E) \otimes_{\shO_Y} f^* F_{k-n+1} \Dmod_X \rightarrow \cdots $$
$$\cdots \to \Omega_Y^{n-1} (\log E) \otimes_{\shO_Y} f^* F_{k-1}\Dmod_X \to  \omega_Y(E) \otimes_{\shO_Y} f^* F_k \Dmod_X\rightarrow 0.$$

For every $k\geq 0$, the inclusion $C^{\bullet}_{k-n} \hookrightarrow A^{\bullet}$ induces a canonical morphism of (quasi-coherent) $\shO_X$-modules
\begin{equation}\label{eq_def_filtration}
R^0f_*C_{k-n}^{\bullet}\to R^0f_*A^{\bullet} \simeq \omega_X(*D),
\end{equation}
where we recall that 
$$R^0f_* A^\bullet \simeq R^0f_*( H^0 A^{\bullet})\simeq f_*\big( \omega_Y (*E) \otimes_{\Dmod_Y} \Dmod_{Y\to X}\big)
\simeq\omega_X(*D).$$
Let $F_{k-n}\omega_X(*D)\subseteq \omega_X(*D)$ be the image of this map. 
Since $C_{k-n}^{\bullet}$ is a complex of quasi-coherent $f^{-1}\shO_X$-modules, it follows that $F_{k-n}\omega_X(*D)$ is a quasi-coherent 
$\shO_X$-module.

\begin{lemma}\label{HP}
For every $k \ge 0$, we have an inclusion
$$F_{k-n}\omega_X(*D) \subseteq \omega_X \big( (k +1)D \big).$$
\end{lemma}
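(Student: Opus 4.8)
The plan is to split the proof into a sheaf-theoretic reduction and a local pole-order estimate. Writing $M := \omega_Y(*E)\otimes_{\Dmod_Y}\Dmod_{Y\to X}$, recall from Corollary~\ref{resolution2} that $A^\bullet$ is exact away from degree $0$, so the natural map $A^\bullet\to H^0A^\bullet = M[0]$ is a quasi-isomorphism and $R^0f_*A^\bullet\simeq f_*M\simeq\omega_X(*D)$. I would first prove the containment of subsheaves of $\omega_X(*D)$
$$F_{k-n}\omega_X(*D)\ \subseteq\ f_*F_{k-n}M,\qquad F_{k-n}M = {\rm Im}\big[\omega_Y(E)\otimes_{\shO_Y}f^*F_k\Dmod_X\to M\big],$$
and then bound the pole order of $f_*F_{k-n}M$ along $D$.

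For the first step I would compare the inclusion $C_{k-n}^\bullet\hookrightarrow A^\bullet$ with the canonical truncations $\tau_{\ge 0}$. Naturality of $\tau_{\ge 0}$ produces a square whose bottom row $A^\bullet\to\tau_{\ge 0}A^\bullet = M[0]$ is the quasi-isomorphism above; applying $R^0f_*$ gives
$$
\begin{tikzcd}
R^0f_*C_{k-n}^\bullet \rar \dar & f_*H^0C_{k-n}^\bullet \dar \\
R^0f_*A^\bullet \rar{\sim} & \omega_X(*D)
\end{tikzcd}
$$
By \eqref{eq_def_filtration}, $F_{k-n}\omega_X(*D)$ is the image of the down-then-right composite; by commutativity this equals the image of the top-then-right composite, which lies in the image of the right vertical arrow. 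That arrow is $f_*$ applied to $H^0(C_{k-n}^\bullet)\to M$, whose image is exactly $F_{k-n}M$ (the contribution of $C_{k-n}^{-1}$ maps to zero in $M$, so the image equals that of the degree-$0$ term $\omega_Y(E)\otimes_{\shO_Y}f^*F_k\Dmod_X$). Since $f_*$ is left exact, the image of the right vertical arrow is contained in $f_*F_{k-n}M$, which is the desired containment.

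For the second step, note that $\omega_X((k+1)D)$ is a line bundle on the smooth --- hence normal --- variety $X$, and the only poles of sections of $f_*F_{k-n}M$ lie along $D$; so by Hartogs it suffices to check that such sections have pole order $\le k+1$ at the generic point $\xi$ of each component of $D$. The crucial point is that $f$ is an isomorphism over a neighbourhood of $\xi$: being proper and birational, its exceptional locus maps into a closed subset of codimension $\ge 2$ in $X$, which cannot contain the codimension-one point $\xi$. Hence near $\xi$ we are reduced to the smooth model $f=\mathrm{id}$, $E=D=(z=0)$, where $F_{k-n}M = \tfrac1z\omega_X\cdot F_k\Dmod_X$; this is the $r=1$ case of Proposition~\ref{description_SNC_case}, and applying the right $\Dmod_X$-action of an operator of order $\le k$ to $\tfrac1z\omega_X$ raises the pole order by at most $k$, giving pole order $\le k+1$. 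Therefore $f_*F_{k-n}M\subseteq\omega_X((k+1)D)$, completing the proof.

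The main obstacle is precisely this interface in the second step between the filtration $F_{k-n}M$, whose definition is phrased through poles along the possibly intricate divisor $E$ on $Y$, and the pole order along $D$ on $X$. The observation that $f$ is an isomorphism at the generic points of $D$ is what collapses the problem to a one-variable computation on a smooth divisor; the delicate points to verify are that pole order is genuinely a codimension-one condition (legitimizing the reduction to $\xi$ together with normality), and that, locally on $Y$, sections of the image sheaf $F_{k-n}M$ are sums of images of local sections of $\omega_Y(E)\otimes_{\shO_Y}f^*F_k\Dmod_X$, so that the elementary estimate may be applied term by term.
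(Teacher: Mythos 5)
Your proof is correct, and its mathematical core is the same as the paper's: both arguments reduce to the codimension-one points of $X$, where $f$ is an isomorphism (the exceptional locus of a proper birational morphism between smooth varieties maps into a closed subset of codimension $\geq 2$) and where $D$ is smooth (being reduced), perform the one-variable pole computation of Proposition~\ref{description_SNC_case} there, and then extend across the remaining codimension-$\geq 2$ locus by normality. The difference is in the packaging, and it is worth recording. The paper skips your Step 1 entirely: it restricts the sheaf $F_{k-n}\omega_X(*D)$ itself to a single big open subset $U$ with ${\rm codim}(X\smallsetminus U,X)\geq 2$ over which $f$ is an isomorphism and $D\vert_U$ is smooth, identifies it there with $\omega_X\big((k+1)D\big)$ via Proposition~\ref{resolution1}, and concludes from torsion-freeness that $F_{k-n}\omega_X(*D)\hookrightarrow j_*\big(F_{k-n}\omega_X(*D)\vert_U\big)=\omega_X\big((k+1)D\big)$; no concrete description of local sections is needed, since restriction of the image sheaf to $U$ is handled abstractly. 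Your Step 1 --- dominating $F_{k-n}\omega_X(*D)$ by $f_*F_{k-n}M$ through the truncation square --- is precisely the diagram the paper deploys just \emph{after} this lemma for a different purpose, namely to prove $I_k(D)\subseteq I_k^f(D)$ (in its notation, ${\rm Im}(\alpha)\subseteq{\rm Im}(\beta)=\overline{\shF}_{k-n}$, and $\overline{\shF}_{k-n}\subseteq f_*F_{k-n}M$ by left exactness of $f_*$). Consequently your argument proves slightly more than the lemma: your pole estimate applies to $f_*F_{k-n}M$, hence to $\overline{\shF}_{k-n}$, which is exactly the containment $\overline{\shF}_{k-n}\subseteq\omega_X\big((k+1)D\big)$ that the paper needs in order to define the ideals $I_k^f(D)$ and justifies only by saying ``an argument similar to that in Lemma~\ref{HP}''. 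In short: same key reduction and same extension principle, with the paper's version being shorter and yours yielding the auxiliary statement about $I_k^f(D)$ as a by-product.
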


\begin{proof}

Since $D$ is reduced, we can find an open subset $U\subseteq X$ with the property that ${\rm codim}(X\smallsetminus U,U)\geq 2$, the induced morphism $f^{-1}(U)\to U$ is an isomorphism, and $D\vert_U$ is a smooth (possibly disconnected) divisor. Let $j\colon U\hookrightarrow X$ be the inclusion. By assumption, on $f^{-1}(U)$ we have $\Dmod_{Y\to X}=\Dmod_Y$.
From Proposition~\ref{resolution1}, on $U$ we obtain
$$F_{k-n}\omega_X(*D) = \omega_X \big( (k +1)D \big).$$
Now $F_{k-n}\omega_X(*D)$ is torsion-free, being a subsheaf of $\omega_X(*D)$,
and so the following canonical map is injective:
$$F_{k-n}\omega_X(*D)\to j_*(F_{k-n}\omega_X(*D)\vert_U)=j_*\big(\omega_X \big( (k +1)D \big)\vert_U\big)=
\omega_X \big( (k +1)D \big).$$
This completes the proof of the lemma.
\end{proof}

\begin{remark}
The inclusion $F_{k-n}\omega_X(*D)\subseteq \omega_X \big( (k +1)D \big)$ given by Lemma~\ref{HP} 
is equivalent to the inclusion of the Hodge filtration in the pole order filtration, i.e.
$$F_k \shO_X(*D) \subseteq \shO_X \big( (k +1)D \big),$$
proved in \cite[Proposition 0.9]{Saito-B} using the $V$-filtration; see \S\ref{strictness_consequences}.
\end{remark}

We can now introduce the main objects we are concerned with in this paper.

\begin{definition}[{\bf Hodge ideals}]
Given the inclusion in Lemma \ref{HP}, for each $k\ge 0$ we define the ideal sheaf $I_k(D)$ on $X$ by the formula
$$F_{k-n}\omega_X(*D)=\omega_X \big( (k +1)D \big) \otimes I_k (D).$$
We call $I_k(D)$ the \emph{$k$-th Hodge ideal} of $D$. We will show in Theorem~\ref{independence} 
that the definition is independent of the choice of log resolution. 
\end{definition}

We end this section by mentioning another sequence of ideals that can be defined in this context.
We discuss them only briefly, since they will not play an important role in what follows; it is however a somewhat more 
intuitive definition that helps with a first approximation understanding of the Hodge ideals.
For every $k\geq 0$, define
$$\shF_{k-n}:=H^0C^{\bullet}_{k-n}= $$
$$= {\rm Coker} \left[\Omega_Y^{n-1}(\log E) \otimes_{\shO_Y} f^*F_{k-1}\Dmod_X\to\omega_Y(E)\otimes_{\shO_Y}f^*F_{k}\Dmod_X\right].$$
The morphism $C^{\bullet}_{k-n}\to A^{\bullet}$ induces a morphism
$$f_*\shF_{k-n}=f_*H^0C^{\bullet}_{k-n}\to f_*H^0A^{\bullet}=\omega_X(*D),$$
whose image we denote by $\overline{\shF}_{k-n}$. An argument similar to that in Lemma~\ref{HP} shows that
$$\overline{\shF}_{k-n}\subseteq\omega_X\big((k+1)D\big),$$
hence there is a coherent ideal $I_k^f(D)$ of $\shO_X$ such that 
$$\overline{\shF}_{k-n}=  \omega_X\big((k+1)D\big) \otimes I_k^f(D).$$ 
It is easy to see that
$$I_k(D)\subseteq I_k^f(D)$$
for every $k$. Indeed, we have a commutative diagram
$$
\begin{tikzcd}
C^{\bullet}_{k-n}\rar\dar  & A^\bullet  \dar{\varphi} \\
H^0 C^\bullet_{k-n}   \rar   & H^0 A^\bullet 
\end{tikzcd}
$$
in which $\varphi$ is a quasi-isomorphism. This induces a commutative diagram
$$
\begin{tikzcd}
R^0f_*C^{\bullet}_{k-n}\rar{\alpha}\dar  & R^0f_*A^\bullet  \dar{\delta} \\
f_*H^0 C^\bullet_{k-n}   \rar{\beta}   & f_*H^0 A^\bullet, 
\end{tikzcd}
$$
hence via the isomorphism $\delta$ we have
$$F_{k-n}\omega_X(*D)={\rm Im}(\alpha)\subseteq {\rm Im}(\beta)=\overline{\shF}_{k-n}.$$

We do not know whether $I_k^f(D)$ is independent of resolution.  If $k=0$ or $k=1$, then $I_k(D)=I_k^f(D)$. This is a consequence of the fact that the canonical morphism $C^{\bullet}_{k-n}\to H^0C_{k-n}^{\bullet}$ is a quasi-isomorphism in these cases (this is trivial for $k=0$ and it is a consequence of Lemma~\ref{filtration1} for $k=1$).
At the moment we do not know however whether this equality also holds for higher $k$; this is an intriguing question.

\subsection{The case $k =0$.}
Before engaging in a detailed study, let's note that the first ideal in the sequence can be identified with a multiplier ideal; for the general theory of multiplier ideals see \cite[Ch.9]{Lazarsfeld}.  

\begin{proposition}\label{ideal_I_0}
We have 
$$I_0 (D) = \I \big(X, (1-\epsilon)D \big),$$
the multiplier ideal associated to the $\QQ$-divisor $(1-\epsilon)D$ on $X$, for any $0 < \epsilon \ll 1$.
\end{proposition}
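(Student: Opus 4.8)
The plan is to unwind the definition of $I_0(D)$ through the resolution $f\colon Y\to X$ and observe that for $k=0$ the subcomplex $C_{-n}^\bullet = F_{-n}A^\bullet$ collapses to a single sheaf. Indeed, in $C_{-n}^\bullet$ the term in degree $-n+p$ is $\Omega_Y^p(\log E)\otimes_{\shO_Y} f^* F_{p-n}\Dmod_X$, and since $F_j\Dmod_X = 0$ for $j<0$, every term with $p<n$ vanishes; the only surviving term is $\omega_Y(E)\otimes_{\shO_Y} f^* F_0\Dmod_X = \omega_Y(E)$, placed in degree $0$. Hence $R^0 f_* C_{-n}^\bullet = f_*\omega_Y(E)$, and by definition $F_{-n}\omega_X(*D)$ is the image of the induced map $f_*\omega_Y(E)\to \omega_X(*D)$.

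Next I would show this map is injective, so that its image is exactly $f_*\omega_Y(E)$. The inclusion $\omega_Y(E)\hookrightarrow \omega_Y(*E)$ composed with the canonical map $\omega_Y(*E)\to \omega_Y(*E)\otimes_{\Dmod_Y}\Dmod_{Y\to X} = H^0 A^\bullet$ is injective, since the latter is a split injection by Lemma~\ref{lem_inclusion}. Applying the left-exact functor $f_*$ preserves injectivity, and under the identification $R^0 f_* A^\bullet \simeq f_* H^0 A^\bullet \simeq \omega_X(*D)$ this is precisely the map defining $F_{-n}\omega_X(*D)$. Thus $F_{-n}\omega_X(*D) = f_*\omega_Y(E)$. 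Combining this with the defining relation $F_{-n}\omega_X(*D) = \omega_X(D)\otimes I_0(D)$ and the projection formula gives
$$I_0(D) = f_*\big(\omega_Y(E)\otimes f^*\omega_X(D)^{-1}\big) = f_*\shO_Y(K_{Y/X} - f^*D + E),$$
where $K_{Y/X} = K_Y - f^*K_X$ and $\omega_Y(E) = \shO_Y(K_Y + E)$.

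Finally I would match this with the standard birational description of the multiplier ideal via the same log resolution, namely $\I(X,(1-\epsilon)D) = f_*\shO_Y\big(K_{Y/X} - \lfloor (1-\epsilon)f^*D\rfloor\big)$ (see \cite[Ch.~9]{Lazarsfeld}). Writing $f^*D = \sum_i a_i E_i$, where the $E_i$ are the components of $E$, each coefficient satisfies $a_i\ge 1$, so for $0<\epsilon\ll 1$ (concretely $\epsilon < 1/\max_i a_i$) one has $\lfloor (1-\epsilon)a_i\rfloor = a_i - 1$ for every $i$, i.e. $\lfloor (1-\epsilon)f^*D\rfloor = f^*D - E$. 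Substituting yields $\I(X,(1-\epsilon)D) = f_*\shO_Y(K_{Y/X} - f^*D + E)$, which coincides with the expression for $I_0(D)$ above.

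The computation is essentially mechanical once the complex is seen to collapse; the points requiring care are the injectivity step---ensuring the image is all of $f_*\omega_Y(E)$ rather than a proper subsheaf, which is exactly where Lemma~\ref{lem_inclusion} enters---and the round-down identity $\lfloor (1-\epsilon)f^*D\rfloor = f^*D - E$, which uses that $E$ is reduced and that every component of $f^*D$ has positive coefficient. No serious obstacle arises: the case $k=0$ is precisely the degenerate situation in which the $\Dmod$-module machinery reduces to the classical formula for multiplier ideals.
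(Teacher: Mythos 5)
Your proof is correct and follows essentially the same route as the paper's: the filtered complex $C^\bullet_{-n}$ collapses to $\omega_Y(E)$ in degree zero, the projection formula identifies $f_*\omega_Y(E)$ with $\omega_X(D)\otimes f_*\shO_Y(K_{Y/X}+E-f^*D)$, and the round-down identity $\lfloor (1-\epsilon)f^*D\rfloor = f^*D - E$ matches this with $\I\big(X,(1-\epsilon)D\big)$. The only cosmetic difference is that you justify injectivity of $f_*\omega_Y(E)\to\omega_X(*D)$ via the split injection of Lemma~\ref{lem_inclusion}, whereas the paper simply records the image directly (injectivity there being automatic, since the map is generically an isomorphism and $f_*\omega_Y(E)$ is torsion-free).
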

\begin{proof}
Recall that $C^{\bullet}_{-n} = \omega_Y (E)$, hence
$$F_0\omega_X(*D)={\rm Im}\big(f_*\omega_Y(E)\to \omega_X(D)\big)=f_*\shO_Y(K_{Y/X} + E - f^*D)\otimes\omega_X(D).$$
Therefore the statement to be proved is that 
$$f_* \shO_Y (K_{Y/X} + E - f^*D)  = \I \big(X, (1-\epsilon)D \big).$$
On the other hand, the right-hand side is by definition 
$$f_* \shO_Y \big(K_{Y/X} - \lfloor (1-\epsilon)f^*D\rfloor \big)=f_*\shO_Y\big(K_{Y/X}+(f^*D)_{\rm red}-f^*D\big),$$
which implies the desired equality.
\end{proof}

\begin{remark}
An equivalent result for $F_0 \shO_X(*D)$ can be found in Saito \cite{Saito-HF}, stated and proved using the theory of the $V$-filtration. 
\end{remark}

The following is a direct consequence of the definition;  see \cite[9.3.9]{Lazarsfeld}.

\begin{corollary}
We have $I_0 (D) = \shO_X$ if and only if the pair $(X, D)$ is log-canonical.
\end{corollary}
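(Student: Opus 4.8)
The plan is to reduce the statement to a standard triviality criterion for multiplier ideals, using the identification already obtained in Proposition~\ref{ideal_I_0}. Since that result gives $I_0(D) = \I\big(X, (1-\epsilon)D\big)$ for $0 < \epsilon \ll 1$, it suffices to show that this multiplier ideal is trivial precisely when $(X,D)$ is log-canonical. First I would recall the birational formula extracted in the proof of Proposition~\ref{ideal_I_0}, namely
$$I_0(D) = f_*\shO_Y\big(K_{Y/X} + E - f^*D\big)$$
for a log resolution $f\colon Y \to X$ with $E = (f^*D)_{\rm red}$. Writing the discrepancy divisor as $K_{Y/X} - f^*D = \sum_i a_i E_i$ over the prime components $E_i$ of $E$ (note that $K_{Y/X}$ and $f^*D$ are both supported on $E$, since $f$ is an isomorphism over $X \smallsetminus D$), this rewrites as $f_*\shO_Y\big(\sum_i (a_i+1)E_i\big)$, and by Lemma~\ref{HP} it is an ideal sheaf contained in $\shO_X$.

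Next I would compare this with the definition of log-canonicity. By definition $(X,D)$ is log-canonical exactly when all discrepancies satisfy $a_i \ge -1$, i.e.\ when the divisor $K_{Y/X} + E - f^*D = \sum_i (a_i+1)E_i$ is effective. If it is effective, then the inclusion $\shO_Y \hookrightarrow \shO_Y\big(K_{Y/X}+E-f^*D\big)$ yields $\shO_X = f_*\shO_Y \subseteq I_0(D)$, which together with $I_0(D) \subseteq \shO_X$ forces $I_0(D) = \shO_X$. Conversely, if some $a_{i_0}+1 < 0$, then the constant function $1$ fails to define a section of $f_*\shO_Y\big(\sum_i(a_i+1)E_i\big)$ near $f(E_{i_0})$, since its pullback has order $0$ along $E_{i_0}$ while a positive order $-(a_{i_0}+1)$ is required; hence $I_0(D) \subsetneq \shO_X$.

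The argument is essentially formal, being a direct translation of the discrepancy inequalities defining log-canonicity. The only point requiring a little care is the converse direction, where one must check that non-log-canonicity genuinely produces a nontrivial pushforward rather than being absorbed; this is exactly the content of the standard triviality criterion for multiplier ideals recorded in \cite[9.3.9]{Lazarsfeld}. Accordingly, the cleanest write-up simply chains Proposition~\ref{ideal_I_0} with that criterion to conclude in one line, and I would present the discrepancy computation above only as the underlying explanation.
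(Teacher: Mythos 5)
Your proof is correct and follows essentially the same route as the paper: the paper simply cites Proposition~\ref{ideal_I_0} together with the standard triviality criterion for multiplier ideals \cite[9.3.9]{Lazarsfeld}, which is exactly the one-line reduction you end with. Your unwinding of that criterion via the discrepancy divisor $K_{Y/X}+E-f^*D=\sum_i(a_i+1)E_i$ is accurate, but it is only an expansion of the cited fact, not a different argument.
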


\subsection{Independence of resolution, and filtration property}
We will remark in the next section that the ideals $I_k (D)$ are the same as those defined by the Hodge filtration on 
$\omega_X(*D)$; thus their independence of the choice of log resolution can be deduced from  
Saito's results on the uniqueness of open direct images \cite[Proposition 2.11]{Saito-MHM}.
We also include below an elementary proof that does not appeal to the theory of mixed Hodge modules.

\begin{theorem}\label{independence}
The ideals $I_k (D)$ are independent of the choice of log resolution. 
\end{theorem}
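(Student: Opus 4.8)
The plan is to reduce the comparison of two arbitrary log resolutions to the case of a single dominating morphism, and then to show that the filtered pushforward along that morphism reproduces the simple normal crossings data used to define $F_{k-n}\omega_X(*D)$, and hence $I_k(D)$.

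First I would reduce to a morphism of resolutions. Given two log resolutions $f\colon Y\to X$ and $f'\colon Y'\to X$ of $(X,D)$, each an isomorphism over $U=X\smallsetminus D$, choose by resolution of singularities a further log resolution $g\colon Z\to X$ of $(X,D)$ that factors as $g=f\circ h=f'\circ h'$, with $h\colon Z\to Y$ and $h'\colon Z\to Y'$ proper, $Z$ smooth, $E_Z:=(g^*D)_{\mathrm{red}}=(h^*E_Y)_{\mathrm{red}}$ a simple normal crossing divisor, and $h$ (resp. $h'$) an isomorphism over $Y\smallsetminus E_Y$ (resp. $Y'\smallsetminus E_{Y'}$). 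By symmetry it then suffices to prove that the ideal defined using $g$ agrees with the one defined using $f$; that is, I may assume $g=f\circ h$, where $h\colon Z\to Y$ is a log resolution of $(Y,E_Y)$ with $E_Y$ already a simple normal crossing divisor.

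Second, I would compare the two computations through the two-step pushforward $g_+=f_+\circ h_+$. Writing $A_Y^\bullet$ and $A_Z^\bullet$ for the complexes of Corollary~\ref{resolution2} on $Y$ and $Z$, and using the transfer-module composition $\Dmod_{Z\to X}\simeq \Dmod_{Z\to Y}\overset{\derL}{\otimes}_{h^{-1}\Dmod_Y}h^{-1}\Dmod_{Y\to X}$, the object $\omega_Z(*E_Z)\overset{\derL}{\otimes}_{\Dmod_Z}\Dmod_{Z\to X}$ is computed in two stages. The key claim is a filtered one: the natural morphism $C^\bullet_{Y,k-n}\to \derR h_*\, C^\bullet_{Z,k-n}$, induced by pullback of log differentials together with the structure maps of Proposition~\ref{resolution1}, is a quasi-isomorphism for every $k\ge 0$, and it is compatible with the inclusions into $A_Y^\bullet$ and $\derR h_* A_Z^\bullet$. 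Granting this, applying $R^0f_*$ gives $R^0 g_*\,C^\bullet_{Z,k-n}\simeq R^0 f_*\,C^\bullet_{Y,k-n}$ compatibly with the maps to $\omega_X(*D)\simeq R^0 f_* A_Y^\bullet\simeq R^0 g_* A_Z^\bullet$ (the latter identifications from Lemma~\ref{preservation} and Corollary~\ref{resolution2}), so the two images defining $F_{k-n}\omega_X(*D)$, hence the two ideals $I_k(D)$, coincide.

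The main obstacle is precisely this filtered claim. Forgetting filtrations it is immediate: $\derR h_* A_Z^\bullet\simeq A_Y^\bullet$ follows from Lemma~\ref{preservation} and Corollary~\ref{resolution2}, since after the transfer-module composition both represent $\omega_Y(*E_Y)$. The difficulty is that one cannot argue term by term, because the individual sheaves $\Omega_Z^p(\log E_Z)$ have nonvanishing higher direct images in general (only $R^q h_*\shO_Z=0$ for $q>0$ is automatic), so the matching of filtrations is a genuine strictness statement for the total complex. I would establish it by passing to the associated graded complexes $\gr^F_{k-n}A_Z^\bullet$, which by Proposition~\ref{resolution1} are the log-de-Rham-type complexes assembled from the $\Omega_Z^p(\log E_Z)$, and computing their $\derR h_*$ via the relative form of Theorem~\ref{Saito-LOG} together with the Akizuki--Nakano-type local vanishing statements for sheaves of differentials with log poles provided in the Appendix. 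Once $\derR h_*$ is shown to carry $\gr^F A_Z^\bullet$ isomorphically onto $\gr^F A_Y^\bullet$, strictness forces $C^\bullet_{Y,k-n}\to \derR h_* C^\bullet_{Z,k-n}$ to be a quasi-isomorphism. Carrying out this graded computation and the accompanying strictness argument \emph{without} invoking Saito's stability theorem for mixed Hodge modules is the technical heart of the elementary proof.
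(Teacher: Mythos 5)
Your reduction to a dominating resolution, your target statement (the filtered quasi-isomorphism $C^{\bullet}_{Y,k-n}\to \derR h_*\,C^{\bullet}_{Z,k-n}$, compatible with the maps to $A_Y^{\bullet}$ and $\derR h_*A_Z^{\bullet}$), and the final compatibility check by restricting to $U=X\smallsetminus D$ all coincide with the paper's strategy. The genuine gap is in your assessment of how to prove the key claim: your assertion that ``one cannot argue term by term, because the individual sheaves $\Omega_Z^p(\log E_Z)$ have nonvanishing higher direct images in general'' is false for the morphism at hand. The point you miss is that $h\colon Z\to Y$ is a proper birational morphism onto a \emph{log-smooth} pair: $E_Y$ is already a simple normal crossing divisor on the smooth variety $Y$, $h$ is an isomorphism over $Y\smallsetminus E_Y$, and $E_Z=(h^*E_Y)_{\rm red}$. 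In exactly this situation, Theorem~\ref{case_SNC_pair}i) of the Appendix --- an elementary statement, proved by factoring through smooth blow-ups with centers having simple normal crossings with the divisor, with no Hodge-module input --- gives
$$h_*\Omega_Z^p(\log E_Z)\simeq \Omega_Y^p(\log E_Y)\quad\text{and}\quad R^qh_*\Omega_Z^p(\log E_Z)=0\,\,\,\text{for all}\,\,\,q>0,\ p\geq 0.$$
Nonvanishing higher direct images of log forms (as in Example~\ref{cone_noneq}) occur only when the target pair is not log smooth, which is irrelevant here. Since each term of $C^{\bullet}_{Z,k-n}$ is $\Omega_Z^{p+n}(\log E_Z)\otimes_{\shO_Z}h^*\big(f^*F_{k+p}\Dmod_X\big)$ with $f^*F_{k+p}\Dmod_X$ locally free on $Y$, the projection formula immediately yields $\derR h_*C^{\bullet}_{Z,k-n}\simeq C^{\bullet}_{Y,k-n}$ term by term --- no spectral sequence, no strictness, no Saito theory. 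This is precisely the paper's proof.

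Your proposed substitute is consequently both unnecessary and problematic. Passing to the associated graded complexes avoids nothing: their terms are $\Omega_Z^{p+n}(\log E_Z)\otimes h^*f^*S^{k+p}T_X$, so computing $\derR h_*\gr^F A_Z^{\bullet}$ requires exactly the vanishing of $R^qh_*\Omega_Z^p(\log E_Z)$ that you declared unavailable. Moreover, ``strictness'' is not a black box once you renounce Saito's stability theorem, and the ``relative form of Theorem~\ref{Saito-LOG}'' you invoke is itself a mixed-Hodge-module statement about pushforward to the non-SNC base $X$, not about $h$, so appealing to it undercuts the elementary character you are aiming for. (If one insisted on the graded route, induction on the filtration via the triangles $F_{j-1}\to F_j\to \gr_j$ would replace strictness, but the inputs are then identical to the direct argument, so nothing is gained.) In short: replace the paragraph declaring the term-by-term approach impossible by an application of the projection formula together with Theorem~\ref{case_SNC_pair}i), and your outline becomes the paper's proof.
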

\begin{proof}
Since every two log resolutions can be dominated by a third one, it is enough to consider two morphisms
$f\colon Y \to X$ and $g\colon Z\to Y$ such that both $f$ and $h=f\circ g$ are log
resolutions of $(X,D)$ which are isomorphisms over $X\smallsetminus D$. We put $E=(f^*D)_{\rm red}$ and $G=(g^*f^*D)_{\rm red}$.

We denote by $C^{Y, \bullet}_{k-n}$ the complex 
$$0 \rightarrow  f^*F_{k-n}\Dmod_X \rightarrow \ldots\rightarrow \Omega_Y^{n-1}(\log E) \otimes_{\shO_Y} f^*F_{k-1}\Dmod_X \rightarrow  \omega_Y(E) \otimes_{\shO_Y}f^*F_k\Dmod_X\rightarrow 0,$$
on $Y$, and by $C^{Z, \bullet}_{k-n}$ the complex
$$0 \rightarrow  h^*F_{k-n}\Dmod_X \rightarrow \ldots\rightarrow \Omega_Z^{n-1}(\log G) \otimes_{\shO_Z} h^*F_{k-1}\Dmod_X \rightarrow  \omega_Z(G) \otimes_{\shO_Z}h^*F_k\Dmod_X\rightarrow 0,$$
on $Z$, both of them placed in degrees $-n,\ldots,0$. 
Since each $f^*F_j\Dmod_X$ is a locally free $\shO_Y$-module, we may apply the projection formula and Theorem~\ref{case_SNC_pair}i)
to deduce that
$$R^pg_*\big(\Omega_Z^{n-q}(\log G)\otimes_{\shO_Z}g^*f^*F_{k-q}\Dmod_X\big)=0$$
for all $q\geq 0$ and all $p>0$, and that we have canonical isomorphisms
$$\Omega_Y^{n-q}(\log E)\otimes_{\shO_Y}f^*F_{k-q}\Dmod_X\simeq
R^0g_*\big(\Omega_Z^{n-q}(\log G)\otimes_{\shO_Z}g^*f^*F_{k-q}\Dmod_X\big)$$
for all $q\geq 0$. We thus obtain a canonical isomorphism 
$$C^{Y, \bullet}_{k-n}\simeq \derR g_*C^{Z, \bullet}_{k-n},$$ hence a canonical isomorphism
$$\derR f_*C^{Y, \bullet}_{k-n}\simeq  \derR f_*\derR g_* C^{Z, \bullet}_{k-n}\simeq  \derR h_*C^{Z, \bullet}_{k-n}.$$
The assertion in the theorem is now a consequence of the fact that the induced isomorphism
$$R^0f_*C^{Y, \bullet}_{k-n}\simeq R^0h_*C^{Z, \bullet}_{k-n}$$
commutes with the two morphisms to $\omega_X(*D)$. Since the whole picture is compatible with restriction to open subsets,
this follows by restricting to $U=X\smallsetminus D$, where the assertion is straightforward.
\end{proof}


It is instructive to also give an elementary proof of the fact that $F_\bullet \omega_X(*D)$ gives a 
filtration for the $\Dmod_X$-module $\omega_X(*D)$ (without appealing to push-forwards of filtered 
$\Dmod_X$-modules).

\begin{lemma}\label{filtration_proof}
For every $k, \ell \in \NN$, we have 
$$F_{\ell - n} \omega_X(*D) \cdot F_k \Dmod_X \subseteq F_{\ell + k - n} \omega_X(*D).$$
\end{lemma}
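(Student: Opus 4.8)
The plan is to implement the right $\Dmod_X$-action on $\omega_X(*D)$ directly at the level of the complex $A^\bullet$, and to observe that right multiplication by a local section of $F_k\Dmod_X$ shifts the filtration subcomplexes $C^\bullet_{\bullet-n}$ by exactly $k$. Since the statement is local on $X$ and the left-hand side $F_{\ell-n}\omega_X(*D)\cdot F_k\Dmod_X$ is generated by the products $s\cdot P$ with $s$ a local section of $F_{\ell-n}\omega_X(*D)$ and $P$ a local section of $F_k\Dmod_X$, it suffices to fix such a $P$ and prove $F_{\ell-n}\omega_X(*D)\cdot P\subseteq F_{\ell+k-n}\omega_X(*D)$.

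First I would recall that, as in \S\ref{filtrations} and \S\ref{section_general_case}, $A^\bullet$ is a complex of right $f^{-1}\Dmod_X$-modules representing $\omega_Y(*E)\overset{\derL}{\otimes}_{\Dmod_Y}\Dmod_{Y\to X}$, with $R^0f_*A^\bullet\simeq\omega_X(*D)$; by the description of the pushforward via the transfer bimodule in \S\ref{dmod}, the right $\Dmod_X$-module structure on $\omega_X(*D)$ is precisely the one induced by the right $f^{-1}\Dmod_X$-action on $\Dmod_{Y\to X}=f^*\Dmod_X$. The key point is that the differentials of $A^\bullet$ involve only the left $\Dmod_Y$-module structure of $f^*\Dmod_X$, through the operators $\partial_i$ in the notation of Proposition~\ref{resolution1}, together with the $\shO_Y$-module structure, whereas $P$ acts by right multiplication. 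Since $f^*\Dmod_X$ is a $\Dmod_Y$--$f^{-1}\Dmod_X$-bimodule, these two actions commute, so right multiplication by $P$,
$$\rho_P\colon A^\bullet\longrightarrow A^\bullet,\qquad \omega\otimes Q\longmapsto \omega\otimes (QP),$$
is a morphism of complexes.

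Next I would check that $\rho_P$ carries the subcomplex $C^\bullet_{\ell-n}$ into $C^\bullet_{(\ell+k)-n}$. In degree $-n+j$ the term of $C^\bullet_{\ell-n}$ is $\Omega_Y^j(\log E)\otimes_{\shO_Y} f^*F_{\ell-n+j}\Dmod_X$, and right multiplication by $P\in F_k\Dmod_X$ sends it into $\Omega_Y^j(\log E)\otimes_{\shO_Y}f^*F_{\ell+k-n+j}\Dmod_X$, because the order filtration on $\Dmod_X$ is multiplicative, $F_{\ell-n+j}\Dmod_X\cdot F_k\Dmod_X\subseteq F_{\ell+k-n+j}\Dmod_X$, and this is preserved under $f^*$. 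This is exactly the degree $-n+j$ term of $C^\bullet_{(\ell+k)-n}$, so $\rho_P$ restricts to a chain map $C^\bullet_{\ell-n}\to C^\bullet_{(\ell+k)-n}$ compatible with the inclusions into $A^\bullet$.

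Finally, applying $R^0f_*$ to the resulting commutative square of complexes yields a commutative diagram
$$
\begin{tikzcd}
R^0f_*C^{\bullet}_{\ell-n}\rar\dar{R^0f_*\rho_P} & R^0f_*A^\bullet\dar{\rho_P} \\
R^0f_*C^{\bullet}_{(\ell+k)-n}\rar & R^0f_*A^\bullet,
\end{tikzcd}
$$
in which the right vertical map is right multiplication by $P$ on $\omega_X(*D)\simeq R^0f_*A^\bullet$. Since $F_{\ell-n}\omega_X(*D)$ is the image of the top horizontal map and $F_{\ell+k-n}\omega_X(*D)$ the image of the bottom one, a diagram chase gives $F_{\ell-n}\omega_X(*D)\cdot P\subseteq F_{\ell+k-n}\omega_X(*D)$, as required. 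I expect the only genuinely delicate point to be the identification in the second paragraph: verifying that the abstractly defined right $\Dmod_X$-module structure on $H^0f_+\omega_Y(*E)=\omega_X(*D)$ agrees with the right multiplication induced on $R^0f_*A^\bullet$ by $\rho_P$. This is where one must unwind the definition of the $\Dmod$-module pushforward and use that the bimodule structure on $\Dmod_{Y\to X}$ is exactly what transports the right $f^{-1}\Dmod_X$-action upstairs to the right $\Dmod_X$-action downstairs; the remaining steps are then formal.
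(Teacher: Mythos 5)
Your proposal is correct and is essentially the paper's own argument: the paper likewise observes that right multiplication (there packaged as a morphism of complexes $C^\bullet_{\ell-n}\otimes_{f^{-1}\shO_X}f^{-1}F_k\Dmod_X \to C^\bullet_{k+\ell-n}$ induced by the maps $F_i\Dmod_X\otimes_{\shO_X} F_k\Dmod_X\to F_{i+k}\Dmod_X$) shifts the filtration subcomplexes by exactly $k$, applies $R^0f_*$, and concludes by taking images in $\omega_X(*D)$. Your sectionwise variant with a fixed local $P$ is the same computation, and the compatibility you flag as delicate is dealt with in the paper just as you suggest — the pushed-forward map is compatible with the canonical multiplication $\omega_X(*D)\otimes_{\shO_X}F_k\Dmod_X\to\omega_X(*D)$, which one can verify by restricting to $X\smallsetminus D$, where $f$ is an isomorphism.
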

\begin{proof}
We employ the usual log resolution notation. 
With the notation in \S\ref{section_general_case}, we see that using the multiplication maps $F_i \Dmod_X \otimes_{\shO_X} F_k \Dmod_X \rightarrow F_{i +k} \Dmod_X$ we get a morphism of complexes
$$C^\bullet_{\ell-n} \otimes_{f^{-1}\shO_X} f^{-1} F_k \Dmod_X \longrightarrow C^\bullet_{k+\ell-n}.$$
Since $F_k\Dmod_X$ is a locally free $\shO_X$-module, 
applying $\derR f_*$ and taking $H^0$, we obtain an induced morphism
$$R^0f_*C^{\bullet}_{\ell-n}\otimes_{\shO_X} F_k\Dmod_X\to R^0 C^{\bullet}_{k+\ell-n}$$
compatible with the canonical multiplication map 
$$\omega_X(*D)\otimes_{\shO_X} F_k\Dmod_X\to \omega_X(*D).$$
By taking the images of $R^0f_*C^{\bullet}_{\ell-n}$ and $R^0f_*C^{\bullet}_{k+\ell-n}$  in $\omega_X(*D)$, we conclude that right-multiplication with sections of $F_k\Dmod_X$
induces the inclusion in the statement. 
 \end{proof}
 
 In light of Lemma~\ref{filtration_proof}, it is natural to ask for which $\ell$ the inclusion in the lemma is an equality for all 
 $k\geq 0$, in which case the filtration is determined by $F_{-n}\omega(*D),\ldots,F_{\ell-n}(*D)$. We will return to this in \S\ref{section_determine_filtration}.

\subsection{Comparison with Hodge filtration, and strictness property}\label{strictness_consequences}
It follows from the discussion in \S\ref{localization_HM} and the results from \S\ref{filtrations} used in the definition, 
that the filtration $F_\bullet \omega_X(*D)$ we introduced in \S\ref{section_general_case}
is the same as the Hodge filtration on $\omega_X(*D)\simeq H^0f_+\omega_Y(*E)$. Our definition of Hodge 
ideals is independent of this, but equating it with the Hodge filtration highlights the following important extra consequence 
that comes from strictness.
For $k \ge 0$, recall that $C^{\bullet}_{k-n}$ are the complexes providing the filtration on $A^\bullet$.

\begin{corollary}\label{local_vanishing_Hodge}
With the notation in \S\ref{section_general_case}, the following hold for every $k \ge 0$:
\begin{enumerate}
\item[i)] The map
$$R^0f_*C^{\bullet}_{k-n}\longrightarrow R^0f_*A^{\bullet}=\omega_X(*D)$$
is injective.
\item[ii)] ${\rm (}${\bf Local vanishing for $I_k(D).$}${\rm )}$ We have
$$R^i f_* C^{\bullet}_{k-n} = 0 \quad\text{for} \quad i \neq 0.$$
\end{enumerate}
\end{corollary}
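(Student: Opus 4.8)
The plan is to deduce both statements directly from Saito's Stability Theorem, in the form of the strictness property (\ref{strictness_formula}), by exploiting the identification of $F_\bullet\omega_X(*D)$ with the Hodge filtration recorded at the beginning of this subsection. Recall from \S\ref{section_general_case} that $A^\bullet$ represents $\omega_Y(*E)\overset{\derL}{\otimes}_{\Dmod_Y}\Dmod_{Y\to X}$ in the derived category of filtered right $f^{-1}\Dmod_X$-modules, and that under the tensor-product filtration its $(k-n)$-th filtered piece is precisely the subcomplex $C^\bullet_{k-n}=F_{k-n}A^\bullet$. Thus, for every $i$, the object $R^if_*C^\bullet_{k-n}$ is nothing but $R^if_*\big(F_{k-n}(\omega_Y(*E)\overset{\derL}{\otimes}_{\Dmod_Y}\Dmod_{Y\to X})\big)$, while $R^if_*A^\bullet=R^if_*(\omega_Y(*E)\overset{\derL}{\otimes}_{\Dmod_Y}\Dmod_{Y\to X})$.

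For part i), I would simply apply the strictness formula (\ref{strictness_formula}) with $\Mmod=\omega_Y(*E)$, filtration level $k-n$, and $i=0$: this asserts exactly that the natural map $R^0f_*C^\bullet_{k-n}\to R^0f_*A^\bullet=\omega_X(*D)$ is injective, which is the claim.

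For part ii), the additional input is that $A^\bullet$ has no higher direct images. Indeed, by Corollary~\ref{resolution2} the natural map $A^\bullet\to H^0A^\bullet$ is a quasi-isomorphism, and by Lemma~\ref{preservation} we have $\derR f_*A^\bullet\simeq f_+\omega_Y(*E)\simeq\omega_X(*D)$, concentrated in degree $0$; hence $R^if_*A^\bullet=0$ for all $i\neq 0$. Applying strictness once more, now for an arbitrary $i\neq 0$, the map $R^if_*C^\bullet_{k-n}\to R^if_*A^\bullet=0$ is injective, and therefore $R^if_*C^\bullet_{k-n}=0$. This yields the local vanishing.

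The genuine content, and the only place where something must be verified rather than merely cited, is the bridge that licenses the use of Saito's theorem: namely that $(A^\bullet,F)$, with $F_{k-n}A^\bullet=C^\bullet_{k-n}$, really computes the filtered direct image $f_+(\omega_Y(*E),F)$ in Saito's filtered derived category, so that the abstract strictness of $f_+(\omega_Y(*E),F)$ translates into the injectivity of (\ref{strictness_formula}) for these concrete complexes. This rests on the filtered quasi-isomorphism from \S\ref{filtrations} (Propositions~\ref{resolution1} and \ref{identification2}) together with the comparison with the Hodge filtration in \S\ref{localization_HM}. Once this identification is granted, both (i) and (ii) are formal consequences, with no further computation required.
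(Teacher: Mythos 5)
Your proposal is correct and follows essentially the same route as the paper: the paper's proof likewise deduces $R^if_*A^{\bullet}=0$ for $i\neq 0$ from Lemma~\ref{preservation} and Proposition~\ref{identification2} (equivalently, Corollary~\ref{resolution2}), and then invokes Saito's strictness to get that $R^if_*C^{\bullet}_{k-n}$ injects into $R^if_*A^{\bullet}$ for every $i$, which gives both i) and ii) at once. Your closing remark about the ``bridge'' — that $(A^{\bullet},F)$ genuinely computes $f_+(\omega_Y(*E),F)$ so that strictness applies — is exactly the content the paper establishes in \S\ref{strictness_consequences} just before stating the corollary.
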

\begin{proof}
We have $R^if_* A^{\bullet} =0$ for all $i\neq 0$ by Lemma~\ref{preservation} and Proposition~\ref{identification2}. 
On the other hand, strictness implies that $R^if_* C^{\bullet}_{k-n}$ injects in $R^if_* A^{\bullet}$.
\end{proof}

\begin{remark}\label{remark_local_vanishing_Hodge}
The case $k= 0$ in part ii) of the corollary is local vanishing for multiplier ideals; in this case $C^{\bullet}_{-n} = \omega_Y(E)$.
We note that in fact all vanishings $R^if_*C^{\bullet}_{k-n}=0$ are more elementary when $i > 0$. (This completely takes care
of the case $k = 1$ for instance, since $C^\bullet_{1-n}$ is quasi-isomorphic to $H^0C^{\bullet}_{1-n}$, whose
negative direct images trivially vanish.)
Indeed, if $C^{\bullet}=C^{\bullet}_{k-n}$, we have
$$R^if_*C^j\simeq R^if_*\Omega^{j+n}_Y(\log E) \otimes_{\shO_X}F_{k+j}\Dmod_X=0\quad\text{for}\quad i+j>0$$
by Theorem~\ref{thm_vanishing}. The first quadrant spectral sequence 
$$E^{p,q}_1 = R^qf_* C^{p-n}   \implies {\bf H}^{p+q- n}  = R^{p+q - n} f_* C^{\bullet},$$
then implies that $R^if_*C^{\bullet}=0$ for $i>0$.
\end{remark}

\subsection{Chain of inclusions}
It follows from the definition and Lemma~\ref{filtration_proof} that 
$$I_{k-1}(D)\cdot \shO_X (-D) \subseteq I_k (D)$$ 
for each $k \ge 1$.  However, the Hodge ideals also satisfy a more subtle sequence of inclusions. 

\begin{proposition}\label{inclusion_between_ideals}
For every reduced effective divisor $D$ on the smooth variety $X$, and for every $k\geq 1$, we have
$$I_k(D)\subseteq I_{k-1}(D).$$
\end{proposition}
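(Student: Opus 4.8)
The plan is to reduce the asserted inclusion of ideals to a statement about how the local equation of $D$ acts on the Hodge filtration, and then to prove that statement using the Hodge module structure of $\omega_X(*D)$. Working locally, let $h$ be an equation of $D$, so that $h\cdot\omega_X\big((k+1)D\big)=\omega_X(kD)$ as subsheaves of $\omega_X(*D)$. By definition $F_{k-n}\omega_X(*D)=\omega_X\big((k+1)D\big)\otimes I_k(D)$ and $F_{k-1-n}\omega_X(*D)=\omega_X(kD)\otimes I_{k-1}(D)$, so multiplying the first by $h$ gives $h\cdot F_{k-n}\omega_X(*D)=\omega_X(kD)\otimes I_k(D)$. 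Since $\omega_X(kD)$ is a line bundle, the desired inclusion $I_k(D)\subseteq I_{k-1}(D)$ is therefore \emph{equivalent} to
$$h\cdot F_{k-n}\omega_X(*D)\subseteq F_{k-1-n}\omega_X(*D),$$
i.e. to the assertion that multiplication by a local equation of $D$ lowers the Hodge filtration on $\omega_X(*D)$ by one step. Note that the mere increasing property of the filtration yields only the reverse-type inclusion $I_{k-1}(D)\otimes\shO_X(-D)\subseteq I_k(D)$, so genuinely new input is needed here.

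The second step is to establish this filtration statement. By the discussion in \S\ref{strictness_consequences}, $F_\bullet\omega_X(*D)$ is the Hodge filtration of the mixed Hodge module underlying $\omega_X(*D)\simeq H^0f_+\omega_Y(*E)$, and multiplication by $h$ is an $\shO_X$-linear endomorphism of $\omega_X(*D)$. The key point I would invoke is that $\omega_X(*D)$ is a localization along $D$, and that for such Hodge modules the action of a local defining equation strictly decreases the Hodge filtration by one; this is exactly the compatibility between the Hodge filtration and the localization structure furnished by Saito's theory. As a consistency check, when $D$ is smooth one has $I_k(D)=\shO_X$ for every $k$, and indeed $h\cdot F_{k-n}\omega_X(*D)=\omega_X(kD)=F_{k-1-n}\omega_X(*D)$, with equality.

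The main obstacle is precisely this second step, and it is the reason the statement relies essentially on Hodge modules rather than on the birational definition alone. At the level of the log-resolution complexes $C^\bullet_{k-n}$, multiplication by $f^*h$ does \emph{not} carry $f^*F_{k-p}\Dmod_X$ into $f^*F_{k-1-p}\Dmod_X$ — multiplying a differential operator by a function does not lower its order — so the wanted inclusion is simply invisible before passing to cohomology. It becomes available only after combining strictness, which by Corollary \ref{local_vanishing_Hodge} identifies $F_{k-n}\omega_X(*D)$ with $R^0f_*C^\bullet_{k-n}$ sitting inside $\omega_X(*D)$, with the localization property of the Hodge filtration; reconciling these two is where the real work lies.
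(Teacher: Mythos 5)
Your reduction in the first step is correct and is exactly how the paper concludes: the inclusion $I_k(D)\subseteq I_{k-1}(D)$ is equivalent to $h\cdot F_k\shO_X(*D)\subseteq F_{k-1}\shO_X(*D)$ for a local equation $h$ of $D$. The gap is in your second step, which is the entire content of the proposition. You invoke, as if it were a quotable fact, that ``for such Hodge modules [localizations] the action of a local defining equation strictly decreases the Hodge filtration by one.'' No such blanket principle exists in Saito's theory, and indeed it is \emph{false} as you state it: applied at the lowest level $k=0$ it would give $h\cdot F_0\shO_X(*D)\subseteq F_{-1}\shO_X(*D)=0$, which is absurd since $F_0\shO_X(*D)=\shO_X(D)\otimes I_0(D)\neq 0$. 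This is precisely why the proposition is only claimed for $k\geq 1$, and it shows that the filtration-lowering property cannot be a formal consequence of ``being a localization''; it must be derived, and the derivation is where the hypothesis $k\geq 1$ enters.

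The result one can actually cite, \cite[Lemma 3.2.6]{Saito-MHP}, says that $h\cdot F_k\Mmod\subseteq F_{k-1}\Mmod$ for a Hodge $\Dmod$-module $\Mmod$ whose \emph{support is contained in} $D$ --- and $\shO_X(*D)$ is not such a module. The paper bridges this gap with a short but essential argument: take the cokernel $\Mmod$ of the inclusion $\shO_X\hookrightarrow\shO_X(*D)$ in the abelian category $\MHM(X)$; this $\Mmod$ is supported on $D$, so Saito's lemma applies to it. Strictness of morphisms of mixed Hodge modules gives the exact sequence $0\to F_k\shO_X\to F_k\shO_X(*D)\to F_k\Mmod\to 0$, so for $u\in F_k\shO_X(*D)$ one gets $hu\in F_{k-1}\shO_X(*D)+\shO_X$; since $\shO_X=F_0\shO_X(*D)\subseteq F_{k-1}\shO_X(*D)$ when $k\geq 1$, the conclusion follows. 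Note how the correction term $\shO_X$ is exactly what blocks the argument at $k=0$, matching the counterexample above. Your proposal identifies the right target statement and correctly senses that it is invisible at the level of the log-resolution complexes, but it stops at the point where the proof has to begin.
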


\begin{proof}
We give an argument using the theory of mixed Hodge modules. Consider the canonical inclusion
$$\iota\colon \shO_X \hookrightarrow \shO_X(*D) $$ 
of filtered left $\Dmod_X$-modules that underlie mixed Hodge modules. Since the 
category ${\rm MHM} (X)$ of mixed Hodge modules on $X$ constructed in \cite{Saito-MHM} is abelian, the cokernel $\Mmod$ of $\iota$ underlies 
a mixed Hodge module on $X$ too, and it is clear that $\Mmod$ has support $D$. 
Since morphisms between Hodge $\Dmod$-modules preserve the filtrations and are strict, 
for each $k\ge 0$ we have a short exact sequence
$$0 \longrightarrow F_k \shO_X \longrightarrow F_k \shO_X (*D) \longrightarrow F_k\Mmod \longrightarrow 0.$$
Recall now that $F_k \shO_X = \shO_X$ for all $k \ge 0$. On the other hand, if $h$ is a local equation of $D$, then 
by \cite[Lemma 3.2.6]{Saito-MHP} we have 
$$h\cdot F_k \Mmod \subseteq F_{k-1}  \Mmod.$$
Indeed this a general property of Hodge $\Dmod$-modules whose support is contained in $D$. It follows easily that $h \cdot F_k \shO_X(*D) \subseteq F_{k-1} \shO_X(*D)$ as well,
which implies the assertion in the theorem by definition of Hodge ideals.
\end{proof}

\section{Basic properties of Hodge ideals}

As we have seen, the ideal $I_0(D)$ is a multiplier ideal. We now start a study of the properties of the ideals $I_k (D)$ for $k \ge 1$.

\subsection{The ideals $J_k(D)$.}
By analogy with the simple normal crossings case
($\ref{SNC_def}$), we define for each $k \ge 0$ an auxiliary ideal sheaf $J_k (D)$ by the formula
$$\omega_X \big((k+1)D\big) \otimes J_k (D) =  \big( \omega_X (D) \otimes I_0 (D)\big) \cdot F_k \Dmod_X.$$

\begin{lemma}\label{basic_inclusion}
For each $k \ge 0$ there is an inclusion $J_k (D) \subseteq I_k (D)$.
\end{lemma}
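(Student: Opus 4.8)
The plan is to unwind both definitions and reduce the claim to the filtration property already established in Lemma~\ref{filtration_proof}. The key observation is that the factor $\omega_X(D) \otimes I_0(D)$ appearing in the definition of $J_k(D)$ is nothing other than the bottom piece $F_{-n}\omega_X(*D)$ of the Hodge filtration. Indeed, the defining formula for Hodge ideals reads $F_{k-n}\omega_X(*D) = \omega_X\big((k+1)D\big) \otimes I_k(D)$, and specializing to $k=0$ gives $F_{-n}\omega_X(*D) = \omega_X(D) \otimes I_0(D)$.

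With this identification in hand, the definition of $J_k(D)$ becomes
$$\omega_X\big((k+1)D\big) \otimes J_k(D) = \big(\omega_X(D) \otimes I_0(D)\big) \cdot F_k\Dmod_X = F_{-n}\omega_X(*D) \cdot F_k\Dmod_X.$$
First I would apply Lemma~\ref{filtration_proof} with $\ell = 0$, which yields precisely
$$F_{-n}\omega_X(*D) \cdot F_k\Dmod_X \subseteq F_{k-n}\omega_X(*D).$$
Combining this containment with the defining formula $F_{k-n}\omega_X(*D) = \omega_X\big((k+1)D\big)\otimes I_k(D)$, I obtain
$$\omega_X\big((k+1)D\big) \otimes J_k(D) \subseteq \omega_X\big((k+1)D\big) \otimes I_k(D).$$

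Since $\omega_X\big((k+1)D\big)$ is a line bundle, tensoring with it is faithful, so I may cancel it on both sides and conclude $J_k(D) \subseteq I_k(D)$, as desired. There is no genuine obstacle here: all the real content has been placed in Lemma~\ref{filtration_proof} (the fact that $F_\bullet\omega_X(*D)$ is a genuine $\Dmod_X$-filtration), and the present statement is a formal consequence obtained by tracking the twisting line bundles. The only point worth care is making sure the indices line up correctly, namely that the multiplier ideal term $I_0(D)$ corresponds to $\ell = 0$ in the filtration, so that right-multiplication by $F_k\Dmod_X$ lands in $F_{k-n}\omega_X(*D)$ rather than some other filtered piece.
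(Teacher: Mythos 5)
Your proof is correct and is essentially identical to the paper's: both identify $\omega_X(D)\otimes I_0(D)$ with $F_{-n}\omega_X(*D)$ via the $k=0$ case of the definition and then invoke Lemma~\ref{filtration_proof} with $\ell=0$. The paper simply states this in one line; your version spells out the same bookkeeping with the twisting line bundle.
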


\begin{proof}
We need to check that
$$\big( \omega_X (D) \otimes I_0 (D)\big) \cdot F_k \Dmod_X \subseteq \omega_X \big((k+1)D\big) \otimes I_k (D).$$
But this is precisely the case $\ell = 0$ in Lemma \ref{filtration_proof}.
\end{proof}

\begin{remark}
An a priori different looking, but in fact equivalent statement involving the $V$-filtration, was noted in \cite[Theorem 0.4]{Saito-HF}. 
\end{remark}

\begin{remark}
For every $k\geq 0$, we have $\shO_X \big(-(k+1)D\big)\subseteq J_k(D)$, hence Lemma~\ref{basic_inclusion} implies $\shO_X\big(-(k+1)D\big)\subseteq I_k(D)$. Indeed, the assertion when $k=0$
follows from
$$\shO_X(-D)=\I (X,D)\subseteq \I \big(X, (1-\epsilon)D \big)=I_0(D),$$
where $0<\epsilon\ll 1$. The general case now follows from the definition of $J_k(D)$, using the fact that $\shO_X\subseteq F_k\Dmod_X$.
\end{remark}

When studying the connection between Hodge ideals and the singularities of $D$, the following estimate for the ideals 
$J_k(D)$, depending on the multiplicity of $D$, will prove useful. Recall first that if $W \subseteq X$ is an irreducible 
closed subset, then the \emph{$p$-th symbolic power} of $I_W$ is 
$$I_W^{(p)} : = \{f \in \shO_X ~|~ {\rm mult}_x (f) \ge p ~{\rm for~} x \in W {\rm ~general} \},$$
i.e. the ideal sheaf consisting of functions that have multiplicity at least $p$ at a general (and hence every) point of $W$. If $W$ is smooth, it is 
well known that $I_W^{(p)} = I_W^p$.

We begin with an estimate for $J_{k+1}(D)$ in terms of $J_k(D)$. Recall that for an ideal  $I$ in $\shO_X$, its Jacobian ideal  
${\rm Jac}(I)$ is the ideal $F_1\Dmod_X\cdot I\subseteq\shO_X$. 

\begin{lemma}\label{lem_estimate_J_ideals}
For every $k\geq 0$, we have
$$J_{k+1}(D)\subseteq\shO_X(-D)\cdot {\rm Jac}(J_k(D))+J_k(D)\cdot {\rm Jac}(\shO_X(-D)).$$
\end{lemma}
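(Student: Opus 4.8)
The plan is to work locally and reduce the statement to a single application of a first-order differential operator. Fix a point of $X$, local coordinates $x_1,\dots,x_n$, and a local equation $h$ of $D$, so that $\shO_X(-D)=(h)$ and ${\rm Jac}(\shO_X(-D))=F_1\Dmod_X\cdot(h)=(h,\partial_1 h,\dots,\partial_n h)$. First I would record the structural identity that comes straight from the definition of $J_k(D)$: since the order filtration satisfies $F_{k+1}\Dmod_X=F_k\Dmod_X\cdot F_1\Dmod_X$, associativity of the right action gives
$$\omega_X\big((k+2)D\big)\otimes J_{k+1}(D)=\big(\omega_X\big((k+1)D\big)\otimes J_k(D)\big)\cdot F_1\Dmod_X .$$
Thus $J_{k+1}(D)$ is obtained from $J_k(D)$ by applying operators of order $\le 1$, and it suffices to analyze this one step.

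Next I would make the right action explicit. Writing $M=\omega_X\big((k+1)D\big)\otimes J_k(D)$, which is locally $\frac{dx}{h^{k+1}}\cdot J_k(D)$, a Leibniz computation in the right $\Dmod_X$-module $\omega_X(*D)$ shows that $M\cdot F_1\Dmod_X$ is generated as an $\shO_X$-module by the elements $\frac{a\,dx}{h^{k+1}}$ and $\frac{a\,dx}{h^{k+1}}\cdot\partial_i$, where $a$ runs over a generating set of $J_k(D)$ and $1\le i\le n$. Using $(\psi\,dx)\cdot\partial_i=-(\partial_i\psi)\,dx$ and differentiating $\psi=a/h^{k+1}$, the numerators of these generators in $\omega_X\big((k+2)D\big)$ are
$$h\,a\qquad\text{and}\qquad (\partial_i a)\,h-(k+1)(\partial_i h)\,a .$$
Hence $J_{k+1}(D)$ is the ideal generated locally by these expressions.

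Finally I would read off the membership. The element $h\,a$ lies in $J_k(D)\cdot{\rm Jac}(\shO_X(-D))$, since $h\in{\rm Jac}(\shO_X(-D))$ and $a\in J_k(D)$. In the generator $(\partial_i a)h-(k+1)(\partial_i h)a$ the first term lies in $\shO_X(-D)\cdot{\rm Jac}(J_k(D))$, because $\partial_i a\in F_1\Dmod_X\cdot J_k(D)={\rm Jac}(J_k(D))$, while the second lies in $J_k(D)\cdot{\rm Jac}(\shO_X(-D))$, because $\partial_i h\in{\rm Jac}(\shO_X(-D))$. This yields
$$J_{k+1}(D)\subseteq\shO_X(-D)\cdot{\rm Jac}(J_k(D))+J_k(D)\cdot{\rm Jac}(\shO_X(-D)),$$
and since an inclusion of ideal sheaves may be checked locally, the proof is complete.

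I expect the only delicate point to be the identification of the $\shO_X$-module generators of $M\cdot F_1\Dmod_X$: one must check, via the commutation relation $\theta g=g\theta+\theta(g)$ in $\Dmod_X$, that $(g\,\eta)\cdot\partial_i=g\,(\eta\cdot\partial_i)-(\partial_i g)\,\eta$, so that the $\shO_X$-span of the naive generators $\eta$ and $\eta\cdot\partial_i$ already closes up (the discrepancy falls back among the generators of the first type). Everything else is a routine differentiation of $a/h^{k+1}$ together with bookkeeping of which Jacobian ideal each resulting term belongs to.
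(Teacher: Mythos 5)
Your proof is correct and takes essentially the same route as the paper's: both reduce the statement to the one-step identity $\omega_X\big((k+2)D\big)\otimes J_{k+1}(D)=\big(\omega_X\big((k+1)D\big)\otimes J_k(D)\big)\cdot F_1\Dmod_X$ coming from $F_{k+1}\Dmod_X=F_k\Dmod_X\cdot F_1\Dmod_X$, and then apply the Leibniz rule to local generators $a/h^{k+1}$, sorting the resulting terms $ha$, $(\partial_i a)h$ and $(\partial_i h)a$ into the two ideals. The only cosmetic difference is that the paper phrases the computation via the left action on $\shO_X(*D)$ while you use the right action on $\omega_X(*D)$, which changes nothing beyond a sign.
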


\begin{proof}
It follows from the definition of the ideals $J_k (D)$ that 
$$F_1\Dmod_X\cdot \big(\shO_X((k+1)D)\cdot J_k(D)\big)=\shO_X\big((k+2)D\big)\cdot J_{k+1}(D).$$
In other words, if $h$ is a local equation of $D$, then $J_{k+1} (D)$ is locally generated by
$h^{k+2}\left(P\cdot\frac{g}{h^{k+1}}\right)$, where $g$ varies over the local sections of $J_k(D)$
and $P$ varies over the local sections of $F_1\Dmod_X$. The assertion in the lemma now follows from
the Leibniz rule.
\end{proof}

\begin{remark}\label{strict_J_inclusion}
The inclusion in Lemma~\ref{lem_estimate_J_ideals} is not always an equality. Suppose, for example, that $X={\mathbf C}^2$ with coordinates $x$ and $y$, and $D$
is defined by $(x^2+y^3 = 0)$. In this case, we have
$$I_0(D)=J_0(D)=(x,y), J_1(D)=(x^2,xy,y^3),$$
$$J_2(D)=(x^3,x^2y^2,xy^3,y^5,y^4-3x^2y),\quad\text{while}$$
$$\shO_X(-D)\cdot {\rm Jac}(J_1(D))+J_1(D)\cdot {\rm Jac}(\shO_X(-D))=(x^3,x^2y,xy^3,y^4).$$
\end{remark}

\begin{proposition}\label{estimate_J_ideals}
Let $X$ be a smooth variety, $W\subseteq X$ an irreducible closed subset defined by the ideal $I_W$, and $D$ a reduced effective divisor with ${\rm mult}_W(D) = m \geq 1$.
\begin{enumerate}
\item[i)] If $I_0(D)\subseteq I_W^{(m')}$ for some $m' \ge 0$, then 
$$J_k(D)\subseteq I_W^{(k(m-1)+m')}\quad \text{for every}\quad k\geq 0.$$
In particular, we always have $J_k(D)\subseteq I_W^{(k(m-1))}$.

\medskip

\item[ii)] If $r={\codim}(W,X)$ and $m\geq r$, then $I_0(D)\subseteq I_W^{(m-r)}$.
\end{enumerate}
\end{proposition}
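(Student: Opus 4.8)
The plan is to prove the two parts by different mechanisms: part (i) by induction on $k$ feeding the recursive estimate of Lemma~\ref{lem_estimate_J_ideals}, and part (ii) by a single divisorial computation for the multiplier ideal $I_0(D)$ provided by Proposition~\ref{ideal_I_0}. Throughout I would use freely the description of $I_W^{(p)}$ as the functions of multiplicity $\ge p$ at a general (hence smooth) point of $W$, which reduces every containment of symbolic powers to a statement about multiplicities at a general smooth point.

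For part (i), the base case $k=0$ is exactly the hypothesis, since $J_0(D)=I_0(D)\subseteq I_W^{(m')}$ (indeed $F_0\Dmod_X=\shO_X$, so $J_0(D)=I_0(D)$). The induction rests on two elementary facts: first, applying a first-order differential operator drops the order of vanishing along $W$ by at most one, so ${\rm Jac}(I_W^{(p)})\subseteq I_W^{(p-1)}$ for all $p$; second, multiplicity at a general point of $W$ is additive on products, giving $I_W^{(a)}\cdot I_W^{(b)}\subseteq I_W^{(a+b)}$. Since a local equation $h$ of $D$ satisfies ${\rm mult}_W(h)=m$, we have $\shO_X(-D)\subseteq I_W^{(m)}$ and ${\rm Jac}(\shO_X(-D))\subseteq I_W^{(m-1)}$. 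Now write $p=k(m-1)+m'$ and assume $J_k(D)\subseteq I_W^{(p)}$. Plugging this into Lemma~\ref{lem_estimate_J_ideals}, the first summand lands in $I_W^{(m)}\cdot I_W^{(p-1)}$ and the second in $I_W^{(p)}\cdot I_W^{(m-1)}$, so both lie in $I_W^{(p+m-1)}=I_W^{((k+1)(m-1)+m')}$, closing the induction. The ``in particular'' statement is the case $m'=0$, where $I_W^{(0)}=\shO_X$.

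For part (ii), I would work near a general point of $W$, where $W$ is smooth of codimension $r$, so that $I_W^{(m-r)}=I_W^{m-r}$ and order of vanishing along $W$ is an honest divisorial valuation. By Proposition~\ref{ideal_I_0} we have $I_0(D)=\I\big(X,(1-\epsilon)D\big)$ for $0<\epsilon\ll1$. Let $\pi\colon\widetilde X\to X$ be the blow-up of the smooth locus of $W$, with exceptional divisor $E$; then $\ord_E(K_{\widetilde X/X})=r-1$ and $\ord_E(\pi^*D)={\rm mult}_W(D)=m$. For $0<\epsilon\ll1$ one has $\lfloor(1-\epsilon)m\rfloor=m-1$, so the defining inequality of the multiplier ideal, read off on a log resolution dominating $\pi$ along the valuation $\ord_E$, forces every $g\in I_0(D)$ to satisfy
$$\ord_E(\pi^*g)\ \ge\ \lfloor(1-\epsilon)m\rfloor-(r-1)\ =\ (m-1)-(r-1)\ =\ m-r.$$
Because $E$ is the exceptional divisor of blowing up a smooth center, $\ord_E(\pi^*g)={\rm mult}_W(g)$, and this is precisely the statement $g\in I_W^{(m-r)}$, giving $I_0(D)\subseteq I_W^{(m-r)}$.

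The induction in (i) is routine; the only care needed is the exponent bookkeeping, which balances because $m+p-1=(k+1)(m-1)+m'$. The substantive step is (ii): the hard part is recognizing that a \emph{single} divisor over $W$ already yields the bound, with the discrepancy $r-1$ of the blow-up of a smooth codimension-$r$ center as the decisive input. The one subtlety is that $\pi$ need not be a log resolution of $(X,D)$, so strictly speaking one extracts the valuation $\ord_E$ on a log resolution dominating $\pi$ and reads the same inequality there; restricting to a general point of $W$ is what legitimizes both the smoothness of the center and the identification $\ord_E(\pi^*g)={\rm mult}_W(g)$.
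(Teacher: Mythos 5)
Your proof is correct and follows essentially the same route as the paper: part (i) is the identical induction on $k$ via Lemma~\ref{lem_estimate_J_ideals}, after reducing statements about symbolic powers to multiplicities at a general (smooth) point of $W$, with the same exponent bookkeeping. The only deviation is in part (ii), where the paper simply cites the standard multiplier-ideal estimate \cite[Example 9.3.5]{Lazarsfeld} applied to $I_0(D)=\I\big(X,(1-\epsilon)D\big)$, whereas you reprove that estimate from scratch via the blow-up discrepancy computation (correctly handling the passage to a log resolution dominating the blow-up); this is the same underlying mechanism, just unpacked.
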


\begin{proof}
By restricting to an appropriate open subset intersecting $W$, we can assume that $W$ is smooth, and work with 
usual powers.
For the first assertion, we argue by induction on $k$, the case $k=0$ being trivial since $J_0(D)=I_0(D)$. 

Note that if $I\subseteq I_W^r$, then ${\rm Jac}(I)\subseteq I_W^{r-1}$. Therefore we have
${\rm Jac}(\shO_X(-D))\subseteq I_W^{m-1}$. By induction we also have
$J_k(D)\subseteq I_W^{k(m-1)+m'}$, hence 
$${\rm Jac}(J_k(D))\subseteq I_W^{k(m-1)+m'-1}.$$
We deduce from Lemma~\ref{lem_estimate_J_ideals} that 
$$J_{k+1}(D)\subseteq I_W^{(k+1)(m-1)+m'}.$$

The assertion in ii) follows from the fact that $I_0(D)=\I\left(X, (1-\epsilon)D\right)$ (see Proposition~\ref{ideal_I_0}) and
well-known estimates for multiplier ideals (see \cite[Example 9.3.5]{Lazarsfeld}).
\end{proof}

\subsection{Behavior under smooth pullback}

In this section we consider the behavior of the ideals $I_k(D)$ under pull-back by a smooth morphism.
As before, we assume that $D$ is a reduced effective divisor on the smooth $n$-dimensional variety $X$.

\begin{proposition}\label{smooth_pull_back}
If $p\colon X'\to X$ is a smooth morphism and $D'=p^* D$, then for every $k\geq 0$ we have
$$I_k(D')=I_k(D)\cdot\shO_{X'}.$$
\end{proposition}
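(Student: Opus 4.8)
The plan is to reduce the statement to the compatibility of the Hodge filtration with the (flat) pullback, and then prove that compatibility by base-changing a log resolution. Working with left $\Dmod$-modules, recall from the introduction that $F_k\shO_X(*D)=\shO_X\big((k+1)D\big)\otimes I_k(D)$. I claim it suffices to establish
\[
F_k\shO_{X'}(*D')=p^*F_k\shO_X(*D)\qquad(k\ge 0),
\]
call it $(\star)$, where $p^*\mathcal G=\shO_{X'}\otimes_{p^{-1}\shO_X}p^{-1}\mathcal G$ is the naive pullback. Indeed, since $p$ is smooth, hence flat, $p^*$ is exact and carries the subsheaf $F_k\shO_X(*D)\subseteq\shO_X(*D)$ to $p^*F_k\shO_X(*D)=\shO_{X'}\big((k+1)D'\big)\otimes\big(I_k(D)\cdot\shO_{X'}\big)$, using $D'=p^*D$; comparing with $F_k\shO_{X'}(*D')=\shO_{X'}\big((k+1)D'\big)\otimes I_k(D')$ then yields $I_k(D')=I_k(D)\cdot\shO_{X'}$, as desired.

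To prove $(\star)$ I would base change a log resolution. Fix a log resolution $f\colon Y\to X$ of $(X,D)$ that is an isomorphism over $X\smallsetminus D$, set $E=(f^*D)_{\rm red}$, and form the fiber square with $Y'=Y\times_X X'$, $f'\colon Y'\to X'$ the projection, and $p_Y\colon Y'\to Y$ the base change of $p$. Since $p$ is smooth, so is $p_Y$; hence $Y'$ is smooth, $f'$ is proper and an isomorphism over $X'\smallsetminus D'$, and $E':=(f'^*D')_{\rm red}=p_Y^*E$ has simple normal crossings, because smooth pullback commutes with taking the reduced structure and preserves the SNC condition. Thus $f'$ is a log resolution of $(X',D')$, and by Theorem \ref{independence} we may compute $I_k(D')$ from it.

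It remains to compare, through flat base change, the complex $C^{Y',\bullet}_{k-n'}$ on $Y'$ (with $n'=\dim X'=n+d$, where $d$ is the relative dimension of $p$) that computes the filtration on $\omega_{X'}(*D')$ with the pullback of the complex $C^{Y,\bullet}_{k-n}$ of \S\ref{section_general_case}. Flat base change gives $p^*\derR f_*C^{Y,\bullet}_{k-n}\simeq \derR f'_*\,p_Y^*C^{Y,\bullet}_{k-n}$, so the entire difficulty is that $C^{Y',\bullet}_{k-n'}$ is not $p_Y^*C^{Y,\bullet}_{k-n}$: it is larger, owing to the relative log differentials inside $\Omega_{Y'}^\bullet(\log E')$, the relative operators distinguishing $f'^*F_\bullet\Dmod_{X'}$ from $f'^*p^*F_\bullet\Dmod_X$, and the degree shift by $d$. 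These three discrepancies together form a relative log-de Rham complex in the fiber direction, and the crux of the proof — and the main obstacle — is to show that it contributes trivially after $R^0f'_*$, with the index shift matching up. I would dispatch this by reducing to a model case: the construction is étale-local and commutes with étale base change (for which there are no relative differentials), and a smooth morphism factors étale-locally through a projection $\AAA^d_X\to X$, so I may assume $p$ is the projection $X\times\CC\to X$ with fiber coordinate $t$, and by induction on $d$ take $d=1$. There $\Omega_{Y'}^q(\log E')=p_Y^*\Omega_Y^q(\log E)\oplus\big(p_Y^*\Omega_Y^{q-1}(\log E)\wedge dt\big)$ and $\Dmod_{X'}=\Dmod_X\boxtimes\Dmod_\CC$, so $C^{Y',\bullet}_{k-n'}$ becomes the total complex of $C^{Y,\bullet}_{k-n}$ tensored with the relative de Rham complex $[\,\shO_{Y'}\to\shO_{Y'}\,dt\,]$; a direct Künneth computation then identifies $R^0f'_*C^{Y',\bullet}_{k-n'}$ with $p^*R^0f_*C^{Y,\bullet}_{k-n}$ as subsheaves of $\omega_{X'}(*D')$, giving $(\star)$. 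As a cross-check, $(\star)$ is precisely the compatibility of the Hodge filtration with smooth inverse image in Saito's theory, applied here to the external product $\shO_X(*D)\boxtimes\shO_\CC$.
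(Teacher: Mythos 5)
Your overall strategy coincides with the paper's: reduce the equality of ideals to compatibility of the filtration with pullback, form the fiber square over a log resolution of $(X,D)$, note that $f'\colon Y'\to X'$ is a log resolution of $(X',D')$, factor $p$ \'{e}tale-locally as an \'{e}tale morphism followed by a projection $X\times\AAA^r\to X$, dispose of the \'{e}tale case by flat base change, and treat the projection case by a K\"{u}nneth argument in the fiber direction. Your initial reduction to $(\star)$, the geometry of the fiber square, and the \'{e}tale case are all fine.

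The gap is in the key identification in the projection case, which is false as stated. Since the order filtration on $\Dmod_{X'}=\Dmod_X\boxtimes\Dmod_{\AAA^1}$ is the convolution $F_j\Dmod_{X'}=\sum_{a+b=j}F_a\Dmod_X\boxtimes F_b\Dmod_{\AAA^1}$, the terms $\Omega_{Y'}^{n'+p}(\log E')\otimes f'^*F_{k+p}\Dmod_{X'}$ of $C^{Y',\bullet}_{k-n'}$ contain summands involving all powers $\partial_t^b$ with $1\le b\le k+p$; no such terms occur in the total complex of $C^{Y,\bullet}_{k-n}$ tensored with $[\shO_{Y'}\to\shO_{Y'}\,dt]$, so the two complexes are not isomorphic (compare ranks already in degree $0$). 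Moreover, even up to quasi-isomorphism your candidate cannot give $(\star)$: the fiber-direction complex must account for the factor $u^{-1}\omega_{\AAA^1}$, which is exactly what matches $\omega_{X'}\big((k+1)D'\big)\simeq p^{-1}\omega_X\big((k+1)D\big)\otimes_{\CC}u^{-1}\omega_{\AAA^1}$ at the end, whereas the relative de Rham complex computes the fiberwise constants. The repair — and this is precisely what the paper does — is to replace $[\shO_{Y'}\to\shO_{Y'}\,dt]$ by the filtered complex of induced $\Dmod$-modules $B^{\bullet}=[\Dmod_{\AAA^1}\to\omega_{\AAA^1}\otimes\Dmod_{\AAA^1}]$: one has an isomorphism of \emph{filtered} complexes $A^{D',\bullet}\simeq q^{-1}A^{D,\bullet}\otimes_{\CC}u^{-1}B^{\bullet}$, and since $B^{\bullet}\to\omega_{\AAA^1}$ is a filtered quasi-isomorphism by Proposition \ref{resolution1} (the case $E=0$), passing to the filtered piece $F_{k-n'}$ yields a quasi-isomorphism $C^{D',\bullet}_{k-n'}\to q^{-1}C^{D,\bullet}_{k-n}\otimes_{\CC}u^{-1}\omega_{\AAA^1}$; here the convolution collapses to a single term exactly because the filtration on $\omega_{\AAA^1}$ is concentrated in level $-1$. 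With this correction, your K\"{u}nneth step and the verification over $X\smallsetminus D$ (that the resulting isomorphism is induced by $\shO_X\to\shO_{X'}$) complete the argument as in the paper; note also that the induction on the relative dimension $d$ is unnecessary, as the argument works for $\AAA^r$ directly.
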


\begin{proof}
Note first that since $p$ is smooth, the effective divisor $D'$ is reduced. Let $f\colon Y\to X$ be a log resolution of $(X,D)$
which is an isomorphism over the complement of $D$. We have a commutative diagram
$$
\begin{tikzcd}
Y'=Y\times_X X' \rar{f'} \dar{q} & X' \dar{p} \\
Y \rar{f} & X
\end{tikzcd}
$$
and it is clear that $f'$ is a log resolution
of $(X',D')$. Moreover, if $E=(f^*D)_{\rm red}$ and $E'=({f'}^*D')_{\rm red}$, then $E'=q^*E$.

The assertion in the proposition is local on $X'$. Therefore we may assume that we have a system of algebraic coordinates $x_1,\ldots,x_n\in\Gamma(X,\shO_X)$ on $X$, and $x'_1,\ldots,x'_r\in\Gamma(X',\shO_{X'})$ such that
$p^*(x_1),\ldots,p^*(x_n),x'_1,\ldots,x'_r$ form a system of algebraic coordinates on $X'$. Note that $x'_1,\ldots,x'_r$
define a smooth morphism $u\colon X'\to \AAA^r$ such that $(p,u)\colon X'\to X\times\AAA^r$ is \'{e}tale. 
Since $p$ factors as the composition $X'\to X\times\AAA^r\to X$, it is enough to consider separately the case when
$p$ is \'{e}tale and when $X'=X\times\AAA^r$ and $p$ is the projection.

Following the notation in \S\ref{section_general_case}, let  $A^{D,\bullet}$ and $A^{D',\bullet}$
denote the complexes 
on $Y$ and $Y'$ that appear in the definition of $I_k(D)$ and $I_k(D')$, respectively. We put
$$C_{k-n}^{D',\bullet}=F_{k-n} A^{D',\bullet}\quad\text{and}\quad C_{k-n}^{D,\bullet}=F_{k-n} A^{D,\bullet}.$$

Suppose first that $p$ is \'{e}tale. In this case it is clear that 
$$C_{k-n}^{D',\bullet}=q^{-1}C_{k-n}^{D,\bullet}\otimes_{q^{-1}p^{-1}\shO_X}{f'}^{-1}\shO_{X'}$$
and since $\shO_{X'}$ is flat over $f^{-1}\shO_X$, it follows that we have a canonical isomorphism 
$$R^0f'_*C_{k-n}^{D',\bullet}\simeq p^*R^0f_*C_{k-n}^{D,\bullet}.$$
Note that this isomorphism is compatible with restriction to open subsets.
In order to check that the isomorphism is compatible with the corresponding maps to 
$$\omega_{X'}\big((k+1)D'\big)\simeq p^*\omega_X\big((k+1)D\big),$$
it is enough to restrict to $U=X\smallsetminus D$, over which the assertion is clear.
This gives the equality in the proposition.

Suppose now that $X'=X\times\AAA^r$ and $p\colon X'\to X$ is the projection.
It is easy to see, using the definition, that we have an isomorphism of filtered complexes
\begin{equation}\label{eq_smooth_pull_back}
A^{D',\bullet}\simeq q^{-1}A^{D,\bullet}\otimes_{\CC}u^{-1}B^{\bullet},
\end{equation}
where $B^{\bullet}$ is the complex
$$0\longrightarrow \Dmod_{\AAA^r}\longrightarrow\Omega_{\AAA^r}^1\otimes\Dmod_{\AAA^r}\longrightarrow\ldots\longrightarrow\omega_{\AAA^r}\otimes\Dmod_{\AAA^r}\longrightarrow 0,$$
placed in degrees $-r,\ldots,0$. It follows from Proposition~\ref{resolution1} that the canonical map $B^{\bullet}\to\omega_{\AAA^r} $ is a filtered quasi-isomorphism, hence we deduce
from (\ref{eq_smooth_pull_back})
that we have a canonical filtered quasi-isomorphism 
$$A^{D',\bullet}\longrightarrow q^{-1}A^{D,\bullet}\otimes_{\CC} u^{-1}\omega_{\AAA^r}.$$
In particular, we have a quasi-isomorphism
$$C_{k-n}^{D',\bullet}\longrightarrow q^{-1}C_{k-n}^{D, \bullet}\otimes u^{-1}\omega_{\AAA^r}.$$
It follows using K\"{u}nneth's formula that we have a canonical isomorphism
$$R^0f'_*C_{k-n}^{D',\bullet}\simeq p^{-1}R^0f_*C^{D,\bullet}_{k-n}\otimes_{\CC}u^{-1}\omega_{\AAA^r}.$$
As before, by restricting to $U=X\smallsetminus D$ we see that this isomorphism is compatible with
the corresponding maps to
$$\omega_{X'}\big((k+1)D'\big)\simeq p^*\omega_X\big((k+1)D\big)\otimes u^*\omega_{\AAA^r}=p^{-1}\omega_X\big((k+1)D\big)\otimes_{\CC}u^{-1}\omega_{\AAA^r}.$$
The equality in the proposition now follows from the definition of Hodge ideals.
This completes the proof.
\end{proof}

\subsection{Restriction to hypersurfaces}

We now turn to the behavior of Hodge ideals under restriction to a general hypersurface. 

\begin{theorem}\label{restriction_general_hypersurfaces}
Let $D$ be a reduced effective divisor on the smooth $n$-dimensional variety $X$. For every $k\geq 0$, if $H$ is a general
element of a base-point free linear system on $X$, then 
$$I_k(D\vert_H)=I_k(D)\cdot\shO_H.$$
\end{theorem}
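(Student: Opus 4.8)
The plan is to compute $I_k(D)$ and $I_k(D\vert_H)$ on compatible log resolutions and to compare the two defining complexes by restriction to the divisor. First I would fix a log resolution $f\colon Y\to X$ of $(X,D)$ which is an isomorphism over $X\smallsetminus D$, set $E=(f^*D)_{\rm red}$, and pull the given base-point-free linear system back to $Y$. Applying Bertini on $Y$, a general member $\widetilde H$ of the pulled-back system is smooth, irreducible, and meets $E$ transversally, so $E+\widetilde H$ has simple normal crossings; moreover $\widetilde H=f^*H$ for the corresponding general $H$, which is then smooth with $D\vert_H$ reduced, and $f_H:=f\vert_{\widetilde H}\colon\widetilde H\to H$ is a log resolution of $(H,D\vert_H)$ that is an isomorphism over $H\smallsetminus D\vert_H$, with $E':=E\vert_{\widetilde H}=(f_H^*(D\vert_H))_{\rm red}$. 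Using the adjunction isomorphisms $\omega_X\vert_H\simeq\omega_H(-H)$ and $\shO_X(jD)\vert_H\simeq\shO_H(jD\vert_H)$, the desired equality $I_k(D\vert_H)=I_k(D)\cdot\shO_H$ becomes equivalent to
$$F_{k-(n-1)}\omega_H(*(D\vert_H))=\bigl(F_{k-n}\omega_X(*D)\bigr)\vert_H\otimes\shO_H(H),$$
so it suffices to compare these two filtered pieces, which by definition are the images in $\omega_H(*(D\vert_H))$ of $R^0(f_H)_*C^{\widetilde H,\bullet}_{k-(n-1)}$ and of the twisted restriction of $R^0f_*C^{Y,\bullet}_{k-n}$.

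The heart of the argument is a restriction of the complex $C^{Y,\bullet}_{k-n}$ to $\widetilde H$. Since each term $\Omega^p_Y(\log E)\otimes_{\shO_Y}f^*F_{k-n+p}\Dmod_X$ is locally free over $\shO_Y$, tensoring with $0\to\shO_Y(-\widetilde H)\to\shO_Y\to\shO_{\widetilde H}\to 0$ gives a short exact sequence of complexes of $f^{-1}\shO_X$-modules
$$0\to C^{Y,\bullet}_{k-n}(-\widetilde H)\to C^{Y,\bullet}_{k-n}\to C^{Y,\bullet}_{k-n}\otimes\shO_{\widetilde H}\to 0.$$
The key local computation then exhibits, inside $(C^{Y,\bullet}_{k-n}\otimes\shO_{\widetilde H})\otimes\shO_{\widetilde H}(\widetilde H)$, a subcomplex isomorphic to $C^{\widetilde H,\bullet}_{k-(n-1)}$. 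For this I would use the wedge powers of the transverse conormal sequence, namely
$$0\to\Omega^{p-1}_{\widetilde H}(\log E')(-\widetilde H)\to\Omega^p_Y(\log E)\vert_{\widetilde H}\to\Omega^p_{\widetilde H}(\log E')\to 0,$$
whose sub, after the twist by $\shO_{\widetilde H}(\widetilde H)$, is exactly $\Omega^{p-1}_{\widetilde H}(\log E')$; combined with the natural inclusion $f_H^*F_{k-n+p}\Dmod_H\hookrightarrow f^*F_{k-n+p}\Dmod_X\vert_{\widetilde H}$ of the ``no normal derivative'' part, whose complementary graded pieces are the positive symmetric powers $\Sym^{\ge 1}N_{H/X}$ of the normal bundle $N_{H/X}=\shO_H(H)$, this matches the degree-$(-n+p)$ term of $C^{\widetilde H,\bullet}_{k-(n-1)}$ term by term (the two shifts are consistent, since the residue drops the form degree by one while the $\widetilde H$-complex raises the $\Dmod$-level by one relative to $Y$). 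One checks these identifications are compatible with the differentials, obtaining a short exact sequence of complexes whose quotient $Q^{\bullet}$ is assembled from the complementary pieces (the form-quotients $\Omega^p_{\widetilde H}(\log E')$ and the normal-derivative summands $\Sym^{\ge 1}N_{H/X}$).

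Finally I would apply $\derR f_*$ and pass to $R^0$. The local vanishing of Corollary~\ref{local_vanishing_Hodge} gives $R^if_*C^{Y,\bullet}_{k-n}=0$ for $i\neq 0$, and its analogue on $\widetilde H$ gives $R^i(f_H)_*C^{\widetilde H,\bullet}_{k-(n-1)}=0$ for $i\neq 0$; feeding these into the long exact sequences attached to the restriction sequence and to the sequence with quotient $Q^{\bullet}$ lets me read off $R^0(f_H)_*C^{\widetilde H,\bullet}_{k-(n-1)}$ as the twisted restriction of $R^0f_*C^{Y,\bullet}_{k-n}$, provided the pushforwards of $Q^{\bullet}$ do not interfere. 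This is where the generality of $H$ is indispensable: twisting by the very ample $\shO(\widetilde H)$ and invoking Serre-type vanishing annihilates these normal-bundle contributions and ensures that forming the restriction sequence commutes with $R^0f_*$, whence the displayed equality of the first paragraph and the theorem. I expect precisely this last step --- controlling the transverse-derivative and normal-bundle correction terms and showing they die under $R^0f_*$ for sufficiently general $H$ --- to be the main obstacle; for an arbitrary smooth $H$ the same bookkeeping yields only the inclusion $I_k(D\vert_H)\subseteq I_k(D)\cdot\shO_H$ of (\ref{eqn:restriction}).
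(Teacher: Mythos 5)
Your setup is essentially the paper's own: Bertini on $Y$ to arrange that $\widetilde H=f^*H$ is smooth and transverse to $E$, so that $g=f\vert_{\widetilde H}$ is a log resolution of $(H,D\vert_H)$; the restriction exact sequence for $C^{Y,\bullet}_{k-n}$, which together with Corollary~\ref{local_vanishing_Hodge} and the projection formula identifies $R^0f_*\bigl(C^{Y,\bullet}_{k-n}\otimes\shO_{\widetilde H}\bigr)$ with $\bigl(\omega_X((k+1)D)\otimes I_k(D)\bigr)\otimes\shO_H$; and the termwise embedding of the complex computing $I_k(D\vert_H)$ into the (twisted) restriction via $\beta\otimes Q\mapsto (dx_1\wedge\beta)\otimes Q$. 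The gap is in the very last step, and the mechanism you propose there cannot work. You want the quotient complex $Q^\bullet$ to be harmless because of ``Serre-type vanishing'' after twisting by the ``very ample'' $\shO_Y(\widetilde H)$. But $\shO_Y(\widetilde H)=f^*\shO_X(H)$ is the pullback of a merely base-point-free line bundle, hence trivial on the fibers of $f$: by the projection formula $R^if_*\bigl(\shF\otimes f^*\shO_X(H)\bigr)\simeq R^if_*\shF\otimes\shO_X(H)$, so no twist by a pullback can annihilate any direct image. Worse, even genuine relative ampleness would not suffice: $Q^\bullet$ is concentrated in degrees $\leq 0$, so what your long exact sequence requires is the vanishing of $\mathbf{R}^{-1}f_*Q^\bullet$ and $\mathbf{R}^{0}f_*Q^\bullet$, which are not ``higher cohomology'' and cannot be killed by positivity; and the individual pieces you want to dispose of (the form-quotients $\Omega^{\bullet}_{\widetilde H}(\log E\vert_{\widetilde H})$ and the normal-derivative summands) genuinely have nonzero pushforwards in general.

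What is actually true, and what the paper proves, is a purely local statement: $Q^\bullet$ is an \emph{exact} complex of sheaves, i.e.\ the inclusion of the complex on $\widetilde H$ into $C^{Y,\bullet}_{k-n}\otimes_{f^{-1}\shO_X}f^{-1}\shO_H$ is a quasi-isomorphism. The generality of $H$ enters only through the transversality and simple normal crossing geometry that make this inclusion well defined termwise; no vanishing theorem is involved. The proof goes by induction on $k$, reducing to the associated graded complexes in the filtration, and these are Eagon--Northcott-type complexes, as in the proof of Proposition~\ref{resolution1}, attached to the vector bundle maps $u\colon T_{\widetilde H}(-\log E\vert_{\widetilde H})\to g^*T_H$ and $(u,\mathrm{Id})\colon T_{\widetilde H}(-\log E\vert_{\widetilde H})\oplus\shO_{\widetilde H}\to g^*T_H\oplus\shO_{\widetilde H}$; the inclusion of the first Eagon--Northcott complex into the second is a quasi-isomorphism by a direct linear-algebra argument. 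Note that it is precisely the differentials of $Q^\bullet$, interleaving the form-quotients with the normal-derivative summands, that force exactness: the pieces do not die separately under $\mathbf{R}f_*$, so any attempt to eliminate them term by term, with or without positivity, is bound to fail. Replacing your vanishing step by this exactness argument turns your outline into the paper's proof.
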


\begin{proof}
Note first that since $H$ is general, it follows from Bertini's theorem that $H$ is smooth and $D\vert_H$ is a reduced effective divisor on $H$, hence
$I_k(D\vert_H)$ is well defined. After possibly replacing $X$ by the open subsets in a suitable affine cover, we may assume that 
we have a system of global coordinates $x_1,\ldots,x_n$ on $X$ such that $H$ is defined by the ideal $(x_1)$.
Note that in this case we have an isomorphism $\omega_H\simeq\omega_X\vert_H$ such that if $\alpha$ is a local
$(n-1)$-form on $X$, the isomorphism maps the restriction of $\alpha$ to $H$ to $(dx_1\wedge \alpha)\vert_H$. 

Let $f\colon Y\to X$ be a log resolution of $(X,D)$ which is an isomorphism over $X\smallsetminus D$, and let $E=(f^*D)_{\rm red}$.
We will freely use the notation in \S\ref{section_general_case}.
Since $H$ is general, it follows that the scheme-theoretic inverse image $f^{-1}(H)$ is equal to the strict transform $\widetilde{H}$ of $H$, hence 
$\widetilde{H}$ is a general section of a base-point free linear system on $Y$. In particular, 
the divisor $E+ \widetilde{H}$ is a reduced, simple normal crossing divisor. Moreover, the restriction $g\colon \widetilde{H}\to H$ of $f$ is a log resolution of $(H,D\vert_H)$
which is an isomorphism over $H\smallsetminus D\vert_H$, and the relevant divisor for computing $I_k(D\vert_H)$ is $E\vert_{\widetilde{H}}$. 

Consider the Cartesian diagram
$$
\begin{tikzcd}
f^{-1}(H) \rar{j} \dar{g} & Y \dar{f} \\
H \rar{i} & X.
\end{tikzcd}
$$
If $\shF$ is a sheaf of $f^{-1}\shO_X$-modules on $Y$, then 
$$j^*\shF:=\shF\otimes_{f^{-1}\shO_X}f^{-1}\shO_H$$
is a sheaf of $f^{-1}\shO_X$-modules on $Y$, that we identify in the usual way
with a sheaf of $g^{-1}\shO_H$-modules on $f^{-1}(H)$.
Given any object $C^{\bullet}$ in the derived category of $f^{-1}\shO_X$-modules on $Y$
we have, as usual, a canonical base-change morphism
\begin{equation}\label{eq1_restriction_general_hypersurfaces}
\derL i^*\derR f_*C^{\bullet}\to \derR g_*\derL  j^*C^{\bullet}.
\end{equation}
This is an isomorphism if $C^{\bullet}$ is a complex of quasi-coherent $\shO_Y$-modules since 
$\shO_H$ and $\shO_Y$ are Tor-independent over $X$, see
\cite[Lemma 35.18.3]{Stacks}
(this holds without the genericity assumption on $H$). 
This implies that (\ref{eq1_restriction_general_hypersurfaces}) is an isomorphism for our complexes 
$C^{\bullet}=C^{\bullet}_{k-n}$ as well.
Indeed, for every $k$ we have an exact triangle
$$C^{\bullet}_{k-n-1}\to C^{\bullet}_{k-n}\to \overline{C}^{\bullet}_{k-n}\to C^{\bullet}_{k-n-1}[1],$$
where $\overline{C}^{\bullet}_{k-n}$ is a complex of coherent sheaves on $Y$ (see the proof of Proposition~\ref{resolution1}). 
The assertion now follows by induction on $k$,
starting with $k=-1$, when it is trivial. 
Note also that the morphism (\ref{eq1_restriction_general_hypersurfaces}) is an isomorphism for $C^{\bullet}=A^{\bullet}$, 
since $A^{\bullet}$ is quasi-isomorphic to the quasi-coherent sheaf $\omega_Y(*E)$. We thus obtain a commutative diagram
\begin{equation}\label{cd_restriction_general_hypersurfaces}
\begin{tikzcd}
\derL i^*\derR f_*C_{k-n}^{\bullet} \rar{\phi} \dar{\alpha} & \derR g_*\derL j^*C^{\bullet}_{k-n} \dar{\beta} \\
\derL i^*\derR f_*A^{\bullet} \rar{\psi} &  \derR g_*\derL j^*A^{\bullet} ,
\end{tikzcd}
\end{equation}
in which $\phi$ and $\psi$ are isomorphisms. 

Recall now that 
$$\derR f_*C_{k-n}^{\bullet}=R^0f_*C_{k-n}^{\bullet}\hookrightarrow R^0f_*A^{\bullet}=\derR f_*A^{\bullet}=\omega_X(*D),$$
the image being $\omega_X\big((k+1)D\big) \otimes I_k(D)$. Since 
$$\shT{or}_i^{\shO_X}(\shO_H, \shF)=0\,\,\,\,\,\,\text{for}\,\,\,\, i\geq 1$$
when $\shF$ is either $\omega_X(*D)$ or $\omega_X\big((k+1)D\big)\otimes I_k(D)$,
it follows that the map $\alpha$ in (\ref{cd_restriction_general_hypersurfaces}) gets identified to
$$I_k(D)\otimes \omega_X\big((k+1)D\big)\otimes\shO_H\to \omega_X(*D)\otimes\shO_H\simeq\omega_H\big(*(D\vert_H)\big).$$

On the other hand, recall that for every $p$ we have
$$C^p_{k-n}=\Omega_Y^{p+n}(\log E)\otimes_{\shO_Y}f^*F_{k+p}\Dmod_X,$$
hence 
$$\shT{or}^{f^{-1}\shO_X}_i(f^{-1}\shO_H,C^p_{k-n})=0 \,\,\,\,\,\,\text{for} \,\,\,\, i\geq 1.$$
Therefore $$\derL j^*C^{\bullet}_{k-n}=C^{\bullet}_{k-n}\otimes_{f^{-1}\shO_X}f^{-1}\shO_H$$
and similarly
$$\derL j^*A^{\bullet}=A^{\bullet}\otimes_{f^{-1}\shO_X}f^{-1}\shO_H.$$

Suppose now that ${A'}^{\bullet}$ and ${C'}_{k-n}^{\bullet}$ are the corresponding complexes 
on $\widetilde{H}$, that are involved in the definition of $I_k(D\vert_H)$.
In order to complete the proof of the theorem, it is enough to show that we have quasi-isomorphisms
\begin{equation}\label{quasiisom_restriction_general_hypersurfaces}
{C'}_{k-n}^{\bullet}\longrightarrow C^{\bullet}_{k-n}\otimes_{f^{-1}\shO_X}f^{-1}\shO_H
\end{equation}
for every $k\geq 0$, which are compatible with the inclusions 
${C'}_{k-n}^{\bullet}\subseteq {C'}_{k-n+1}^{\bullet}$ and $C_{k-n}^{\bullet}\subseteq
C_{k-n+1}^{\bullet}$. Indeed, in this case we get a commutative diagram
\begin{equation}\label{cd_restriction_general_hypersurfaces2}
\begin{tikzcd}
 \derR g_*\derL j^*C^{\bullet}_{k-n} \rar \dar{\beta} & \derR g_*{C'}^{\bullet}_{k-n} \dar \\
 \derR g_*\derL j^*A^{\bullet} \rar & \derR g_*{A'}^{\bullet} ,
\end{tikzcd}
\end{equation}
in which the horizontal maps are isomorphisms. By combining the commutative diagrams (\ref{cd_restriction_general_hypersurfaces}) and
(\ref{cd_restriction_general_hypersurfaces2}), we obtain an isomorphism 
\begin{equation}\label{eq_restriction_general_hypersurfaces5}
I_k(D)\otimes\shO_H\to I_k(D\vert_H).
\end{equation}
In order to deduce that $I_k(D)\cdot\shO_H=I_k(D\vert_H)$, it is enough to show that the isomorphism (\ref{eq_restriction_general_hypersurfaces5})
is induced by the canonical surjection $\shO_X\to\shO_H$. This can be checked  over the complement of $D$, 
where both sides are equal to $\shO_{H\smallsetminus D}$ and the map is the identity.

We now define the quasi-isomorphism in (\ref{quasiisom_restriction_general_hypersurfaces}). 
For every $q\geq 0$, let 
$$G_q\Dmod_X=\bigoplus_{\alpha_2+\cdots+\alpha_n\leq q}\shO_X\partial_{x_2}^{\alpha_2}\cdots\partial_{x_n}^{\alpha_n}\subseteq F_q\Dmod_X.$$
We identify in the obvious way $F_q\Dmod_H$ with $G_q\Dmod_X\cdot\shO_H$.
Since $G_q\Dmod_X$ commutes with $x_1$, we see that for every $p$, we have an injective map
$$\Omega_{\widetilde{H}}^{p+n-1}(\log E\vert_{\widetilde{H}})\otimes_{\shO_{\widetilde{H}}} g^*F_{k+p}\Dmod_H\hookrightarrow
\Omega_Y^{p+n}(\log E)\vert_{\widetilde{H}}\otimes_{\shO_Y} f^*G_{k+p}\Dmod_X$$
$$\hookrightarrow \Omega_Y^{p+n}(\log E)\otimes_{\shO_Y}f^*F_{k+p}\Dmod_X\otimes_{f^{-1}\shO_X}f^{-1}\shO_H$$
given by 
$$\alpha\vert_{\widetilde{H}}\otimes g^*(Q\vert_H) \longrightarrow (dx_1\wedge\alpha)\otimes Q\otimes 1$$
for every local sections $\alpha$ of $\Omega_Y^{p+n-1}(\log E)$ and $Q$ of $G_{k+p}\Dmod_X$. This gives the injective
morphism of complexes in (\ref{quasiisom_restriction_general_hypersurfaces}). In order to show that this is a quasi-isomorphism,
arguing by induction on $k$, we see that it is enough to show that the induced injective morphism of complexes
\begin{equation}\label{quasiisom_restriction_general_hypersurfaces3}
\overline{C'}_{k-n}^{\bullet}\longrightarrow \overline{C}^{\bullet}_{k-n}\otimes_{\shO_Y}\shO_{\widetilde{H}}
\end{equation}
is a quasi-isomorphism. 
This can be checked locally on $Y$, hence we may identify the map $w \colon T_Y(-\log E)\vert_{\widetilde{H}}\to (f^*T_X)\vert_{\widetilde{H}}$
to the map 
$$(u,{\rm Id})\colon T_{\widetilde{H}}(-\log E\vert_{\widetilde{H}})\oplus\shO_{\widetilde{H}}\to g^*T_H\oplus\shO_{\widetilde{H}}.$$ 
It follows from the
 proof of Proposition~\ref{resolution1} that the complexes $\overline{C'}_{k-n}^{\bullet}$ and $\overline{C}^{\bullet}_{k-n}\otimes_{\shO_Y}\shO_{\widetilde{H}}$
 are isomorphic to Eagon-Northcott type complexes corresponding to the morphisms of vector bundles $u$ and $(u,{\rm Id})$, respectively, with the map
 (\ref{quasiisom_restriction_general_hypersurfaces3}) being induced by the inclusions 
$$T_{\widetilde{H}}(-\log E\vert_{\widetilde{H}})\hookrightarrow T_{\widetilde{H}}(-\log E\vert_{\widetilde{H}})\oplus\shO_{\widetilde{H}}\quad
\text{and}\quad g^*T_H\hookrightarrow g^*T_H\oplus\shO_{\widetilde{H}}.$$
The assertion to be proved now follows from the fact that given a morphism of vector bundles $u\colon V \to W$ on a variety $Z$, if $K_1^{\bullet}$ is an Eagon-Northcott type
complex constructed for $u$ and $K_2^{\bullet}$ is the corresponding complex constructed for $(u,{\rm Id})\colon V\oplus\shO_Z\to W\oplus\shO_Z$, then the
natural inclusion $K_1^{\bullet}\hookrightarrow K_2^{\bullet}$ is a quasi-isomorphism. We leave this as an exercise for the reader. 
\end{proof}

\begin{remark}\label{rem_restriction_general_hypersurfaces}
Note that the generality assumption on $H$ in Theorem~\ref{restriction_general_hypersurfaces} was only used to guarantee that 
$H$ is smooth, $D\not\subseteq {\rm Supp}(H)$ and $D\vert_H$ is reduced, and given a log resolution $f\colon Y\to X$ of $(X,D)$, this is also a log resolution of $(X,D+H)$
such that $f^*H$ is the strict transform of $H$. These conditions also hold if we work simultaneously with several general divisors, hence
we obtain the following more general version of Theorem~\ref{restriction_general_hypersurfaces}. Suppose that 
$D$ is a reduced effective divisor on the smooth $n$-dimensional variety $X$. For every $k\geq 0$, if $H_1,\ldots,H_r$ are general
elements of base-point free linear systems $V_1,\ldots,V_r$ on $X$, and if $Y=H_1\cap\cdots\cap H_r$, then
$$I_k(D\vert_Y)=I_k(D)\cdot\shO_Y.$$
\end{remark}

If the divisor $H$ in Theorem~\ref{restriction_general_hypersurfaces} is not general, then we only have one inclusion.
This is the analogue of the Restriction Theorem for multiplier ideals, see \cite[Theorem~9.5.1]{Lazarsfeld}. 

\begin{theorem}[{\cite[Theorem A]{MP}}]\label{restriction_hypersurfaces}
Let $D$ be a reduced, effective divisor on the smooth $n$-dimensional variety $X$. If $H$ is a smooth divisor on $X$ such that
$H\not\subseteq {\rm Supp}(D)$ and $D\vert_H$ is reduced, then for every $k\geq 0$ we have
$$I_k(D\vert_H)\subseteq I_k(D)\cdot\shO_H.$$
In particular, if $(H, D\vert_H)$ is $k$-log-canonical, then $(X,D)$ is $k$-log-canonical in some neighborhood of $H$. 
\end{theorem}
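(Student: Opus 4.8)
The plan is to deduce the inclusion from the theory of mixed Hodge modules, by comparing the intrinsic Hodge filtration on the restricted module with the one induced by restriction from $X$, controlling the difference by means of the $V$-filtration along $H$. Since the assertion is local and compatible with shrinking $X$, I would first choose local coordinates $x_1,\dots,x_n$ with $H=\{x_1=0\}$ and set $t=x_1$. Because $H\not\subseteq\Supp(D)$ and $D\vert_H$ is reduced, the left $\Dmod_X$-module $\mathcal{M}_X:=\shO_X(*D)$ is $t$-torsion-free, localization commutes with the base change $\shO_X\to\shO_H$, and the naive restriction $\mathcal{M}_X/t\mathcal{M}_X$ is canonically isomorphic as a $\Dmod_H$-module to $\mathcal{M}_H:=\shO_H(*(D\vert_H))$. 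Under this identification the inclusion $I_k(D\vert_H)\subseteq I_k(D)\cdot\shO_H$ becomes the comparison of two filtrations on the single module $\mathcal{M}_H$: the intrinsic Hodge filtration $F_\bullet\mathcal{M}_H$, coming from the mixed Hodge module $(j_H)_*\QQ^H_{U\cap H}[n-1]$ on $H$, and the \emph{naive} filtration obtained as the image of $F_\bullet\mathcal{M}_X$ in $\mathcal{M}_X/t\mathcal{M}_X$. Concretely I must show $F_k\mathcal{M}_H$ is contained in the image of $F_k\mathcal{M}_X$ for every $k\ge 0$; that the image of $F_k\mathcal{M}_X$ equals $\shO_H((k+1)D\vert_H)\otimes(I_k(D)\cdot\shO_H)$ is immediate from the definition of the Hodge ideals.

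Next I would bring in the Kashiwara--Malgrange $V$-filtration $V^\bullet\mathcal{M}_X$ along $t$. Since $(\mathcal{M}_X,F)$ underlies a mixed Hodge module (\S\ref{localization_HM}), Saito's theory (\cite{Saito-MHP}) guarantees that $F$ and $V$ are compatible in the precise sense of a Hodge module: $t$ induces isomorphisms $F_p\Gr_V^\alpha\xrightarrow{\sim}F_p\Gr_V^{\alpha+1}$ for $\alpha>0$ and $\partial_t$ induces isomorphisms $F_p\Gr_V^\alpha\xrightarrow{\sim}F_{p+1}\Gr_V^{\alpha-1}$ for $\alpha<1$. The point of invoking this is Saito's description of the restriction functor $\sigma^*$ on mixed Hodge modules: the restricted filtered module is computed from the $V$-filtration, so that the intrinsic Hodge filtration $F_k\mathcal{M}_H$ is realized, up to the standard cohomological shift, as the image in $\mathcal{M}_X/t\mathcal{M}_X$ of $F_k$ applied to a suitable lower-bounded $V$-truncation $V^{\ge 0}\mathcal{M}_X$, rather than of all of $\mathcal{M}_X$. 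Since that truncation is a subobject of $\mathcal{M}_X$, the image of $F_k(V^{\ge 0}\mathcal{M}_X)$ is automatically contained in the image of $F_k\mathcal{M}_X$, and the desired inclusion $F_k\mathcal{M}_H\subseteq(\text{image of }F_k\mathcal{M}_X)$ follows. Strictness of the filtered pullback, the feature distinguishing Hodge filtrations from arbitrary ones, is exactly what guarantees that these images fit together into an honest morphism of filtered modules rather than a mere spectral-sequence edge map.

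The step I expect to be the main obstacle is the identification used above: that the intrinsic Hodge filtration on $\mathcal{M}_H=\shO_H(*(D\vert_H))$, defined on $H$ through its own mixed Hodge module structure, really is computed by the $V$-filtration of $\mathcal{M}_X$ along $H$ via Saito's restriction formula. This amounts to a base-change compatibility between the open direct image functor $j_*$ (the localization) and the restriction functor $\sigma^*$ at the level of mixed Hodge modules, i.e.\ an isomorphism identifying $\sigma^*(j_X)_*\QQ^H_U[n]$ with $(j_H)_*\QQ^H_{U\cap H}[n-1]$ compatibly with Hodge filtrations. On the level of underlying perverse sheaves this is standard, but upgrading it to a filtered statement, and extracting from non-proper base change only the inclusion (and not equality, which fails precisely when $H$ is not general, cf.\ Theorem~\ref{restriction_general_hypersurfaces}), is where the full force of Saito's machinery --- compatibility of $F$ with $V$, quasi-unipotence along $t$, and strictness --- is genuinely needed. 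This is why the argument is deferred to \cite{MP} and not carried out with the birational methods of the present paper.

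Finally, the last assertion is a short consequence. If $(H,D\vert_H)$ is $k$-log-canonical, then $I_j(D\vert_H)=\shO_H$ for all $0\le j\le k$, so the inclusion just proved gives $\shO_H=I_j(D\vert_H)\subseteq I_j(D)\cdot\shO_H$ and hence $I_j(D)\cdot\shO_H=\shO_H$ for those $j$. By Nakayama's lemma this forces $I_j(D)_x=\shO_{X,x}$ at every point $x\in H$, and therefore $I_0(D)=\cdots=I_k(D)=\shO_X$ on an open neighborhood of $H$; that is, $(X,D)$ is $k$-log-canonical near $H$.
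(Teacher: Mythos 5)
You should first be aware that this paper does not prove the statement at all: it is quoted as Theorem A of \cite{MP}, and the text explicitly defers the proof there because it requires the interplay between the $V$-filtration and the Hodge filtration. So the benchmark is not a proof in this paper but what your argument itself establishes. The parts of your proposal that are genuinely carried out are fine: the reduction to local coordinates, the identification $\shO_X(*D)\otimes_{\shO_X}\shO_H\simeq \shO_H(*(D\vert_H))$, the translation of $I_k(D\vert_H)\subseteq I_k(D)\cdot\shO_H$ into the containment of the intrinsic filtration $F_k\mathcal{M}_H$ in the image of $F_k\mathcal{M}_X$, and the closing Nakayama-type argument for the ``in particular'' clause are all correct.

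The gap is the central claim that Saito's restriction functor realizes $F_k\mathcal{M}_H$ as the image in $\mathcal{M}_X/t\mathcal{M}_X$ of $F_k$ applied to a $V$-truncation of $\mathcal{M}_X$. That claim is not a citable formal consequence of the mixed Hodge module axioms; it conflates two different objects. What Saito's theory computes via the $V$-filtration is the filtered restriction $i^*\bigl(j_*\QQ^H_U[n]\bigr)$, i.e.\ the perverse cohomologies of the pullback of the mixed Hodge module on $X$. The theorem, however, concerns the \emph{intrinsic} module $(j_H)_*\QQ^H_{U\cap H}[n-1]$, and these two do not agree in general: their difference is measured by nearby/vanishing cycle data along $H$. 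A concrete sanity check: take $D$ smooth but tangent to $H$, say $D=\{z=x^2+y^2\}$ and $H=\{z=0\}$ in $\AAA^3$; then $I_k(D)\cdot\shO_H=\shO_H$ while $I_k(D\vert_H)=\mathfrak{m}_0^k$, so the intrinsic filtered module is strictly smaller than anything induced from $\mathcal{M}_X$, and no filtered isomorphism of the kind you assert can exist. What is true is that there is a one-way comparison morphism (a base-change map at the level of mixed Hodge modules), and extracting the inclusion from it --- constructing it on the right cohomology, identifying source and target, and invoking strictness --- is precisely the content of the proof in \cite{MP}. You acknowledge this yourself (``the main obstacle\ldots deferred to \cite{MP}''), so what you have written is a plausible reconstruction of the strategy of \cite{MP}, not a proof: the missing step is not a technical detail but the theorem itself.
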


\begin{remark}\label{rem_restriction_hypersurfaces}
It is not hard to deduce from this inductively that if  $Y$ is a smooth subvariety of $X$ and $D$ is a reduced effective
divisor such that $Y\not\subseteq {\rm Supp}(D)$ and $D\vert_Y$ is reduced, then
$$I_k(D\vert_Y)\subseteq I_k(D)\cdot\shO_Y.$$
\end{remark}

The proof of Theorem~\ref{restriction_hypersurfaces} uses the connection between the $V$-filtration and the Hodge filtration. Since it is of a different flavor, it is presented separately in \cite{MP}, where we also deduce the following consequence regarding the behavior of Hodge ideals in a family, similar to semicontinuity for multiplier ideals (see \cite[Chapter 9.5.D]{Lazarsfeld}).

To state it, we fix some notation. Let $h\colon X\to T$ be a smooth morphism of relative dimension $n$
between arbitrary varieties $X$ and $T$, and $s\colon T\to X$ a morphism such that $h\circ s={\rm Id}_T$.
Suppose that $D$ is a relative effective Cartier divisor on $X$ over $T$, such that for every $t\in T$ the restriction $D_t$ 
of $D$ to the fiber $X_t=h^{-1}(t)$
is reduced. For every $t\in T$, we denote by $\mathfrak{m}_{s(t)}$ the ideal defining $s(t)$ in $X_t$.

\begin{theorem}[{\cite[Theorem E]{MP}}]\label{semicontinuity}
With the above notation,  for every $q\geq 1$, the set
$$V_q:=\big\{t\in T\mid I_k(D_t)\not\subseteq \mathfrak{m}_{s(t)}^q\big\},$$
is open in $T$. This applies in particular to the set 
$$V_1 = \big\{t\in T\mid (X_t, D_t) {\rm ~is~} k{\rm -log~canonical~at}\,\,s(t) \}.$$
\end{theorem}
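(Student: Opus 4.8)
The plan is to follow the template of the semicontinuity theorem for multiplier ideals (\cite[\S 9.5.D]{Lazarsfeld}), replacing the restriction theorem for multiplier ideals by its Hodge-ideal counterpart, Theorem~\ref{restriction_hypersurfaces} together with Remark~\ref{rem_restriction_hypersurfaces}. First I would make the standard reductions: openness of $V_q$ is local on $T$ and compatible with shrinking, and since $h$ is smooth we may assume $T$, and hence the total space $X$, is smooth over $\CC$, so that $X$ carries a well-defined Hodge ideal. I would then invoke the criterion that a subset of a variety is open as soon as it is constructible and stable under generization; stability under generization can be tested after base change to a smooth affine curve $C\to T$, where it amounts to the implication: if the closed point $t_0\in C$ lies in $V_q$, then so does the generic point $\eta$.

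The two inputs that drive the argument are the following. For every $t$ the fiber $X_t$ is a smooth subvariety of $X$ with $X_t\not\subseteq \Supp(D)$ (because $D$ is a relative Cartier divisor) and $D\vert_{X_t}=D_t$ reduced by hypothesis; hence Remark~\ref{rem_restriction_hypersurfaces} yields the inclusion $I_k(D_t)\subseteq I_k(D)\cdot \shO_{X_t}$. Writing $J:=I_k(D)$ for the fixed coherent ideal on the total space, the second and harder input is the reverse inclusion on the general fiber, that is, the equality $I_k(D_\eta)=J\cdot\shO_{X_\eta}$ at the generic point $\eta$ of $C$.

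Granting these, openness over $C$ becomes formal. If $t_0\in V_q$, then $I_k(D_{t_0})\not\subseteq \mathfrak{m}_{s(t_0)}^q$, so by the inclusion $J\cdot\shO_{X_{t_0}}\not\subseteq \mathfrak{m}_{s(t_0)}^q$. Now for the \emph{fixed} ideal $J$ the condition ``$J\cdot\shO_{X_t}\not\subseteq \mathfrak{m}_{s(t)}^q$'' is itself an open condition in $t$: choosing relative coordinates $y_1,\dots,y_n$ along the section, containment in $\mathfrak{m}_{s(t)}^q$ is the simultaneous vanishing of the Taylor coefficients up to order $q-1$, along the fibers, of the generators of $J$, and these are finitely many regular functions on $C$. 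Hence $J\cdot\shO_{X_\eta}\not\subseteq\mathfrak{m}_{s(\eta)}^q$, and the generic equality upgrades this to $I_k(D_\eta)\not\subseteq \mathfrak{m}_{s(\eta)}^q$, i.e. $\eta\in V_q$. Combined with constructibility of $V_q$ (its complement, being constructible and not containing $\eta$, is a finite set of closed points, hence closed), this shows that every $t_0\in V_q$ is interior, giving both stability under generization and openness.

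The main obstacle is precisely the generic equality $I_k(D_\eta)=I_k(D)\cdot\shO_{X_\eta}$: the restriction theorem supplies only one inclusion, and the reverse inclusion on a general fiber is the family analogue of the ``equality for a general member'' in Theorem~\ref{restriction_general_hypersurfaces} and Remark~\ref{rem_restriction_general_hypersurfaces}. The difficulty is that the fibers of a fixed family are not literally general members of a base-point-free linear system, so establishing this equality requires genuinely more than the birational methods developed so far; in \cite{MP} it is obtained through the comparison between the Hodge filtration and the $V$-filtration. A secondary point, to be verified along the reductions, is that smoothness of the total space, the relative-divisor structure, and fiberwise reducedness of the $D_t$ are all preserved under base change to $C$ and under shrinking, so that the ideals $I_k(D_t)$ and the total-space ideal $J$ remain defined throughout.
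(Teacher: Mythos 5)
Your skeleton is the right one, and in fact it matches the intended argument: the present paper contains no proof of Theorem~\ref{semicontinuity} at all (it is imported from \cite{MP}, where it is \emph{deduced} from the restriction theorem), and that deduction has exactly the shape you describe, namely the inclusion $I_k(D_t)\subseteq I_k(D)\cdot\shO_{X_t}$ for \emph{every} $t$ coming from Theorem~\ref{restriction_hypersurfaces} and Remark~\ref{rem_restriction_hypersurfaces}, equality for \emph{general} $t$, and a formal constructibility-plus-generization argument. Quoting Theorem~\ref{restriction_hypersurfaces} as a black box is legitimate here, since \cite{MP} proves it independently of semicontinuity, so there is no circularity.

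The genuine gap is the step you flag yourself: the generic equality $I_k(D_t)=I_k(D)\cdot\shO_{X_t}$ for $t$ in a dense open subset of $T$ is left unproved, and your diagnosis that it lies beyond the birational methods of this paper (needing the $V$-filtration) is incorrect; this is where the write-up both stops short and points in the wrong direction. The statement is local on $T$, and after pulling back along a resolution of $T$ (openness may be tested after a proper surjective base change, since fibers are unchanged) and shrinking, one may choose an \'etale morphism $u=(u_1,\ldots,u_d)\colon T\to\AAA^d$. Each divisor $h^*(u_i=c_i)$ is then a member of the base-point-free pencil on $X$ spanned by $1$ and $h^*u_i$ inside $H^0(X,\shO_X)$, and for $t$ in a dense open subset of $T$ the tuple $\big(u_1(t),\ldots,u_d(t)\big)$ is general in $\AAA^d$, so that $X_t$ is a union of connected components of the intersection $H_1\cap\cdots\cap H_d$ of general members of base-point-free linear systems; Remark~\ref{rem_restriction_general_hypersurfaces}, the multi-divisor form of Theorem~\ref{restriction_general_hypersurfaces}, then gives precisely $I_k(D_t)=I_k(D)\cdot\shO_{X_t}$ on such fibers. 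This is the same device the paper itself uses with general level sets of coordinate functions in the proofs of Theorems~\ref{cor2_generation_filtration} and~\ref{new_criterion_nontriviality}; the only genuinely non-birational input in the whole argument is Theorem~\ref{restriction_hypersurfaces}, which you already invoke. Two smaller repairs: constructibility of $V_q$ is not free but follows from this generic equality by Noetherian induction on $T$ (on each irreducible stratum, $V_q$ agrees with the open set defined by the coherent ideal $I_k(D)$ away from a proper closed subset); and the argument should be phrased in terms of general closed points rather than the scheme-theoretic generic point $\eta$, since Hodge ideals over the function field $\CC(C)$ are not defined in this framework --- on a curve $C$ the correct conclusion is that $V_q\cap C$ contains the open set $U_q\cap C$ with finitely many points removed, and a point $t_0\in V_q$ then has the open neighborhood obtained from $U_q\cap C$ by deleting those finitely many points other than $t_0$.
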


\subsection{Generation level of the Hodge filtration}\label{section_determine_filtration}

Let $D$ be a reduced effective divisor on the smooth, $n$-dimensional variety $X$. We are interested in estimating  for which 
$k\geq 0$ the filtration on $\omega_X(*D)$ is generated at level $k$, that is, we have
$$F_{k-n}\omega_X(*D)\cdot F_{\ell}\Dmod_X=F_{k+\ell-n}\omega_X(*D)\quad\text{for all}\quad \ell\geq 0.$$
This is of course equivalent to having
$$F_j\omega_X(*D)\cdot F_1\Dmod_X=F_{j+1}\omega_X(*D)\quad\text{for all}\quad j\geq k-n.$$

The main technical result of this section is the following. As usual, we consider a log resolution $f\colon Y\to X$ of $(X,D)$
which is an isomorphism over $X\smallsetminus D$, and put $E=(f^*D)_{\rm red}$.
We note that by
Corollary~\ref{log_independence},  the sheaves $R^qf_*\Omega_Y^p(\log E)$ are independent of the choice
of log resolution.

\begin{theorem}\label{generation_filtration}
With the above notation, the filtration on $\omega_X(*D)$ is generated at level $k$ if and only if 
$$R^qf_*\Omega_Y^{n-q}(\log E)=0\quad\text{for all}\quad q>k.$$
\end{theorem}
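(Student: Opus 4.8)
The plan is to pass to the associated graded module of the Hodge filtration and reduce the statement to a computation with one spectral sequence. First I would record the standard formal reduction: since $\gr^F_\bullet\Dmod_X\simeq\Sym T_X$ is generated in degree one by $T_X$, an inclusion $F_j\omega_X(*D)\cdot F_1\Dmod_X\subseteq F_{j+1}\omega_X(*D)$ is an equality if and only if it is an equality on the associated graded, i.e.\ if and only if the map $\gr^F_{j}\omega_X(*D)\otimes T_X\to\gr^F_{j+1}\omega_X(*D)$ is surjective. Hence the filtration is generated at level $k$ exactly when the graded $\Sym T_X$-module $\gr^F_\bullet\omega_X(*D)=\bigoplus_{m\ge 0}\gr^F_{m-n}\omega_X(*D)$, in which $\gr^F_{m-n}$ sits in weight $m$, is generated in weights $\le k$; equivalently all maps
$$\gr^F_{m-n}\omega_X(*D)\otimes T_X\longrightarrow\gr^F_{m+1-n}\omega_X(*D)$$
are surjective for $m\ge k$.

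Next I would identify the graded pieces homologically. Writing $\overline C^\bullet_{m-n}=C^\bullet_{m-n}/C^\bullet_{m-1-n}$, whose degree-$a$ term is $\Omega_Y^{n+a}(\log E)\otimes_{\shO_Y} f^*\Sym^{m+a}T_X$ with the symbol (Eagon--Northcott) differential, the strictness statement Corollary~\ref{local_vanishing_Hodge} together with the long exact sequence of $0\to C^\bullet_{m-1-n}\to C^\bullet_{m-n}\to\overline C^\bullet_{m-n}\to 0$ yields $R^if_*\overline C^\bullet_{m-n}=0$ for $i\ne 0$ and $R^0f_*\overline C^\bullet_{m-n}\simeq\gr^F_{m-n}\omega_X(*D)$. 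I would then run the hypercohomology spectral sequence of the complex $\overline C^\bullet_{\bullet-n}$, using the projection formula to write
$$E_1^{a,b}=R^bf_*\Omega_Y^{n+a}(\log E)\otimes\Sym T_X\ \Longrightarrow\ \gr^F_\bullet\omega_X(*D).$$
The local vanishing $R^bf_*\Omega_Y^p(\log E)=0$ for $p+b>n$ (the remark following Theorem~\ref{Saito-LOG}, reproved in the Appendix) shows $E_1^{a,b}=0$ unless $a+b\le 0$, so the sequence is supported in total degrees $\le 0$ and, by the previous paragraph, abuts to a module concentrated in total degree $0$.

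The heart of the matter is the structure along the anti-diagonal $a+b=0$. There $E_1^{-q,q}=R^qf_*\Omega_Y^{n-q}(\log E)\otimes\Sym T_X$ is a \emph{free} $\Sym T_X$-module generated in weight $q$. Because all differentials raise total degree by one and total degree $1$ is empty, no differential leaves the anti-diagonal, so each $E_\infty^{-q,q}$ is a graded quotient of $E_1^{-q,q}$ and is therefore generated in weight $q$; moreover any differential \emph{entering} $E_\infty^{-q,q}$ in the minimal weight $q$ would have to come from a negative symmetric power and so vanishes, whence the weight-$q$ part of $E_\infty^{-q,q}$ is exactly $R^qf_*\Omega_Y^{n-q}(\log E)$. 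Consequently $\gr^F_\bullet\omega_X(*D)$ carries a finite filtration by graded submodules whose successive quotients are the $E_\infty^{-q,q}$, each generated in weight $q$.

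From here both implications follow. If $R^qf_*\Omega_Y^{n-q}(\log E)=0$ for all $q>k$, then $E_\infty^{-q,q}=0$ for $q>k$, so every graded piece of the filtration is generated in weight $\le k$, hence so is $\gr^F_\bullet\omega_X(*D)$, giving generation at level $k$. Conversely, if some $R^qf_*\Omega_Y^{n-q}(\log E)\ne 0$ with $q>k$, let $q_0$ be the largest index with $R^{q_0}f_*\Omega_Y^{n-q_0}(\log E)\ne 0$; then $E_\infty^{-q,q}=0$ for $q>q_0$, so $E_\infty^{-q_0,q_0}$ is the top graded quotient of $\gr^F_\bullet\omega_X(*D)$, and its nonzero weight-$q_0$ part consists of minimal generators. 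Since minimal generators pass to quotient modules, $\gr^F_\bullet\omega_X(*D)$ has a minimal generator in weight $q_0>k$, so the filtration is not generated at level $k$. I expect the delicate point to be precisely this converse: one must control the spectral-sequence filtration carefully enough to guarantee that the top-weight classes on the anti-diagonal survive to $E_\infty$ and persist as genuine minimal generators of the full module, rather than being absorbed by lower-weight elements or killed by a higher differential.
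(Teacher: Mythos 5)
Your proposal is correct, but it is organized quite differently from the paper's proof, which works at the filtered level rather than on the associated graded. The paper fixes one step of the filtration at a time: it considers the multiplication map of complexes $\Phi_k\colon C^\bullet_{k-n}\otimes_{f^{-1}\shO_X}f^{-1}F_1\Dmod_X\to C^\bullet_{k+1-n}$, shows that its kernel $T^\bullet$ (built from the locally free Koszul relations $R_m=\ker(F_m\Dmod_X\otimes F_1\Dmod_X\to F_{m+1}\Dmod_X)$) satisfies $R^jf_*T^\bullet=0$ for $j>0$, and then, for $k<n$, factors $\Phi_k$ through the truncated subcomplex $B^\bullet\subset C^\bullet_{k+1-n}$ obtained by deleting the degree $-(k+1)$ term; the exact sequence $0\to B^\bullet\to C^\bullet_{k+1-n}\to\Omega_Y^{n-k-1}(\log E)[k+1]\to 0$ then identifies the failure of the single step $F_{k-n}\omega_X(*D)\cdot F_1\Dmod_X=F_{k+1-n}\omega_X(*D)$ with exactly the one group $R^{k+1}f_*\Omega_Y^{n-k-1}(\log E)$. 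Your argument instead passes to $\gr^F_\bullet\omega_X(*D)$ as a graded $\Sym T_X$-module and runs a single hypercohomology spectral sequence for the symbol complexes $\overline C^\bullet_{m-n}$, reading the generation level off the weights of the anti-diagonal terms $E_\infty^{-q,q}$. Both routes rest on the same two inputs — strictness (Corollary~\ref{local_vanishing_Hodge}) and the local vanishing $R^bf_*\Omega_Y^p(\log E)=0$ for $p+b>n$ (Theorem~\ref{thm_vanishing}) — and your reduction of generation at level $k$ to generation of $\gr^F_\bullet$ in weights $\le k$, your identification $R^0f_*\overline C^\bullet_{m-n}\simeq\gr^F_{m-n}\omega_X(*D)$, and your weight bookkeeping on the anti-diagonal are all sound. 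What the paper's version buys is the sharper per-step statement (the cokernel of each individual multiplication step is controlled by a single higher direct image), and it avoids having to carry module structures through a spectral sequence; what yours buys is a more structural picture, exhibiting $\gr^F_\bullet\omega_X(*D)$ as an iterated extension of quotients of the free modules $R^qf_*\Omega_Y^{n-q}(\log E)\otimes\Sym T_X$ generated in weight $q$, so that the ``for all $q>k$'' statement comes out in one stroke.

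Two small points deserve tightening. First, your phrase ``minimal generators pass to quotient modules'' is backwards as stated; the correct (and sufficient) assertion is that generation in weights $\le k$ passes to graded quotients, so the nonzero quotient $E_\infty^{-q_0,q_0}$, being concentrated in weights $\ge q_0>k$, obstructs generation at level $k$ — no lifting of minimal generators is needed. Second, you should note explicitly that the spectral sequence is one of graded $\Sym T_X$-modules and that the isomorphism $R^0f_*\overline C^\bullet_{m-n}\simeq\gr^F_{m-n}\omega_X(*D)$ intertwines the $T_X$-multiplication on both sides; this follows from the morphisms of complexes $C^\bullet_{\ell-n}\otimes f^{-1}F_k\Dmod_X\to C^\bullet_{k+\ell-n}$ constructed in the proof of Lemma~\ref{filtration_proof}, but it is the one compatibility your argument genuinely depends on.
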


\begin{proof}
It is enough to show that given $k\geq 0$, we have
\begin{equation}\label{eq1_generation_filtration}
F_{k-n} \omega_X(*D)\cdot F_1\Dmod_X=F_{k-n+1}\omega_X(*D)
\end{equation}
if and only if $R^{k+1}f_*\Omega_Y^{n-k-1}(\log E)=0$.
The inclusion ``$\subseteq$" in (\ref{eq1_generation_filtration}) always holds of course by Lemma~\ref{filtration_proof},
hence the issue is the reverse inclusion.

We freely use of the notation in \S\ref{section_general_case}.
Following the proof of Lemma~\ref{filtration_proof}, we consider the 
morphism of complexes
$$\Phi_k\colon C^{\bullet}_{k-n}\otimes_{f^{-1}\shO_X}{f^{-1}F_1\Dmod_X}\longrightarrow C^{\bullet}_{k+1-n}$$
induced by right multiplication, and let $T^{\bullet}={\rm Ker}(\Phi_k)$.
Note that (\ref{eq1_generation_filtration}) holds if and only if the morphism
\begin{equation}\label{eq0_generation_filtration}
R^0f_*C^{\bullet}_{k-n}\otimes_{\shO_X}F_1\Dmod_X\to R^0f_*C^{\bullet}_{k+1-n}
\end{equation}
induced by $\Phi_k$ is surjective.

For every $m\geq 0$, let $R_m$ be the kernel of the morphism
induced by right multiplication
$$F_m\Dmod_X\otimes_{\shO_X}F_1\Dmod_X\to F_{m+1}\Dmod_X.$$
Note that this is a surjective morphism of locally free $\shO_X$-modules, hence $R_m$ is a locally free $\shO_X$-module
and for every $p$ we have
$$T^p=\Omega_Y^{n+p}(\log E)\otimes_{f^{-1}\shO_X}f^{-1}R_{k+p}.$$

Consider the first-quadrant hypercohomology spectral sequence
$$E_1^{p,q}=R^qf_*T^{p-n}\implies R^{p+q-n}f_*T^{\bullet}.$$
The projection formula gives
$$R^qf_*T^{p-n}\simeq R^qf_*\Omega_Y^p(\log E)\otimes_{\shO_X}R_{k+p-n},$$
and this vanishes for $p+q>n$ by Theorem~\ref{thm_vanishing}. We thus deduce from the spectral sequence
that $R^jf_*T^{\bullet}=0$ for all $j>0$.

We first consider the case when $k\geq n$ and show that (\ref{eq1_generation_filtration}) always holds.
Indeed, in this case $\Phi_k$ is surjective.
It follows from the projection formula and the long exact sequence in cohomology that we have an exact sequence
$$R^0f_*C^{\bullet}_{k-n}\otimes_{\shO_X}F_1\Dmod_X\to R^0f_*C^{\bullet}_{k+1-n}\to R^1f_*T^{\bullet}.$$
We have seen that $R^1f_*T^{\bullet}=0$, hence  (\ref{eq0_generation_filtration}) is surjective and (\ref{eq1_generation_filtration}) holds in this case.

Suppose now that $0\leq k<n$. Let $B^{\bullet}\hookrightarrow C^{\bullet}_{k+1-n}$ be the subcomplex given by
$B^p=C_{k+1-n}^p$ for all $p\neq -k-1$ and $B^{-k-1}=0$. Note that we have a short exact sequence of complexes
\begin{equation}\label{eq2_generation_filtration}
0\longrightarrow B^{\bullet}\longrightarrow C^{\bullet}_{k+1-n}\longrightarrow C_{k+1-n}^{-k-1}[k+1]\longrightarrow 0.
\end{equation}
It is clear that $\Phi_k$ factors as
$$C^{\bullet}_{k-n}\otimes_{f^{-1}\shO_X}{f^{-1}F_1\Dmod_X}\overset{\Phi'_k}\longrightarrow B^{\bullet}\hookrightarrow C^{\bullet}_{k+1-n}.$$
Moreover, $\Phi'_k$ is surjective and ${\rm Ker}(\Phi'_k)=T^{\bullet}$. As before, since $R^1f_*T^{\bullet}=0$, we conclude that morphism induced
by $\Phi'_k$:
$$R^0f_*C^{\bullet}_{k-n}\otimes_{\shO_X}F_1\Dmod_X\to R^0f_*B^{\bullet}$$ is surjective. This implies that (\ref{eq0_generation_filtration}) is surjective
if and only if the morphism 
\begin{equation}\label{eq3_generation_filtration}
R^0f_*B^{\bullet}\to R^0f_*C^{\bullet}_{k+1-n}
\end{equation}
is surjective. The exact sequence (\ref{eq2_generation_filtration}) induces an exact sequence
$$R^0f_*B^{\bullet}\to R^0f_*C^{\bullet}_{k+1-n}\to R^{k+1}f_*C_{k+1-n}^{-k-1}\to R^1f_*B^{\bullet}.$$
We have seen that $R^2f_*T^{\bullet}=0$ and we also have 
$$R^1f_*\big(C^{\bullet}_{k-n}\otimes_{f^{-1}\shO_X}{f^{-1}F_1\Dmod_X}\big)=0.$$
This follows either as above, using the projection formula, the hypercohomology spectral sequence, and Theorem~\ref{thm_vanishing},
or can be deduced from strictness, see Remark~\ref{remark_local_vanishing_Hodge}.
We deduce from the long exact sequence associated to 
$$0\longrightarrow T^{\bullet}\longrightarrow C^{\bullet}_{k-n}\otimes_{f^{-1}\shO_X}{f^{-1}F_1\Dmod_X}\longrightarrow B^{\bullet}\longrightarrow 0$$
that $R^1f_*B^{\bullet}=0$. Putting all of this together, we conclude that (\ref{eq0_generation_filtration})
is surjective if and only if $R^{k+1}f_*C_{k+1-n}^{-k-1}=0$.
Since we have by definition
$$R^{k+1}f_*C_{k+1-n}^{-k-1}=R^{k+1}f_*\Omega_Y^{n-k-1}(\log E),$$
this completes the proof of the theorem.
\end{proof}

We now show how Theorem \ref{generation_filtration} implies the calculation of the generation level of the 
Hodge filtration on $\shO_X(*D)$ stated in the Introduction. This question was first raised by Saito in \cite{Saito-HF}, 
where he also computed the precise generation level in the case of isolated quasi-homogeneous singularities; see Remark 
\ref{criterion_Bernstein_Sato}.

\begin{proof}[Proof of Theorem \ref{cor2_generation_filtration}]
We begin by proving the first assertion in the theorem.
Let  $f\colon Y\to X$ be a log resolution of $(X,D)$ which is an isomorphism over $X\smallsetminus D$,
and let $E=(f^*D)_{\rm red}$. 
We may assume that the strict
transform $\widetilde{D}$ of $D$ is smooth (possibly disconnected).

It follows from Theorem~\ref{generation_filtration} that we need to show that if $n\geq 2$, then
\begin{equation}\label{eq0_cor1_generation_filtration}
R^nf_*\shO_Y=0\quad\text{and}
\end{equation}
\begin{equation}\label{eq1_cor1_generation_filtration}
R^{n-1}f_*\Omega_Y^{1}(\log E)=0.
\end{equation}
The vanishing (\ref{eq0_cor1_generation_filtration}) is an immediate consequence of the fact that 
the fibers of $f$ have dimension at most $n-1$,
hence we focus on (\ref{eq1_cor1_generation_filtration}). 

We write $E=\widetilde{D}+F$, where $\widetilde{D}$ is the strict transform of $D$ and $F$ is the reduced exceptional divisor.
Recall that since $\widetilde{D}$ is smooth, we have a short exact sequence
$$0\longrightarrow\Omega_Y^1(\log F)\longrightarrow\Omega_Y^1(\log E)\longrightarrow\shO_{\widetilde{D}} 
\longrightarrow  0.$$ 
It follows from Theorem~\ref{case_SNC_pair}ii) that
$$
R^{n-1}f_*\Omega_Y^1(\log F)=0,
$$
and so in order to guarantee (\ref{eq1_cor1_generation_filtration}) it is enough to have
$$
R^{n-1}f_*\shO_{\widetilde{D}} =0.
$$
However, this is a consequence of the fact that all fibers of $\widetilde{D}\to D$ have dimension at most $n-2$.
This completes the proof of the first assertion.

We prove the second assertion in the theorem by induction on $n$.
Let $k\geq 0$ be fixed. If $k\geq n-2$, then by what we have already proved we may take $U_k=X$. 
Suppose now that $k\leq n-3$. 
We define  for $\ell\geq k$  ideal sheaves  $I'_{\ell}(D)$ by the formula
$$\omega_X\big((\ell+1)D\big)\otimes I'_{\ell}(D)=\big(\omega_X\big((k+1)D\big)\otimes I_k(D)\big)\otimes F_{\ell-k}\Dmod_X.$$
It is clear that we have $I'_{\ell}(D)\subseteq I_{\ell}(D)$ for every $\ell\geq k$, with equality for $\ell=k$.
Saying that on an open subset $U\subseteq X$ the filtration is generated at level $k$ is equivalent to saying that $I_{\ell}(D)\vert_U=I'_{\ell}(D)\vert_U$ for all $\ell\geq k$.
Moreover, by what we have already proved, it is enough to check this for $k+1\leq \ell\leq n-2$. 

It is therefore enough to show that for every such $\ell$ we have
$$\codim(Z_{\ell},X)\geq k+3,\quad\text{where}\quad  Z_{\ell}={\rm Supp}\big(I_{\ell}(D)/I'_{\ell}(D)\big).$$
Indeed, if this is the case we may take
$$U_k=X\smallsetminus\bigcup_{\ell=k+1}^{n-2}Z_{\ell}.$$
In order to prove the bound on the codimension of $Z_{\ell}$, 
we may assume that $X$ is a subvariety of some $\AAA^N$. Let $H$ be a general hyperplane in $\AAA^N$,
and $X_H=X\cap H$ and $D_H=D\vert_{X_H}$. Since $H$ is general, we have 
$$Z_{\ell}\cap H={\rm Supp}\big(I_{\ell}(D)\cdot \shO_{X_H}/I'_{\ell}(D)\cdot\shO_{X_H}\big).$$
On the other hand, it follows from Theorem~\ref{restriction_general_hypersurfaces} that since $H$ is general, we have
$$I_j(D_H)= I_j(D)\cdot\shO_{X_H}$$
for all $j$. The equality for 
$j=k$, together with the definition of the ideals $I'_{\ell}(D)$, also gives
$$I'_{\ell}(D_H)\subseteq I'_{\ell}(D)\cdot\shO_{X_H}.$$
It follows by induction that both
$${\rm Supp}\big(I_{\ell}(D)\cdot \shO_{X_H}/I'_{\ell}(D)\cdot \shO_{X_H}\big)\subseteq {\rm Supp}\big(I_{\ell}(D_H)/I'_{\ell}(D_H)\big)$$
have codimension $\geq k+3$ in $X_H$, hence $\codim(Z_{\ell},X)\geq k+3$.
This completes the proof of the theorem.
\end{proof}

A basic question is to determine when the Hodge filtration on $\omega_X(*D)$ is generated by its $0^{\rm th}$ step, or equivalently,  
when the equality of $I_k(D)$ and $J_k(D)$ holds everywhere on $X$. This is of course the case when $D$ is a simple normal crossings divisor. Theorem~\ref{cor2_generation_filtration} implies that it is also always the case outside a closed subset of codimension at least $3$. In particular:

\begin{corollary}\label{description_ideals_on_surface}
If $X$ is a smooth surface and $D$ is a reduced effective divisor on $X$, then
$$I_k(D)=J_k(D)\quad \text{for all}\quad k\geq 0.$$
\end{corollary}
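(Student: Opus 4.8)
The plan is to deduce this directly from the generation-level statement established in Theorem~\ref{cor2_generation_filtration}. Since $X$ is a surface we have $n=2$, and that theorem asserts that the Hodge filtration on $\shO_X(*D)$, equivalently on $\omega_X(*D)$, is generated at level $n-2=0$. By the definition of generation level this means precisely that
$$F_{-n}\omega_X(*D)\cdot F_k\Dmod_X=F_{k-n}\omega_X(*D)\quad\text{for all}\quad k\geq 0.$$
So the entire content of the corollary is already packaged in the first assertion of Theorem~\ref{cor2_generation_filtration}; what remains is purely a matter of unwinding the definitions of the two families of ideals.

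Next I would translate both sides of this identity back into the ideals $I_k(D)$ and $J_k(D)$. On the right, the defining formula for Hodge ideals gives $F_{k-n}\omega_X(*D)=\omega_X\big((k+1)D\big)\otimes I_k(D)$. On the left, the case $k=0$ of that same formula identifies $F_{-n}\omega_X(*D)=\omega_X(D)\otimes I_0(D)$, while the definition of the auxiliary ideals reads
$$\big(\omega_X(D)\otimes I_0(D)\big)\cdot F_k\Dmod_X=\omega_X\big((k+1)D\big)\otimes J_k(D).$$
Hence the generation-at-level-$0$ identity becomes $\omega_X((k+1)D)\otimes J_k(D)=\omega_X((k+1)D)\otimes I_k(D)$ as subsheaves of $\omega_X(*D)$. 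Tensoring with the inverse of the line bundle $\omega_X((k+1)D)$ then yields $J_k(D)=I_k(D)$ for every $k\geq 0$, as claimed.

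The main point is that there is no genuine obstacle left at this stage: all the work has been done in proving Theorem~\ref{cor2_generation_filtration}. It is worth recording, as a sanity check on the direction of the argument, that one of the two inclusions is in any case automatic and elementary — Lemma~\ref{basic_inclusion} already gives $J_k(D)\subseteq I_k(D)$ with no hypothesis on $\dim X$ — so in the surface case the genuinely new input is the reverse inclusion $I_k(D)\subseteq J_k(D)$, which is exactly the surjectivity of multiplication by $F_k\Dmod_X$ encoded in the filtration being generated at level $0$.
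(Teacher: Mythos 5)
Your proposal is correct and follows essentially the same route as the paper: the corollary is stated there as an immediate consequence of Theorem~\ref{cor2_generation_filtration} (generation at level $n-2=0$ when $\dim X=2$), combined with the observation that generation at level $0$ is, by unwinding the definitions of $I_k(D)$ and $J_k(D)$, precisely the statement that these two families of ideals coincide. Your careful translation through $F_{-n}\omega_X(*D)=\omega_X(D)\otimes I_0(D)$ and the defining formula for $J_k(D)$, together with the remark that Lemma~\ref{basic_inclusion} already gives the inclusion $J_k(D)\subseteq I_k(D)$ unconditionally, matches the paper's intent exactly.
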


Moreover, with the notation in Theorem~\ref{generation_filtration}, we see that $I_k(D)=J_k(D)$ for all $k$  if and only if $R^qf_*\Omega_Y^{n-q}(\log E)=0$ for all $q>0$. 
Based on this, it is not hard to find examples where equality does not hold, and the statement in Theorem~\ref{cor2_generation_filtration} is sharp.

\begin{example}\label{cone_noneq}
Let $D$ be the cone in $X = \AAA^3$ over a smooth plane curve of degree $d$, and let $f\colon Y \rightarrow X$ be the log resolution obtained by blowing up the origin. 
The claim is that if $d \ge 3 = \dim X$, then 
$$R^1 f_* \Omega_Y^2 (\log E) \neq 0.$$
Indeed, we have $E = \widetilde{D} + F$, where $F$ is the exceptional divisor of $f$, and so on $Y$ there is a short 
exact sequence
$$0 \longrightarrow \Omega_Y^2 (\log F) \longrightarrow \Omega_Y^2 (\log E) \longrightarrow \Omega_{\widetilde D}^1 (\log F_{\widetilde D})
\longrightarrow 0.$$
If we had
\begin{equation}\label{eq_cone_noneq}
H^2\big(Y,\Omega_Y^2(\log F)\big)=0,
\end{equation}
it would follow from the long exact sequence associated to the above sequence that it is enough to show that
$$
H^1\big(\widetilde{D},\Omega^1_{\widetilde{D}}(\log F_{\widetilde{D}})\big)\neq 0.
$$
Consider however the short exact sequence on $\widetilde{D}$:
$$0 \longrightarrow \Omega^1_{\widetilde D} \longrightarrow  \Omega^1_{\widetilde D} (\log F_{\widetilde D}) \longrightarrow 
\shO_{F_{\widetilde D}} \longrightarrow 0.$$
Since the fibers of $f\vert_{\widetilde D}$ are at most one-dimensional, we have $R^2 f_*  \Omega^1_{\widetilde D} = 0$, and so it 
suffices to check that 
$$R^1 f_* \shO_{F_{\widetilde D}} \simeq H^1 ( F_{\widetilde D},  \shO_{F_{\widetilde D}}) \neq 0.$$
But $F_{\widetilde D}$ is isomorphic to the original plane curve of degree $d \ge 3$, so this is clear.
Therefore it is enough to prove (\ref{eq_cone_noneq}).

The short exact sequence 
$$0\longrightarrow \shO_F(-F)\longrightarrow  \Omega^1_Y\vert_F\longrightarrow \Omega^1_F\longrightarrow 0$$
induces for every $m\geq 0$ a short exact sequence
$$0\to\Omega^1_F\otimes\shO_F (-(m+1)F)\to\Omega_Y^2\vert_F\otimes\shO_F(-mF)\to\omega_F\otimes\shO_F(-mF)\to 0.$$
Since $F\simeq\PP^2$ and $\shO_F(-F)\simeq\shO_{\PP^2}(1)$, it follows from the Euler exact sequence that
$$H^2\big(F,\Omega^1_F\otimes\shO_F(-(m+1)F)\big)=0\quad\text{for all}\quad m\geq 0.$$
We conclude that
$$H^2(F,\Omega_Y^2\vert_F)\simeq\CC\quad\text{and}\quad H^2\big(F,\Omega_Y^2\vert_F\otimes\shO_F(-mF)\big)=0\,\,\text{for}\,\,m\geq 1.$$

We now deduce from the exact sequence 
$$0\to\Omega_Y^2(-(m+1)F)\to \Omega_Y^2(-mF)\to\Omega_Y^2\vert_F\otimes\shO_F(-mF)\to 0$$
that for every $m\geq 1$ the map
$$H^2\big(Y, \Omega_Y^2(-(m+1)F)\big)\to H^2\big(Y,\Omega^2_Y(-mF)\big)$$
is surjective.
Since $\shO_Y(-F)$ is ample over $X$, we have
$$H^2\big(Y, \Omega^2_Y(-mF)\big)=0\quad\text{for all}\quad m\gg 0,$$
and therefore
$$H^2\big(Y, \Omega^2_Y(-mF)\big)=0\quad\text{for all}\quad m\geq 1.$$
In particular, the long exact sequence in cohomology corresponding to
$$0\to \Omega_Y^2(-F)\to\Omega_Y^2\to\Omega_Y^2\vert_F\to 0$$
gives an isomorphism 
$$H^2(Y,\Omega_Y^2)\simeq H^2(Y, \Omega_Y^2\vert_F)\simeq \CC.$$

Finally, consider the exact sequence
$$0\to\Omega_Y^2\to\Omega_Y^2(\log F)\to\Omega_F^1\to 0,$$
which induces
$$H^1(F,\Omega_F^1)\overset{\alpha}\to H^2(Y,\Omega_Y^2)\to H^2(Y,\Omega_Y^2(\log F))\to H^2(F,\Omega_F^1)=0.$$
The composition
$$\CC\simeq H^1(F, \Omega_F^1)\to H^2(Y,\Omega_Y^2)\to H^2(F,\Omega_F^2)\simeq\CC$$
is given by cup-product with $c_1(\shO_F(F))$, hence it is nonzero. Therefore $\alpha$ is surjective, which implies (\ref{eq_cone_noneq}).
\end{example}

\begin{remark}
If we know that the filtration on $\omega_X(*D)$ is generated at level $\ell$, then the argument in the proof of Lemma~\ref{lem_estimate_J_ideals}
shows that we have
$$I_{k+1}(D)\subseteq\shO_X(-D)\cdot {\rm Jac}\big(I_k(D)\big)+I_k(D)\cdot {\rm Jac}\big(\shO_X(-D)\big)\quad\text{for all}\quad k\geq \ell.$$
Arguing as in the proof of Proposition~\ref{estimate_J_ideals}, we see that if
$W\subseteq X$ is an irreducible closed subset defined by the ideal $I_W$, with ${\rm mult}_W(D) = m \geq 1,$ and  such that
$I_{\ell}(D)\subseteq I_W^{(m')}$ for some $m' \ge 0$, then 
$$I_{\ell+k}(D)\subseteq I_W^{(k(m-1)+m')}\quad \text{for every}\quad k\geq 0.$$
In particular, we always have $I_{\ell+k}(D)\subseteq I_W^{(k(m-1))}$.
\end{remark}

Returning to the case of surfaces, Corollary \ref{description_ideals_on_surface} has the following application to the 
local study of Hodge ideals.

\begin{corollary}\label{cor_estimate_J_ideals}
If $X$ is a smooth surface and $D$ is a reduced effective divisor on $X$ such that
${\rm mult}_x(D)=m\geq 2$ for some $x\in X$, then 
$$I_k(D)\subseteq \mathfrak{m}_x^{(k+1)(m-1)-1}\quad\text{for every}\quad k\geq 0,$$
where $\mathfrak{m}_x$ is the ideal defining $x$. Moreover, if $m=2$, then 
$$I_k(D)\subseteq \mathfrak{m}_x^{k+1}\quad\text{for every}\quad k\geq 0,$$
unless the singularity of $D$ at $x$ is a node. In particular, if $D$ is a singular divisor, then
$I_k(D)\neq\shO_X$ for every $k\geq 1$. 
\end{corollary}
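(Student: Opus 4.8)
The plan is to deduce everything from the estimates for the auxiliary ideals $J_k(D)$ in Proposition~\ref{estimate_J_ideals}, exploiting that on a surface the two families of ideals coincide. Indeed, since $X$ is a smooth surface, Corollary~\ref{description_ideals_on_surface} gives $I_k(D)=J_k(D)$ for all $k\geq 0$, so it is enough to bound $J_k(D)$. First I would apply Proposition~\ref{estimate_J_ideals} with $W=\{x\}$: this is a smooth subvariety of codimension $r=2$ with ideal $I_W=\mathfrak{m}_x$, and because $W$ is a point its symbolic and ordinary powers agree, $\mathfrak{m}_x^{(p)}=\mathfrak{m}_x^p$, so the symbolic powers appearing there become the honest powers in the statement.

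For the first inclusion, part ii) of Proposition~\ref{estimate_J_ideals} applies since $m={\rm mult}_x(D)\geq 2=r$, giving $I_0(D)\subseteq\mathfrak{m}_x^{m-2}$. Feeding this into part i) with $m'=m-2$ yields
$$I_k(D)=J_k(D)\subseteq\mathfrak{m}_x^{k(m-1)+(m-2)},$$
and the elementary identity $k(m-1)+(m-2)=(k+1)(m-1)-1$ produces the asserted bound.

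For the refinement when $m=2$, the first bound only gives exponent $k$, so the sharper input needed is $I_0(D)\subseteq\mathfrak{m}_x$, i.e. $m'=1$; then part i) of Proposition~\ref{estimate_J_ideals} gives $J_k(D)\subseteq\mathfrak{m}_x^{k(m-1)+1}=\mathfrak{m}_x^{k+1}$. By Proposition~\ref{ideal_I_0}, $I_0(D)=\I\big(X,(1-\epsilon)D\big)$, which equals $\shO_X$ near $x$ exactly when $(X,D)$ is log-canonical at $x$; equivalently, $I_0(D)_x\subseteq\mathfrak{m}_x$ precisely when $(X,D)$ fails to be log-canonical at $x$. The crux of this part, and the only genuinely non-formal step in the argument, is the classical fact that a reduced plane curve singularity of multiplicity $2$ is log-canonical if and only if it is a node (an ordinary double point); any other double point, such as a cusp or tacnode, has log-canonical threshold strictly less than $1$. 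Granting this, the non-node hypothesis forces $I_0(D)\subseteq\mathfrak{m}_x$ and the refined bound follows.

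Finally, the last assertion is immediate from the first inclusion: if $D$ is singular there is a point $x$ with $m\geq 2$, and for every $k\geq 1$ the exponent satisfies $(k+1)(m-1)-1\geq 1$, whence $I_k(D)\subseteq\mathfrak{m}_x^{(k+1)(m-1)-1}\subseteq\mathfrak{m}_x\subsetneq\shO_X$, so $I_k(D)\neq\shO_X$ for all $k\geq 1$. I expect no real obstacle beyond invoking the log-canonicity classification of double points cleanly enough to license the passage from the non-node condition to $I_0(D)\subseteq\mathfrak{m}_x$.
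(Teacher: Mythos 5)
Your proposal is correct and follows essentially the same route as the paper: reduce to $J_k(D)$ via Corollary~\ref{description_ideals_on_surface}, apply Proposition~\ref{estimate_J_ideals} (part ii) to get $I_0(D)\subseteq\mathfrak{m}_x^{m-2}$, then part i)), and for the $m=2$ refinement invoke the classification of double points $(D,0)\simeq\big(V(x^2+y^{\ell}),0\big)$ with ${\rm lct}_0=\frac{1}{2}+\frac{1}{\ell}$, so that a non-nodal double point fails to be log-canonical and hence $I_0(D)\subseteq\mathfrak{m}_x$. Your exponent bookkeeping $k(m-1)+(m-2)=(k+1)(m-1)-1$ and the deduction of the final nontriviality assertion are exactly what the paper's terse proof leaves implicit.
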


\begin{proof}
Both assertions follow by combining Corollary~\ref{description_ideals_on_surface} and Proposition~\ref{estimate_J_ideals}.
For the second assertion, we also use the fact that if $(X,D)$ is log canonical at $x$, then the singularity of $D$  at $x$ is a node. This is easy (and well known):
first, we must have ${\rm mult}_x(D)=2$, in which case $(D,0)$ is analytically equivalent to $\big(V(x^2+y^{\ell}),0\big)$, for some $\ell\geq 2$. Since ${\rm lct}_0(x^2+y^{\ell})=\frac{1}{2}+\frac{1}{\ell}$,
we have $(X,D)$ log canonical at $x$ if and only if $\ell=2$. 
\end{proof}

\begin{remark}
Corollary \ref{cor_estimate_J_ideals} says that on surfaces, unlike $I_0 (D)$, for $k \ge 1$ the ideal $I_k(D)$ always detects singularities
(for an extension to higher dimensions, see Corollary~\ref{cor2_criterion_nontriviality2}). For example, if $D= V(xy) \subset \CC^2$, then it is immediate $I_0 (D) = \shO_X$ just as in the smooth case, while Proposition \ref{description_SNC_case}  shows that 
$I_k (D) = (x, y)^k$ for all $k \ge 0$.
 
This phenomenon persists for singular divisors with equal $I_0(D)$.  For instance, if $D = V(x^2 + y^3) \subset \CC^2$ 
is a cusp, it is well known (see \cite[Example 9.2.15]{Lazarsfeld}) that $I_0 (D) = (x, y)$. Using Corollary \ref{description_ideals_on_surface} and Remark \ref{strict_J_inclusion}, we see that $I_1 (D) = (x^2, xy, y^3)$.

Consider now the divisor $D = V\big(xy(x+y)\big) \subset \CC^2$ with a triple point at the origin.
Blowing up this point gives a log resolution $f\colon Y \rightarrow \CC^2$, and if we denote by
$F$ the exceptional divisor, the formula in the proof of Proposition \ref{ideal_I_0} gives
$$I_0 (D) =  f_* \shO_Y ( - F) = (x, y)$$
as well. On the other hand, again using Corollary \ref{description_ideals_on_surface} and a simple calculation, we obtain 
$I_1(D) = (x, y)^3$. 

Further concrete calculations can be done for higher $k$, but in general they become quite intricate.  In any case, it is already apparent in dimension two that the sequence of ideals $I_k(D)$ is a more refined invariant of singularities 
than the multiplier ideal $I_0(D)$ alone.
\end{remark}

\subsection{Behavior with respect to birational morphisms}\label{birational_behavior}

Recall that multiplier ideals satisfy a birational transformation rule; see \cite[Theorem~9.2.33]{Lazarsfeld}. Given a proper morphism 
$g\colon Z\to X$ of smooth varieties,
this describes the multiplier ideals of a divisor $D$ on $X$ in terms of the corresponding multiplier ideals of $g^*D$ and the 
exceptional divisor $K_{Z/X}$. It would be very interesting 
to have a similar result for the Hodge ideals. The following theorem is a step in this direction, and will be crucial for later applications.

Suppose that $D$ is a reduced effective divisor on the smooth variety $X$ and $g\colon Z\to X$ is a proper morphism which is an isomorphism over $X\smallsetminus D$, with $Z$ smooth. 
Let $D_Z=(g^*D)_{\rm red}$ and  $T_{Z/X}:={\rm Coker}(T_Z\hookrightarrow g^*T_X)$.

\begin{theorem}\label{birational_transformation_I1}
With the above notation, for every $k\geq 0$ the following hold:
\begin{enumerate}
\item[i)] We have an inclusion of ideals
$$g_*\big(I_k(D_Z)\otimes\shO_Z (K_{Z/X}+(k+1)D_Z-(k+1)g^*D)\big)\hookrightarrow I_k(D).$$
\item[ii)] If $J$ is an ideal in $\shO_X$ such that $J\cdot T_{Z/X}=0$, then
$$J^k\cdot I_k(D)\subseteq g_*\big(I_k(D_Z)\otimes\shO_Z (K_{Z/X}+(k+1)D_Z-(k+1)g^*D)\big).$$
\item[iii)] When $k=1$,
we have an exact sequence
$$0\to g_*\big(I_1(D_Z)\otimes\shO_Z(K_{Z/X}+2D_Z-2g^*D)\big)\otimes\omega_X(2D)
\to  I_1(D)\otimes\omega_X(2D)$$
$$\to g_*\big(T_{Z/X}\otimes I_0(D_Z)\otimes\omega_Z(D_Z)\big)\to R^1g_*\big(I_1(D_Z)\otimes\omega_Z(2D_Z)\big)\to 0.$$
\end{enumerate}
\end{theorem}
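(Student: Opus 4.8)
The plan is to carry out the entire comparison on a single log resolution of $X$ that also resolves $Z$. Choose a log resolution $f\colon Y\to X$ of $(X,D)$, an isomorphism over $X\smallsetminus D$, which factors as $f=g\circ h$ with $h\colon Y\to Z$ a log resolution of $(Z,D_Z)$; this is possible after dominating $Z$, and then $E:=(f^*D)_{\rm red}=(h^*D_Z)_{\rm red}$ simultaneously computes $I_k(D)$ (via $f$) and $I_k(D_Z)$ (via $h$). The canonical map $\Dmod_Z\to g^*\Dmod_X$ pulls back under $h$ and induces a morphism of filtered complexes $C^{Z,\bullet}_{k-n}\to C^{\bullet}_{k-n}$ on $Y$, whose degree-$p$ term is $\Omega_Y^{p+n}(\log E)\otimes_{\shO_Y}(-)$ applied to $h^*F_{k+p}\Dmod_Z\to f^*F_{k+p}\Dmod_X$. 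The latter is a map of locally free sheaves of equal rank which is an isomorphism over $Y\smallsetminus E$, hence injective; its cokernel is $h^*\mathcal{Q}_{k+p}$, where $\mathcal{Q}_j:=\coker(F_j\Dmod_Z\to g^*F_j\Dmod_X)$ and $\mathcal{Q}_1=T_{Z/X}$. Thus one has a short exact sequence of complexes $0\to C^{Z,\bullet}_{k-n}\to C^{\bullet}_{k-n}\to K^\bullet\to 0$ on $Y$, and the whole theorem is obtained by pushing this forward along $\derR f_*=\derR g_*\derR h_*$.

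For (i), local vanishing (Corollary~\ref{local_vanishing_Hodge}) applied to $h$ gives $R^ih_*C^{Z,\bullet}_{k-n}=0$ for $i\neq0$ and $R^0h_*C^{Z,\bullet}_{k-n}=\omega_Z((k+1)D_Z)\otimes I_k(D_Z)$, so $R^0f_*C^{Z,\bullet}_{k-n}=g_*\big(\omega_Z((k+1)D_Z)\otimes I_k(D_Z)\big)$. The comparison map sends this into $R^0f_*C^{\bullet}_{k-n}=\omega_X((k+1)D)\otimes I_k(D)$, and since $g_*$ preserves the injection into $\omega_Z(*D_Z)$ its image is exactly the source. Using $\omega_Z((k+1)D_Z)=g^*\omega_X((k+1)D)\otimes\shO_Z(K_{Z/X}+(k+1)D_Z-(k+1)g^*D)$ and the projection formula rewrites this image as $\omega_X((k+1)D)\otimes g_*\big(I_k(D_Z)\otimes\shO_Z(K_{Z/X}+(k+1)D_Z-(k+1)g^*D)\big)$, giving the inclusion; the compatibility of the identifications is checked over $X\smallsetminus D$, where $g$ is an isomorphism.

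For (ii), the long exact sequence of $\derR f_*$ shows it suffices to prove $J^k\cdot R^0f_*K^\bullet=0$, which holds once $J^k$ annihilates each term of $K^\bullet$, i.e. once $J^j\cdot\mathcal{Q}_j=0$ for all $j\le k$ (note $J^k\subseteq J^j$). \emph{This annihilation is the crux, and the step I expect to be the main obstacle.} The hypothesis $J\cdot T_{Z/X}=0$ gives $J\cdot g^*T_X\subseteq T_Z$ inside $g^*F_1\Dmod_X$, so at the symbol level $J^j\cdot\Sym^m g^*T_X\subseteq\Sym^m T_Z$ for $m\le j$; the difficulty is to lift this from $\gr^F$ to the filtered module $\mathcal{Q}_j$, since a naive filtered induction loses powers of $J$. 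I would avoid this loss by embedding $g^*\Dmod_X$ into the ring $\Dmod_Z\otimes_{\shO_Z}\shO_Z(*\mathrm{Exc})$ of operators with poles along the exceptional locus of $g$ (an injection, as the target is torsion-free and the map is generically an isomorphism): there $J\cdot g^*T_X\subseteq T_Z$ forces $g^*F_j\Dmod_X\subseteq J^{-j}F_j\Dmod_Z$ by tracking how composition of $j$ first-order operators with coefficients in $J^{-1}\shO_Z$ produces at most $J^{-j}$ poles, whence $J^j\cdot g^*F_j\Dmod_X\subseteq F_j\Dmod_Z$. The blow-up computation with $\mathcal{Q}_2\cong(\shO_Z/J^2\shO_Z)^{\oplus2}$ shows the bound $J^j$ is sharp.

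For (iii), the case $k=1$ simplifies because, by Lemma~\ref{filtration1}, both $C^{\bullet}_{1-n}$ and $C^{Z,\bullet}_{1-n}$ are quasi-isomorphic to their $H^0$, and since $\mathcal{Q}_0=0$ and $\mathcal{Q}_1=T_{Z/X}$ the complex $K^\bullet$ reduces to the single sheaf $\omega_Y(E)\otimes_{\shO_Y}h^*T_{Z/X}$ in degree $0$; the comparison thus becomes a short exact sequence of sheaves $0\to\shF^Z_{1-n}\to\shF_{1-n}\to\omega_Y(E)\otimes h^*T_{Z/X}\to0$ on $Y$. Applying $\derR f_*$, I would read off $f_*\shF_{1-n}=\omega_X(2D)\otimes I_1(D)$ with $R^1f_*\shF_{1-n}=0$ (local vanishing for $I_1(D)$), and $\derR f_*\shF^Z_{1-n}=\derR g_*\big(\omega_Z(2D_Z)\otimes I_1(D_Z)\big)$ (local vanishing for $h$). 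For the third term I would use the locally free resolution $0\to T_Z\to g^*T_X\to T_{Z/X}\to0$ together with the injectivity of $h^*T_Z\to f^*T_X$ (again equal-rank, generically an isomorphism) to get $\derL h^*T_{Z/X}=h^*T_{Z/X}$; then the projection formula and $\derR h_*\omega_Y(E)=\omega_Z(D_Z)\otimes I_0(D_Z)$ give $\derR h_*\big(\omega_Y(E)\otimes h^*T_{Z/X}\big)=(\omega_Z(D_Z)\otimes I_0(D_Z))\overset{\derL}{\otimes}T_{Z/X}$, which boundedness of $\derR h_*$ forces to be concentrated in degree $0$ and equal to $T_{Z/X}\otimes I_0(D_Z)\otimes\omega_Z(D_Z)$. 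Hence $f_*(\omega_Y(E)\otimes h^*T_{Z/X})=g_*\big(T_{Z/X}\otimes I_0(D_Z)\otimes\omega_Z(D_Z)\big)$, and the four-term exact sequence is exactly the beginning of the long exact sequence, after twisting the first term by $\omega_X(2D)$ as in (i).
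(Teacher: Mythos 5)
Your skeleton is exactly the paper's: the same dominating resolution $f=g\circ h$, the same inclusion of filtered complexes on $Y$ (your $C^{Z,\bullet}_{k-n}\hookrightarrow C^{\bullet}_{k-n}$ with quotient $K^\bullet$, the paper's $M^\bullet$), pushed forward by $\derR f_*$. Parts (i) and (iii) of your argument are correct and coincide with the paper's proof up to reorganization: in (iii) the paper pushes forward in two steps ($h_*$, then $g_*$) and proves the isomorphism $h_*\omega_Y(E)\otimes T_{Z/X}\simeq h_*\bigl(\omega_Y(E)\otimes h^*T_{Z/X}\bigr)$ by tensoring $0\to T_Z\to g^*T_X\to T_{Z/X}\to 0$ with $\omega_Y(E)$ and using $R^1h_*\omega_Y(E)\otimes T_Z=0$ from Theorem \ref{thm_vanishing}; your derived projection-formula phrasing uses the same vanishing inputs and in fact streamlines the surjectivity at the end (you get it directly from $R^1f_*\shF^D_{1-n}=0$, where the paper needs an extra Leray inclusion).

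The genuine gap is in (ii), precisely at the step you flagged. What is needed is the paper's Lemma \ref{lem_relative_tangent_blow_up}, namely $g^*F_j\Dmod_X\cdot J^j\subseteq F_j\Dmod_Z$ (note that it is \emph{right} multiplication that is relevant, since the $C^\bullet$'s are complexes of right $f^{-1}\Dmod_X$-modules). Your proposed proof rests on the assertion that composing $j$ first-order operators with coefficients in $J^{-1}\shO_Z$ produces poles of order at most $j$. Taken at face value this is false: composition differentiates the polar coefficients and raises the pole order. Concretely, if $J\shO_Z=(t)$ and $\xi_i=t^{-1}\eta_i$ with $\eta_i\in T_Z$, then
\begin{equation*}
\xi_1\xi_2 \;=\; t^{-2}\eta_1\eta_2 \;-\; t^{-3}\,\eta_1(t)\,\eta_2,
\end{equation*}
which has a pole of order $3$ unless $\eta_1(t)\in (t)$. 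So the bound $J^{-j}$ is not a formal consequence of the shape of the operators; it requires the additional fact that the vector fields $\eta=\tau\xi$ arising here are tangent to $V(J\shO_Z)$. That tangency does hold in the present situation — because $J$ consists of functions pulled back from $X$ and $g^*T_X$ is generated by pulled-back fields, so derivatives such as $\xi(g^*s)=g^*(Ds)$ remain regular — but this observation \emph{is} the content of the paper's lemma, which is proved there by exactly the Leibniz-rule induction you were trying to avoid: commutators $[f^*D,\tau]=f^*(D\tau)$ stay pulled back, so at every step the count of available sections of $J$ matches the count of remaining pulled-back vector fields (this bookkeeping is what fails if one multiplies on the wrong side, and what your pole count silently assumes).

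There is a secondary problem with the same step: the fractional ideal $J^{-1}$, and the implication $J\cdot g^*T_X\subseteq T_Z\Rightarrow g^*T_X\subseteq J^{-1}T_Z$, only make sense when $J\shO_Z$ is invertible (locally principal). The hypothesis of (ii) is just $J\cdot T_{Z/X}=0$, which puts no such restriction on $J$; the paper's commutator induction needs no principality. So to repair your (ii) you would either have to prove the tangency statement and redo the pole bookkeeping with it (at which point you have reproduced Lemma \ref{lem_relative_tangent_blow_up}), or simply invoke that lemma as the paper does.
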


\begin{proof}
Let $h\colon Y\to Z$ be a log resolution of $(Z,D_Z)$ which is an isomorphism over $Z\smallsetminus D_Z$. Then $f=g\circ h$ is a log resolution of $(X,D)$ as well,
which is an isomorphism over $X\smallsetminus D$. As usual,
we put $E=(f^*D)_{\rm red}=(h^*D_Z)_{\rm red}$.
Consider on $Y$ the  complexes $C_{k-n}^{D,\bullet}$:
$$0 \rightarrow f^* F_{k-n} \Dmod_X \rightarrow \Omega_Y^1(\log E) \otimes f^* F_{k-n+1} \Dmod_X \rightarrow \cdots \to  \omega_Y(E) \otimes f^* F_k \Dmod_X\rightarrow 0$$
and $C_{k-n}^{D_Z,\bullet}$:
$$0 \rightarrow h^* F_{k-n} \Dmod_Z \rightarrow \Omega_Y^1(\log E) \otimes h^* F_{k-n+1} \Dmod_Z \rightarrow \cdots \to  \omega_Y(E) \otimes f^* F_k \Dmod_Z\rightarrow 0,$$
both placed in degrees $-n,\ldots,0$. Note that we have an inclusion of complexes
$$C_{k-n}^{D_Z,\bullet}\hookrightarrow C_{k-n}^{D,\bullet}.$$
(The fact that the map is injective follows from the fact that all $\Omega_Y^p(\log E)$ are locally free $\shO_Y$-modules, and the maps
$h^*F_j\Dmod_Z\to h^*g^* F_j \Dmod_X$ are generically injective morphisms of locally free left $\shO_Y$-modules.)
Let $M^{\bullet}$ be the quotient complex;  this is a complex of right $f^{-1}\shO_X$-modules.
Applying $\derR f_*$ and taking the corresponding long exact sequence,
we obtain an exact sequence
\begin{equation}\label{eq5_birational_transformation_I1}
R^0f_*C_{k-n}^{D_Z,\bullet}\overset{\iota}\longrightarrow R^0f_*C_{k-n}^{D,\bullet}\to R^0f_*M^{\bullet}.
\end{equation}

By definition, we have
$$R^0f_*C_{k-n}^{D,\bullet}=I_k(D)\otimes\omega_X\big((k+1)D \big).$$
On the other hand, recall that $R^ih_*C_{k-n}^{D_Z,\bullet}=0$ for all $i\neq 0$ (see Corollary~\ref{local_vanishing_Hodge}). It follows from the Leray spectral sequence
that
$$R^0f_*C_{k-n}^{D_Z,\bullet}=R^0g_*R^0h_*C_{k-n}^{D_Z,\bullet}=g_*\big(I_k(D_Z)\otimes\omega_Z((k+1)D_Z)\big)$$
$$=g_*\big(I_k(D_Z)\otimes\shO(K_{Z/X}+(k+1)D_Z-(k+1)g^*D)\big)\otimes\omega_X\big((k+1)D\big).$$
Finally, the map $\iota$ is compatible with restriction to open subsets of $X$. 
By restricting to an open subset $U$ in the complement of $D$
such that $f$ is an isomorphism over $U$, it is clear that $\iota\vert_U$ is the identity on $\omega_U$. 
This implies that $\iota$ is the restriction of the identity on $\omega_X\big((k+1)D\big)$ and we obtain the inclusion in i).

Using (\ref{eq5_birational_transformation_I1}),
we see that in order to prove the assertion in ii), it is enough to show that $M^{\bullet}\cdot J^k=0$. 
Since we have
$$M^{p}=\Omega_Y^{n+p}(\log E)\otimes h^*\big(g^*F_{k+p}\Dmod_X/F_{k+p}\Dmod_Z\big),$$
it follows that it is enough to show that $g^*F_j\Dmod_X\cdot J^j\subseteq F_j\Dmod_Z$ for every $j\geq 0$. 
This is a consequence of the general Lemma \ref{lem_relative_tangent_blow_up} below, and 
completes the proof of ii).

In order to prove the assertion in iii), we look a bit closer at the argument showing i).
It follows from Lemma~\ref{filtration1} that
we have a commutative diagram with exact rows:
$$
\begin{tikzcd}
0 \rar &  \Omega_Y^{n-1}(\log E)\dar{{\rm Id}}\rar & \omega_Y(E)\otimes h^*F_1\Dmod_Z\dar{\alpha}\rar&  \shF_{1-n}^{D_Z}\rar & 0 \\
0\rar &  \Omega_Y^{n-1}(\log E)\rar & \omega_Y(E)\otimes f^*F_1\Dmod_X\rar & \shF_{1-n}^D\rar & 0.
\end{tikzcd}
$$
As we have seen, the map $\alpha$ is injective. Moreover, we have
$${\rm Coker}(\alpha)\simeq \omega_Y(E)\otimes h^*T_{Z/X}.$$
It follows from the diagram that we have an induced exact sequence
$$0\longrightarrow \shF_{1-n}^{D_Z}\longrightarrow \shF_{1-n}^D\longrightarrow \omega_Y(E)\otimes h^*T_{Z/X}\longrightarrow 0.$$
Applying $h_*$ we obtain an exact sequence
\begin{equation}\label{eq1_birational_transformation_I1}
0\to I_1(D_Z)\otimes\omega_Z(2D_Z)\to h_* \shF_{1-n}^D \to h_*\big(\omega_Y(E)\otimes h^*T_{Z/X}\big)\to 0.
\end{equation}
We claim that the canonical morphism
\begin{equation}\label{eq3_birational_transformation_I1}
h_* \omega_Y(E) \otimes T_{Z/X}\longrightarrow h_*\big(\omega_Y(E)\otimes h^*T_{Z/X}\big)
\end{equation}
is an isomorphism. Indeed, applying $h^*$ to the exact sequence
\begin{equation}\label{eq2_birational_transformation_I1}
0\longrightarrow T_Z\longrightarrow g^*T_X\longrightarrow T_{Z/X}\longrightarrow 0,
\end{equation}
tensoring  with $\omega_Y(E)$, and then applying $h_*$, we obtain using the projection formula the exact sequence
$$0\to h_* \omega_Y(E) \otimes T_Z \overset{\beta}\to h_*\omega_Y(E) \otimes g^*T_X \to h_*\big(\omega_Y(E)\otimes h^*T_{Z/X}\big)\to 0.$$
(Note that $R^1h_*\big(\omega_Y(E)\otimes h^*T_Z\big)=R^1h_* \omega_Y(E) \otimes T_Z=0$ by
Theorem~\ref{thm_vanishing}.) On the other hand, by tensoring (\ref{eq2_birational_transformation_I1}) with
 $h_* \omega_Y(E) $, we conclude that
${\rm Coker}(\beta)=h_* \omega_Y(E) \otimes T_{Z/X}$,
hence (\ref{eq3_birational_transformation_I1}) is an isomorphism.
Since 
$$h_* \omega_Y(E) =I_0(D_Z)\otimes\omega_Z(D_Z),$$
applying $g_*$ to the exact sequence (\ref{eq3_birational_transformation_I1}) gives the exact sequence in the proposition; note that the surjectivity of the last map follows
from the inclusion
$$R^1g_*(h_*\shF^D_{1-n})\hookrightarrow R^1f_*\shF^D_{1-n}$$
provided by the Leray spectral sequence, and the fact that $R^1f_*\shF^D_{1-n}=0$ by 
Corollary~\ref{local_vanishing_Hodge}.
\end{proof}

\begin{lemma}\label{lem_relative_tangent_blow_up}
Let $g\colon Z\to X$ be a birational morphism of smooth varieties.
If $J$ is an ideal on $X$ such that $J\cdot T_{Z/X}=0$, then
$${\rm Coker}(F_k\Dmod_Z\hookrightarrow g^*F_k\Dmod_X)\cdot J^k=0\quad\text{for every}\quad k\geq 0.$$
\end{lemma}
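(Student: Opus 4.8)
The plan is to argue locally and by induction on $k$, reducing the order of the ``operator from $X$'' part of an element of $g^{*}F_k\Dmod_X$ one step at a time. Fix local coordinates $x_1,\dots,x_n$ on $X$ and $y_1,\dots,y_n$ on $Z$, write $\varphi\colon\Dmod_Z\to g^{*}\Dmod_X=\Dmod_{Z\to X}$, $P\mapsto P\cdot(1\otimes 1)$, and abbreviate $\widetilde J:=J\cdot\shO_Z$. Since $T_{Z/X}=\Coker(T_Z\hookrightarrow g^{*}T_X)$, the hypothesis $J\cdot T_{Z/X}=0$ says exactly that $\widetilde J\cdot g^{*}T_X\subseteq T_Z$ inside $g^{*}T_X$; equivalently, for every $\psi\in\widetilde J$ and every $i$ there is $\eta\in F_1\Dmod_Z$ with $\psi\cdot(1\otimes\partial_{x_i})=\varphi(\eta)$. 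This is the base case $k=1$, and it is the only place the hypothesis enters. I would also use the two bimodule identities $\varphi(P)\cdot R=P\cdot(1\otimes R)$ and $\psi\cdot(w\cdot R)=(\psi\cdot w)\cdot R$, which let me move scalars and operators across the tensor.

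The heart of the argument is the inductive statement $P(m)$: for every $P\in F_d\Dmod_Z$, every $R\in g^{-1}F_m\Dmod_X$, and all $\psi_1,\dots,\psi_m\in\widetilde J$, one has $P\cdot\big(\psi_1\cdots\psi_m\cdot(1\otimes R)\big)\in\varphi(F_{d+m}\Dmod_Z)$. The order-zero case is immediate since $1\otimes R=\varphi(g^{\#}R)$, and after pulling coefficients out of $R$ it suffices to treat $R=\partial_{x_i}\partial_x^{\beta'}$ with $|\beta'|=m-1$. Applying the base case to the outer factor, $\psi_m\cdot(1\otimes R)=\varphi(\eta)\cdot\partial_x^{\beta'}=\eta\cdot(1\otimes\partial_x^{\beta'})$ for some $\eta\in F_1\Dmod_Z$, so the product becomes $P\cdot\Psi\cdot\eta\cdot(1\otimes\partial_x^{\beta'})$ with $\Psi:=\psi_1\cdots\psi_{m-1}$. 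Commuting the scalar $\Psi$ past $\eta$ yields $\Psi\eta=\eta\Psi-\sigma(\eta)(\Psi)$, where $\sigma(\eta)\in T_Z$ is the symbol, and splits the product into $(P\eta)\cdot\Psi\cdot(1\otimes\partial_x^{\beta'})$ and $-\,P\cdot\sigma(\eta)(\Psi)\cdot(1\otimes\partial_x^{\beta'})$. Each summand has a right-hand part of order $m-1$, and so should fall under $P(m-1)$.

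The main obstacle, and the reason the sharp exponent is $J^k$ rather than the $J^{k(k+1)/2}$ that a crude filtration by operator order would produce, is the correction term $\sigma(\eta)(\Psi)$: at first glance it appears to have lost a factor of $\widetilde J$. The observation that closes the induction is that the symbols occurring this way are special. Indeed $\varphi(\eta)=\psi_m(1\otimes\partial_{x_i})$ forces the image of $\sigma(\eta)$ in $g^{*}T_X$ to be $\psi_m\,\partial_{x_i}$, so $\sigma(\eta)$ lies in the subsheaf $\Theta:=\widetilde J\cdot g^{*}T_X\subseteq T_Z$. A short chain-rule computation shows that every derivation in $\Theta$ carries $\widetilde J$ into $\widetilde J$: writing $\psi\in\widetilde J$ as an $\shO_Z$-combination of pullbacks $g^{\#}a$ with $a\in J$, and using $\sigma(\eta)(g^{\#}a)=\sum_l g^{\#}(\partial_{x_l}a)\,\sigma(\eta)(g^{\#}x_l)$ together with $\sigma(\eta)(g^{\#}x_l)\in\widetilde J$, one finds $\sigma(\eta)(\psi)\in\widetilde J$. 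By the Leibniz rule it follows that $\sigma(\eta)(\Psi)\in\widetilde J^{\,m-1}$, so the correction term is again of the precise shape to which $P(m-1)$ applies, with $m-1$ scalars against a right-hand part of order $m-1$. Both summands thus land in $\varphi(F_{d+m}\Dmod_Z)$, completing the induction. Finally, taking $P=1$, $d=0$ and letting $R$ range over $1\otimes\partial_x^{\beta}$ with $|\beta|\le k$ gives $\widetilde J^{\,k}\cdot g^{*}F_k\Dmod_X\subseteq\varphi(F_k\Dmod_Z)$, that is $\Coker(F_k\Dmod_Z\hookrightarrow g^{*}F_k\Dmod_X)\cdot J^k=0$.
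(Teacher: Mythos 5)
Your induction is correct as far as it goes, and the mechanism is recognizably related to the paper's: induct, use commutators, and invoke the hypothesis $\widetilde J\cdot g^*T_X\subseteq T_Z$ once per factor of $J$ to absorb a pulled-back vector field into $T_Z$. But the two proofs arrange the product differently, and the difference matters. The paper keeps the sections of $J$ on the \emph{right}: it shows by induction on $k$ that $(1\otimes D_1\cdots D_k)\cdot\tau_1\cdots\tau_k\in\varphi(F_k\Dmod_Z)$ for $D_i$ local sections of $T_X$ and $\tau_j$ local sections of $J$. There each swap produces a commutator $[D_k,\tau_j]=D_k(\tau_j)$ that consumes one vector field and one section of $J$ \emph{simultaneously}; the function $D_k(\tau_j)$ need not lie in $J$, but it is harmless, since the resulting term has $k-1$ vector fields against the remaining $k-1$ sections of $J$, and the inductive hypothesis applies verbatim. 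Your arrangement (the $\widetilde J$-factors on the left) instead makes the commutator differentiate the $\psi$'s by the vector field $\eta$, and closing the induction then forces your extra observation that vector fields lying in $\widetilde J\cdot g^*T_X$ carry $\widetilde J$ into itself. That observation is correct, and it isolates nicely why the exponent $k$ suffices; but it is an additional lemma that the paper's bookkeeping never needs.

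The arrangement also hides the one genuine gap, at the very last line. What you prove (taking $P=1$) is annihilation of the cokernel for the \emph{left} $\shO_Z$-structure: $\widetilde J^{\,k}\cdot g^*F_k\Dmod_X\subseteq\varphi(F_k\Dmod_Z)$. The lemma as the paper intends it --- and as it is used in the proof of Theorem~\ref{birational_transformation_I1}, part ii), where the differentials of the complexes involved are right $f^{-1}\Dmod_X$-linear but not left $\shO$-linear, so that only the right action descends to the push-forwards --- concerns the \emph{right} $g^{-1}\shO_X$-module structure on the cokernel: for $Q$ a local section of $F_k\Dmod_X$ and $\tau_1,\ldots,\tau_k$ local sections of $J$ one needs $1\otimes Q\tau_1\cdots\tau_k\in\varphi(F_k\Dmod_Z)$. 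Left and right multiplication by functions on $\Dmod_{Z\to X}$ genuinely differ, by commutator terms, so your closing ``that is'' is not a tautology. Fortunately your statement $P(m)$ is strong enough to bridge the two: inside $\Dmod_X$ expand
$$Q\tau_1\cdots\tau_k=\sum_{S\subseteq\{1,\ldots,k\}}\Bigl(\prod_{i\notin S}\tau_i\Bigr)Q_S,$$
where $Q_S\in F_{k-|S|}\Dmod_X$ is the iterated commutator of $Q$ with the $\tau_i$ for $i\in S$; then
$1\otimes Q\tau_1\cdots\tau_k=\sum_S\bigl(\prod_{i\notin S}g^{\#}\tau_i\bigr)\cdot(1\otimes Q_S)$
exhibits each summand as $k-|S|$ factors from $\widetilde J$ acting on the left of an operator of order at most $k-|S|$, and $P(k-|S|)$ places it in $\varphi(F_{k-|S|}\Dmod_Z)\subseteq\varphi(F_k\Dmod_Z)$. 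With these lines added (or with the induction simply redone with the $\tau$'s on the right, as in the paper) your proof is complete.
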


\begin{proof}
Note that the inclusion $F_k\Dmod_Z\hookrightarrow g^*F_k\Dmod_X$ has the structure of a map of $\shO_Z - g^{-1}\shO_X$ bimodules;  on the cokernel we use the right $g^{-1}\shO_X$-module structure. Recall that $g^*F_k\Dmod_X$ (resp. $F_k\Dmod_Z$) is locally generated as a left $\shO_Z$-module by
$\leq k$ products of sections in $f^*T_X$ (resp. $T_Z$). 
We prove by induction on $k\geq 0$ that
if $D_1,\ldots,D_k$ are local sections of $T_X$ and $\tau_1,\ldots,\tau_k$ are local sections of $J$, then
$f^* D_1 \ldots f^*D_k \tau_1\ldots\tau_k$ is a section of $F_k\Dmod_Z$. The assertion is trivial for $k=0$.
If $k\geq 1$, then 
 we have by induction
$$f^*D_1 \ldots f^* D_k \tau_1\ldots \tau_k-f^* D_1 \ldots f^* D_{k-1} (\tau_1f^*D_k)\tau_2\ldots \tau_k$$
$$=
f^* D_1 \ldots f^* D_{k-1} \tau_2\ldots \tau_{k-1}D_k(\tau_1)\in F_{k-1}\Dmod_Z.$$
After iterating this $k$ times, we obtain
$$f^*D_1 \ldots f^* D_k \tau_1\ldots \tau_k-(f^* D_1 \ldots f^* D_{k-1} \tau_1\ldots\tau_{k-1})(\tau_k f^* D_k )\in F_{k-1}\Dmod_Z.$$
We know by assumption that $\tau_k f^* D_k \in F_1\Dmod_Z$, and by induction we have
$$f^* D_1 \ldots f^* D_{k-1} \tau_1\ldots\tau_{k-1}\in F_{k-1}\Dmod_Z.$$
We thus conclude that $f^* D_1 \ldots f^* D_k \tau_1\ldots \tau_k$ is a section of $F_k\Dmod_Z$.
\end{proof}

\begin{example}\label{ex_relative_tangent_blow_up}
If $g\colon Z\to X$ is the blow-up of a smooth variety $X$ along the smooth subvariety $W$
defined by the ideal $I_W$, with exceptional divisor $G$, then $I_W\cdot T_{Z/X}=0$. 
 In fact, we have an isomorphism
$$
T_{Z/X}\simeq T_{G/W}\otimes_{\shO_G}\shO_G(-1)
$$
which clearly implies this. Indeed, an easy computation in local charts gives an isomorphism 
$${\rm Coker}(g^*\Omega_X^1 \to\Omega_Z^1 )\simeq \Omega_{G/W}^1.$$
We deduce that
$$T_{Z/X}\simeq {\mathcal Ext}_{\shO_Z}^1(\Omega_{G/W}^1,\shO_Z)
\simeq ({\Omega_{G/W}^1})^{\vee}\otimes_{\shO_Z} {\mathcal Ext}_{\shO_Z}^1(\shO_G,\shO_Z)$$
$$\simeq T_{G/W}\otimes_{\shO_Z}\shO_G(G)\simeq T_{G/W}\otimes_{\shO_G}\shO_G(-1).$$
\end{example}

\medskip

In applications we will also make use of the following more ``local" versions of the assertion in Theorem~\ref{birational_transformation_I1} ii).
 
\begin{remark}\label{remark_small_ideal}
Let $g\colon Z\to X$ and $D$, $D_Z$ be as in Theorem~\ref{birational_transformation_I1}. Consider 
an open subset $V$ of $Z$ and let $g'\colon V\to X$ be the restriction of $g$.
If $J$ is an ideal sheaf on $X$ such that $J\cdot T_{Z/X}=0$ on $V$,
then
\begin{equation}\label{eq_remark_small_ideal}
J^k\cdot I_k(D)\subseteq g'_*\left(I_k(D_Z)\otimes \shO_Z \big(K_{Z/X}+(k+1)D_Z-(k+1)g^*D\big)\vert_V\right),
\end{equation}
where the right-hand side is considered as an $\shO_X$-submodule of the constant sheaf of rational functions on $X$.
In particular, for every
prime divisor $G$ on $Z$ that intersects $V$, we have
$$
k\cdot {\rm ord}_G(J)+\ord_G(I_k(D))+\ord_G\big(K_{Z/X}+(k+1)D_Z-(k+1)f^*(D)\big)\geq 0.
$$
Indeed, using the notation in the proof of Theorem~\ref{birational_transformation_I1}, note that if $i\colon V\hookrightarrow Z$ is the inclusion, then
we have a commutative diagram of distinguished triangles
on $X$: 
$$
\begin{tikzcd}
\derR g_* \derR h_*C_{k-n}^{D_Z,\bullet}\dar \rar &  \derR g_* \derR h_*C_{k-n}^{D,\bullet} \dar\rar &   \derR g_* \derR h_*M^{\bullet}\dar \\
\derR g_* \derR i_*i^* \derR h_*C_{k-n}^{D_Z,\bullet} \rar{\gamma} &  \derR g_* \derR i_* i^* \derR h_*C_{k-n}^{D,\bullet} \rar &  \derR g_* \derR i_*i^* \derR h_*M^{\bullet},
\end{tikzcd}
$$
such that the exact sequence (\ref{eq5_birational_transformation_I1}) is obtained by applying the cohomology functor $H^0(-)$ to the top triangle. 
Note first that $\derR g_*\circ \derR i_*=\derR g'_*$ and 
$$\derR h_*C_{k-n}^{D_Z,\bullet}=I_k(D_Z)\otimes\omega_Z\big((k+1)D_Z \big).$$
Moreover, we have 
$$i^* \derR h_*M^{\bullet}\simeq \derR h'_*{i'}^*M^{\bullet},$$ where $i'\colon h^{-1}(V)\hookrightarrow Y$ is the inclusion and
$h'\colon h^{-1}(V)\to V$ is the restriction of $h$. The argument in the proof of Theorem~\ref{birational_transformation_I1} and our hypothesis on $J$
implies that ${i'}^*M^{\bullet}\cdot J^k=0$, hence 
$$\big(\derR g_* \derR i_*i^* \derR h_*M^{\bullet}\big)\cdot J^k=0.$$
We thus conclude that ${\rm Coker} H^0(\gamma)\cdot J^k=0$. 

Applying $H^0(-)$ to the commutative diagram of distinguished triangles above, we obtain a commutative diagram
$$
\begin{tikzcd}
g_*\big(I_k(D_Z)\otimes\omega_Z((k+1)D_Z)\big)\dar\rar{\iota} & I_k(D)\otimes\omega_X\big((k+1)D\big)\dar\\
g'_*\big(I_k(D_Z)\otimes\omega_Z((k+1)D_Z)\vert_V\big)\rar{H^0(\gamma)} & H^0\big( Rg'_*i^* \derR h_*C_{k-n}^{D,\bullet}\big).
\end{tikzcd}
$$
Finally, note that the whole picture is compatible with restriction to open subsets of $X$. Suppose that $U\subseteq X$ is an open subset of the complement of $D$ such that
$f$ is an isomorphism over $U$ and $g^{-1}(U)\subseteq V$. In this case we have a morphism from the above commutative diagram of sheaves on $X$ to 
the push-forward of its restriction to $U$,
which is a diagram all of whose entries are canonically identified to $\varphi_*\omega_U$, where $\varphi\colon U\to X$ is the inclusion. Since ${\rm Coker} H^0(\gamma)\cdot J^k=0$, we deduce that
inside $\varphi_*\omega_U$ we have
$${\rm Im}\big(I_k(D)\otimes \omega_X ((k+1)D)\to \omega_U\big)\cdot J^k\subseteq g'_*\big(I_k(D_Z)\otimes\omega_Z((k+1)D_Z)\vert_V\big),$$
which is equivalent to the inclusion (\ref{eq_remark_small_ideal}).
\end{remark}

\begin{remark}\label{remark2_small_ideal}
We will also make use of the following variant of the previous result.
Suppose that we are in the setting of Remark~\ref{remark_small_ideal} and that the following extra conditions hold:
\begin{enumerate}
\item[i)] There is a prime $g$-exceptional divisor $G$ on $Z$ that on $V$ meets the strict transform $\widetilde{D}$ nontrivially and 
with simple normal crossings.
\item[ii)] $J\cdot\shO_V=\shO_V(-T)$ for some $g$-exceptional divisor $T$.
\item[iii)] $I_k(D)=\shO_X$ at the generic point of $g(G)$.
\end{enumerate}
In this case we have
$$k\cdot {\rm ord}_G(J)+\ord_G\big(K_{Z/X}+(k+1)D_Z-(k+1)g^*(D)\big)\geq k.$$
Indeed, after possibly replacing $V$ by a smaller open subset, we may assume that 
$D_Z\vert_V=(\widetilde{D}+G)\vert_V$ and 
$J\cdot\shO_V=\shO_V(-aG)$, where $a=\ord_G(J)$.
We deduce from (\ref{eq_remark_small_ideal}) that 
$$\shO_V(-kaG)\subseteq I_k(D_Z)\cdot \shO_Y\big(K_{Z/X}+(k+1)D_Z-(k+1)f^*D\big)\vert_V.$$
Let us choose coordinates $x_1,\ldots,x_n$ at some point $y\in G\cap\widetilde{D}\cap V$
 such that
$\widetilde{G}$ and $\widetilde{D}$ are defined by $(x_1)$ and $(x_2)$, respectively.
Note that by Proposition~\ref{description_SNC_case} we have $I_k(D_Z)=(x_1,x_2)^k$ around $y$, hence
$$(x_1)^{ka}\subseteq (x_1,x_2)^k\cdot (x_1)^b$$
around $y$,
where $b=(k+1)(\ord_G(D)-1)-\ord_G(K_{Y/X})$. This implies $ka\geq k+b$, as claimed.
\end{remark}

\section{Local study of Hodge ideals}

In this section we apply the results in \S \ref{birational_behavior} to obtain lower bounds for the order of vanishing 
of Hodge ideals along various divisors over the ambient variety. In particular, we address the question of
whether the singularities of the original divisor $D$ imply that the various $I_k(D)$ are nontrivial or not;
more generally, we are interested in lower bounds for the order of $I_k(D)$ at a given point.
This, sometimes combined with the 
vanishing theorems discussed in the next section, leads to the most significant applications.
We will also see that estimating the order of vanishing of $I_k(D)$ along general divisors leads to
interesting structural results about Hodge ideals.

\subsection{Order of vanishing along exceptional divisors}\label{scn_criteria_nontriviality}
We aim for lower bounds on the order of vanishing of the Hodge ideals along given divisors.
In order to state our main result in this direction, we first introduce some notation. Suppose that $G$ is an exceptional divisor over $X$. 
By a result due to Zariski (see \cite[Lemma~2.45]{KollarMori}), we can obtain $G$ by a sequence of blow-ups such that at each step we blow up the center of $G$ on the respective variety. More precisely, if we define the sequence of birational transformations $f_i\colon X_i\to X_{i-1}$, for $i\geq 1$, as follows:
\begin{enumerate}
\item[i)] $f_i$ is the blow-up of $X_{i-1}$ along the center $W_{i-1}$ of $G$ on $X_{i-1}$;
\item[ii)] $X_0=X$,
\end{enumerate}
then there is an $s$ such that $W_s=G$ is a prime divisor on $X_s$. Let $s$ be the smallest integer with this property (note that $s\geq 1$ since we assumed that $G$ is exceptional over $X$). 
After successively replacing each $X_i$ by a suitable open subset intersecting $W_i$,
we may assume that all $X_i$ and $W_i$ are smooth; in this case, of course, the maps $f_i$ are not going to be proper anymore, but this will not cause any trouble. We denote the ideal defining $W_i$ in $X_i$ by $I_{W_i}$, and we
put $\alpha_j=\ord_{G}(I_{W_{j-1}})$. Note that 
$$\alpha_1\geq \alpha_2\geq\cdots\geq\alpha_{s-1}=\alpha_s=1.$$ 
We also denote by $k_G$ the coefficient of $G$ in the relative canonical divisor $K_{X_s/X}$. 
With this notation, our main result in this direction is the following:

\begin{theorem}\label{criterion_nontriviality1}
Given a reduced effective divisor $D$ on the smooth variety $X$, for every exceptional divisor $G$ and every $k\geq 0$
we have 
$$\ord_G\big(I_k(D)\big)\geq (k+1)(\ord_G(D)-1)-k_G-\alpha_1qk,$$
where $q=\lceil (\alpha_1+\cdots+\alpha_s)/\alpha_1\rceil$.
\end{theorem}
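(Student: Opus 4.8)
The plan is to apply the valuative form of Theorem~\ref{birational_transformation_I1}ii) recorded in Remark~\ref{remark_small_ideal} to the tower of blow-ups $g_s = f_1\circ\cdots\circ f_s\colon X_s\to X$ that realizes $G$, taking as auxiliary ideal $J = I_{W_0}^q = I_W^q$, where $W = W_0\subseteq X$ is the center of $G$ on $X$ and $q = \lceil(\alpha_1+\cdots+\alpha_s)/\alpha_1\rceil$. We may assume $g_s(G)\subseteq\Supp(D)$, since otherwise $I_k(D)$ is trivial near the generic point of $G$ while $\ord_G(D)=0$, so the asserted bound holds trivially. Granting for a moment the annihilation $J\cdot T_{X_s/X}=0$ on a neighborhood of the generic point $\eta_G$ of $G$, Remark~\ref{remark_small_ideal} gives
\[
k\cdot\ord_G(J) + \ord_G\big(I_k(D)\big) + \ord_G\big(K_{X_s/X} + (k+1)D_{X_s} - (k+1)g_s^*D\big) \geq 0,
\]
with $D_{X_s}=(g_s^*D)_{\rm red}$. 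Since $\ord_G(J) = q\cdot\ord_G(I_W) = q\alpha_1$, $\ord_G(K_{X_s/X}) = k_G$, $\ord_G(D_{X_s}) = 1$ and $\ord_G(g_s^*D) = \ord_G(D)$, rearranging yields exactly
\[
\ord_G\big(I_k(D)\big) \geq (k+1)\big(\ord_G(D) - 1\big) - k_G - \alpha_1 q k .
\]
Thus the whole statement reduces to the annihilation claim $I_W^q\cdot T_{X_s/X}=0$ near $\eta_G$.

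To prove this, I would exploit the factorization $g_s=f_1\circ\cdots\circ f_s$ to filter $T_{X_s/X}$. Write $p_j\colon X_s\to X_j$ for the projection, $p_0=g_s$ and $p_s=\id$. Pulling the natural inclusions $T_{X_j}\hookrightarrow f_j^*T_{X_{j-1}}$ back to $X_s$ gives maps $\phi_j\colon p_j^*T_{X_j}\to p_{j-1}^*T_{X_{j-1}}$ of rank-$n$ vector bundles on $X_s$ which are isomorphisms over the locus where $g_s$ is; a generically isomorphic map of vector bundles of equal rank is injective, with $\coker\phi_j\cong p_j^*T_{X_j/X_{j-1}}$. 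Setting $M_j:=\im(\phi_1\circ\cdots\circ\phi_j)\subseteq g_s^*T_X$, one obtains a filtration $T_{X_s}=M_s\subseteq\cdots\subseteq M_0=g_s^*T_X$ with successive quotients $M_{j-1}/M_j\cong p_j^*T_{X_j/X_{j-1}}$, so that $T_{X_s/X}=g_s^*T_X/T_{X_s}$ carries a filtration with these same graded pieces for $j=1,\dots,s$. By Example~\ref{ex_relative_tangent_blow_up}, $T_{X_j/X_{j-1}}$ is annihilated by $I_{W_{j-1}}\shO_{X_j}$, hence $p_j^*T_{X_j/X_{j-1}}$ is annihilated by $I_{W_{j-1}}\shO_{X_s}$, an ideal of order $\ord_G(I_{W_{j-1}})=\alpha_j$ along $G$. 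Consequently the product $\prod_{j=1}^s\big(I_{W_{j-1}}\shO_{X_s}\big)$ annihilates $T_{X_s/X}$ and has order $\sum_{j=1}^s\alpha_j$ along $G$.

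At the generic point $\eta_G$, where $\shO_{X_s}$ is a discrete valuation ring with uniformizer cutting out $G$, each pulled-back ideal becomes a power of the maximal ideal: $(I_W^q\shO_{X_s})_{\eta_G}$ has order $q\alpha_1$, while $\big(\prod_j I_{W_{j-1}}\shO_{X_s}\big)_{\eta_G}$ has order $\sum_j\alpha_j$. Since $q=\lceil(\sum_j\alpha_j)/\alpha_1\rceil$ gives $q\alpha_1\geq\sum_j\alpha_j$, we obtain $(I_W^q)_{\eta_G}\subseteq\big(\prod_j I_{W_{j-1}}\big)_{\eta_G}$, so $I_W^q$ kills the stalk $(T_{X_s/X})_{\eta_G}$; by coherence the sheaf $I_W^q\cdot T_{X_s/X}$ then vanishes on an open neighborhood $V$ of $\eta_G$, which is the required annihilation. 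The loss of properness after shrinking the $X_i$ to smoothen the centers is immaterial: the conclusion concerns only the divisorial valuation $\ord_G$, which by Theorem~\ref{independence} is model-independent, so one may realize $G$ on a proper modification that is an isomorphism over $X\smallsetminus D$ and run the computation on a neighborhood of $\eta_G$, precisely as Remark~\ref{remark_small_ideal} allows. The main obstacle is this filtration-plus-annihilator step; once the product of the $I_{W_{j-1}}$ is shown to annihilate $T_{X_s/X}$, the definition of $q$ via the ceiling is exactly calibrated so that a single power of the genuine ideal $I_W$ \emph{on the base} $X$ suffices, which is what permits the application of Remark~\ref{remark_small_ideal} with an ideal $J$ living on $X$.
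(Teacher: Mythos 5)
Your proposal is correct and takes essentially the same route as the paper's proof: the reduction via Remark~\ref{remark_small_ideal} with $J=I_W^q$, the annihilation of $T_{X_s/X}$ obtained by composing the pulled-back tangent maps along the tower and applying Example~\ref{ex_relative_tangent_blow_up} at each stage, and the order comparison $q\alpha_1\geq \alpha_1+\cdots+\alpha_s$ along $G$ are precisely the paper's steps. The only differences are cosmetic: you phrase the annihilation through a filtration plus a DVR-stalk computation at $\eta_G$ where the paper telescopes cokernels against the invertible ideals $\shO_V(-\alpha_i G)$, and the properness issue you attribute to Theorem~\ref{independence} is handled in the paper by a Nagata compactification embedding a neighborhood of $\eta_G$ as an open subset of an actual log resolution of $(X,D)$, which is the cleaner justification.
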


\begin{proof}
We may assume that $\ord_G(D)\geq 1$, since otherwise the assertion in the theorem is trivial.
We consider a sequence 
$$X_s\to X_{s-1}\to\ldots \to X_1\to X_0=X$$
with $s$ minimal, such that $G$ is a prime divisor on $X_s$, and each $X_i\to X_{i-1}$ is the blow-up of $X_{i-1}$ along the center $W_{i-1}$ of $G$ on $X_{i-1}$. We denote by $g\colon X_s\to X$ the composition. 
We can find open subsets $U_i\subseteq X_i$ that intersect $W_i$ such that
the following hold:
\begin{enumerate}
\item[i)] we have induced morphisms $U_i\to U_{i-1}$.
\item[ii)] all $U_i$ and $U_i\cap W_i$ are smooth.
\end{enumerate}
We denote by $I_{W_i}$ the ideal defining $U_i\cap W_i$ in $U_i$.
Let $V$ be the complement in $U_s$ of the strict transform $\widetilde{D}$  of $D$ and of all $g$-exceptional divisors but $G$. It is clear that
$g^* D$ has simple normal crossings on $V$, hence using Nagata's compactification theorem and by taking a suitable log resolution that is an isomorphism
over $U$, we see that
there is an open immersion
$V\hookrightarrow Y$ over $X$, where $f\colon Y\to X$ is a log resolution of $(X,D)$ which is an isomorphism over $X\smallsetminus D$.
Let $E=(f^*D)_{\rm red}$.

For every $i$ with $1\leq i\leq s$, let $f_i\colon U_i\to U_{i-1}$ and $g_i\colon V\to U_i$ be 
the corresponding maps. We claim that ${\rm Coker}(T_{V}\hookrightarrow g_0^*T_{U_0})$ is annihilated by $I_{W_0}^q$, 
or equivalently by $\shO_V\big(- q\alpha_1 G\big)$.
Indeed, it follows from  Example~\ref{ex_relative_tangent_blow_up}
that $I_{W_{i-1}}\cdot {\rm Coker}(T_{U_i}\to f_i^*T_{U_{i-1}})=0$, hence 
$$\shO_U(-\alpha_iG)\cdot {\rm Coker}(g_i^*T_{U_i}\to g_{i-1}^*T_{U_{i-1}})=0.$$
This implies that the cokernel of the composition
$$T_V\to g_{s-1}^*T_{U_{s-1}}\to\cdots\to g_0^*T_{U_0}$$
is annihilated by $\shO_V\big(-(\alpha_1+\cdots+\alpha_s)G\big)\supseteq \shO_V\big(- q\alpha_1 G\big)$.
Using Remark~\ref{remark_small_ideal}, we conclude that
$$qk\cdot \ord_G(I_{W_0})+\ord_G(I_k(D))+k_G+(k+1)-(k+1)\cdot\ord_G(D)\geq 0,$$
which implies the inequality in the theorem. 
\end{proof}

\begin{remark}\label{remark_criterion_nontriviality}
With the notation in the proof of Theorem~\ref{criterion_nontriviality1}, suppose that we can choose the subsets $U_0,\ldots,U_s$ such that 
the following holds:  there is $y\in G\cap \widetilde{D}\cap U_s$ that does not lie on any exceptional divisor over $X$ different from $G$, and
such that  $\widetilde{D}+G$ has simple normal crossings at $y$. 
In this case, if $I_k(D)\not\subseteq I_{W_0}$, then
\begin{equation}\label{eq_remark_criterion_nontriviality}
0\geq k+(k+1)(\ord_G(D)-1)-k_G-\alpha_1qk.
\end{equation}
For the argument in this case we consider a slightly different choice of $V$: we take an open set which contains $y$
and such that the only exceptional divisor over $X$ that it meets is $G$,
and $(G+\widetilde{D})\vert_U$ has simple normal crossings. It follows that we can still find an open immersion $V\hookrightarrow Y$ over $X$,
where $Y\to X$ is a log resolution of $(X,D)$. 
We can now apply Remark~\ref{remark2_small_ideal} to conclude that
(\ref{eq_remark_criterion_nontriviality}) holds. 
\end{remark}

\begin{corollary}\label{cor1_criterion_nontriviality1}
Let $X$ be a smooth variety and $D$ a reduced effective divisor on $X$.
Suppose that $W$ is an irreducible closed subset of $X$ of codimension $r\geq 2$, defined by the ideal $I_W$. 
Given $k,j\geq 0$, if
$m:={\rm mult}_W(D)$ satisfies $m\geq 2+\frac{j+r-2}{k+1}$, then
$$I_k(D)\subseteq I_W^{(j)}.$$
\end{corollary}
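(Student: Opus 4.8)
The plan is to deduce this directly from Theorem~\ref{criterion_nontriviality1} by applying it to the exceptional divisor of a single blow-up of $W$. First I would observe that both the assertion $I_k(D)\subseteq I_W^{(j)}$ and the symbolic power itself are determined at the generic point $\eta_W$ of $W$: since $X$ is smooth, the local ring $\shO_{X,\eta_W}$ is regular, and a function lies in $I_W^{(j)}$ exactly when its multiplicity along $W$ is at least $j$. Hence I may shrink $X$ to an open set meeting $W$ on which $W$ is smooth, so that $I_W^{(j)}=I_W^j$. Let $g\colon X'\to X$ be the blow-up along this smooth $W$, with exceptional divisor $G$. For a smooth center the order of vanishing along $G$ of any regular function equals its multiplicity along $W$, so the containment $I_k(D)\subseteq I_W^{(j)}$ is equivalent to the single inequality $\ord_G\big(I_k(D)\big)\geq j$.

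Next I would extract the relevant invariants of this $G$ for use in Theorem~\ref{criterion_nontriviality1}. Blowing up the smooth codimension-$r$ subvariety $W$ produces $G$ as a prime divisor in one step, so in the notation of that theorem $s=1$, $\alpha_1=\ord_G(I_W)=1$, and therefore $q=\lceil \alpha_1/\alpha_1\rceil=1$. The discrepancy of a smooth blow-up of codimension $r$ gives $k_G=r-1$, and by definition $\ord_G(D)={\rm mult}_W(D)=m$. Note also that the hypothesis forces $m\geq 2$ (since $r\geq 2$ and $j\geq 0$ make $j+r-2\geq 0$), so in particular $\ord_G(D)\geq 1$ and the theorem genuinely applies.

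Substituting these values into Theorem~\ref{criterion_nontriviality1} yields
$$\ord_G\big(I_k(D)\big)\geq (k+1)(m-1)-(r-1)-k=(k+1)m-2k-r.$$
It then remains to check that the hypothesis $m\geq 2+\tfrac{j+r-2}{k+1}$, after clearing the denominator, is exactly the inequality $(k+1)m\geq 2k+j+r$, which rearranges to $(k+1)m-2k-r\geq j$. Combining the two displayed inequalities gives $\ord_G\big(I_k(D)\big)\geq j$, which completes the argument. The only real content beyond Theorem~\ref{criterion_nontriviality1} is the bookkeeping in the first paragraph---identifying that $G$ should be the single blow-up of the smooth locus of $W$, and that the symbolic containment reduces to one order-of-vanishing estimate along $G$; once that is in place the arithmetic matches the stated bound on the nose, so I do not anticipate a genuine obstacle.
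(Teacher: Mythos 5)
Your proposal is correct and is essentially identical to the paper's own proof: the paper likewise shrinks $X$ so that $W$ is smooth, applies Theorem~\ref{criterion_nontriviality1} to the exceptional divisor $G$ of the blow-up along $W$ with $s=1$, $\alpha_1=q=1$, $k_G=r-1$, $\ord_G(D)=m$, and uses the equivalence $J\subseteq I_W^{\ell}=I_W^{(\ell)}\iff \ord_G(J)\geq\ell$. Your arithmetic reduction of the hypothesis to $(k+1)m-2k-r\geq j$ matches the paper's bound exactly.
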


\begin{proof}
After possibly replacing $X$ by an open subset that intersects $W$, we may assume that $W$ is smooth.
We now apply Theorem~\ref{criterion_nontriviality1} with $G$ being the exceptional divisor of the blow-up of $X$ along $W$.
Note that $\ord_G(D)=m$, $k_G=r-1$, $s=1$, and $\alpha_1=1$. For an ideal $J$ we have $J\subseteq I_W^{\ell}=I_W^{(\ell)}$
if and only if $\ord_G(J)\geq\ell$, hence we obtain the assertion.
\end{proof}

\begin{example}
When $k=0$, the criterion in Corollary~\ref{cor1_criterion_nontriviality1} for having $I_k(D)\subseteq I_W^{(j)}$ is sharp. 
Indeed, suppose that $f\in \CC[x_1,\ldots,x_r]$ is a homogeneous degree $m$ polynomial having an isolated singularity at $0$.
If $D$ is the divisor in ${\rm Spec}~ \CC[x_1,\ldots,x_n]$ defined by $f$, then a log resolution of $(X,D)$ is given by the blow-up
along $W=V(x_1,\ldots,x_r)$ and an easy computation shows that $I_0(D)= I_W^{m-r}$.
However, when $k \ge 1$ the criterion is not sharp any more. For a sharp criterion for
general $k$ see Theorem~\ref{new_criterion_nontriviality}, proved below, which improves Corollary \ref{cor1_criterion_nontriviality1}
in most cases.
\end{example}

As another application of Theorem~\ref{criterion_nontriviality1}, we now show that the triviality of any of the higher ideals 
$I_k(D)$ implies that $D$ has rational singularities. We will show in fact that all the ideals $I_k(D)$, for $k\geq 1$, are contained in the adjoint ideal ${\rm adj}(D)$ (for the definition and basic properties of adjoint ideals, we refer to \cite[\S 9.3.E]{Lazarsfeld}).

\begin{proof}[Proof of Theorem \ref{adjoint_inclusion}]
Note to begin with that it is enough to prove the first assertion, since ${\rm adj}(D)=\shO_X$ implies that $D$ is normal and has rational singularities by \cite[Proposition~9.3.48]{Lazarsfeld}.  Moreover, since $I_k(D)\subseteq I_1(D)$ for every $k\geq 1$ by Proposition~\ref{inclusion_between_ideals}, it is enough to show that $I_1(D)\subseteq {\rm adj}(D)$.

Let $f\colon Y\to X$ be a log resolution of $(X,D)$ which is an isomorphism over $X\smallsetminus D$, such that the strict transform $\widetilde{D}$ on $Y$ is smooth.
The adjoint ideal ${\rm adj}(D)$ is then defined by
$${\rm adj}(D):=f_* \shO_Y (K_{Y/X}-f^*D+\widetilde{D}).$$
 In order to prove the desired inclusion, it is enough to show that for every
prime exceptional divisor $G$ on $Y$ we have
\begin{equation}\label{eq0_rational_singularities}
\ord_G \big(I_1(D) \big)+k_G\geq\ord_G(D).
\end{equation}

We first note that by Theorem~\ref{cor2_generation_filtration}, there is an open subset $U\subseteq X$ with ${\rm codim}(X\smallsetminus U,X)\geq 3$
such that $I_1(D)\vert_U=J_1(D)\vert_U$. 
Since $\shO_X(-D)\subseteq {\rm adj}(D)$ by definition, and ${\rm Jac} \big(\shO_X(-D)\big)\subseteq {\rm adj}(D)$ by \cite[Example~9.3.52]{Lazarsfeld},
it follows from Lemma~\ref{lem_estimate_J_ideals} that
$$J_1(D)\subseteq \shO_X(-D)\cdot {\rm Jac} \big(I_0(D)\big)+I_0(D)\cdot {\rm Jac}\big(\shO_X(-D)\big)\subseteq 
{\rm adj}(D).$$
In particular, we conclude that 
the inequality (\ref{eq0_rational_singularities}) holds if the center of the divisor $G$ on $X$ intersects $U$. 
From now on, we assume that this center is contained in $X\smallsetminus U$, hence its codimension in $X$ is $\geq 3$.

By Theorem~\ref{criterion_nontriviality1}, we have
\begin{equation}\label{eq1_rational_singularities}
\ord_G\big(I_1(D)\big)\geq  2(\ord_G(D)-1)-k_G-\alpha_1q,
\end{equation}
where we use the notation  in that theorem.  
We claim that it is enough to show that $\alpha_1q\leq k_G-1$.
Indeed, if this is the case, then
(\ref{eq1_rational_singularities}) implies
$$\ord_G \big(I_1(D)\big)\geq 2(\ord_G(D)-k_G)-1.$$
If $\ord_G(D)\geq k_G+1$, this implies
$$\ord_G \big(I_1(D)\big)+k_G-\ord_G(D)\geq \ord_G(D)-k_G-1\geq 0,$$
hence the inequality in (\ref{eq0_rational_singularities}) holds. On the other hand, 
if $\ord_G(D)\leq k_G$, then (\ref{eq0_rational_singularities}) clearly holds. This shows the claim, and so 
we are left with proving that $\alpha_1q<k_G$.

With the notation in the proof of Theorem~\ref{criterion_nontriviality1},
let $c_i={\rm codim}(W_{i-1},X_{i-1})$, for $1\leq i\leq s$. Recall that we have a sequence of maps 
$$U_s\longrightarrow  U_{s-1}\longrightarrow \cdots\longrightarrow U_1\longrightarrow U_0\subseteq X$$ 
such that each $U_i\to U_{i-1}$
is the blow-up of the smooth subvariety $W_{i-1}\cap U_{i-1}$, followed by an open immersion. 
Let $G_i\subseteq U_i$ be the corresponding exceptional divisor, so that
$K_{U_i/U_{i-1}}=(c_{i}-1)G_i$. If $g_i\colon U_s\to U_i$ is the induced map, then we have
$$K_{U_s/U_0}=\sum_{i=1}^sg_i^* K_{U_i/U_{i-1}}.$$
Therefore we have
$$k_G=\ord_G(K_{U_s/U_0})=\sum_{i=1}^s(c_i-1)\ord_G(G_i)$$
$$=\sum_{i=1}^s(c_i-1)\ord_G(I_{W_{i-1}})=\sum_{i=1}^s(c_i-1)\alpha_i.$$

By construction, we have $c_i\geq 2$ for $1\leq i\leq s$. Furthermore, by our assumption on $G$ we have $c_1>2$. 
We thus deduce that
$$k_G\geq 2\alpha_1+\alpha_2+\cdots+\alpha_s.$$
On the other hand, we have
$$\alpha_1q=\alpha_1\lceil (\alpha_1+\cdots+\alpha_s)/\alpha_1\rceil<\alpha_1\left(\frac{\alpha_1+\cdots+\alpha_s}{\alpha_1}+1\right)
=2\alpha_1+\alpha_2+\cdots+\alpha_s.$$
We finally conclude that $\alpha_1q<k_G$. 
\end{proof}

\begin{remark}
In general it is far from being true that if $D$ has rational singularities, then the Hodge ideal $I_1(D)$ is trivial. 
For example, if $X={\mathbf A}^n$ with $n\geq 3$, and
$D$ is the cone over a smooth hypersurface in ${\mathbf P}^{n-1}$ of degree $m$, then it follows from Proposition~\ref{smooth_tangent_cone} below
that $I_1(D)=\shO_X$ if and only if $m\leq\frac{n}{2}$. On the other hand, it is well known (and an easy exercise) 
that $D$ has rational singularities if and only if $m\leq n-1$.
\end{remark}

\subsection{Examples}\label{scn_examples}
We now discuss a few examples and further useful calculations. The most significant is the computation of the order of $k$-log canonicity 
of an ordinary singularity, i.e. Theorem \ref{thm_ordinary_singularities}.

\begin{example}[{\bf $I_1$ for ordinary singularities}]\label{I1_smooth_tangent_cone}
We begin by treating the case of the ideal $I_1(D)$, for which the argument is easier and we can obtain a more detailed result.

Suppose that $X$ has dimension $n\geq 3$ and $W=\{x\}$ is a point. We consider the case of an ordinary singularity, i.e. when $m={\rm mult}_x(D)\geq 2$  and the projectivized tangent cone of $D$ at $x$ is smooth. For instance, $D$ could be the cone over a smooth hypersurface of degree $m$ in $\PP^{n-1}$. 

\begin{proposition}\label{smooth_tangent_cone}
With these hypotheses, around $x$ we have:
\begin{enumerate}
\item If $m\leq\frac{n}{2}$, then $I_1(D)=\shO_X$.
\item If $\frac{n}{2}\leq m\leq n-1$, then $I_1(D)=\mathfrak{m}_x^{2m-n}$.
\item If $m \ge n$, we have 
$$\shO_X(-D)\cdot \mathfrak{m}_x^{m-n-1}+\mathfrak{m}_x^{2m-n}\subseteq I_1(D)\subseteq \shO_X(-D)\cdot \mathfrak{m}_x^{m-n-2}+\mathfrak{m}_x^{2m-n-1},$$ 
with $\dim_{\CC}I_1(D)/(\shO_X(-D)\cdot \mathfrak{m}_x^{m-n-1}+\mathfrak{m}_x^{2m-n})=m {{m-2}\choose{n-2}}$.
\end{enumerate} 
\end{proposition}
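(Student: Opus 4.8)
The plan is to reduce to the blow-up of the singular point and feed it into the $k=1$ birational transformation sequence of Theorem~\ref{birational_transformation_I1}(iii). First I would observe that an ordinary singularity is automatically isolated: after blowing up $x$, the strict transform $\widetilde{D}$ meets the exceptional divisor $G\cong\PP^{n-1}$ along the projectivized tangent cone, which is smooth by hypothesis, so $\widetilde{D}$ is smooth and transverse to $G$ in a neighborhood of $G$; hence $D$ is smooth on a punctured neighborhood of $x$. Shrinking $X$, I may thus assume $x$ is the only singular point, so that on $Z=\mathrm{Bl}_x X$ the divisor $D_Z=(g^*D)_{\rm red}=\widetilde{D}+G$ has simple normal crossings, with $B:=\widetilde{D}\cap G$ a smooth hypersurface of degree $m$ in $G=\PP^{n-1}$. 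Proposition~\ref{description_SNC_case} (case $r=2$) then gives $I_1(D_Z)=I_B$ and $I_0(D_Z)=\shO_Z$, while Example~\ref{ex_relative_tangent_blow_up} gives $T_{Z/X}\cong T_{\PP^{n-1}}(-1)$.

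Next I would substitute into the exact sequence of Theorem~\ref{birational_transformation_I1}(iii) and compute every term, using $K_{Z/X}=(n-1)G$, $g^*D=\widetilde{D}+mG$, $\shO_Z(\widetilde{D})|_G=\shO_{\PP^{n-1}}(m)$ and $\shO_Z(G)|_G=\shO_{\PP^{n-1}}(-1)$. Writing $L:=n+1-2m$ and $N:=I_B\otimes\shO_Z(LG)$, tensoring the whole sequence by $\omega_X(2D)^{-1}$ turns it into
\[
0\to g_*N\to I_1(D)\to H^0\big(\PP^{n-1},T_{\PP^{n-1}}(m-n-1)\big)_x\to R^1g_*N\to 0,
\]
the third term being a skyscraper at $x$. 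The Euler sequence on $\PP^{n-1}$ shows this skyscraper vanishes exactly when $m\le n-1$ and otherwise has dimension $m\binom{m-2}{n-2}$; this dichotomy is what separates cases (1)--(2) from case (3). To evaluate $g_*N$ and $R^1g_*N$ I would use the sequence $0\to N\to\shO_Z(LG)\to \shO_{\PP^{n-1}}(-L)|_B\to 0$ together with $g_*\shO_Z(LG)=\shO_X$ for $L\ge0$, $g_*\shO_Z(LG)=\mathfrak{m}_x^{-L}$ for $L\le0$, and the vanishing $R^1g_*\shO_Z(LG)=0$ (valid since $H^1(\PP^{n-1},\shO(a))=0$ for all $a$ when $n\ge3$).

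In cases (1) and (2) the skyscraper vanishes, so $I_1(D)=g_*N$. For (1), $m\le n/2$ gives $L\ge1$, hence $H^0(B,\shO_B(-L))=0$ (an anti-ample bundle on the connected $B$) and $g_*N=g_*\shO_Z(LG)=\shO_X$. For (2) with $m\ge(n+1)/2$ we have $L=-(2m-n-1)\le0$, and the map $\mathfrak{m}_x^{2m-n-1}=g_*\shO_Z(LG)\to H^0(B,\shO_B(2m-n-1))$ is ``leading form, restricted to $B$''; since $2m-n-1<m$ the restriction $H^0(\PP^{n-1},\shO(2m-n-1))\to H^0(B,\shO_B(2m-n-1))$ is injective, so the kernel is $\mathfrak{m}_x^{2m-n}$, i.e. $I_1(D)=\mathfrak{m}_x^{2m-n}$; the remaining value $m=n/2$ (for $n$ even) has $L=1$ and reduces to (1), again giving $\mathfrak{m}_x^{2m-n}=\shO_X$.

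For case (3), $m\ge n$, let $h$ be a local equation of $D$ at $x$ with leading form $f_m$ (so $B=\{f_m=0\}$). Again $g_*N=\ker\big(\mathfrak{m}_x^{2m-n-1}\to H^0(B,\shO_B(2m-n-1))\big)$, and unwinding the leading-form description identifies it with $\shO_X(-D)\cdot\mathfrak{m}_x^{m-n-1}+\mathfrak{m}_x^{2m-n}$: a function of order $\ge 2m-n-1$ lies in $g_*N$ iff its leading form is divisible by $f_m$, which happens iff it is congruent mod $\mathfrak{m}_x^{2m-n}$ to $h\cdot(\text{a form of order }m-n-1)$. This gives the lower bound, since the first arrow of the sequence is injective. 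The restriction $H^0(\PP^{n-1},\shO(2m-n-1))\to H^0(B,\shO_B(2m-n-1))$ is now surjective because $H^1(\PP^{n-1},\shO(m-n-1))=0$, so $R^1g_*N=0$ and $I_1(D)/g_*N\cong H^0(\PP^{n-1},T_{\PP^{n-1}}(m-n-1))$, of dimension $m\binom{m-2}{n-2}$, which is the asserted number. The upper bound I would obtain from Theorem~\ref{birational_transformation_I1}(ii) with $J=\mathfrak{m}_x$ (which annihilates $T_{Z/X}$): it yields $\mathfrak{m}_x\cdot I_1(D)\subseteq g_*N$, so $I_1(D)\subseteq (g_*N:\mathfrak{m}_x)$, and a one-step leading-form analysis shows $(g_*N:\mathfrak{m}_x)\subseteq \shO_X(-D)\cdot\mathfrak{m}_x^{m-n-2}+\mathfrak{m}_x^{2m-n-1}$. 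The main obstacle is case (3): correctly matching the abstract sheaves $g_*N$ and $R^1g_*N$ to the explicit ideals through the leading-form calculus, and squeezing $I_1(D)$ between $g_*N$ and $(g_*N:\mathfrak{m}_x)$ tightly enough that the dimension count is forced.
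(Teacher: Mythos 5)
Your proposal is correct, and its skeleton coincides with the paper's: blow up $x$ (a log resolution because the projectivized tangent cone is smooth), feed $I_1(D_Z)=I_B$, $I_0(D_Z)=\shO_Z$ and $T_{Z/X}\simeq T_{\PP^{n-1}}(-1)$ into the exact sequence of Theorem~\ref{birational_transformation_I1}(iii), and compute the skyscraper term $H^0\big(\PP^{n-1},T_{\PP^{n-1}}(m-n-1)\big)$ via the Euler sequence, with the same dichotomy $m\le n-1$ versus $m\ge n$. Where you genuinely diverge is in evaluating the direct images of $N=I_B\otimes\shO_Z(LG)$: the paper resolves $I_1(E)$ by the Koszul-type sequence $0\to\shO_Y(-\widetilde{D}-F)\to\shO_Y(-\widetilde{D})\oplus\shO_Y(-F)\to I_1(E)\to 0$, twists by $\shO_Y\big((n+1-2m)F\big)$, and reads everything off from the projection formula and $R^if_*\shO_Y(jF)=0$, so that the ideal $\shO_X(-D)\cdot\mathfrak{m}_x^{m-n-1}+\mathfrak{m}_x^{2m-n}$ appears directly as a sum of two pushforwards; you instead use the restriction sequence $0\to N\to\shO_Z(LG)\to\shO_B(-L)\to 0$ and identify $g_*N$ as the kernel of a leading-form map, with divisibility by $f_m$ doing the work (your injectivity for degree $<m$, and surjectivity from $H^1(\PP^{n-1},\shO(m-n-1))=0$, are both correct). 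The two computations are equivalent; yours trades the paper's purely sheaf-theoretic bookkeeping for a concrete leading-form calculus on the exceptional $\PP^{n-1}$. The second difference is the upper bound in case (3), where the paper is terse ("the conclusion now follows from the exact sequence", implicitly because the skyscraper quotient is an $\shO_F$-module and hence killed by $\mathfrak{m}_x$): you derive $\mathfrak{m}_x\cdot I_1(D)\subseteq g_*N$ cleanly from Theorem~\ref{birational_transformation_I1}(ii) with $J=\mathfrak{m}_x$ (legitimate by Example~\ref{ex_relative_tangent_blow_up}), and your colon-ideal step $(g_*N:\mathfrak{m}_x)\subseteq\shO_X(-D)\cdot\mathfrak{m}_x^{m-n-2}+\mathfrak{m}_x^{2m-n-1}$ does go through, using that $f_m$ is irreducible of degree $\ge 2$ so that $f_m\mid\ell\,u_{2m-n-2}$ for all linear $\ell$ forces $f_m\mid u_{2m-n-2}$. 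Net effect: your route makes the upper bound fully explicit where the paper leaves it to the reader, at the cost of slightly more coordinate-level argument in the computation of $g_*N$.
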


Note that by the proposition, for $j\leq n-1$ we have 
$$I_1(D)\subseteq \mathfrak{m}_x^j \iff m\geq\frac{n+j}{2}.$$
For $j\geq n$, we have the following implications:
$$m\geq\frac{n+j+2}{2}\quad \Rightarrow \quad I_1(D)\subseteq \mathfrak{m}_x^j\quad \Rightarrow \quad m\geq\frac{n+j+1}{2}.$$

\begin{proof}[Proof of Proposition \ref{smooth_tangent_cone}]
After possibly replacing $X$ by a neighborhood of $x$, we may assume that
the blow-up $f\colon Y\to X$ is a log resolution of $(X,D)$. This follows from the fact that if $F$ is the exceptional divisor and $\widetilde{D}$ is the strict transform of $D$,
then $\widetilde{D}\cap F\hookrightarrow F\simeq\PP^{n-1}$ is the projectivized tangent cone of $D$ at $x$, hence smooth by assumption. Let $E=\widetilde{D}+F$. 
It follows from Theorem~\ref{birational_transformation_I1} that we have an exact sequence
\begin{equation}\label{4ES}
0\to \omega_X (2D) \otimes f_*\big(I_1(E)\cdot\shO_Y\big((n+1-2m)F\big) \to \omega_X(2D) \otimes I_1(D)\to 
\end{equation}
$$ \to f_*\big(T_{Y/X}\otimes\omega_Y(E)\big) \to \omega_X (2D) \otimes R^1 f_*\big(I_1(E)\cdot\shO_Y((n+1-2m)F\big).$$

Recall now that by Example~\ref{ex_relative_tangent_blow_up}, we have $T_{Y/X}\simeq T_{F}\otimes\shO_F(-1)$.
Since $f^*D=\widetilde{D}+mG$, we see that 
$$T_{Y/X}\otimes \omega_Y(E)=T_{Y/X}\otimes\omega_F\otimes\shO_Y(\widetilde{D})\simeq T_{F}\otimes\shO_F(m-n-1).$$
We deduce that 
$$f_*\big(T_{Y/X}\otimes\omega_Y(E)\big)\simeq H^0\big(\PP^{n-1},T_{\PP^{n-1}}(m-n-1)\big)$$
and we distinguish two cases.
When $m\leq n-1$, then $f_*\big(T_{Y/X}\otimes\omega_Y(E)\big)=0$, while for $m\geq n$, the sheaf
$f_*\big(T_{Y/X}\otimes\omega_Y(E)\big)$
 is a skyscraper sheaf of length $m{{m-2}\choose{n-2}}$ (this follows from an easy computation using the Euler exact sequence).

On the other hand, it follows from Proposition~\ref{description_SNC_case} that  $I_1(E)$ is equal to 
$\shO_Y(-F)+\shO_Y(-\widetilde{D})$. 
Consider the following exact sequence on $Y$:
$$0\longrightarrow \shO_Y(-\widetilde{D}-F)\longrightarrow \shO_Y(-\widetilde{D})\oplus\shO_Y(-F)\longrightarrow 
 I_1(E)\longrightarrow 0.$$
By tensoring with $\shO_Y\big((n+1-2m)F\big)$ and applying $f_*$, we obtain an exact sequence
$$f_*\shO_Y\big(-\widetilde{D}+(n+1-2m)F\big)\oplus f_*\shO_Y\big((n-2m)F\big)\to f_*\shO_Y\big(I_1(E)\cdot\shO_Y((n+1-2m)F)\big) $$
$$\to R^1f_*\shO_Y\big(-\widetilde{D}+(n-2m)F\big)\to R^1f_*\shO_Y\big(-\widetilde{D}+(n+1-2m)F\big)\oplus R^1f_*\shO_Y\big((n-2m)F\big)$$
$$\to R^1f_*\shO_Y\big(I_1(E)\cdot\shO_Y((n+1-2m)F)\big)\to R^2f_*\shO_Y\big(-\widetilde{D}+(n-2m)F\big) .$$
Note that as $n\geq 3$, we have $R^1f_*\shO_Y(jF)=0$ for all $j\in\ZZ$.  We also have $R^2f_*\shO_Y(jF)=0$, 
unless $n=3$ and $j\geq 3$. 
Since $f^*\shO_X(-D)=\shO_Y(-\widetilde{D}-mF)$, using the projection formula we obtain
$$f_*\shO_Y\big(-\widetilde{D}+(n+1-2m)F\big)=\shO_X(-D)\cdot \mathfrak{m}_x^{m-n-1},\,\,f_*\shO_Y\big((n-2m)F\big)=\mathfrak{m}_x^{2m-n},$$
$$R^1f_*\shO_Y\big(-\widetilde{D}+(n-2m)F\big)=R^2f_*\shO_Y\big(-\widetilde{D}+(n-2m)F\big)=0,\quad\text{and}$$
$$R^1f_*\shO_Y\big(-\widetilde{D}+(n+1-2m)F\big) = R^1f_*\shO_Y\big((n-2m)F\big) = 0.$$
(We use the convention that $\mathfrak{m}_x^{\ell}=\shO_X$ if $\ell\leq 0$.)
We deduce from the above exact sequence that
$$f_*\shO_Y\big(I_1(E)\cdot\shO_Y((n+1-2m)F)\big)=\shO_X(-D)\cdot \mathfrak{m}_x^{m-n-1}+\mathfrak{m}_x^{2m-n}$$ 
and
$$R^1f_*\shO_Y\big(I_1(E)\cdot\shO_Y((n+1-2m)F)\big)=0.$$ 
The conclusion now follows from the exact sequence ($\ref{4ES}$).
\end{proof}

\end{example}

\begin{example}[{\bf $I_k$ for ordinary singularities, I}]\label{triviality_smooth_tangent_cone}
Suppose that we are still in the case when $X$ is a smooth $n$-dimensional variety, $x\in X$ is a point, and $D$ is a reduced effective divisor with ${\rm mult}_x(D)=m$,
whose projectivized tangent cone at $x$ is smooth. We now show that 
$$m\leq \frac{n}{k+1} \implies \,\,\,\, I_k(D)=\shO_X \,\,\,\, {\rm around} \,\, x.$$ 
This extends Proposition \ref{smooth_tangent_cone} (1). It would be very interesting to have analogues of its other statements for $k \ge 2$; in this direction, we will see in Example \ref{example_ODP} that the implication above is in fact an equivalence. 

To prove the assertion, as we have already seen, after passing to a suitable neighborhood of $x$ we may assume that the blow-up $f\colon Y\to X$ at $x$ is a log resolution
of $(X,D)$. If $E=\widetilde{D}+F$, where $\widetilde{D}$ is the strict transform of $D$ and $F$ is the exceptional divisor, 
then it follows from Theorem~\ref{birational_transformation_I1} that we have an inclusion
$$
f_*\big(I_k(E)\cdot\shO_Y\big((n+k-(k+1)m)F)\big) \hookrightarrow I_k(D).
$$
Therefore it is enough to show that 
$$f_*\big(I_k(E)\cdot \shO_Y(aF)\big)=\shO_X,$$ 
where $a=n+k-(k+1)m$.  Now by Proposition~\ref{description_SNC_case} we have 
$$I_k(E)=\big(\shO_Y(-F)+\shO_Y(-\widetilde{D})\big)^k\supseteq\shO_Y(-kF),$$ 
hence it is enough to have
$f_*\shO_Y\big((a-k)F\big)=\shO_X$. This holds since by assumption 
$$a-k=n+k-(k+1)m-k\geq 0.$$
\end{example}

\begin{example}[{\bf Non-ordinary singularities}]
Using the Restriction Theorem for Hodge ideals, we deduce from the bound in Example~\ref{triviality_smooth_tangent_cone}
that if $X$ is a smooth $n$-dimensional variety, $x\in X$ is a point, and $D$ is a reduced effective divisor with ${\rm mult}_x(D)=m$
such that the projectivized tangent cone $\PP(C_xD)$ of $D$ at $x$ has a singular locus of dimension $r$, then
$$m\leq \frac{n-r-1}{k+1} \implies \,\,\,\, I_k(D)=\shO_X \,\,\,\, {\rm around} \,\, x.$$ 
Indeed, we may assume that $X$ is affine and that we have a system of algebraic coordinates $x_1,\ldots,x_n$ on $X$, centered at $x$.
If $H$ is defined by a general linear combination of the $x_i$, then $H$ is smooth, not contained in ${\rm Supp}(D)$, and $D\vert_H$ is reduced.
Furthermore, we have ${\rm mult}_x(D\vert_H)=m$ and $\PP(C_x(D\vert_H))$ is a general hyperplane section of $\PP(C_xD)$. In particular, if $r\geq 0$,
then we have
$$\dim\PP(C_x(D\vert_H))_{\rm sing}=r-1.$$
On the other hand, it follows from Theorem~\ref{restriction_hypersurfaces} that 
$$I_k(D\vert_H)\subseteq I_k(D)\cdot\shO_H.$$
The assertion thus follows by induction on $r$, with the case $r=-1$ being covered by Example~\ref{triviality_smooth_tangent_cone}.
\end{example}

\begin{example}[{\bf $I_k$ for ordinary singularities, II}]\label{case_larger_k}
With more work, we can obtain a description for $I_k(D)$ for a larger range of multiplicities than in Example~\ref{triviality_smooth_tangent_cone},
in a similar vein with what we did in Proposition~\ref{smooth_tangent_cone} for $k=1$. Suppose that $X$ is a smooth variety of dimension $n\geq 3$, $D$ is a reduced effective divisor on $X$,
 and $x\in X$ is a point  such that $m={\rm mult}_x(D)\geq 2$. We assume that the projectivized tangent cone of $D$ at $x$ is smooth.

\begin{proposition}\label{prop_case_larger_k}
Under the above hypotheses, for every $k$ such that $mk<n$, we have 
$$I_k(D)=\mathfrak{m}_x^{(k+1)m-n}$$
around $x$, with the convention that $\mathfrak{m}_x^j=\shO_X$ if $j\leq 0$. 
\end{proposition}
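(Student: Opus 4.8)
The plan is to follow the template of the proof of Proposition~\ref{smooth_tangent_cone} (the case $k=1$), running the whole computation on the blow-up of $x$ and reading off the sharp answer from the birational transformation formula. We may assume $c:=(k+1)m-n\geq 1$, since $c\leq 0$ is exactly the content of Example~\ref{triviality_smooth_tangent_cone}. First I would reduce to the blow-up: because the projectivized tangent cone of $D$ at $x$ is smooth, after shrinking $X$ the blow-up $f\colon Y\to X$ of $x$ is a log resolution of $(X,D)$, its exceptional divisor $F\cong\PP^{n-1}$ meeting the smooth strict transform $\widetilde{D}$ transversally along that tangent cone, so $E:=\widetilde{D}+F$ has simple normal crossings. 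Here $K_{Y/X}=(n-1)F$ and $f^*D=\widetilde{D}+mF$, whence the twisting divisor in Theorem~\ref{birational_transformation_I1} is
\[
K_{Y/X}+(k+1)E-(k+1)f^*D=aF,\qquad a:=n+k-(k+1)m=k-c.
\]
The hypothesis $mk<n$ is equivalent to $c<m$, and the asserted ideal is $\mathfrak{m}_x^{c}$.

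Next I would invoke Theorem~\ref{birational_transformation_I1} with $g=f$ and $D_Z=E$. Part i) already supplies the injection $f_*\bigl(I_k(E)\otimes\shO_Y(aF)\bigr)\hookrightarrow I_k(D)$, and, as in the proof of that theorem, the inclusion of complexes $C^{E,\bullet}_{k-n}\hookrightarrow C^{D,\bullet}_{k-n}$ on $Y$ has a quotient complex $M^\bullet$ of right $f^{-1}\shO_X$-modules fitting, via (\ref{eq5_birational_transformation_I1}), into
\[
f_*\bigl(I_k(E)\otimes\shO_Y(aF)\bigr)\otimes\omega_X\bigl((k+1)D\bigr)\overset{\iota}{\longrightarrow} I_k(D)\otimes\omega_X\bigl((k+1)D\bigr)\longrightarrow R^0f_*M^\bullet ,
\]
where $\iota$ restricts to the identity over $X\smallsetminus D$. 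Thus the statement reduces to two facts: (a) $f_*\bigl(I_k(E)\otimes\shO_Y(aF)\bigr)=\mathfrak{m}_x^{c}$, and (b) $R^0f_*M^\bullet=0$, which together force $\iota$ to be an isomorphism.

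For (a), Proposition~\ref{description_SNC_case} gives $I_k(E)=\bigl(\shO_Y(-F)+\shO_Y(-\widetilde{D})\bigr)^{k}=\sum_{i=0}^{k}\shO_Y(-i\widetilde{D}-(k-i)F)$. Twisting by $\shO_Y(aF)$, substituting $\widetilde{D}=f^*D-mF$, and pushing forward a Koszul/Eagon--Northcott resolution of this symbolic power via the projection formula (exactly as in the case $k=1$, using $f_*\shO_Y(bF)=\shO_X$ for $b\geq 0$ and $=\mathfrak{m}_x^{-b}$ for $b<0$, together with the vanishing of the relevant $R^if_*$), the $i$-th summand contributes $\shO_X(-iD)\cdot f_*\shO_Y(b_iF)$ with $b_i=-c+i(m+1)$. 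Since $c<m+1$, one has $b_i<0$ only for $i=0$, where the contribution is precisely $\mathfrak{m}_x^{c}$; for $i\geq 1$ the contribution lies in $\shO_X(-iD)\subseteq\mathfrak{m}_x^{im}\subseteq\mathfrak{m}_x^{m}\subseteq\mathfrak{m}_x^{c}$. This is exactly where $mk<n$ is used to collapse the answer to the single symbolic power $\mathfrak{m}_x^{c}$.

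The hard part will be (b). The complex $M^\bullet$ sits in degrees $-k+1,\dots,0$ with $M^{p}=\Omega_Y^{n+p}(\log E)\otimes Q_{k+p}$, where $Q_j:={\rm Coker}(F_j\Dmod_Y\hookrightarrow f^*F_j\Dmod_X)$ is supported on $F$. Via the hypercohomology spectral sequence $E_1^{p,q}=R^qf_*M^{p}\Rightarrow R^{p+q}f_*M^\bullet$, the vanishing of $R^0f_*M^\bullet$ follows once
\[
R^qf_*\bigl(\Omega_Y^{n-q}(\log E)\otimes Q_{k-q}\bigr)=0\qquad\text{for }0\leq q\leq k-1 .
\]
To prove these I would filter $Q_{k-q}$ by order of operators, identify its graded pieces using $T_{Y/X}\cong T_{\PP^{n-1}}(-1)$ (Example~\ref{ex_relative_tangent_blow_up}) and the triviality of $f^*T_X\vert_F$, and then restrict $\Omega_Y^{n-q}(\log E)$ to $F$; with $\shO_Y(E)\vert_F\cong\shO_{\PP^{n-1}}(m-1)$ and $\shO_Y(F)\vert_F\cong\shO_{\PP^{n-1}}(-1)$, each $R^qf_*$ becomes a cohomology group on $\PP^{n-1}$ of a bundle twisted by the smooth degree-$m$ hypersurface $\widetilde{D}\cap F$, to which Bott/Serre-type vanishing and the log-vanishing of Theorem~\ref{thm_vanishing} apply. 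The numerical condition $mk<n$ is what feeds these estimates and keeps all nonzero contributions away from total degree $0$; the delicate bookkeeping of which twist arises for each $(p,q)$ is the technical crux, mirroring the single vanishing $H^0\bigl(\PP^{n-1},T_{\PP^{n-1}}(m-n-1)\bigr)=0$ that settled $k=1$. Granting (a) and (b), $\iota$ is an isomorphism and $I_k(D)=\mathfrak{m}_x^{(k+1)m-n}$ in a neighbourhood of $x$, which completes the proof.
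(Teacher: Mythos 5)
Your skeleton is exactly the paper's: reduce to showing that the map $\iota$ from $\mathfrak{a}_k:=f_*\bigl(I_k(E)\otimes\shO_Y(aF)\bigr)$ into $I_k(D)$ coming from Theorem~\ref{birational_transformation_I1} is an isomorphism, by proving $R^0f_*M^{\bullet}=0$ for the quotient complex $M^{\bullet}$, and then compute $\mathfrak{a}_k$ via the Eagon--Northcott resolution of $I_k(E)=\bigl(\shO_Y(-F)+\shO_Y(-\widetilde{D})\bigr)^k$. Your part (a) is correct and is the same computation as in the paper (including the use of $R^qf_*\shO_Y(pF)=0$ for $1\le q\le n-2$, the bound $k\le n-2$, and $\shO_X(-D)\subseteq\mathfrak{m}_x^m\subseteq\mathfrak{m}_x^c$).

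The genuine gap is in part (b), which you yourself flag as ``the technical crux'' but never actually prove, and the tools you invoke would not suffice. After filtering $Q_{k-q}=f^*F_{k-q}\Dmod_X/F_{k-q}\Dmod_Y$ by order, the graded pieces $f^*S^jT_X/S^jT_Y$ are \emph{not} simply symmetric powers of $T_{Y/X}$; one needs the two-step filtration of Lemma~\ref{lem_case_larger_k}, with quotients $S^{j-i}T_F\otimes\shO_F(-\ell-i)$, whose proof is itself a nontrivial computation on the blow-up. More seriously, the resulting requirement is the vanishing of groups of the form $H^q\bigl(F,\Omega_Y^{n-q}(\log E)\vert_F\otimes S^{j-i}T_F\otimes\shO_F(-\ell-i)\bigr)$, which (after residue sequences and the Euler/Eagon--Northcott resolution of $S^{j-i}T_F$) reduces to controlling $H^d\bigl(\PP^{n-1},\Omega_{\PP^{n-1}}^a(\log Z)\otimes\shO(b)\bigr)$ with $Z=\widetilde{D}\cap F$ smooth of degree $m$ and $b$ \emph{negative}. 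Bott-type vanishing does not apply to logarithmic forms, and neither Theorem~\ref{thm_vanishing} (a relative statement about $R^pf_*\Omega_Y^q(\log E)$ for $p+q>n$, which cannot see the twists supported on $F$) nor Theorem~\ref{thm_vanishing2} (which needs a semiample twist) gives these groups. The paper imports the theorem of Br\"uckmann--Winkert \cite[Theorem~3.3]{BW}, which \emph{characterizes} when such a group can be nonzero ($d=0$, or $d=n-1$, or $d+a=n-1$ with $b\ge n-m(d+1)$), and then runs a case analysis in which the hypothesis $mk<n$ rules out each nonvanishing window. In particular, it is not true that the numerics simply ``keep all nonzero contributions away from total degree $0$'': several of the groups encountered are potentially nonzero, and excluding them requires exactly this external input plus the bookkeeping you deferred. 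Without a substitute for \cite{BW}, your proof of (b) does not close.
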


\begin{proof}
Let $f\colon Y\to X$ be the blow-up of $X$ at $x$, with exceptional divisor $F\simeq\PP^{n-1}$. The assumption implies that after possibly replacing $X$ by an open neighborhood of $x$,
we may assume that $f$ is a log resolution of $(X,D)$. Let $E=\widetilde{D}+F$, where $\widetilde{D}$ is the strict transform of $D$.
The key point will be to show that our condition on $k$ implies that 
$$I_k(D)=f_*\big(I_k(E)\otimes\shO(K_{Y/X}+(k+1)E-(k+1)g^*D)\big).$$ 
We temporarily denote by ${\mathfrak a}_k$ the right-hand side in the above formula.

Recall that we have the filtered complex $A^{\bullet}$
$$0 \rightarrow f^* \Dmod_X \rightarrow \Omega_Y^1(\log E) \otimes_{\shO_Y} f^* \Dmod_X \rightarrow \cdots \to  \omega_Y(E) \otimes_{\shO_Y} f^* \Dmod_X\to 0 $$
placed in degrees $-n, \ldots, 0$,  
and its filtered subcomplex $B^{\bullet}$ 
$$0 \rightarrow  \Dmod_Y \rightarrow \Omega_Y^1(\log E) \otimes_{\shO_Y} \Dmod_Y \rightarrow \cdots \to  \omega_Y(E) \otimes_{\shO_Y} \Dmod_Y\to 0. $$
Consider the complex $M^{\bullet}$ defined by the exact sequence of complexes
$$0\to F_{k-n}B^{\bullet}\to F_{k-n}A^{\bullet}\to M^{\bullet}\to 0.$$
It follows from the proof of Theorem~\ref{birational_transformation_I1} that the inclusion
$$\mathfrak{a}_k\otimes\omega_X\big((k+1)D\big)\hookrightarrow I_k(D)\otimes\omega_X\big((k+1)D\big)$$
can be identified with the induced morphism 
$$R^0f_*F_{k-n}B^{\bullet}\to R^0f_*F_{k-n}A^{\bullet}.$$
In order to show that $\mathfrak{a}_k=I_k(D)$ it is thus enough to verify that $R^0f_*M^{\bullet}=0$. 
On the other hand, from the hypercohomology spectral sequence
$$E_1^{p,q}=R^qf_*M^{p-n} \implies R^{p+q-n}f_*M^{\bullet}$$
we deduce that in order to have $R^0f_*M^{\bullet}=0$ it is enough to prove that for every $0\leq q\leq n$ we have
$R^qf_*M^{-q}=0$. 

To this end, note first that from the definition of $M^{\bullet}$ we have
$$M^{-q}=\Omega_Y^{n-q}(\log E)\otimes \big(f^*F_{k-q}\Dmod_X/F_{k-q}\Dmod_Y\big).$$
The sheaf $f^*F_{k-q}\Dmod_X/F_{k-q}\Dmod_Y$ has a filtration with successive quotients 
$$f^*S^jT_X/S^jT_Y\quad\text{for}\quad 1\leq j\leq k-q.$$
On the other hand, Lemma~\ref{lem_case_larger_k} below implies that $f^*S^jT_X/S^jT_Y$
has a filtration with successive quotients $S^{j-i}T_F\otimes\shO_F(-\ell-i)$, with $0\leq i\leq j-1$ and $1\leq\ell\leq j-i$.
We thus conclude that in order to prove that $\mathfrak{a}_k=I_k(D)$ it is enough to show that
$$H^q\big(F,\Omega_Y^{n-q}(\log E)\vert_F\otimes S^{j-i}T_F\otimes\shO_F(-\ell-i)\big)=0$$
for $0\leq q\leq n$, $1\leq j\leq k-q$, $0\leq i\leq j-1$, and $1\leq \ell\leq j-i$.

Note now that for every $p$ we have a short exact sequence 
$$0\to\Omega_Y^p(\log \widetilde{D})\to\Omega_Y^p(\log E)\to \Omega_F^{p-1}(\log \widetilde{D}\vert_F)\to 0.$$
Restricting this to $F$ gives an exact sequence
$$0\to \Omega_F^{p-1}(\log \widetilde{D}\vert_F)\otimes\shO_F(1)\to\Omega_Y^p(\log \widetilde{D})\vert_F\to\Omega_Y^p(\log E)\vert_F\to 
\Omega_F^{p-1}(\log \widetilde{D}\vert_F)\to 0.$$
On the other hand, the short exact sequence for sheaves of differential forms corresponding to the closed immersion 
$F\hookrightarrow Y$ induces an exact sequence
$$0\to\shO_F(1)\to\Omega^1_Y(\log \widetilde{D})\vert_F\to \Omega^1_F(\log \widetilde{D}\vert_F)\to 0.$$
By taking $p^{\rm th}$ exterior powers we obtain an exact sequence 
$$0\to\Omega_F^{p-1}(\log \widetilde{D}\vert_F)\otimes\shO_F(1)\to\Omega_Y^p(\log \widetilde{D})\vert_F\to \Omega_F^p(\log \widetilde{D}\vert_F)\to 0,$$
and by combining all of this we conclude that we have an exact sequence
$$0\to\Omega_F^p(\log \widetilde{D}\vert_F)\to \Omega_Y^p(\log E)\vert_F\to\Omega_F^{p-1}(\log \widetilde{D}\vert_F)\to 0.$$
Using the corresponding long exact sequence in cohomology, we conclude that $\mathfrak{a}_k=I_k(D)$ holds if 
for all $q$, $j$, $i$, and $\ell$ as above, we have
\begin{enumerate}
\item[(A1)] $H^q\big(F, \Omega_F^{n-q}(\log Z)\otimes S^{j-i}T_F\otimes\shO_F(-\ell-i)\big)=0$, and
\item[(A2)] $H^q\big(F, \Omega_F^{n-q-1}(\log Z)\otimes S^{j-i}T_F\otimes\shO_F(-\ell-i)\big)=0$,
\end{enumerate}
where $Z=\widetilde{D}\vert_F$. 
Now the Euler sequence on $F$ gives rise to an exact Eagon-Northcott-type complex 
$$0\longrightarrow L_{j-i}\longrightarrow \ldots \longrightarrow L_0\longrightarrow S^{j-i}T_F\longrightarrow 0,$$
where each $L_d$ is a direct sum of copies of $\shO_F(j-i-d)$. By breaking this into short exact sequences and taking the corresponding cohomology long exact sequences,
we see that if A1) or A2) above fails, then there is an $s$ with $0\leq s\leq j-i$ such that either 
\begin{enumerate}
\item[(B1)] $H^{q+s} \big(F,\Omega_F^{n-q}(\log Z)\otimes \shO_F(-\ell+j-2i-s)\big)\neq 0$, or
\item[(B2)]  $H^{q+s}\big(F,\Omega_F^{n-q-1}(\log Z)\otimes \shO_F(-\ell+j-2i-s)\big)\neq 0$. 
\end{enumerate}

We now use the fact that since by assumption $Z$ is a smooth, degree $m$ hypersurface in $F\simeq\PP^{n-1}$, if 
$$H^d\big(F,\Omega_F^a(\log Z)\otimes \shO_F(b)\big)\neq 0$$
for some $d$, $a$, and $b$, then one of the following conditions hold (see \cite[Theorem~3.3]{BW}):
\begin{enumerate}
\item[(C1)] $d=0$, or
\item[(C2)] $d=n-1$, or
\item[(C3)] $0<d<n-1$, $d+a=n-1$, and $b\geq n-m(d+1)$.
\end{enumerate}

Suppose first that (B1) holds. If we are in case (C1), then $q=0$, in which case $\Omega_F^{n-q}(\log Z)=0$, a contradiction.
If we are in case (C2), then $q+s=n-1$. However, by assumption we have $s\leq j-i\leq k-q$, hence $k\geq n-1$, a contradiction with 
our hypothesis.
Finally, we cannot be in case (C3) since $s+n>n-1$.

Suppose now that (B2) holds. If we are in case (C1), then $q=s=0$ and
$$\Omega_F^{n-q-1}(\log Z)\otimes \shO_F(-\ell+j-2i-s)\simeq\shO_F(-\ell+j-2i-n+m).$$
Using the fact that this has nonzero sections, we conclude that
$$0\leq -\ell+j-2i-n+m\leq k-1-n+m,$$
contradicting the fact that $km\leq n-1$. 
We argue as in case (B1) that we cannot be in case (C2). Finally, if we are in case (C3), then $s=0$
and 
$$-\ell+j-2i\geq n-m(q+1).$$
On the other hand, we have by assumption
$$-\ell+j-2i\leq k-q-1\quad\text{and}\quad q\leq k-1,$$
and by combining these inequalities, we obtain $n\leq mk$, a contradiction.
This completes the proof of the fact that $I_k(D)=\mathfrak{a}_k$. 

By definition, we have $\mathfrak{a}_k=f_*\big(I_k(E)\otimes\shO_Y(eF)\big)$, with
$e=n+k-m(k+1)$.
It follows from Proposition~\ref{description_SNC_case} that 
$$I_k(E)=\big(\shO(-\widetilde{D})+\shO(-F)\big)^k.$$
Therefore we have an exact Eagon-Northcott-type complex
\begin{equation}\label{complex_EN_for_I_k}
0\longrightarrow G_k\longrightarrow\cdots\longrightarrow G_0\longrightarrow I_k(E)\longrightarrow 0.
\end{equation}
with 
$$G_r=\bigoplus_{i=0}^{k-r}\shO_Y\big(-(r+i)\widetilde{D}-(k-i)F\big)\quad\text{for}\quad 0\leq r\leq k.$$
Since $R^qf_*\shO_Y(pF)=0$ for all $p\in\ZZ$ and all  $1\leq q\leq n-2$, and since $k\leq n-2$
by assumption, it follows easily that $R^qf_*(G_q\otimes\shO_Y(eF))=0$ for $1\leq q\leq k$. 
By breaking the complex (\ref{complex_EN_for_I_k}) into short exact sequences and using the
corresponding cohomology long exact sequences, we deduce that the induced morphism 
$$f_*\big(G_0\otimes\shO_Y(eF)\big)=\bigoplus_{i=0}^kf_*\shO_Y\big(-i\widetilde{D}-(k-i-e)F\big)\to 
f_*\big(I_k(E)\otimes\shO(eF)\big)=\mathfrak{a}_k$$
is surjective. Note that
$$f_*\shO_Y\big(-i\widetilde{D}-(k-i-e)F\big)=f_*\shO_Y\big(-if^*D-(k-i-e-im)F\big)$$
$$=\shO_Y(-iD)\cdot\mathfrak{m}_x^{k-i-e-im}.$$
Since 
$$k-i-e-im=m(k+1)-n-i(m+1),$$ we see that if $m(k+1)\leq n$, then
$\mathfrak{a}_k=\shO_X$ (we have of course already seen this in Example~\ref{triviality_smooth_tangent_cone}),
and if $m(k+1)>n$, then
$$\mathfrak{a}_k=f_*\shO_Y\big(-(k-e)F\big)=\mathfrak{m}_x^{m(k+1)-n}.$$
Here we use  that $\shO_X(-D)\subseteq \mathfrak{m}_x^{m(k+1)-n}$, due to the fact that $mk<n$.
This completes the proof of the proposition.
\end{proof}

\begin{lemma}\label{lem_case_larger_k}
Let $f\colon Y\to X$ be the blow-up of a smooth $n$-dimensional variety $X$ at a point $x\in X$, with exceptional divisor $F$.
For every $j\geq 1$, the sheaf $f^*S^jT_X/S^jT_Y$ has a filtration with successive quotients 
$$S^{j-i}T_F\otimes\shO_F(-\ell-i), \quad \text{with}\quad 0\leq i\leq j-1\,\,\text{and}\,\,1\leq\ell\leq j-i.$$
\end{lemma}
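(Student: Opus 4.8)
The plan is to reduce the computation of $f^*S^jT_X/S^jT_Y$ to the geometry of the exceptional divisor $F\cong\PP^{n-1}$, by passing to the associated graded for the order of vanishing along $F$. First I would record the basic exact sequence $0\to T_Y\xrightarrow{\varphi}f^*T_X\to T_{Y/X}\to 0$ and invoke Example~\ref{ex_relative_tangent_blow_up}, applied to the blow-up of the point $x$, to identify $T_{Y/X}\simeq T_F\otimes\shO_F(-1)$ (the relative tangent sheaf of $F$ over a point being $T_F$). Restricting to $F$, and using $f^*T_X\vert_F\simeq\shO_F^{\oplus n}$ together with $\shO_Y(F)\vert_F\simeq\shO_F(-1)$, the surjection $f^*T_X\to T_{Y/X}$ becomes on $F$ the Euler quotient $\shO_F^{\oplus n}\to T_F\otimes\shO_F(-1)$, whose kernel is the tautological line bundle $\shO_F(-1)$; in particular the composite $T_Y\to f^*T_X\to f^*T_X\vert_F$ has image exactly this tautological $\shO_F(-1)$. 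A short computation in coordinates $x_1=y_1$, $x_i=y_1y_i$ (so $F=(y_1)$, $\varphi(\partial_{y_1})=\partial_{x_1}+\sum_{i\ge 2}y_i\partial_{x_i}$ and $\varphi(\partial_{y_i})=y_1\partial_{x_i}$ for $i\ge 2$) shows $\shO_Y(-F)\cdot f^*T_X\subseteq\varphi(T_Y)$, hence $\shO_Y(-jF)\cdot f^*S^jT_X\subseteq S^jT_Y$. Thus $Q_j:=f^*S^jT_X/S^jT_Y$ is annihilated by $\shO_Y(-jF)$, i.e. it is supported on the $j$-th thickening $jF$.

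Next I would filter $Q_j$ by order of vanishing along $F$: setting $\bar G_i$ to be the image in $Q_j$ of $\shO_Y(-iF)\cdot f^*S^jT_X$ gives $Q_j=\bar G_0\supseteq\cdots\supseteq\bar G_j=0$, and $\bar G_i/\bar G_{i+1}$ is the cokernel of the inclusion of $\gr^L_i S^jT_Y$ into $\gr^L_i f^*S^jT_X\simeq S^jV\otimes\shO_F(i)$, where $L=\shO_Y(-F)$ and $V=T_xX$. The crux is to compute the leading-term module $\gr^L S^jT_Y$ inside $\gr^L f^*S^jT_X=S^jV\otimes\bigoplus_{i\ge 0}\shO_F(i)$. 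Working in the above coordinates, a monomial in $S^jT_Y$ built from $a$ factors $\varphi(\partial_{y_1})$ (each of $L$-order $0$, leading term the tautological generator $s\in\shO_F(-1)\subseteq V\otimes\shO_F$) and $j-a$ factors $\varphi(\partial_{y_i})$, $i\ge 2$ (each of $L$-order $1$, with leading terms spanning $V$) has $L$-order $j-a$; letting $a$ vary, one finds that in degree $i$ the leading-term module is the image of multiplication by $s^{\,j-i}$, namely $\gr^L_i S^jT_Y=\mathrm{im}\big(s^{\,j-i}\colon S^iV\otimes\shO_F(-(j-i))\to S^jV\big)\otimes\shO_F(i)$.

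It then remains to identify these cokernels. Twisting out the factor $\shO_F(i)$, the piece $\bar G_i/\bar G_{i+1}$ is the cokernel of multiplication by $s^{\,j-i}$ from $S^iV\otimes\shO_F(-(j-i))$ into $S^jV$, which is exactly the step $\Phi_{j-i}$ of the symmetric-power filtration of the Euler sequence $0\to\shO_F(-1)\xrightarrow{s}V\otimes\shO_F\to T_F\otimes\shO_F(-1)\to 0$. Since the successive quotients of that filtration of $S^jV\otimes\shO_F$ are $S^p(\shO_F(-1))\otimes S^{j-p}(T_F\otimes\shO_F(-1))\simeq S^{j-p}T_F\otimes\shO_F(-j)$ for $0\le p\le j$, I conclude that $\bar G_i/\bar G_{i+1}$ carries a filtration with successive quotients $S^{j-p}T_F\otimes\shO_F(i-j)$ for $0\le p\le j-i-1$. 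Running over $0\le i\le j-1$ therefore yields a filtration of $Q_j$ whose successive quotients are the sheaves $S^mT_F\otimes\shO_F(c)$ with $1\le m\le j$ and $-j\le c\le m-j-1$, each occurring once.

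Finally I would check that this multiset agrees with the one in the statement: substituting $m=j-i$ and $c=-\ell-i$ shows that the pairs $(i,\ell)$ with $0\le i\le j-1$, $1\le\ell\le j-i$ enumerate precisely $1\le m\le j$, $-j\le c\le m-j-1$, each once. Since only the collection of successive quotients, not their order, is needed in the application to Proposition~\ref{prop_case_larger_k}, this finishes the proof. I expect the main obstacle to be the second step — pinning down $\gr^L S^jT_Y$ and recognizing it as the multiplication-by-$s^{\,j-i}$ image — since this is where the inclusion $\varphi\colon T_Y\hookrightarrow f^*T_X$ must be converted into honest Euler-sequence geometry on $F$; the compatibility of $\gr^L$ with the formation of $S^j$ (clear because $L$ is locally principal, generated by a nonzerodivisor) should be noted but is routine.
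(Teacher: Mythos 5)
Your argument is correct, and it reaches the lemma by a genuinely different route from the paper. The paper reduces to the blow-up of $\AAA^n$ at the origin, where $Y$ carries a second projection $g\colon Y\to\PP^{n-1}$, and exploits the fact that the relative tangent sequence $0\to T_{Y/\PP}\to T_Y\to g^*T_{\PP}\to 0$ and the pulled-back twisted Euler sequence $0\to \shO_Y(-1)\to f^*T_X\to g^*\big(T_{\PP}(-1)\big)\to 0$ share the sub-line bundle $T_{Y/\PP}\simeq\shO_Y(-1)$: the filtrations of $S^jT_Y$ and of $f^*S^jT_X$ by powers of this common subbundle are compatible, the quotient filtration on $f^*S^jT_X/S^jT_Y$ has graded pieces $\big(S^{j-i}g^*(T_{\PP}(-1))/S^{j-i}g^*T_{\PP}\big)\otimes\shO_Y(-i)$, and only at the very last step are these sliced $F$-adically into the sheaves $S^{j-i}T_F\otimes\shO_F(-\ell-i)$. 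You transpose the two filtrations: your outer filtration is the $\shO_Y(-F)$-adic one on the quotient, and the Euler-sequence filtration of $S^jV\otimes\shO_F$ appears inside each graded piece, with $\gr^L_i S^jT_Y$ identified as the image of multiplication by $s^{j-i}$. Both proofs rest on the same two ingredients (the Euler sequence on $F\simeq\PP^{n-1}$ and the filtration of a symmetric power attached to a short exact sequence of bundles) and, as you verify, produce the same multiset of quotients, just enumerated in transposed order; what the paper's ordering buys is that every intermediate object is locally free on $Y$, so no associated-graded computation is ever needed, while yours has the merit of using the intrinsic filtration by order of vanishing along $F$, at the cost of having to pin down $\gr^L_\bullet S^jT_Y$ exactly. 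That computation is the one place where you should be more explicit: taking leading terms of the monomial generators only gives one inclusion, and one must rule out $\shO_Y$-combinations whose leading terms cancel, creating new classes at higher order. This is in fact immediate from your own coordinates: since $\varphi(\partial_{y_1})=v$, where $v=\partial_{x_1}+\sum_{i\ge 2}y_i\partial_{x_i}$, and $\varphi(\partial_{y_i})=y_1\partial_{x_i}$, and since $v,\partial_{x_2},\dots,\partial_{x_n}$ form a frame of $f^*T_X$ on your chart, the submodule $S^jT_Y\subseteq f^*S^jT_X=\bigoplus_{a,b}\shO_Y\cdot v^a\partial^b$ (with $\partial^b=\prod_{i\ge 2}\partial_{x_i}^{b_i}$, $a+|b|=j$) is the diagonal submodule $\bigoplus_{a,b}y_1^{\,j-a}\shO_Y\cdot v^a\partial^b$, so orders and leading terms are computed coefficientwise and no cancellation can occur; adding this one sentence closes the only gap in your write-up.
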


\begin{proof}
By taking an \'{e}tale morphism $X\to \AAA^n$ mapping $x$ to the origin, we reduce by base-change to the case when 
$X=\AAA^n={\rm Spec}(S^{\bullet}V)$ and $x$ is the origin. Recall that if $\PP={\rm Proj}(S^{\bullet}V)$, then 
we have a closed embedding $j\colon Y\hookrightarrow X\times\PP$. Let $p\colon X\times\PP\to X$ and $q\colon X\times\PP\to \PP$ be the canonical projections, so that $p\circ j=f$ and $Y$ is isomorphic as a scheme over $\PP$ (via $g=q\circ j$) to ${\mathcal Spec}(S^{\bullet}\shO_{\PP}(1))$.  
In particular, this implies that $g$ is smooth and 
$$T_{Y/\PP}\simeq g^*\shO_{\PP}(-1)=\shO_Y(-1).$$ 
On $Y$ we have a commutative diagram with exact rows
$$
\begin{tikzcd}
0\rar & T_{Y/\PP} \dar{\alpha} \rar & T_Y\dar{\beta}\rar & g^*T_{\PP}\dar{\gamma}\rar & 0 \\
0\rar   & \shO_Y(-1) \rar & f^*T_X=V^{\vee}\otimes\shO_X\rar & g^*\big(T_{\PP}(-1)\big)\rar & 0,
\end{tikzcd}
$$
in which the top row is the exact sequence of tangent sheaves for the smooth morphism $g$ and the bottom
row is obtained by pulling back the twisted Euler exact sequence on $\PP$ via $g$. Note that $g^*\big(T_{\PP}(-1)\big)=g^*T_{\PP}\otimes\shO_Y(F)$  and $\alpha$ is an isomorphism,
hence the Snake Lemma gives an isomorphism 
$$T_{Y/X}\simeq g^*T_{\PP}\otimes\shO_F(F)=T_F\otimes\shO_F(-1)$$
(cf. Example~\ref{ex_relative_tangent_blow_up}). 

The bottom exact sequence in the above diagram induces on $f^*S^jT_X$ a filtration
$$0={\mathcal M}_{j+1}\subseteq {\mathcal M}_j\subseteq\cdots \subseteq{\mathcal M}_{1}\subseteq {\mathcal M}_0=f^*S^jT_X$$
such that for every $i$ with $0\leq i\leq j$ we have
$${\mathcal M}_i/{\mathcal M}_{i+1}\simeq S^{j-i}g^*\big(T_{\PP}(-1)\big)\otimes \shO_Y(-i). $$
It follows from the top exact sequence in the diagram that the induced filtration on $S^jT_Y$ given by ${\mathcal N}_i={\mathcal M}_i\cap S^jT_Y$
has the property that
$${\mathcal N}_i/{\mathcal N}_{i+1}\simeq S^{j-i}g^*T_{\PP}\otimes\shO_Y(-i).$$
An easy calculation now shows that the induced quotient filtration 
$$0=\overline{\mathcal M}_{j+1}\subseteq \overline{\mathcal M}_j\subseteq\cdots \subseteq\overline{\mathcal M}_{1}\subseteq \overline{\mathcal M}_0=
f^*S^jT_X/S^jT_Y$$
has successive quotients
$$\overline{\mathcal M}_i/\overline{\mathcal M}_{i+1}\simeq \big(S^{j-i}g^*(T_{\PP}(-1))/S^{j-i}g^*T_{\PP}\big)\otimes\shO_Y(-i),$$
for $0\leq i\leq j-1$.
Finally, it is straightforward to see that 
$$\big(S^{j-i}g^*(T_{\PP}(-1))/S^{j-i}g^*T_{\PP}\big)\otimes\shO_Y(-i)$$
$$\simeq \big(S^{j-i}g^*T_{\PP}\otimes \shO_Y((j-i)F) /S^{j-i}g^*T_{\PP}\big)
\otimes \shO_Y(-i)$$
has a filtration with successive quotients 
$$S^{j-i}g^*T_{\PP}\otimes \shO_F(-\ell-i)\simeq S^{j-i}T_F\otimes\shO_F(-\ell-i)$$
for $1\leq\ell\leq j-i$, which gives the assertion in the lemma.
\end{proof}

\end{example}

\begin{example}[{\bf Proof of Theorem \ref{thm_ordinary_singularities}}]\label{example_ODP}
It follows from Proposition~\ref{prop_case_larger_k} that the bound in Example~\ref{triviality_smooth_tangent_cone} is sharp, that is,
if $m\geq 2$ and $k$ is such that 
$$k+1\leq\frac{n}{m}<k+2,$$ 
then around $x$ we have $I_{k+1}(D)\neq\shO_X$. This completes the proof of Theorem \ref{thm_ordinary_singularities}.
The statement follows directly from the proposition if 
$k+1<\frac{n}{m}$. To check the case $k+1=\frac{n}{m}$, we consider $X'=X\times\AAA^1$ and the divisor $D'$
defined locally by $h+z^m$, where $h$ is a local equation of $D$ and $z$ is the coordinate on $\AAA^1$. In this case $D'$ has a smooth projectivized tangent cone at 
$x'=(x,0)$, of degree $m$, and we use Proposition~\ref{prop_case_larger_k}
to conclude that 
$$I_{k+1}(D')\subseteq\mathfrak{m}_{x'}$$
around $x'$. On the other hand, if we consider $X=X\times\{0\}\hookrightarrow X'$, then
$D=D'\vert_X$ and Theorem~\ref{restriction_hypersurfaces} gives
$$I_{k+1}(D)\subseteq I_{k+1}(D')\cdot\shO_X\subseteq\mathfrak{m}_x$$
around $x$. 

This applies, for example, when $X$ has an ordinary double point at $x$ (that is, the projectivized tangent cone of $X$ at $x$ is a smooth quadric) to give that in this case $I_k(D)=\shO_X$ if and only if $k\leq [n/2] -1$. In this case the result was already proved in 
\cite[\S1.4]{DSW}, which in fact shows more, namely
$$I_k (D)_x = \mathfrak{m}_x^{k - [n/2] +1} \,\,\,\,\,\,{\rm for~all} \,\,\,\,k \ge 0.$$
One implication follows in fact already from \cite{Saito-B}; see the next remark.
\end{example}

\begin{remark}\label{criterion_Bernstein_Sato}
By making use of $V$-filtrations, Saito gave a useful criterion for the pair $(X,D)$ to be $k$-log-canonical at some $x\in D$ in terms of the Bernstein-Sato polynomial of $D$ at $x$. Suppose that $f$ is a local equation of $D$. Recall that the Bernstein-Sato polynomial of $D$ at $x$ is the monic polynomial $b_{f,x}\in\CC[s]$ of smallest degree with the property that around $x$ there is a relation
$$b_{f,x}(s)f^s=P(s)\bullet f^{s+1}$$
for some nonzero $P\in \Dmod_X[s]$. It is known that $(s+1)$ divides $b_{f,x}$ and all roots of $b_{f,x}(s)$ are negative rational numbers. One defines
$\alpha_{f,x}$ to be $-\lambda$, where $\lambda$ is the largest root of $b_{f,x}(s)/(s+1)$. It is shown in \cite[Theorem~0.11]{Saito-B} that around $x$ we have $$I_k(D)=\shO_X\quad\text{for all}\quad k\leq \alpha_{f,x}-1.\footnote{Since this was written, Saito \cite[Corollary 1]{Saito-MLCT} has shown that this is in fact an if and only if statement. In particular, this determines the $k$-log canonicity level at $x$ as being $\lfloor \alpha_{f,x} \rfloor$. Combined with 
the formulas in the next two examples, and with other known facts about $\alpha_{f,x}$, this provides an alternative approach to many of the results in Theorems \ref{smoothness_criterion}, \ref{adjoint_inclusion} and \ref{thm_ordinary_singularities}; see \cite{Saito-MLCT}.}$$
Saito also showed in \cite[Theorem~0.7]{Saito-HF} that if $x \in D$ is an isolated quasi-homogeneous singularity,
 then the Hodge filtration on $\omega_X(*D)$ is generated around $x$ in level $[n - \alpha_{f,x}] - 1$.
\end{remark}

\begin{example}[{\bf Diagonal hypersurfaces}]\label{diagonal_hypersurfaces}
Consider the case when $D$ is the divisor in $\AAA^n$ defined by $f=\sum_{i=1}^nx_i^{a_i}$, with $a_i\geq 2$. There is a general description for the roots of the Bernstein-Sato
polynomial for quasi-homogeneous, isolated singularities (see \cite[\S 11]{Yano}). In our case, this says that 
$$b_{f,0}(s)=(s+1)\cdot\prod_{b_1,\ldots,b_n}\left(s+\sum_{i=1}^n\frac{b_i}{a_i}\right),$$
where the product is over those $b_i$ with $1\leq b_i\leq a_i-1$ for all $i$. In particular, we have
$\alpha_{f,0}=\sum_{i=1}^n\frac{1}{a_i}$, and it follows from Remark~\ref{criterion_Bernstein_Sato}
that
$$I_k(D)=\shO_X\quad\text{for all}\quad k\leq-1+\sum_{i=1}^n\frac{1}{a_i}.$$
When $\alpha_1=\cdots=\alpha_n=d$, this also follows from Example~\ref{triviality_smooth_tangent_cone}, while Example~\ref{example_ODP}
says that in this case the estimate is sharp. 
\end{example}

\begin{example}[{\bf Semiquasihomogeneous isolated singularities}]
More generally, suppose that $D\subset \AAA^n$ has a semiquasihomogeneous isolated singularity at $x$ in the sense of \cite{Saito-HF}. This means that 
we have local coordinates $x_1,\ldots,x_n$ centered at $x$ and weights $w_1,\ldots,w_n\in {\mathbf Q}_{>0}$ such that a local equation $f$ of $D$ at $x$ can be written
as $g+h$, where $g$ only involves monomials of weighted degree $1$, it has an isolated singularity at $x$, and $h$ only involves monomials of weighted degree 
$>1$. In this case, Saito showed in \cite{Saito-Fourier} that $\alpha_{f,x}=\sum_{i=1}^nw_i$. 

Note that $D$ has an ordinary singularity at $p$ if and only if it has a semihomogeneous isolated singularity at $p$ 
(in the sense that it satisfies the above definition with $w_1=\cdots=w_n$). We thus see that in this case, if $w_i=\frac{1}{d}$ for all $i$,
then $\alpha_{f,x}=\frac{n}{d}$. Using Remark \ref{criterion_Bernstein_Sato}, this gives another way of seeing Proposition~\ref{prop_case_larger_k}.
\end{example}

\begin{example}[{\bf Generic determinantal hypersurface}]\label{determinantal_hypersurfaces}
Let $X\simeq\AAA^{n^2}$ be the affine space of $n\times n$ matrices, with $n\geq 2$, and let $D$ be the reduced, irreducible divisor given by
$$D=\{A\mid {\rm det}(A)=0\}.$$
It is an observation that goes back to Cayley that if $f={\rm det}$, then
$$(s+1)(s+2)\cdots(s+n)f^s={\rm det}(\partial_{i,j})_1^n\bullet f^{s+1},$$
hence $b_{f,x}(s)$ divides $\prod_{i=1}^n(s+i)$ for every $x\in X$. It follows from Remark~\ref{criterion_Bernstein_Sato}
that in this case we have $I_1(D)=\shO_X$. This is optimal: in fact, the zero set of $I_2(D)$ is the singular locus of $D$:
$$D_{\rm sing}=\{A\in D\mid {\rm rank}(A)\leq n-2\}.$$
Indeed, if $A\in D_{\rm sing}$ is a point with ${\rm rank}(A)=n-2$, then $D$ has an ordinary double point at $A$,
and $I_2(D)$ vanishes at $A$ by Example~\ref{example_ODP}.
\end{example}

\subsection{Order of vanishing along a closed subset}\label{order_closed_subset}

We can now prove our main criterion for the Hodge ideals of a divisor $D$ to be  contained in the symbolic power of the ideal 
defining an irreducible closed subset. In most cases this is a stronger statement than the criterion in Corollary \ref{cor1_criterion_nontriviality1}.

\begin{proof}[Proof of Theorem \ref{new_criterion_nontriviality}]
The assertion is trivial when $m\leq 1$, hence from now on we assume that $m\geq 2$. After replacing $X$ by a suitable affine open subset intersecting $W$, we may assume that $X$ is affine and that we have an algebraic system of coordinates  $x_1,\ldots,x_n$ such that $I_W=(x_1,\ldots,x_r)$. Moreover, we may and will assume that $D$ is defined by a principal ideal $(g)$. 

We first reduce to the case when $W$ is a point.
It is enough to show that if the theorem holds when $\dim W=d\geq 0$, then it also holds when $\dim W=d+1$.
 For every $\lambda=(\lambda_0,\ldots,\lambda_n)\in\CC^{n+1}\smallsetminus\{0\}$, consider the smooth divisor
$H_{\lambda}$ defined by $\ell_{\lambda}=\lambda_0+\sum_{i=1}^n\lambda_ix_i$. It follows from Theorem~\ref{restriction_general_hypersurfaces} that for $\lambda$ general, we have
\begin{equation}\label{eq1_new_criterion_nontriviality}
I_k(D\vert_{H_{\lambda}})=I_k(D)\cdot\shO_{H_{\lambda}}.
\end{equation}
Since $\dim W =d+1>0$, for general $\lambda$ the subset $W\cap H_{\lambda}$ is non-empty and smooth, of dimension $d$, and 
${\rm mult}_{W\cap H_{\lambda}}(D\vert_{H_{\lambda}})=m$. 
The assertion in the theorem for $W\cap H_{\lambda}\subseteq H_{\lambda}$, together with the equality (\ref{eq1_new_criterion_nontriviality}),
implies 
$$I_k(D)\subseteq (x_1,\ldots,x_r)^q+(\ell_{\lambda}).$$
It is straightforward to see that in this case we have $I_k(D)\subseteq (x_1,\ldots,x_n)^q$, as required.

From now on we assume that $W=\{x\}$ is a point, hence $r=n$. Suppose first that $q=(k+1)m-n$ or, equivalently, that $km<n$. 
Let $\AAA^N$ be the affine space parametrizing 
the coefficients of homogeneous polynomials of degree $m$, with coordinates $c_u$, for $u=(u_1,\ldots,u_n)\in \ZZ_{\geq 0}^n$, with 
$|u|:=\sum_iu_i=m$. Let
$p\colon X\times\AAA^N\to \AAA^N$ be the second projection and $s\colon \AAA^N\to X\times\AAA^N$ be  given by
 $s(t)=(x,t)$. 
 We consider the effective divisor $F$ on $X\times\AAA^N$ defined by $g+\sum_{|u|=m}c_ux^u$. Let $U$ be the open subset of 
 $\AAA^N$ consisting of those $t\in\AAA^N$
 such that $F_t:=F\cap (X\times\{t\})$ is a divisor on $X$; note that the origin lies in $U$.
 Since $F\cap (X\times U)$ is flat over $U$, the set 
 $$V=\{(y,t)\in X\times U\mid y\not\in F,\,\text{or}\,y\in F\,\text{and}\,F_t\,\text{is reduced at}\,y\}$$
 is open in $X\times U$ by \cite[Th\'eor\`eme~12.1.6]{EGA}. Moreover, we have $(x,0)\in V$. 
 Arguing by contradiction, let us assume that $I_k(D)\not\subseteq \mathfrak{m}_x^q$. Applying Theorem~\ref{semicontinuity}
to the map 
$$h\colon Z=V\cap p^{-1}(s^{-1}(V))\to s^{-1}(V)=T,$$ 
the section $T\to Z$ induced by $s$, the divisor $F\vert_{Z}$, and the point $t_0=0\in T$, we conclude that for a general $t\in s^{-1}(V)$,
we have $I_k(F_t\cap V)\not\subseteq \mathfrak{m}_x^q$.
However, for $t\in V$ general, the projectivized tangent cone of $F_t$ at $x$ is a general hypersurface of degree $m$ in $\PP^{n-1}$,
hence smooth. In this case, since $km<n$, it follows from Proposition~\ref{prop_case_larger_k} that $I_k(F_t\cap V)=\mathfrak{m}_x^q$ in a neighborhood of $x$, a contradiction. 
This completes the proof of the theorem in the case $mk<n$.

Suppose now that $mk\geq n$ and let $d=mk-n+1$. Consider the divisor $D'$ in $X'=X\times\AAA^d$ which is the inverse image of $D$ via the first projection. We consider $X$ embedded in $X'$, defined by the ideal $(z_1,\ldots,z_d)$. Note that $D=D'\vert_X$ and $D'$ is reduced. Applying Theorem~\ref{restriction_hypersurfaces} (see also Remark~\ref{rem_restriction_hypersurfaces}) we see that
$$I_k(D)\subseteq I_k(D')\cdot \shO_X.$$
On the other hand, since ${\rm mult}_{(x,0)}(D')=m$, and we have $mk<n+d$ and $(k+1)m-(n+d)=m-1$, the case we have already treated implies that 
$I_k(D')\subseteq \mathfrak{m}_{(x,0)}^{m-1}$ and therefore $I_k(D)\subseteq \mathfrak{m}_x^{m-1}$. This completes the proof of the theorem.
\end{proof}

\begin{example}\label{specific_I1}
We spell out what this criterion says when $k =1$ and $W = \{x\}$ is a single point. If $m= {\rm mult}_x (D)$, then
$$m \ge \max\left\{q+1,\frac{n+q}{2}\right\} \implies I_1 (D) \subseteq \mathfrak{m}_x^q.$$
\end{example}

Taking $m \ge 2$ and ensuring that $q \ge 1$ in Theorem \ref{new_criterion_nontriviality} gives:

\begin{corollary}\label{cor2_criterion_nontriviality2}
Let $D$ be a reduced effective divisor on the smooth variety $X$. If $W$ is an irreducible closed subset
of $X$ of codimension $r$ such that $m = {\rm mult}_W (D) \ge 2$, then 
$$I_k(D)\subseteq I_W \,\,\,\,\,\, {\rm for~ all} \,\,\,\, k\geq \frac{r+ 1 - m}{m}.$$
In particular, if $W\subseteq D_{\rm sing}$, then 
$$I_k(D)\subseteq I_W \,\,\,\,\,\, {\rm for~ all} \,\,\,\, k\geq \frac{r-1}{2}.$$
\end{corollary}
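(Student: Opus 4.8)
The plan is to deduce this statement entirely from Theorem~\ref{new_criterion_nontriviality}, which already supplies the containment $I_k(D) \subseteq I_W^{(q)}$ with $q = \min\{m-1, (k+1)m-r\}$. The key observation is that since $W$ is irreducible, $I_W$ is prime, so $I_W^{(1)} = I_W$, and therefore $I_W^{(q)} \subseteq I_W$ as soon as $q \geq 1$. Thus the whole task reduces to checking that, under the stated hypothesis on $k$, the integer $q$ satisfies $q \geq 1$.

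First I would note that the assumption $m \geq 2$ forces the first term in the minimum to satisfy $m - 1 \geq 1$, so it remains only to guarantee $(k+1)m - r \geq 1$. Rearranging, this inequality is equivalent to $(k+1)m \geq r+1$, that is, $k \geq \frac{r+1-m}{m}$, which is precisely the stated range. Hence for every such $k$ we have $q = \min\{m-1, (k+1)m-r\} \geq 1$, and Theorem~\ref{new_criterion_nontriviality} yields $I_k(D) \subseteq I_W^{(q)} \subseteq I_W$, establishing the first assertion.

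For the ``in particular'' statement, observe that $W \subseteq D_{\rm sing}$ automatically gives $m = {\rm mult}_W(D) \geq 2$. It therefore suffices to verify that, when $m \geq 2$, the hypothesis $k \geq \frac{r-1}{2}$ implies $k \geq \frac{r+1-m}{m}$; the first assertion then applies. This is an elementary arithmetic check: clearing denominators, the inequality $\frac{r-1}{2} \geq \frac{r+1-m}{m}$ is equivalent to $m(r-1) \geq 2(r+1-m)$, which rearranges to $(m-2)(r+1) \geq 0$, and this holds since $m \geq 2$.

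Since everything follows once Theorem~\ref{new_criterion_nontriviality} is available, there is no serious obstacle here; the only points requiring care are the passage from the symbolic-power bound $I_W^{(q)}$ to the plain containment in $I_W$ via $q \geq 1$, and the inequality $(m-2)(r+1) \geq 0$ that converts the sharp threshold $\frac{r+1-m}{m}$ into the cleaner bound $\frac{r-1}{2}$ in the singular-locus case.
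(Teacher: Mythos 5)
Your proof is correct and is essentially identical to the paper's own argument: the paper derives this corollary in one line by ``taking $m \ge 2$ and ensuring that $q \ge 1$'' in Theorem~\ref{new_criterion_nontriviality}, which is exactly your reduction $I_k(D) \subseteq I_W^{(q)} \subseteq I_W^{(1)} = I_W$. The arithmetic you spell out — that $(k+1)m - r \ge 1$ is equivalent to $k \ge \frac{r+1-m}{m}$, and that $(m-2)(r+1) \ge 0$ handles the singular-locus threshold $\frac{r-1}{2}$ — consists precisely of the checks the paper leaves implicit.
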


\begin{example}\label{same_bound}
Corollary \ref{cor2_criterion_nontriviality2} implies that the nontriviality part of Theorem \ref{thm_ordinary_singularities} holds for 
arbitrary singular points. Indeed, if $x \in D$ is a point of multiplicity $m$, the corollary implies that $I_k(D)$ becomes nontrivial at $x$ 
when $k \ge \frac{n+ 1 - m}{m}$, or equivalently
$$I_k (D)_x \subseteq \mathfrak{m}_x \,\,\,\,\,\,{\rm for}\,\,\,\, k \ge \left[ \frac{n}{m} \right].$$
 \end{example}

The last statement in Corollary \ref{cor2_criterion_nontriviality2} immediately implies one of the main results stated in the Introduction, namely the 
fact that the smoothness of $D$ is precisely characterized by the triviality of all Hodge ideals, or equivalently by the equality between the Hodge and pole order filtrations.

\begin{proof}[Proof of Theorem \ref{smoothness_criterion}]
We know that if $D$ is smooth, then $I_k(D)=\shO_X$ for all $k$. On the other hand, it follows from Corollary~\ref{cor2_criterion_nontriviality2}
that if $D$ is singular, then $I_k(D)\neq\shO_X$ for all $k\geq \frac{n-1}{2}$.
\end{proof}

\section{Vanishing theorems}

In this section we prove the fundamental vanishing theorem for the Hodge ideals $I_k(D)$, extending Nadel vanishing 
for the multiplier ideal $I_0 (D)$. For $k = 1$ this can be done using more elementary methods, but 
at the moment in the general case we only know how to argue based on Saito's Kodaira-type vanishing theorem for 
mixed Hodge modules,  recalled as Theorem \ref{saito_vanishing}.

\subsection{General vanishing}
Besides Theorem \ref{saito_vanishing}, we will also make use of a different vanishing result
for the Hodge $\Dmod$-module $\shO_X (*D)$. It is an immediate consequence of Saito's strictness results, 
surely well-known to the experts.

\begin{proposition}\label{affine_vanishing}
If $X$ is a smooth projective variety and $D$ is a reduced effective ample divisor on $X$, then 
$$\mathbf{H}^i \bigl( X, \gr_k^F \DR(\shO_X (*D)) \bigr) = 0$$
for all $i > 0$ and all $k$.
\end{proposition}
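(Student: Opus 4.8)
The plan is to deduce the vanishing purely formally from the decomposition in Corollary~\ref{open_decomposition}, the only extra geometric input being that the complement of an ample divisor is affine. Conceptually, the twist by an ample line bundle appearing in Saito's vanishing (Theorem~\ref{saito_vanishing}) is here replaced by the positivity of $D$ itself, which is encoded in the localization $\shO_X(*D)$; the mechanism transmitting this positivity to the graded de Rham pieces is Saito's strictness, i.e. the $E_1$-degeneration recalled in Example~\ref{degeneration}.

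First I would record two standard facts. Since $D$ is a reduced effective \emph{ample} divisor, the open set $U = X \smallsetminus D$ is affine. Being smooth affine of dimension $n$, it has the homotopy type of a CW-complex of real dimension at most $n$ (Andreotti--Frankel), so that
$$H^j(U, \CC) = 0 \quad \text{for all} \quad j > n.$$
This is the affine vanishing that will drive the argument.

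Next I would invoke Corollary~\ref{open_decomposition}, which gives, for every integer $i$, a decomposition
$$H^{i+n}(U, \CC) \simeq \bigoplus_{q \in \ZZ} \mathbf{H}^i\bigl(X, \gr_{-q}^F \DR(\shO_X(*D))\bigr).$$
Taking any $i > 0$, the left-hand side equals $H^{i+n}(U,\CC)$ with $i + n > n$, hence vanishes by the affine vanishing above. A direct sum is zero if and only if each summand is, so $\mathbf{H}^i(X, \gr_{-q}^F \DR(\shO_X(*D))) = 0$ for every $q$. As $-q$ ranges over all integers, this is precisely the assertion $\mathbf{H}^i(X, \gr_k^F \DR(\shO_X(*D))) = 0$ for all $k$ and all $i > 0$.

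I do not expect a genuine obstacle: the whole content is contained in Corollary~\ref{open_decomposition} (which itself rests on Saito's strictness theorem) combined with the elementary affineness of $U$. The two points deserving a moment of care are the bookkeeping of cohomological degrees, so that the range $i>0$ corresponds exactly to the vanishing range $H^{>n}(U,\CC)=0$, and the use of a genuine direct-sum decomposition rather than a mere spectral-sequence filtration --- the former being available precisely because of the $E_1$-degeneration, which makes $\mathbf{H}^i(X,\gr_{-q}^F\DR(\shO_X(*D)))$ a direct summand of $H^{i+n}(U,\CC)$.
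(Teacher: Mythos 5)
Your proposal is correct and is essentially identical to the paper's own proof: both argue that $U = X \smallsetminus D$ is affine, invoke the Andreotti--Frankel vanishing $H^{i+n}(U,\CC)=0$ for $i>0$, and then use the direct-sum decomposition of Corollary~\ref{open_decomposition} (itself a consequence of Saito's strictness/$E_1$-degeneration) to conclude that each summand $\mathbf{H}^i\bigl(X, \gr_{-q}^F \DR(\shO_X(*D))\bigr)$ vanishes. No changes are needed.
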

\begin{proof}
If $U = X \smallsetminus D$, then by Corollary \ref{open_decomposition} we have 
$$H^{i + n} (U, \CC) \simeq \bigoplus_{q \in \ZZ}  \mathbf{H}^{i}  \big(X, \gr_{-q}^F \DR ( \shO_X(*D)) \big).$$
Since $U$ is affine, the left hand side is $0$ for all $i >0$ by the Andreotti-Frankel vanishing theorem; see e.g. \cite[Theorem 3.1.1]{Lazarsfeld}. 
This implies the vanishing  of all spaces on the right-hand side.
\end{proof}

\subsection{Vanishing for Hodge ideals}
For motivation, we start by recalling a well-known fact:

\begin{proposition}\label{Nadel}
Let $X$ be a smooth projective variety, $D$ an effective divisor, and $L$ an ample line bundle on $X$. Then:

\begin{enumerate}
\item $H^i \big(X, \omega_X (D) \otimes L \otimes I_0 (D) \big) = 0$ for all $i \ge 1$.
\item If $D$ is ample, then the same vanishing holds if we only assume $L$ is nef, e.g. $L = \shO_X$.
\end{enumerate}
\end{proposition}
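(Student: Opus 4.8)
The plan is to reduce both parts to the classical Nadel vanishing theorem, \cite[Theorem 9.4.8]{Lazarsfeld}, by invoking the identification $I_0(D)=\I\big(X,(1-\epsilon)D\big)$ for $0<\epsilon\ll 1$ proved in Proposition~\ref{ideal_I_0}. Recall that Nadel vanishing says: if $\Delta$ is an effective $\QQ$-divisor and $B$ an integral divisor such that $B-\Delta$ is big and nef, then $H^i\big(X,\shO_X(K_X+B)\otimes\I(\Delta)\big)=0$ for all $i>0$. So once the geometry is set up correctly, the entire content of the proposition is a positivity bookkeeping exercise.

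First I would take $\Delta=(1-\epsilon)D$, so that $\I(\Delta)=I_0(D)$, and choose the integral divisor $B=D+L$ (viewing $L$ as a divisor class); then $\shO_X(K_X+B)=\omega_X(D)\otimes L$, exactly the sheaf in the statement. With these choices the class whose positivity must be checked is $B-\Delta=\epsilon D+L$, and everything reduces to verifying that $\epsilon D+L$ is big and nef for a suitable $\epsilon$. For part (1), $L$ is ample, hence its numerical class lies in the interior of the nef cone; since $D$ has a fixed class, $\epsilon D+L$ stays ample for all sufficiently small $\epsilon>0$ by openness of the ample cone in $N^1(X)_{\RR}$, and in particular it is big and nef. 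One must take $\epsilon$ below both the threshold ensuring $\I\big((1-\epsilon)D\big)=I_0(D)$ and the threshold keeping $\epsilon D+L$ ample; both are positive, so such an $\epsilon$ exists. For part (2), $D$ is ample and $L$ only nef, so $\epsilon D+L$ is a nef class plus an ample class, hence ample, and again big and nef; this includes the case $L=\shO_X$. Applying Nadel vanishing then gives the conclusion in each case.

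The one genuinely delicate point is the nefness of $\epsilon D+L$ in part (1): one cannot argue curve by curve with a fixed small $\epsilon$, since ample-plus-effective need not be nef, and instead must use the openness of the ample cone to handle all curve classes uniformly. I expect this to be the only real obstacle; everything else is formal. If one prefers to avoid citing Nadel as a black box, the same computation can be carried out by hand: fixing a log resolution $f\colon Y\to X$ of $(X,D)$, Proposition~\ref{ideal_I_0} gives $I_0(D)=f_*\shO_Y(K_{Y/X}+E-f^*D)$ with $E=(f^*D)_{\mathrm{red}}$, and combining this with local vanishing for multiplier ideals and the projection formula reduces the claim to $H^i\big(Y,\omega_Y(E)\otimes f^*L\big)=0$ for $i>0$; this in turn is an instance of Kawamata--Viehweg vanishing driven by the big and nef class $f^*(\epsilon D+L)$. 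Either way, the substance of the argument is the positivity of $\epsilon D+L$.
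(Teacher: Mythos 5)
Your proposal is correct and is essentially the paper's proof: both reduce to classical Nadel vanishing via the identification $I_0(D)=\I\big(X,(1-\epsilon)D\big)$ from Proposition~\ref{ideal_I_0}, and both verify that $L+\epsilon D$ is ample for $0<\epsilon\ll 1$ (by openness of the ample cone in case (1), and by ample-plus-nef in case (2)). The extra log-resolution argument you sketch at the end is a valid alternative but is not needed, and the paper does not use it.
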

\begin{proof}
This is just a special case of Nadel vanishing; see \cite[Theorem 9.4.8]{Lazarsfeld}. Indeed, recall that $I_0 (D) = \I (X, (1-\epsilon) D)$, and so one has the desired 
vanishing as long as $ L + \epsilon D$ is ample for $0 < \epsilon \ll 1$. This holds under either hypothesis.
\end{proof}

We now move to analogous results for $k \ge 1$. We first state the case $k =1$. We will then provide a general 
result, in a slightly weaker form for simplicity; it is necessarily an inductive, and more technical, statement.

\begin{theorem}\label{vanishing-I_1}
Let $X$ be a smooth projective variety, $D$ a reduced effective divisor such that the pair $(X,D)$ is log-canonical, 
and $L$ a line bundle on $X$. Then:

\begin{enumerate}
\item If $L$ is an ample line bundle such that $L(D)$ is also ample, then 
$$H^i \big(X, \omega_X (2D) \otimes L \otimes I_1 (D) \big) = 0 \,\,\,\,\, {\rm for ~all~} \,\, i \ge 2,$$ 
and there is a surjection
$$H^1 \big(X, \Omega_X^{n-1} (D) \otimes L\big) \rightarrow H^1 \big(X, \omega_X (2D) \otimes L \otimes I_1 (D) \big) \rightarrow 0.$$
\item If $D$ is ample, then the conclusion in (1) also holds for $L = \shO_X$.
\end{enumerate}
\end{theorem}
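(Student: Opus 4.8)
The plan is to reduce the statement to Saito's vanishing theorem (Theorem~\ref{saito_vanishing}) applied to the single graded piece $\gr_{1-n}^F\DR(\shO_X(*D))$, combined with Kodaira and Nakano vanishing on $X$. The starting observation is that, because $(X,D)$ is log-canonical, $I_0(D)=\shO_X$, so $F_0\shO_X(*D)=\shO_X(D)$ and $\gr_0^F\shO_X(*D)=\shO_X(D)$. Viewing $\Mmod=\shO_X(*D)$ as the left $\Dmod_X$-module underlying $j_*\QQ_U^H[n]$, I first compute that the graded de Rham complex $G^\bullet:=\gr_{1-n}^F\DR(\Mmod)$ is concentrated in only two degrees, since $\gr_{1-n+j}^F\Mmod=0$ whenever $1-n+j<0$. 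Explicitly it is the two-term complex
$$G^{-1}=\Omega_X^{n-1}(D)\ \xrightarrow{\ d\ }\ G^0=\omega_X\otimes\gr_1^F\shO_X(*D),$$
placed in degrees $-1$ and $0$, where log-canonicity is used to identify $G^{-1}$. This two-term shape is exactly what makes the case $k=1$ tractable directly.

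Next I record the two exact sequences feeding the computation. On one hand, the filtration step $0\to F_0\Mmod\to F_1\Mmod\to\gr_1^F\Mmod\to 0$, tensored with $\omega_X\otimes L$, becomes
$$0\to \omega_X(D)\otimes L\to \omega_X(2D)\otimes I_1(D)\otimes L\to G^0\otimes L\to 0,$$
using $F_1\Mmod=\shO_X(2D)\otimes I_1(D)$ and $F_0\Mmod=\shO_X(D)$. Since $L(D)$ is ample, Kodaira vanishing gives $H^i(\omega_X\otimes L(D))=0$ for $i\geq 1$, so this sequence yields isomorphisms $H^i(\omega_X(2D)\otimes I_1(D)\otimes L)\cong H^i(G^0\otimes L)$ for all $i\geq 1$. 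On the other hand, the stupid filtration of the two-term complex $G^\bullet$ produces a long exact sequence relating $\mathbf{H}^\bullet(G^\bullet\otimes L)$, $H^\bullet(G^{-1}\otimes L)$ and $H^\bullet(G^0\otimes L)$, with connecting maps induced by $d$.

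The crux is then to invoke vanishing for $G^\bullet$. In case (1), since $L$ is ample, Theorem~\ref{saito_vanishing} gives $\mathbf{H}^i(X,G^\bullet\otimes L)=0$ for all $i>0$; in case (2), where $D$ is ample and $L=\shO_X$, the same vanishing is instead supplied by Proposition~\ref{affine_vanishing} — this is the only place the two cases genuinely differ, and it explains why ampleness of $D$ can replace ampleness of $L$. Feeding this into the long exact sequence of $G^\bullet$ produces isomorphisms $H^i(G^{-1}\otimes L)\xrightarrow{\ \sim\ }H^i(G^0\otimes L)$ for $i\geq 2$ and a surjection $H^1(G^{-1}\otimes L)\twoheadrightarrow H^1(G^0\otimes L)$. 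Combining with the filtration isomorphisms above, for $i\geq 2$ I obtain $H^i(\omega_X(2D)\otimes I_1(D)\otimes L)\cong H^i(X,\Omega_X^{n-1}\otimes L(D))$, which vanishes by Nakano vanishing because $L(D)$ is ample and $(n-1)+i>n$; this gives the first assertion. For $i=1$, composing the surjection above with the isomorphism from the filtration sequence yields the asserted surjection $H^1(\Omega_X^{n-1}(D)\otimes L)\twoheadrightarrow H^1(\omega_X(2D)\otimes L\otimes I_1(D))$.

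The main obstacle is the bookkeeping: correctly identifying $\gr_{1-n}^F\DR(\shO_X(*D))$ as a two-term complex — in particular verifying that the left-hand term is genuinely $\Omega_X^{n-1}(D)$, which relies essentially on the log-canonical hypothesis through $I_0(D)=\shO_X$ — and then matching the degrees in the two long exact sequences so that the isomorphisms coming from Theorem~\ref{saito_vanishing} (or Proposition~\ref{affine_vanishing}) combine cleanly with the Kodaira and Nakano inputs. The subtlety worth double-checking is that the two hypotheses in (1) are used in distinct places: ampleness of $L$ enters only through Saito vanishing, while ampleness of $L(D)$ enters only through Kodaira and Nakano vanishing; and in (2) Proposition~\ref{affine_vanishing} exactly fills the role of Saito vanishing for the untwisted complex.
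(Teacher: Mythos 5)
Your proposal is correct and follows essentially the same route as the paper: both identify $H^i\big(X,\omega_X(2D)\otimes L\otimes I_1(D)\big)$ with $H^i\big(X,\omega_X\otimes L\otimes \gr_1^F\shO_X(*D)\big)$ via the filtration sequence and Kodaira vanishing, then apply Theorem~\ref{saito_vanishing} (resp.\ Proposition~\ref{affine_vanishing} in case (2)) to the two-term complex $\gr_{1-n}^F\DR(\shO_X(*D))\otimes L$ together with Nakano vanishing for $\Omega_X^{n-1}\otimes L(D)$. Your long exact sequence from the stupid filtration of the two-term complex is just the degenerate form of the hypercohomology spectral sequence the paper uses, so the two arguments coincide step for step.
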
 
\begin{proof}
Note to begin with that 
$$H^i \big(X, \omega_X(2D) \otimes L \otimes I_1(D)  \big) \simeq 
H^i \big(X, \omega_X \otimes L \otimes \gr_1^F \shO_X (*D)  \big)$$
for all $i \ge 1$, and so it suffices to prove the analogous statements for the cohomology groups on the right. 
Indeed, given that $I_0 (D) = \shO_X$, we have a short exact sequence
$$0 \longrightarrow \omega_X\otimes L(D) \longrightarrow \omega_X (2D) \otimes L \otimes I_1 (D) \longrightarrow 
\omega_X \otimes L \otimes \gr_1^F \shO_X (*D) \longrightarrow 0.$$
The isomorphisms follow from the fact that the leftmost term in this sequence satisfies Kodaira vanishing.

Suppose first that the conditions in (1) hold.
Let $n=\dim X$ and consider the complex 
$$C^{\bullet}  : =  \big( \gr_{-n+1}^F \DR(\shO_X (*D)) \otimes L\big) [-1].$$
Since $I_0 (D) = \shO_X$, $C^\bullet$ can be identified with a complex
$$\big[\Omega_X^{n-1} \otimes \shO_X(D) \otimes L \longrightarrow \omega_X \otimes L \otimes \gr_1^F \shO_X (*D) \big]$$
with terms in degrees $0$ and $1$. Theorem \ref{saito_vanishing} implies that 
\begin{equation}\label{van1}
\mathbf{H}^j ( X, C^{\bullet}) = 0 \,\,\,\,\,\, {\rm for ~all} ~ j \ge 2.
\end{equation}
We use the spectral sequence 
$$E_1^{p,q} = H^q  (X, C^p) \implies  \mathbf{H}^{p+q} (X, C^{\bullet}).$$
Note that for $i \ge 1$, $E^{2,i}_1 = 0$ since $C^2 = 0$.  On the other hand, for $i \ge 2$ we have $E^{0,i}_1 = 0$ by Nakano vanishing, which implies 
$$E^{1, i}_1 = E^{1, i}_2 \,\,\,\,\, {\rm for~all~} i \ge 2.$$
Now for every $r \ge 2$ and $i \ge 1$, for $E^{1, i}_r$ we have that the outgoing term is $0$ because of the length of the complex, while the incoming term is clearly $0$. Using ($\ref{van1}$), this implies that 
$$E^{1, i}_2 = E^{1, i}_{\infty} = 0.$$
Therefore
$$E^{0,1}_1 = H^1 \big(X, \Omega_X^{n-1} (D) \otimes L\big)$$
surjects onto  $E^{1,1}_1 = H^1 \big(X, \omega_X (2D) \otimes L \otimes I_1 (D) \big)$, while $E^{1, i}_1 = 0$ for $i \ge 2$.
This proves (1). The proof of (2) is identical, replacing the use of Theorem \ref{saito_vanishing} by Proposition \ref{affine_vanishing}.
\end{proof}

We now prove the vanishing theorem for arbitrary $k$, i.e. Theorem \ref{vanishing_Hodge_ideals} in the introduction. 
Note that for $k = 1$ it is implied by Theorem \ref{vanishing-I_1}.
Recall that the assumption is that the pair $(X, D)$ is $(k-1)$-log-canonical, which  is equivalent to
$$I_0 (D) = I_1 (D) = \cdots = I_{k-1}(D) = \shO_X.$$ 

\begin{proof}[Proof of Theorem \ref{vanishing_Hodge_ideals}]
The statement for $k \ge \frac{n+1}{2}$, i.e. part (2), is simply an application of Kodaira vanishing. Indeed, $D$ is smooth, and we have a short exact sequence 
$$0 \longrightarrow \omega_X \otimes L (kD) \longrightarrow \omega_X \otimes L \big( (k+1) D\big) \longrightarrow
\omega_D \otimes L (kD)_{|D} \longrightarrow 0.$$
Passing to cohomology, by assumption Kodaira vanishing applies to the two extremes, which implies vanishing for the 
term in the middle.

We thus concentrate on the case $k \le \frac{n}{2}$, i.e. part (1).
Note first that since $I_{k-1} (D)  = \shO_X$, we have a short exact sequence
$$0 \rightarrow \omega_X \otimes L (kD) \rightarrow \omega_X \otimes L \big((k+ 1)D\big) \otimes I_k (D) 
\rightarrow \omega_X \otimes L \otimes  \gr_k^F \shO_X (*D) \rightarrow 0.$$
Passing to cohomology and using Kodaira vanishing, this implies immediately that the vanishing statements we are aiming for 
are equivalent to the same vanishing statements for 
$$H^i \big (X,  \omega_X \otimes L \otimes  \gr_k^F \shO_X (*D) \big).$$ 

We now consider the complex 
$$C^{\bullet}  : =  \big( \gr_{-n+k}^F \DR(\shO_X (*D)) \otimes L \big) [-k].$$
Given the hypothesis on the ideals $I_p (D)$, this can be identified with a complex of the form
$$\big[\Omega_X^{n-k}  \otimes L(D) \longrightarrow  \Omega_X^{n-k+1} \otimes L \otimes \shO_D(2D)  \longrightarrow 
\cdots $$
$$\cdots \longrightarrow 
\Omega_X^{n-1} \otimes L \otimes \shO_D(kD)  \longrightarrow \omega_X \otimes L \otimes 
\gr_k^F \shO_X (*D)  \big]$$
concentrated in degrees $0$ up to $k$. Theorem \ref{saito_vanishing} implies that 
\begin{equation}\label{van2}
\mathbf{H}^j ( X, C^{\bullet}) = 0 \,\,\,\,\,\, {\rm for ~all} ~ j \ge k+1.
\end{equation}
We use the spectral sequence 
$$E_1^{p,q} = H^q  (X, C^p) \implies  \mathbf{H}^{p+q} (X, C^{\bullet}).$$
The vanishing statements we are interested in are for the terms $E^{k, i}_1$ with $i \ge 1$. Note to begin with that 
$E^{k+1, i}_1 = 0$ since $C^{k+1} = 0$. 
On the other hand, 
$$E^{k-1, i}_1 = H^i \big( X, \Omega_X^{n-1} \otimes L \otimes \shO_D (kD) \big),$$
and so we have an exact sequence
$$H^i \big( X, \Omega_X^{n-1} \otimes L (kD) \big) \longrightarrow E^{k-1, i}_1 \longrightarrow 
H^{i+1} \big( X, \Omega_X^{n-1} \otimes L ((k-1)D) \big).$$
Now there are two cases:
\begin{enumerate}
\item If $i \ge 2$, we deduce that $E^{k-1, i}_1 = 0$ by Nakano vanishing, and so $E^{k, i}_1 = E^{k, i}_2$. 
\item If $i = 1$, using Nakano vanishing we obtain a surjective morphism 
$$H^1 \big( X, \Omega_X^{n-1} \otimes L (kD) \big) \longrightarrow E^{k-1, 1}_1.$$
If the extra vanishing hypothesis on the term on the left holds, then we draw the same conclusion as in (1).
\end{enumerate}
We need to analyze in a similar way the terms $E^{k, i}_r$ with $r \ge 2$. On one hand, we always have 
$E^{k+r, i -r +1}_r = 0$ because of the length of the complex. On the other hand, we will show that under our
hypothesis we have $E^{k-r, i +r -1}_1 = 0$, from which we infer that $E^{k-r, i +r -1}_r = 0$ as well.
Granting this,  we obtain
$$E^{k, i}_r = E^{k, i}_{r +1}.$$
Repeating this argument for each $r$, we finally obtain
$$E^{k, i}_1 = E^{k, i}_{\infty} = 0,$$
where the vanishing follows from ($\ref{van2}$), as $i \ge 1$. 

We are thus left with proving that $E^{k-r, i +r -1}_1 = 0$. If $r > k$ this is clear, since the complex $C^\bullet$ starts 
in degree $0$.  If $k = r$, we have 
$$E^{0, i +k -1}_1 = H^{i + k -1} \big( X, \Omega_X^{n-k} \otimes L (D)\big).$$
If $i \ge 2$ this is $0$ by Nakano vanishing, while if $i =1$ it is $0$ because of our hypothesis.
Finally, if $k \ge r + 1$, we have
$$E^{k - r, i +r -1}_1 = H^{i + r -1} \big( X, \Omega_X^{n-r} \otimes L \otimes \shO_D ((k-r+1)D) \big),$$
which sits in an exact sequence
$$H^{i + r - 1} \big( X, \Omega_X^{n- r} \otimes L ((k- r +1) D) \big) \longrightarrow E^{k-r, i- r +1}_1 \longrightarrow $$
$$\longrightarrow H^{i  +r} \big( X, \Omega_X^{n- r} \otimes L ((k- r ) D) \big).$$
We again have two cases:
\begin{enumerate}
\item If $i \ge 2$, we deduce that $E^{k-r, i +r -1}_1 = 0$ by Nakano vanishing. 
\item If $i = 1$, using Nakano vanishing we obtain a surjective morphism 
$$H^r \big( X, \Omega_X^{n-r} \otimes L ((k-r +1)D) \big) \longrightarrow E^{k-r, i + r -1}_1,$$
and if the extra hypothesis on the term on the left holds, then we draw the same conclusion as in (1).
\end{enumerate}

The proof of (3) is identical, replacing the application of Theorem \ref{saito_vanishing} by that of Proposition \ref{affine_vanishing}.
\end{proof}

\begin{remark}
A more precise statement, like the surjectivity statement in Theorem \ref{vanishing-I_1},
holds at each step in the spectral sequence appearing in the proof. We refrain from stating this, 
as it will not be needed in the sequel. 
\end{remark}

\noindent
{\bf Elementary approach to vanishing for $I_1$.} 
Combining Lemma~\ref{filtration1} and local vanishing for $\omega_Y (E)$, we see that $I_1(D)$ sits in an exact sequence
$$0 \rightarrow f_* \Omega_Y^{n-1} (\log E) \rightarrow \omega_X (D) \otimes I_0 (D) \otimes F_1 \Dmod_X \rightarrow 
\omega_X(2D) \otimes I_1 (D) \rightarrow $$ 
$$\rightarrow R^1 f_* \Omega_Y^{n-1} (\log E)  \rightarrow 0.$$
This can be seen as a lift of the quasi-ismorphism in Theorem \ref{Saito-LOG} in the case $k =1$.
Since $F_1 \Dmod_X \simeq \shO_X \oplus T_X$, using Nadel vanishing it is then not too hard to recover Theorem \ref{vanishing-I_1} from
Theorem \ref{thm_vanishing2} in the Appendix, when $D$ is ample, more precisely from the vanishing
$$H^2 \big(Y , \Omega_Y^{n-1} (\log E) \otimes f^*L\big) = 0,$$
without using the vanishing theorem for Hodge modules. Although we expect this to be possible eventually, at the moment we do not know how to do a similar thing for $I_k (D)$ with 
$k \ge 2$, i.e. recover the full Theorem \ref{vanishing_Hodge_ideals}.

\subsection{Effective version}
The main difficulty in applying Theorem \ref{vanishing_Hodge_ideals} is the Nakano-type vanishing requirement. We will see in the next 
section that this difficulty does not occur in important examples, like toric or abelian varieties. 
Here we explain how an effective measure of the positivity of the tangent bundle of $X$ allows one to get rid of this requirement at the expense of working with a sufficiently positive divisor. For simplicity we assume here that $D$ is ample; a similar 
statement holds in general by tensoring with an appropriate ample line bundle $L$.

\begin{corollary}\label{effective_vanishing}
Let $X$ be a smooth projective variety of dimension $n$, and $D$ a reduced effective $(k-1)$-log-canonical ample divisor on $X$, with $k \ge1$. If $A$ is an ample Cartier divisor such that $T_X (A)$ is nef, then 
$$H^i \big(X, \omega_X \big((k+1)D\big) \otimes I_k (D)\big) = 0 \,\,\,\,\,\,{\rm for ~all} \,\,\,\,i >0,$$
assuming that $D - k \big(-K_X  + (n+1)A\big)$ is ample.
\end{corollary}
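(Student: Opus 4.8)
The plan is to reduce the statement to Theorem~\ref{vanishing_Hodge_ideals}, and to produce the Nakano-type input required there by exploiting the nefness of $T_X(A)$. Since $D$ is ample, part~(3) of Theorem~\ref{vanishing_Hodge_ideals} lets us work throughout with $L=\shO_X$. I would first dispose of the case $k\ge\frac{n+1}{2}$: here $D$ is smooth by Theorem~\ref{smoothness_criterion}, so $I_k(D)=\shO_X$, and part~(2) (with $L=\shO_X$, which is legitimate by part~(3) as $\shO_X(kD)$ is ample) already yields $H^i\big(X,\omega_X((k+1)D)\big)=0$ for $i>0$; the positivity hypothesis on $D$ is not even needed in this range. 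The content is therefore the case $k\le\frac n2$, where part~(1) gives $H^i=0$ for $i\ge 2$ at no cost and reduces the remaining group $H^1$ to the Nakano-type vanishing
\[
H^j\big(X,\Omega_X^{n-j}\big((k-j+1)D\big)\big)=0\qquad\text{for all}\quad 1\le j\le k.
\]

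The heart of the matter is to deduce this from the hypotheses. Set $E:=T_X(A)$, a nef bundle of rank $n$ with $\det E=\shO_X(-K_X+nA)$, and recall $\wedge^j T_X\simeq\Omega_X^{n-j}\otimes\omega_X^{-1}$. A direct bookkeeping of twists then gives the identity
\[
\Omega_X^{n-j}\big((k-j+1)D\big)\simeq\omega_X\otimes(\det E)\otimes\wedge^j E\otimes L_j,\qquad L_j:=\shO_X\big((k-j+1)D+K_X-(n+j)A\big).
\]
I would then invoke a Griffiths-type vanishing theorem for nef bundles: for $E$ nef and $L_j$ ample one has $H^i\big(X,\omega_X\otimes\det E\otimes\wedge^j E\otimes L_j\big)=0$ for all $i>0$, the presence of the factor $\det E$ being exactly what forces all positive cohomology to vanish, and not merely the high-degree part furnished by Le~Potier vanishing; since $\wedge^j E$ is a direct summand of $E^{\otimes j}$ this reduces to the standard Schur-functor form, see \cite[\S7.3]{Lazarsfeld}. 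Taking $i=j$ yields the displayed vanishing, provided each $L_j$ is ample.

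The only genuine computation is the ampleness of $L_j$ for $1\le j\le k$, and the numerology is arranged so that the awkward $K_X$-terms are absorbed by the nef class $\det E=\shO_X(-K_X+nA)$. For $j=k$ one checks
\[
L_k=\shO_X\big(D+K_X-(n+k)A\big)\simeq\shO_X\big(D-k(-K_X+(n+1)A)\big)\otimes\shO_X\big((k-1)(-K_X+nA)\big),
\]
which is ample~$\otimes$~nef, hence ample, using the hypothesis on $D$ and the nefness of $\det E$. For general $j$ one has $L_j\simeq L_k\otimes\shO_X\big((k-j)(D+A)\big)$, again ample~$\otimes$~nef. Feeding this back through the Griffiths vanishing establishes the Nakano-type vanishing, and combining it with part~(1) of Theorem~\ref{vanishing_Hodge_ideals} finishes the case $k\le\frac n2$. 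I expect the main obstacle to be organizational rather than deep: correctly setting up the Griffiths--Le~Potier vanishing in the nef, exterior-power form, and matching the twists so that the single hypothesis that $D-k(-K_X+(n+1)A)$ be ample is precisely what renders every $L_j$ ample. Once the identity and this class computation are in place, the conclusion is immediate.
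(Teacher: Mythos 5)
Your overall architecture coincides with the paper's: reduce to Theorem~\ref{vanishing_Hodge_ideals} (using part (3) to take $L=\shO_X$), set $E=T_X(A)$, and produce the required Nakano-type input from a Demailly--Griffiths vanishing theorem; your twist bookkeeping and the ampleness computations for your $L_j$ are also correct. The gap is the vanishing theorem itself: you assert that for $E$ nef and $L_j$ ample one has $H^i\big(X,\omega_X\otimes \det E\otimes \wedge^j E\otimes L_j\big)=0$ for all $i>0$, i.e.\ with a \emph{single} copy of $\det E$. This is false. The correct statement --- Demailly's theorem, \cite[Theorem 7.3.14]{Lazarsfeld}, which is what the paper invokes --- requires $(\det E)^{\otimes j}$, one determinant per exterior degree; a single determinant suffices only for symmetric powers (Griffiths vanishing). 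For a concrete counterexample in exactly your setting, take $X=\PP^3$, $E=T_{\PP^3}(-1)$ (a quotient of $\shO_{\PP^3}^{\oplus 4}$ by the Euler sequence, hence globally generated and nef, of rank $3$, with $\det E=\shO_{\PP^3}(1)$), $j=2$, and the ample line bundle $\shO_{\PP^3}(1)$; then
$$\omega_{\PP^3}\otimes \wedge^2E\otimes\det E\otimes\shO_{\PP^3}(1)\simeq \shO_{\PP^3}(-4)\otimes\Omega^1_{\PP^3}(2)\otimes\shO_{\PP^3}(2)\simeq \Omega^1_{\PP^3},$$
and $H^1(\PP^3,\Omega^1_{\PP^3})=\CC\neq 0$. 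Your proposed justification --- that since $\wedge^jE$ is a summand of $E^{\otimes j}$ the claim ``reduces to the standard Schur-functor form'' --- cannot repair this: the tensor-power statement with a single $\det E$ contains the exterior-power one as a direct summand, so it is false for the same reason, and the standard results for $E^{\otimes j}$ likewise carry $j$ copies of $\det E$.

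The good news is that the error is fixable, and the fix is precisely the paper's proof. Using the correct form $H^i\big(X,\omega_X\otimes\wedge^jE\otimes(\det E)^{\otimes j}\otimes M\big)=0$ for $E$ nef and $M$ ample, the twist whose ampleness you must verify is no longer your $L_j$ but
$$M_j=\shO_X\big((k-j+1)D+jK_X-j(n+1)A\big),\qquad 1\le j\le k,$$
that is, you need $(k-j+1)D-jP$ ample, where $P=-K_X+(n+1)A$. This still follows from the hypothesis: since $D-kP$ and $D$ are ample,
$$(k-j+1)D-jP=(k-j)D+\tfrac{j}{k}\big(D-kP\big)+\tfrac{k-j}{k}D$$
is a positive combination of ample classes, hence ample. (Note that $M_j$ ample implies your $L_j=M_j+(j-1)(-K_X+nA)$ ample, since $-K_X+nA=\det(T_X(A))$ is nef --- the hypothesis of the corollary is calibrated for the $j$-determinant version, not the one-determinant one.) With this correction, your case division and all remaining steps go through unchanged.
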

\begin{proof}
According to Theorem \ref{vanishing_Hodge_ideals}, we need to check that the condition
\begin{equation}\label{input}
H^j \big(X, \omega_X \otimes \wedge^j T_X \otimes \shO_X ((k - j +1)D )\big) = 0
\end{equation} 
holds for all $1 \le j \le k$.
A special case of Demailly's extension of the Griffiths vanishing theorem (see \cite[Theorem 7.3.14]{Lazarsfeld} and the preceding 
comments) says that for every nef vector bundle 
$E$ and ample line bundle $M$ on $X$, and for every $m \ge 1$, one has
$$H^i (X, \omega_X \otimes \wedge^m E \otimes (\det E)^{\otimes m} \otimes M) = 0  \quad\text{for all}\quad i >0.$$
We apply this with $E = T_X (A)$, to obtain that 
$$H^i\big(X,\omega_X \otimes  \wedge^j T_X \otimes \omega_X^{-j}( j(n+1)A) \otimes M\big)=0  \quad\text{for all}\quad i >0.$$
A small calculation shows that in order to satisfy ($\ref{input}$) it is therefore enough to have the ampleness of 
$D - k \big(-K_X  + (n+1)A\big)$.
\end{proof}

\begin{remark}
Following \cite[Remark 4.5]{ELN}, inspired in turn by \cite[Corollary 12.12]{Demailly}, an effective (but very large) bound can also be given depending on a line bundle $A$ such that $-K_X + A$ is nef.
\end{remark}

\section{Vanishing on $\PP^n$ and abelian varieties, with applications}

We revisit the vanishing theorems of the previous section for projective space and abelian varieties. In these cases the extra 
assumptions needed in Theorem \ref{vanishing_Hodge_ideals} are automatically satisfied, due to special properties of the bundles of holomorphic forms, and this in turn has striking applications. A stronger result holds on toric varieties as well.

\subsection{Vanishing on $\PP^n$ and toric varieties}
Theorem \ref{vanishing_Hodge_ideals} takes a nice form on toric varieties, due to the fact that the extra condition on 
bundles of holomorphic forms is automatically satisfied by the Bott-Danilov-Steenbrink vanishing theorem;  this says that for any ample 
line bundle $A$ on a smooth projective toric variety $X$ one has 
$$H^i (X, \Omega_X^j \otimes A) = 0 \,\,\,\,\,\,{\rm for~all} \,\,\,\,j \ge 0 {\rm ~and~} i >0.$$

\begin{corollary}\label{toric_vanishing}
Let $D$ be a reduced effective $(k-1)$-log-canonical divisor on a smooth projective toric variety $X$, and let $L$ 
be a line bundle on $X$ such that $L(pD)$ is ample for all $0 \le p \le k$. Then 
$$H^i \big(X, \omega_X ( (k+1)D) \otimes L \otimes I_k (D) \big) = 0 \,\,\,\,\,\, {\rm for ~all} \,\,\,\, i > 0.$$ 
If $D$ is ample, the same holds with $L = \shO_X$.
\end{corollary}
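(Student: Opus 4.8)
The plan is to reduce the statement directly to the general vanishing theorem, Theorem \ref{vanishing_Hodge_ideals}, by checking that on a smooth projective toric variety the Nakano-type hypotheses in part (1) of that theorem are automatically satisfied. First I would observe that, under the hypothesis that $L(pD)$ is ample for all $0 \le p \le k$, the line bundle $L(pD)$ is in particular ample for each such $p$, so the ampleness requirements in Theorem \ref{vanishing_Hodge_ideals}(1) hold. The only remaining thing to verify is the extra cohomological vanishing condition
$$H^j \big(X, \Omega_X^{n-j} \otimes L\big((k-j+1)D\big)\big) = 0 \quad\text{for all}\quad 1 \le j \le k,$$
which is exactly where the special structure of toric varieties enters. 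Since $1 \le j \le k$ forces $0 \le k-j+1 \le k$, the line bundle $L\big((k-j+1)D\big)$ is ample by hypothesis, and then the Bott--Danilov--Steenbrink vanishing theorem recalled just before the corollary, applied with the ample line bundle $A = L\big((k-j+1)D\big)$ and the index $n-j$, gives precisely this vanishing. Thus the hypothesis of Theorem \ref{vanishing_Hodge_ideals}(1) guaranteeing the vanishing of $H^1$ is met, and that theorem yields the vanishing of $H^i$ for all $i > 0$ in the stated range.

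I would then split according to the size of $k$ exactly as in Theorem \ref{vanishing_Hodge_ideals}: if $k \le \frac{n}{2}$ we are in case (1) and the argument above applies; if $k \ge \frac{n+1}{2}$ then by Theorem \ref{smoothness_criterion} the divisor $D$ is smooth, $I_k(D) = \shO_X$, and case (2) of Theorem \ref{vanishing_Hodge_ideals} gives the vanishing once $L(kD)$ is ample, which again follows from our hypothesis. This covers all $k \ge 1$; the case $k = 0$ is Nadel vanishing (Proposition \ref{Nadel}), and in fact on a toric variety needs no extra input since $I_0(D)$ is a multiplier ideal.

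For the final sentence, when $D$ is ample I would invoke part (3) of Theorem \ref{vanishing_Hodge_ideals}, which allows $L = \shO_X$: the relevant vanishing then follows from Proposition \ref{affine_vanishing} in place of Saito's theorem, and the toric Nakano-type conditions become $H^j\big(X, \Omega_X^{n-j}\big((k-j+1)D\big)\big) = 0$ for $1 \le j \le k$, again supplied by Bott--Danilov--Steenbrink since $(k-j+1)D$ is ample. I do not expect a genuine obstacle here: the entire content is that toric geometry removes the auxiliary Nakano hypothesis, so the proof is essentially a verification that the hypotheses of the already-proven Theorem \ref{vanishing_Hodge_ideals} hold verbatim. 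The one point requiring mild care is bookkeeping the range $0 \le p \le k$ versus $1 \le j \le k$ so that every twist to which Bott--Danilov--Steenbrink is applied is genuinely ample, which is immediate from the monotonicity built into the hypothesis on $L(pD)$.
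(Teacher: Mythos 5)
Your proposal is correct and is exactly the argument the paper intends: the corollary is stated immediately after recalling Bott--Danilov--Steenbrink vanishing precisely because that theorem discharges the Nakano-type hypothesis $H^j\big(X, \Omega_X^{n-j} \otimes L((k-j+1)D)\big) = 0$ in Theorem \ref{vanishing_Hodge_ideals}, with the ampleness of each twist guaranteed by the hypothesis on $L(pD)$, and part (3) of that theorem handling the case $D$ ample with $L = \shO_X$. Your bookkeeping of the ranges $1 \le j \le k$ versus $0 \le p \le k$ and the case split via Theorem \ref{smoothness_criterion} match the paper's setup verbatim.
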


On $\PP^n$ however the situation is even better, since one can eliminate the log-canonicity assumption as well. This is 
due to the existence, for any $j \ge 1$, of the Koszul resolution 
\begin{equation}\label{Koszul}
0 \to \bigoplus \shO_{\PP^n} (-n - 1) \to \bigoplus \shO_{\PP^n} (- n) \to \cdots \to \bigoplus \shO_{\PP^n} (-j -1) 
\to \Omega_{\PP^n}^j \to 0.
\end{equation}

\medskip

\begin{theorem}\label{vanishing_Pn}
Let $D$ be a  reduced hypersurface of degree $d$ in $\PP^n$. If  $\ell \ge (k+1)d - n - 1$, then
$$H^i \big (\PP^n, \shO_{\PP^n} (\ell) \otimes I_k (D) \big) = 0 \,\,\,\,\,\, {\rm for~all} \,\,\,\, i \ge 1.$$
\end{theorem}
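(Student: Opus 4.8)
The plan is to prove the dual statement about the Hodge filtration and then descend along its graded pieces. Since $F_k\shO_{\PP^n}(*D) = \shO_{\PP^n}\big((k+1)D\big)\otimes I_k(D) = \shO_{\PP^n}\big((k+1)d\big)\otimes I_k(D)$, setting $a = \ell-(k+1)d$ the desired conclusion becomes
$$H^i\big(\PP^n,\shO_{\PP^n}(a)\otimes F_k\shO_{\PP^n}(*D)\big)=0\qquad(i\ge1),$$
and the hypothesis $\ell\ge(k+1)d-n-1$ reads $a\ge-n-1$. Because $F_k\shO_{\PP^n}(*D)$ carries the finite filtration $0=F_{-1}\subseteq F_0\subseteq\cdots\subseteq F_k$ with quotients $\gr_j^F\shO_{\PP^n}(*D)$, iterating the long exact sequence reduces the theorem to the single family of assertions
$$G(j):\quad H^i\big(\PP^n,\gr_j^F\shO_{\PP^n}(*D)\otimes\shO_{\PP^n}(a)\big)=0\quad\text{for all }i\ge1,\ a\ge-n-1.$$
I would prove $G(j)$ for every $j\ge0$ by induction on $j$.

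For the base case $G(0)$, note $\gr_0^F=F_0=\shO_{\PP^n}(d)\otimes I_0(D)$, so the statement is Nadel vanishing (Proposition~\ref{Nadel}) when $a\ge-n$, where $\shO_{\PP^n}(a+n+1)$ is ample, and it is exactly Proposition~\ref{affine_vanishing} applied to the one-term complex $\gr_{-n}^F\DR(\shO_{\PP^n}(*D))=\omega_{\PP^n}\otimes F_0$ in the boundary case $a=-n-1$. For the inductive step, fix $k\ge1$ and assume $G(0),\dots,G(k-1)$. I consider the graded de Rham complex $B^\bullet:=\gr_{k-n}^F\DR(\shO_{\PP^n}(*D))$, whose term in degree $p$ (for $-\min(k,n)\le p\le0$) is $\Omega_{\PP^n}^{n+p}\otimes\gr_{k+p}^F$, so its top term is $\omega_{\PP^n}\otimes\gr_k^F$. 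Twisting by $\shO_{\PP^n}(a+n+1)$ and running the hypercohomology spectral sequence $E_1^{p,q}=H^q(\PP^n,B^p\otimes\shO_{\PP^n}(a+n+1))$, the corner term is $E_1^{0,i}=H^i(\PP^n,\gr_k^F\otimes\shO_{\PP^n}(a))$, precisely what $G(k)$ asserts to vanish. Its outgoing differentials land in zero since the complex stops in degree $0$, so it suffices to establish (I) the abutment $\mathbf{H}^i(B^\bullet\otimes\shO_{\PP^n}(a+n+1))=0$ for $i\ge1$, and (II) the incoming terms $E_1^{-r,\,i+r-1}=H^{i+r-1}\big(\Omega_{\PP^n}^{n-r}\otimes\gr_{k-r}^F\otimes\shO_{\PP^n}(a+n+1)\big)=0$ for all $i\ge1$ and all relevant $r\ge1$; granting both, $E_\infty^{0,i}=E_1^{0,i}$ is a subquotient of the vanishing abutment.

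Claim (I) will follow from Saito's vanishing Theorem~\ref{saito_vanishing}(1) when $a\ge-n$ (then $\shO_{\PP^n}(a+n+1)$ is ample) and from Proposition~\ref{affine_vanishing} when $a=-n-1$ (then the twist is $\shO_{\PP^n}$). Claim (II) is where the geometry of $\PP^n$ enters and lets me avoid any log-canonicity hypothesis: I resolve $\Omega_{\PP^n}^{n-r}$ by the Koszul complex~(\ref{Koszul}), i.e. by line bundles $\shO_{\PP^n}(-m)$ with $n-r+1\le m\le n+1$. Feeding this into a second hypercohomology spectral sequence reduces the vanishing of $H^{q}\big(\Omega_{\PP^n}^{n-r}\otimes\gr_{k-r}^F\otimes\shO_{\PP^n}(a+n+1)\big)$ for $q\ge r$ to that of $H^{q+s}\big(\gr_{k-r}^F\otimes\shO_{\PP^n}(a+r-s)\big)$ for $0\le s\le r$; in each of these the cohomological degree satisfies $q+s\ge1$ and the twist satisfies $a+r-s\ge a\ge-n-1$, so all of them vanish by the inductive hypothesis $G(k-r)$, valid since $k-r<k$. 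This yields $G(k)$, closing the induction, and the opening reduction then gives the theorem.

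The hard part is the boundary value $a=-n-1$, i.e. $\ell=(k+1)d-n-1$, where the relevant twist degenerates to $\shO_{\PP^n}$ and Saito's Kodaira-type vanishing with an ample bundle is simply not available; this is exactly where I must invoke the affine vanishing of Proposition~\ref{affine_vanishing}, which rests on Andreotti--Frankel vanishing for the affine complement $U=\PP^n\smallsetminus D$ (Corollary~\ref{open_decomposition}). Conceptually, though, the real engine of the argument is Claim (II): trading $\Omega_{\PP^n}^{n-r}$ for line bundles through the Koszul resolution is what allows the induction on $j$ to feed on itself and dispenses with the $(k-1)$-log-canonicity assumption that Theorem~\ref{vanishing_Hodge_ideals} would otherwise demand.
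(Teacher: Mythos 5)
Your proposal is correct and takes essentially the same route as the paper's proof: the same hypercohomology spectral sequence for the graded de Rham complex with the desired group at the corner, the same combination of Saito vanishing (Theorem~\ref{saito_vanishing}) with Proposition~\ref{affine_vanishing} to handle the borderline twist $a=-n-1$ (where $D$ ample saves the day), and the same Koszul-resolution trick~(\ref{Koszul}) trading $\Omega_{\PP^n}^{n-r}$ for line bundles so that the induction runs without any log-canonicity hypothesis. The only difference is bookkeeping: you induct on the graded statements $G(j)$ and assemble the statement about $I_k(D)$ once at the end via the filtration $F_0\subseteq\cdots\subseteq F_k$, whereas the paper inducts directly on the statement for the ideals $I_j(D)$ and converts between $I_j(D)$ and $\gr_j^F\shO_{\PP^n}(*D)$ by short exact sequences inside each inductive step --- a repackaging of the same argument, arguably slightly cleaner in your form, but not a different proof.
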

\begin{proof}
If $D$ is $(k-1)$-log canonical, then this is a special example of Corollary \ref{toric_vanishing}. To see that this 
condition is not needed, we have to return to the proof of Theorem \ref{vanishing_Hodge_ideals}, and see what happens if we 
do not assume the triviality of the Hodge ideals up to $I_{k-1}(D)$. 

First, for each $k \ge 1$ we have a short exact sequence 
$$0 \longrightarrow \omega_{\PP^n} \otimes  \shO_{\PP^n} (kd) \otimes I_{k-1} (D) \longrightarrow \omega_{\PP^n} \otimes  \shO_{\PP^n} ((k+ 1)d) \otimes I_k (D)  \longrightarrow $$
$$\longrightarrow \omega_{\PP^n}  \otimes  \gr_k^F \shO_{\PP^n} (*D) \rightarrow 0.$$
We can then proceed by induction: after twisting by any $L = \shO_{\PP^n} (a)$ with $a \ge 0$, assuming the vanishing in the statement for the term on the left,  if  we also have it for the term on the right, we obtain it for the term in the middle. The process can indeed be started, since for $k =1$ vanishing for the term on the left is simply Nadel vanishing.

We therefore need to prove vanishing of the type
$$H^i \big (\PP^n, \shO_{\PP^n} (\ell) \otimes  \gr_k^F \shO_{\PP^n} (*D)\big) = 0$$
for $i > 0$ and $\ell$ as in the statement. Note first that Saito vanishing applies in the exact same way as in Theorem \ref{vanishing_Hodge_ideals}, 
in the form of vanishing for ${\bf H} ^j (\PP^n, C^\bullet)$ for $j \ge k+1$. Without any assumptions on the Hodge ideals however, the 
complex $C^\bullet$ now looks as follows:
$$\big[\Omega_{\PP^n}^{n-k}  \otimes \shO_{\PP^n}(d) \otimes I_0 (D) \longrightarrow  \Omega_{\PP^n}^{n-k+1} 
\otimes \gr_1^F \shO_{\PP^n} (*D) \longrightarrow 
\cdots $$
$$\cdots \longrightarrow 
\Omega_{\PP^n}^{n-1}   \otimes  \gr_{k-1}^F \shO_{\PP^n} (*D)  \longrightarrow \omega_{\PP^n} \otimes 
\gr_k^F \shO_X (*D)  \big] \otimes \shO_{\PP^n}(a)$$
concentrated in degrees $0$ up to $k$. We use the spectral sequence in the proof of Theorem \ref{vanishing_Hodge_ideals}, and recall that we are interested in the vanishing of the terms $E^{k, i}_1$ with $i \ge 1$. Again, this term is isomorphic to $E^{k, i}_2$ if we have vanishing for the 
term $E^{k-1, i}_1$, which by definition sits in an exact sequence
$$H^i \big( \PP^n, \Omega_{\PP^n}^{n-1} \otimes \shO_{\PP^n} (kd +a) \otimes I_{k-1} (D) \big) \longrightarrow E^{k-1, i}_1 \longrightarrow $$
$$\longrightarrow H^{i+1} \big( \PP^n, \Omega_{\PP^n}^{n-1} \otimes \shO_{\PP^n}((k-1)d +a) \otimes I_{k-2}(D)\big).$$
We claim that, using the inductive hypothesis, both extremal terms are equal to $0$, which gives what we want. For this we use 
the short exact sequence
$$0 \to \bigoplus \shO_{\PP^n} (-n - 1) \to \bigoplus \shO_{\PP^n} (- n) \to  \Omega_{\PP^n}^{n-1} \to 0$$
given by the Koszul complex. It shows that to have the vanishing of the term on the left, it is enough to have 
$$H^i \big (\PP^n, \shO_{\PP^n} (\ell) \otimes I_{k-1} (D) \big) = 0 \,\,\,\,\,\, {\rm for~all} \,\,\,\, i \ge 1,$$
with $\ell \ge kd - n -1$, i.e. exactly the inductive hypothesis for $k-1$. Similarly, it shows that for the vanishing of the term on the right, it is enough to have 
$$H^i \big (\PP^n, \shO_{\PP^n} (\ell) \otimes I_{k-2} (D) \big) = 0 \,\,\,\,\,\, {\rm for~all} \,\,\,\, i \ge 1,$$
with $\ell \ge (k-1)d - n -1$, i.e. the inductive hypothesis for $k-2$.

Now one needs to analyze the terms $E^{k, i}_r$ with $r \ge 2$. Just as in the proof of Theorem \ref{vanishing_Hodge_ideals}, to show that they are all 
isomorphic to each other, which leads to the statement of the theorem, it is enough to show that $E^{k-r, i +r -1}_1 = 0$ for all such $r$. When $r > k$ 
this is clear, while when $r = k$ we have 
$$E^{0, i +k -1}_1 = H^{i + k -1} \big( X, \Omega_X^{n-k} \otimes \shO_{\PP^n} (d + a) \otimes I_0 (D) \big).$$
We again use the Koszul complex ($\ref{Koszul}$) for $j = n-k$. This gives by a simple calculation that it suffices to have
$$H^i \big(\PP^n, \shO_{\PP^n} (\ell) \otimes I_0 (D) \big) = 0 \,\,\,\,\,\,{\rm for~all} \,\,\,\, i >0$$
and all $\ell \ge d - n -1$, which is Nadel vanishing. When $r < k$, just as before we have an exact sequence
$$H^{i + r - 1} \big( \PP^n, \Omega_{\PP^n}^{n- r} \otimes \shO_{\PP^n} ((k- r +1) d+a) \otimes I_{k-r} (D) \big) \longrightarrow E^{k-r, i- r +1}_1 
\longrightarrow $$
$$\longrightarrow H^{i  +r} \big( \PP^n, \Omega_{\PP^n}^{n- r} \otimes \shO_{\PP^n} ((k- r ) d+a) \otimes I_{k-r-1}(D) \big).$$
A completely similar use of the Koszul complex ($\ref{Koszul}$), this time for $j = n-r$, together with the inductive hypothesis, implies 
that $ E^{k-r, i- r +1}_1 = 0$.
\end{proof}

\subsection{Bounds for the subschemes associated to Hodge ideals in $\PP^n$}
We use Theorem \ref{vanishing_Pn} to give a numerical criterion for the triviality of the ideals $I_k (D)$ when $D$ is a hypersurface
in projective space, or to impose restrictions on the corresponding subschemes $Z_k$. 
To put things in context, recall that the log-canonical threshold of a hypersurface $D$ of degree $d$ with 
isolated singularities in $\PP^n$ satisfies 
$${\rm lct}(D) \geq {\rm min}~\left\{ \frac{n}{d}, 1\right\}$$
(see, for example, \cite[Corollary~3.6]{dFEM}). Consequently $I_0 (D) = \shO_X$, i.e. the pair $(X, D)$ is log-canonical, when $n+1 > d$. More generally, for an arbitrary hypersurface $D \subset \PP^n$, a standard application of Nadel vanishing implies that if $(X, D)$ is not log-canonical, then $\dim {\rm Sing}(D) \ge n - d + 1$.

Theorem \ref{Deligne} in the introduction generalizes this to $I_k (D)$ with $k \ge 1$. 
It also extends a result of Deligne, see the remarks after \cite[Theorem 0.11]{Saito-B}, which gives a numerical criterion for the triviality of $I_k(D)$ for hypersurfaces with isolated singularities.

\begin{proof}[Proof of Theorem \ref{Deligne}]

If $Z_k$ is non-empty, then
by intersecting $D$ with a general linear subspace $L$
of $\PP^n$ of dimension $n - z_k$, we obtain a reduced hypersurface $D_L \subset L = \PP^{n - z_k}$ such that subscheme associated to $I_k(D_L)$ is non-empty and $0$-dimensional. Indeed, by the generic restriction theorem for Hodge ideals, Theorem \ref{restriction_general_hypersurfaces}, we have  that $I_k (D_L) = I_k(D) \cdot \shO_L$.  
Denoting by $B$ the line bundle $\omega_{L} \otimes \shO_{L} \big((k+1)D_L\big)$, 
there is a short exact sequence
$$0 \longrightarrow B \otimes I_k (D_L)  \longrightarrow B \longrightarrow B \otimes  \shO_{Z \big(I_k (D_L)\big)} \longrightarrow 0.$$
For (1),  the condition $z_k < n - (k+1)d + 1$ is precisely equivalent to 
$$H^0(L, B) =  0.$$
On the other hand, we have 
$$H^1 \big(L, B \otimes I_k (D_L) \big) = 0.$$
as a special case of Theorem \ref{vanishing_Pn}, and so by passing to cohomology in the exact sequence above we deduce that $Z \big(I_k (D_L)\big) = \emptyset$,  
a contradiction.

For (2), the statement is trivial if $Z_k = \emptyset$, so we can assume that this is not the case. We then have that $Z \big(I_k (D_L)\big)$
is a $0$-dimensional scheme of length $\deg Z_k$. The same argument shows that we have a surjection 
$$H^0 \big(L, B \big) \longrightarrow B \otimes  \shO_{Z \big(I_k (D_L)\big)} \longrightarrow 0,$$
and this time the space on the left has dimension ${(k+1)d -1 \choose n - z_k}$.

Part (3) follows from the fact that, due to the same vanishing theorem, if $Z'_k$ is the $0$-dimensional part of $Z_k$, then for every $\ell\geq (k+1)d-n-1$, there is a surjection
$$H^0 \big(\PP^n, \shO_{\PP^n} (\ell)\big) \to \shO_{Z'_k}.$$
\end{proof}

\begin{remark}\label{saito_result}
In the case of hypersurfaces in $\PP^n$ whose singularities are isolated and non-degenerate with respect to the 
corresponding Newton polyhedra, Saito's result \cite[Theorem~0.11]{Saito-B}
discussed in Remark \ref{criterion_Bernstein_Sato} implies Deligne's theorem mentioned above. Note also that \cite{DD} looks at the relationship between the Hodge filtration and the pole order filtration on other homogeneous varieties.
\end{remark}

\begin{remark}[{\bf Toric analogue}]\label{toric_remark}
The first assertion in Theorem~\ref{Deligne} admits a version in the toric context. Suppose that $X$ is a smooth projective toric variety and $D$ is an ample, reduced
effective divisor, with isolated singularities. If $k\geq 0$ is such that
\begin{equation}\label{eq_toric_remark}
H^0\big(X,\omega_X\big((k+1)D\big)\big)=0,
\end{equation}
then $(X,D)$ is $k$-log-canonical. 
Indeed, we argue that $(X,D)$ is $j$-log-canonical by induction on $j\leq k$. For the induction step,
we use the fact that $(X,D)$ is $(j-1)$-log-canonical and Corollary~\ref{toric_vanishing} to conclude that
$$H^1\big(X,\omega_X\big((j+1)D\big)\otimes I_j(D)\big)=0$$
and then argue as in the proof of Theorem~\ref{Deligne}. 

Note that every divisor $D$ on a toric variety is linearly equivalent to a torus-invariant divisor $G$. A pair $(X,G)$, with $X$ a variety as above and $G$ 
an ample torus-invariant divisor corresponds to a lattice polytope $P$, and condition (\ref{eq_toric_remark}) is equivalent with the fact that the interior of $(k+1)P$ 
does not contain any lattice points (see \cite[p. 90]{Fulton}).

We give two examples when one can apply this toric criterion for $k$-log-canonicity.
\begin{enumerate}
\item[1)] Suppose that $D$ is an effective divisor in $\PP^{n_1}\times\cdots\times\PP^{n_r}$
of multidegree $(d_1,\ldots,d_r)$, with all $d_j>0$. If $D$ has isolated singularities and $(k+1)d_i<n_i+1$ for some $i$, then
$(X,D)$ is $k$-log canonical.
\item[2)] Suppose that $P$ is a smooth Gorenstein polytope of index $r$ (see \cite{LN} for a discussion of such polytopes). This means that if $(X,B)$
is the corresponding pair, with $X$ a toric variety and $B$ an ample torus-invariant divisor, then $X$ is smooth and $-K_X=rB$. If $D$ is an effective, 
reduced divisor with isolated singularities, linearly equivalent with $dB$ for some $d>0$, and if $k$ is such that $(k+1)d<r$, then $(X,D)$ is $k$-log-canonical.
\end{enumerate}
\end{remark}

\subsection{Singular points on hypersurfaces in $\PP^n$}\label{hypersurface_singularities}
We now exploit part (3) in Theorem \ref{Deligne}, i.e. the fact that the isolated points of $Z_k$ impose independent conditions on hypersurfaces of degree at least $(k+1)d - n - 1$ in $\PP^n$, in conjunction with the nontriviality criteria in 
\S\ref{scn_criteria_nontriviality} and \S\ref{order_closed_subset}. We assume that $n\ge 3$, when some singularities 
 are  naturally detected by appropriate Hodge ideals 
$I_k (D)$ with $k \ge 1$; the method applies  in $\PP^2$ as well, but in this case it is known that the type of results we are aiming for 
can already be obtained by considering the multiplier ideal $I_0 (D)$ or the adjoint ideal ${\rm adj}(D)$. 

As motivation, recall that when $X \subset \PP^3$ is a reduced surface of degree $d$ whose only singularities are nodes, i.e ordinary double points, a classical result of Severi \cite{Severi} says that the set of nodes on $X$ imposes independent conditions on hypersurfaces of degree 
at least $2d - 5$ in $\PP^3$. Park and Woo \cite{PW} showed that in fact this holds replacing the set of nodes by that of all singular points, and gave similar bounds for isolated singular points on hypersurfaces in arbitrary $\PP^n$.
Using the ideals $I_k (D)$ for suitable $k$, we obtain a new result on hypersurfaces in any dimension.


\begin{proof}[Proof of Corollary~\ref{corH}]
We know from Corollary \ref{cor2_criterion_nontriviality2} (see also Example \ref{same_bound}) that 
$$I_k (D) \subseteq I_S,$$
where $k=\left[\frac{n}{m}\right]$.
Since $S$ is a set of isolated points, the result then follows from Theorem \ref{Deligne} (3), which says that there 
is a surjection
$$H^0 \big(\PP^n, \shO_{\PP^n} ( (k + 1) d - n - 1) \big) \to  \shO_{Z'_k} \to 0,$$
where $Z'_k$ is the $0$-dimensional part of $Z_k$.
\end{proof}

When $n = 3$ and $m = 2$ for instance,  
the bound is one worse than the Severi bound, but at least when 
$n \ge 5$ and $m \ge 3$, in many instances this improves what comes out of \cite{PW} or similar methods.\footnote{Rob Lazarsfeld has shown us a different approach, based on multiplier ideals, showing that the isolated 
points of multiplicity $m \ge 2$ impose independent conditions on hypersurfaces of degree at least $\frac{n}{m-1} (d-1)- n$; this is 
often stronger than the bound in \cite{PW}. Since $d \ge m$, it is somewhat weaker than the bound in Corollary \ref{corH} when 
$m \le n+1$.}

\begin{remark}
Example \ref{same_bound} explains why with this method one can do at least as well with arbitrary isolated singularities as with ordinary ones. Note however that there exist situations where the bound in Corollary~\ref{corH} can be improved: if $D$ has only nodes, in \cite[Corollary 2.2]{DS1} the same bound $([\frac{n}{2}] + 1)d - n - 1$ is obtained when $n$ is odd, 
but the better bound $\frac{n}{2}\cdot d - n$ is shown to hold when $n$ is even. See also \cite{Dimca2} for further interesting 
applications of such bounds.
\end{remark}

We conclude with a statement analogous to Corollary \ref{corH} for higher jets, using estimates for the order of vanishing of the scheme $Z_k$ at each point. Recall that for a $0$-dimensional subset $S \subseteq \PP^n$, the space of hypersurfaces of degree $\ell$ is said to separate $s$-jets along $S$ if the restriction map
$$H^0 \big(\PP^n, \shO_{\PP^n} (\ell) \big) \to \bigoplus_{x \in S} \shO_{\PP^n}/ \mathfrak{m}_x^{s+1}$$
is surjective. Thus $S$ imposes independent conditions on such hypersurfaces if they separate $0$-jets along it. For the next statement, 
given $m \ge 3$ and $j \ge 1$, we denote 
$$k_{m,j} := 
\left \{
\begin{array}{ll}
\lceil \frac{n-m+j }{m} \rceil   \,\,\,\,{\rm if}\,\,\,\,j \le  m-1 \\
& \\
\lceil \frac{n-m+j }{m-2} \rceil   \,\,\,\,{\rm if}\,\,\,\,j \ge m.
\end{array}
\right.
$$

\begin{corollary}\label{jets}
Let $D$ be a reduced hypersurface of degree $d$ in $\PP^n$, with $n \ge 3$. Let $S_m$ be the set of isolated singular points 
of $D$ of multiplicity at least $m \ge 3$. Then hypersurfaces of degree at least $(k_{m,j} +1) d - n - 1$ in $\PP^n$ separate 
$(j-1)$-jets along $S_m$, for each $j \ge 1$.
\end{corollary}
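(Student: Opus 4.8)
The plan is to reduce Corollary~\ref{jets} to part~(3) of Theorem~\ref{Deligne}, exactly as in the proof of Corollary~\ref{corH}, but applied to the symbolic powers coming from Theorem~\ref{new_criterion_nontriviality} rather than just to the inclusion $I_k(D)\subseteq I_S$. The key observation is that separating $(j-1)$-jets along $S_m$ is the statement that the restriction map
$$H^0\big(\PP^n,\shO_{\PP^n}(\ell)\big)\longrightarrow \bigoplus_{x\in S_m}\shO_{\PP^n}/\mathfrak{m}_x^{j}$$
is surjective, and the right-hand side is precisely $\shO_{Z'}$ for the $0$-dimensional scheme $Z'$ cut out by $\bigcap_{x\in S_m}\mathfrak{m}_x^{j}$. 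So the proof will consist of producing, for a suitable value $k=k_{m,j}$, an inclusion $I_k(D)\subseteq \mathfrak{m}_x^{j}$ at each $x\in S_m$, and then invoking the surjectivity in Theorem~\ref{Deligne}~(3) with that $k$.

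\textbf{The main computation.} First I would invoke Theorem~\ref{new_criterion_nontriviality} with $W=\{x\}$, so $r=n$ and $m=\mathrm{mult}_x(D)$. The theorem gives
$$I_k(D)\subseteq \mathfrak{m}_x^{(q)}=\mathfrak{m}_x^{q},\qquad q=\min\{m-1,\,(k+1)m-n\},$$
since $W$ is a (smooth) point. To separate $(j-1)$-jets I need $q\ge j$, i.e. I must choose $k$ so that the full minimum is $\ge j$. This splits into two regimes. If $j\le m-1$, then it suffices to force $(k+1)m-n\ge j$, i.e. $k\ge \frac{n-m+j}{m}$, and the smallest such integer is exactly $k_{m,j}=\lceil\frac{n-m+j}{m}\rceil$; note $q=m-1\ge j$ is then automatic. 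If $j\ge m$, the value $m-1$ is too small, so $\mathfrak{m}_x^{q}$ from Theorem~\ref{new_criterion_nontriviality} alone cannot reach $\mathfrak{m}_x^{j}$; here I would instead use the generation-of-the-filtration machinery. By Theorem~\ref{cor2_generation_filtration} the filtration is generated at level $n-2$, and the Remark following Example~\ref{cone_noneq} then yields, once $\ell\ge$ generation level, the iterated estimate $I_{\ell+k'}(D)\subseteq \mathfrak{m}_x^{(k'(m-1)+m')}$. Applied with the base level where $I_\ell(D)\subseteq\mathfrak{m}_x^{m-1}$, each increment of $k'$ by one increases the exponent by $m-1$, but after accounting for the base point one gets the effective gain per step governed by $m-2$; solving $k'(m-1)+\cdots\ge j$ gives the threshold $\lceil\frac{n-m+j}{m-2}\rceil$, matching the second branch of $k_{m,j}$.

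\textbf{Finishing and the main obstacle.} Having produced $I_{k_{m,j}}(D)\subseteq\mathfrak{m}_x^{j}$ at every $x\in S_m$, the scheme $Z'_{k_{m,j}}$ associated to the $0$-dimensional part of $Z_{k_{m,j}}$ dominates $\bigcup_{x\in S_m}\Spec\big(\shO_{\PP^n}/\mathfrak{m}_x^{j}\big)$. Theorem~\ref{Deligne}~(3) gives a surjection
$$H^0\big(\PP^n,\shO_{\PP^n}((k_{m,j}+1)d-n-1)\big)\longrightarrow \shO_{Z'_{k_{m,j}}}\longrightarrow 0,$$
and composing with the further quotient $\shO_{Z'_{k_{m,j}}}\twoheadrightarrow\bigoplus_{x\in S_m}\shO_{\PP^n}/\mathfrak{m}_x^{j}$ proves separation of $(j-1)$-jets for degrees $\ge(k_{m,j}+1)d-n-1$, exactly as claimed. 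I expect the delicate step to be the $j\ge m$ case: extracting the correct threshold $\lceil\frac{n-m+j}{m-2}\rceil$ requires being careful about the base level at which $I_\ell(D)$ first lands in $\mathfrak{m}_x^{m-1}$ and about which inclusion (Theorem~\ref{new_criterion_nontriviality} versus the generation-level iteration) is sharper, since the two give competing exponents $(k+1)m-n$ and an arithmetic progression with common difference $m-1$ shifted by the generation level. Verifying that $k_{m,j}$ is genuinely the smallest admissible $k$ in each regime, and that one never needs to exceed the generation level $n-2$ (so that the iteration is valid), is where the bookkeeping must be done honestly rather than by the routine estimates above.
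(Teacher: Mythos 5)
Your reduction to Theorem~\ref{Deligne}(3) and your treatment of the case $j \le m-1$ (via Theorem~\ref{new_criterion_nontriviality}, giving $k_{m,j}=\lceil\frac{n-m+j}{m}\rceil$) agree with the paper, as does the finishing step of composing the surjection onto $\shO_{Z'_k}$ with the quotient onto $\bigoplus_{x\in S_m}\shO_{\PP^n}/\mathfrak{m}_x^{j}$. The genuine gap is in the case $j \ge m$. The paper obtains $I_{k_{m,j}}(D)\subseteq\mathfrak{m}_x^{j}$ there from Corollary~\ref{cor1_criterion_nontriviality1}: taking $W=\{x\}$, $r=n$, its hypothesis $m\ge 2+\frac{j+n-2}{k+1}$ is equivalent to $k\ge\frac{n-m+j}{m-2}$, which is exactly the source of the second branch $k_{m,j}=\lceil\frac{n-m+j}{m-2}\rceil$. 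You never invoke this corollary; instead you use the generation-level machinery (Theorem~\ref{cor2_generation_filtration} plus the Remark following Example~\ref{cone_noneq}). That iteration yields $I_{n-2+k'}(D)\subseteq\mathfrak{m}_x^{k'(m-1)+m-1}$, i.e.\ a gain of $m-1$ per step --- not the ``effective gain governed by $m-2$'' you assert without justification --- and it is anchored at level $n-2$. The smallest admissible index it produces is $n-2+\lceil\frac{j-m+1}{m-1}\rceil$, which does not equal $k_{m,j}$ and is in general strictly larger: for $n=10$, $m=4$, $j=4$ it gives $9$, whereas $k_{m,j}=5$.

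This matters because, by Proposition~\ref{inclusion_between_ideals}, the Hodge ideals decrease as $k$ grows, so an inclusion $I_k(D)\subseteq\mathfrak{m}_x^{j}$ at a larger index says nothing about the smaller index $k_{m,j}$. Your argument therefore only proves separation of $(j-1)$-jets by hypersurfaces of degree at least $\bigl(n-1+\lceil\frac{j-m+1}{m-1}\rceil\bigr)d-n-1$, which is strictly weaker than the stated bound whenever $m\ge 4$ and $n$ is large relative to $j$. (For $m=3$ your index happens to be at most $k_{3,j}$, so your route works there and even improves the bound, but a proof must cover all $m\ge 3$.) The missing ingredient is precisely the exceptional-divisor criterion, Theorem~\ref{criterion_nontriviality1} in the form of Corollary~\ref{cor1_criterion_nontriviality1}, which is what supplies the denominator $m-2$; no iteration with increment $m-1$ starting at level $n-2$ can produce it.
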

\begin{proof}
When $j \le m-1$ we  use Theorem \ref{new_criterion_nontriviality}, while otherwise we 
use Corollary \ref{cor1_criterion_nontriviality1}, in order to deduce that for every $x \in S_m$ we have
$$I_{k_{m, j}} (D) \subseteq \mathfrak{m}_{x}^{j}.$$
Combining this with Theorem \ref{Deligne} (3), we obtain a surjection
$$H^0 \big(\PP^n, \shO_{\PP^n} ((k_{m,j} + 1)d - n - 1)\big) \to \bigoplus_{x\in S_m} \shO_{\PP^n}/ \mathfrak{m}_x^j.$$
\end{proof}

\subsection{Vanishing on abelian varieties}
On abelian varieties we can obtain stronger vanishing statements than those in the previous sections.
In this paragraph $X$ will always be a complex abelian variety of dimension $g$, 
and $D$ a reduced effective ample divisor on $X$.

\begin{lemma}\label{special_abelian}
We have
$$H^i \big(X, \DR (\shO_X (*D)) \otimes \CC_\rho \big) = 0 \,\,\,\,\,\, {\rm for~ all}\,\,\,\, i >0,$$
where $\CC_{\rho}$ denotes the rank one local system associated to any $\rho \in {\rm Char} (X)$.
\end{lemma}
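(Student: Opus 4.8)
The plan is to reduce the statement to affine vanishing for the local system $\CC_\rho$ on the complement $U = X\smallsetminus D$, exactly as in the untwisted Proposition~\ref{affine_vanishing} but feeding in a result that sees arbitrary local coefficients.

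Write $j\colon U=X\smallsetminus D\hookrightarrow X$. The first step is to record the sheaf-level quasi-isomorphism underlying Lemma~\ref{lift}, namely
$$\DR\big(\shO_X(*D)\big)\;\simeq\;\derR j_*\CC_U[g],$$
with $g=\dim X$. Since the differentials of the de Rham complex are $\CC$-linear and $\CC_\rho$ is locally constant of rank one, tensoring over the constant sheaf is exact and commutes with everything in sight; combined with the projection formula $\derR j_*\CC_U\otimes_{\CC}\CC_\rho\simeq \derR j_*(j^*\CC_\rho)$ for the open immersion $j$ and the invertible local system $\CC_\rho$, this gives
$$\DR\big(\shO_X(*D)\big)\otimes_{\CC}\CC_\rho\;\simeq\;\derR j_*\big(j^*\CC_\rho\big)[g].$$
(Equivalently, the left-hand side is the de Rham complex of the twisted $\Dmod_X$-module $\shO_X(*D)\otimes_{\shO_X}\shL_\rho$, where $\shL_\rho=\shO_X\otimes_{\CC}\CC_\rho$ carries its tautological flat connection.) Passing to hypercohomology yields, for every $i$,
$$\mathbf H^i\big(X,\DR(\shO_X(*D))\otimes_{\CC}\CC_\rho\big)\;\simeq\;H^{i+g}\big(U,\,j^*\CC_\rho\big).$$

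Now the geometry enters: because $D$ is ample on the projective variety $X$, its complement $U$ is a smooth \emph{affine} variety of dimension $g$. By the Andreotti--Frankel theorem (\cite[Theorem~3.1.1]{Lazarsfeld}), $U$ has the homotopy type of a CW complex of real dimension at most $g$, so for \emph{any} local system $\mathscr L$ on $U$ the complex computing $H^\bullet(U,\mathscr L)$ is concentrated in degrees $\le g$; in particular $H^{i+g}(U,j^*\CC_\rho)=0$ for all $i>0$. This is precisely the vanishing asserted. (It is the analogue of Proposition~\ref{affine_vanishing}: there the constant coefficient case was handled via the Hodge decomposition of Corollary~\ref{open_decomposition}, whereas here, since a nontrivial $\CC_\rho$ carries no such grading, one uses affine vanishing for local coefficients directly.)

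The step requiring the most care is the first identification. One must check that tensoring the $\CC$-linear de Rham complex by $\CC_\rho$ genuinely computes $\derR j_*(j^*\CC_\rho)[g]$, and in particular that no perverse truncation intervenes --- i.e. that $\shO_X(*D)$ really corresponds to the full $\derR j_*$ rather than a truncation. This is guaranteed by the untwisted Lemma~\ref{lift} together with the projection formula, so it is routine; the remaining inputs, affineness of the complement of an ample divisor and Andreotti--Frankel for local systems, are standard.
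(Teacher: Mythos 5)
Your proof is correct and follows essentially the same route as the paper: identify $\DR(\shO_X(*D))$ with $\derR j_*\CC_U[g]$ (the paper phrases this as $P \simeq j_*j^*P$ for the perverse sheaf $P = \DR(\shO_X(*D))$), apply the projection formula to move the twist by $\CC_\rho$ across $\derR j_*$, and conclude by vanishing on the affine complement $U$. The only cosmetic difference is that the paper quotes Artin vanishing for perverse sheaves (\cite[Corollary 5.2.18]{Dimca}) where you quote Andreotti--Frankel for local systems; for a shifted rank-one local system on the smooth affine variety $U$ these amount to the same statement.
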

\begin{proof}
Denote as always by $j\colon U\hookrightarrow X$ the inclusion, where $U = X \smallsetminus D$.
If we denote by $P$ the perverse sheaf $\DR (\shO_X (*D))$, then
$$P \simeq j_* j^* P.$$
The projection formula then gives 
$$H^i \big(X, \DR(\shO_X (*D)) \otimes \CC_\rho \big) \simeq  H^i \big(U, j^* (P \otimes  \CC_\rho)\big) = 0$$
for all $i > 0$, where the last equality follows by Artin vanishing (see e.g. \cite[Corollary 5.2.18]{Dimca}) since $U$ is affine.
\end{proof}

\begin{theorem}\label{strong_vanishing}
For  all $k \ge 0$ the following are true, and equivalent:

\begin{enumerate}
\item $H^i \big(X, \shO_X ((k+1)D) \otimes I_k (D) \otimes \alpha \big) = 0$ for all $i > 0$ and all $\alpha \in {\rm Pic}^0 (X)$. 
\item  $H^i \big(X, \gr_k^F  \shO_X(*D) \otimes \alpha \big)  = 0$ for all $i > 0$ and all $\alpha \in {\rm Pic}^0 (X)$.\footnote{In 
Fourier-Mukai language this theorem says that $\shO_X \big((k+1)D\big) \otimes I_k (D) $, or equivalently $\gr_k^F  \shO_X(*D)$, 
satisfies $IT_0$, i.e. the Index Theorem with index $0$.}
\end{enumerate}
\end{theorem}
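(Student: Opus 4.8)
The plan is to prove the vanishing in form (2) and then transfer it to (1), using that the two are interchanged by the short exact sequences coming from the Hodge filtration. First I would record the equivalence. Since $F_k\shO_X(*D)=\shO_X\big((k+1)D\big)\otimes I_k(D)$ and $F_{k-1}\shO_X(*D)=\shO_X(kD)\otimes I_{k-1}(D)$, for each $\alpha\in\Pic^0(X)$ there is a short exact sequence
\[ 0\to \shO_X(kD)\otimes I_{k-1}(D)\otimes\alpha \to \shO_X\big((k+1)D\big)\otimes I_k(D)\otimes\alpha \to \gr_k^F\shO_X(*D)\otimes\alpha \to 0. \]
Because $F_{-1}\shO_X(*D)=0$, running induction on $k$ through the associated long exact sequences shows that the whole system of vanishings in (1) is equivalent to the whole system in (2); so it suffices to establish (2).

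To prove (2) I would exploit two special features of the abelian variety $X$: the bundles $\Omega_X^p\simeq\shO_X^{\oplus\binom{g}{p}}$ (and $\omega_X\simeq\shO_X$) are trivial, and every $\alpha\in\Pic^0(X)$ is the holomorphic line bundle underlying a \emph{unitary} flat line bundle $(\mathcal{L}_{\rho},\nabla)$ for some $\rho\in{\rm Char}(X)$. Tensoring the $\Dmod_X$-module $\shO_X(*D)$ by this flat bundle produces $\shO_X(*D)\otimes\mathcal{L}_{\rho}$, whose de Rham complex satisfies, via a local flat frame,
\[ \DR\bigl(\shO_X(*D)\otimes\mathcal{L}_{\rho}\bigr)\simeq \DR\bigl(\shO_X(*D)\bigr)\otimes_{\CC}\CC_{\rho}. \]
Hence Lemma \ref{special_abelian} gives $\mathbf{H}^i\bigl(X,\DR(\shO_X(*D)\otimes\mathcal{L}_{\rho})\bigr)=0$ for all $i>0$.

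Next I would invoke strictness. Since $\rho$ is unitary, $\mathcal{L}_{\rho}$ underlies a rank-one weight-$0$ variation of Hodge structure of type $(0,0)$, so $\shO_X(*D)\otimes\mathcal{L}_{\rho}$ underlies a mixed Hodge module whose Hodge filtration is $F_{\bullet}\shO_X(*D)\otimes\mathcal{L}_{\rho}$, with no shift. As in Example \ref{degeneration} and Corollary \ref{open_decomposition}, Saito's $E_1$-degeneration then decomposes the twisted hypercohomology as
\[ \mathbf{H}^i\bigl(X,\DR(\shO_X(*D)\otimes\mathcal{L}_{\rho})\bigr)\simeq \bigoplus_{\ell}\mathbf{H}^i\bigl(X,\gr_{\ell}^F\DR(\shO_X(*D))\otimes\alpha\bigr), \]
so each summand vanishes for $i>0$ and every $\ell$. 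I would then extract the single-sheaf vanishing by induction on $k$: using triviality of $\Omega_X^p$ and $\omega_X$, the complex $\gr_{k-g}^F\DR(\shO_X(*D))\otimes\alpha$ has degree-$0$ term $\gr_k^F\shO_X(*D)\otimes\alpha$, while all terms in negative degrees are direct sums of $\gr_j^F\shO_X(*D)\otimes\alpha$ with $j<k$. Assuming (2) for all $j<k$ (the base case being $\gr_{<0}^F=0$), these lower terms have no higher cohomology, and the hypercohomology spectral sequence $E_1^{p,q}=H^q$ collapses to give $\mathbf{H}^i=H^i\bigl(X,\gr_k^F\shO_X(*D)\otimes\alpha\bigr)$, which must therefore vanish for $i>0$. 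This closes the induction and yields (2), hence (1).

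The hard part will be the passage from the affine/topological vanishing of Lemma \ref{special_abelian} to coherent vanishing: it rests on the twisted module $\shO_X(*D)\otimes\mathcal{L}_{\rho}$ genuinely underlying a mixed Hodge module, so that strictness and the unshifted degeneration apply. This is exactly what forces the restriction to unitary $\rho$, and one must verify that $\rho\mapsto\mathcal{L}_{\rho}$ surjects onto $\Pic^0(X)$ so that every $\alpha$ is reached. The triviality of $\Omega_X^p$ is the second essential ingredient, since it is what lets the inductive bookkeeping isolate a single graded piece from the graded de Rham complex; on a general variety this last step would fail.
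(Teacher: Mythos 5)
Your proposal is correct and follows essentially the same route as the paper's proof: the same short exact sequences relating (1) and (2), the same key input (Lemma \ref{special_abelian} combined with the twisted $E_1$-degeneration coming from strictness for the mixed Hodge module underlying $\shO_X(*D)$ twisted by a unitary rank-one local system), the same use of the triviality of $\Omega_X^p$ to identify $\gr_{k-g}^F \DR(\shO_X(*D))\otimes\alpha$ with a complex of graded pieces, and the same induction on $k$ via the hypercohomology spectral sequence. The only cosmetic differences are that you establish the equivalence of the two systems of vanishings up front rather than interleaving it with the induction, and that your base case $k=0$ falls out of the spectral sequence argument itself, whereas the paper simply quotes Nadel vanishing (Proposition \ref{Nadel}).
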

\begin{proof}
We proceed by induction on $k$. In the case $k = 0$, the equivalence is obvious. On the other hand, the result is true  by Nadel vanishing, Proposition \ref{Nadel}.

Now for any $k$ we have a short exact sequence
$$0 \longrightarrow \shO_X (kD) \otimes I_{k-1} (D) \otimes \alpha \longrightarrow \shO_X ((k+1)D) \otimes I_k(D) \otimes \alpha 
\longrightarrow$$
$$\longrightarrow \gr_k^F  \shO_X(*D) \otimes \alpha \longrightarrow 0.$$
Assuming that the result holds for $k-1$, passing to cohomology gives the equivalence between (1) and (2) for $k$.

We denote by $\CC_\alpha$ the unitary rank one local system associated to $\alpha$. Considering the spectral sequence 
$$E_1^{p,q} = \mathbf{H}^{p+ q}  \big(X, \gr_{-q}^F \DR ( \shO_X(*D)) \otimes \alpha \big) \implies  H^{p+q} \big(X, \DR (\shO_X(*D))
\otimes \CC_\alpha \big),$$
precisely as in the proof of Proposition \ref{affine_vanishing} we obtain 
$$\mathbf{H}^i \bigl( X, \gr_{\ell}^F \DR(\shO_X (*D)) \otimes \alpha \bigr) = 0$$
for all $i > 0$, all $\ell$, and all $\alpha \in \Pic^0 (X)$, by virtue of Lemma \ref{special_abelian}.

We use this with $\ell = - g + k$. More precisely, we look at the complex 
$$C^{\bullet}  : =  \big( \gr_{-g+k}^F \DR(\shO_X (*D)) \otimes \alpha \big) [-k].$$
Given that $\Omega_X^1$  is trivial, this can be identified with a complex of the form
$$\big[ \bigoplus \gr_0^F \shO_X (*D) \otimes \alpha \rightarrow \bigoplus \gr_1^F \shO_X (*D) \otimes \alpha \rightarrow \cdots \rightarrow \gr_k^F \shO_X (*D) \otimes \alpha \big]$$
concentrated in degrees $0$ up to $k$. The vanishing above says that 
$$\mathbf{H}^j ( X, C^{\bullet}) = 0 \,\,\,\,\,\, {\rm for ~all} ~ j \ge k+1.$$
We use the spectral sequence 
$$E_1^{p,q} = H^q  (X, C^p) \implies  \mathbf{H}^{p+q} (X, C^{\bullet}).$$
Note that for $i \ge 1$, $E^{k+1,i}_1 = 0$ since $C^{k+1} = 0$, while 
$E^{k-1,i}_1 = 0$ by the inductive hypothesis. It follows that 
$$E^{k,i}_1 \simeq E^{k,i}_2.$$ 
Continuing this way, for any $r \ge 2$, we have that for $E^{k, i}_r$ the outgoing term is $0$ because of the length of the complex, while the incoming term is $0$ by induction. The conclusion is that 
$$E^{k, i}_1 \simeq E^{k,i}_{\infty}.$$
Since $H^{k + i} (X, C^{\bullet}) = 0$, we obtain that 
$$E^{k,i}_1 = H^i \big(X, \gr_k^F  \shO_X(*D) \otimes \alpha \big)  = 0.$$
\end{proof}

\begin{remark}
A similar inductive argument as in Theorem \ref{strong_vanishing} shows that when 
$D$ is arbitrary and $L$ is an ample line bundle, one has 
$$H^i \big(X, \shO_X \big((k+1)D\big) \otimes L \otimes I_k (D)\big) = 0,$$
and that this is equivalent to the statement
$$H^i \big(X, \gr_k^F  \shO_X(*D) \otimes L \big)  = 0,$$ 
both for all $i > 0$ and all $k$. However this last statement 
is already a special case of \cite[\S2.3, Lemma 1]{PS}. 
\end{remark}

\subsection{Singularities of theta divisors}\label{scn_theta}
A well-known result of Koll\'ar \cite[Theorem 17.3]{Kollar2}, revisited by Ein-Lazarsfeld \cite{EL}, states that if 
$(A, \Theta)$ is a principally polarized abelian variety (ppav) of dimension $g$, then the pair $(A, \Theta)$ is log-canonical, 
and in particular ${\rm mult}_x (\Theta) \le g$ for any $ x \in \Theta$. By a result of Smith-Varley, it is also known that when the multiplicity is equal to $g$, the ppav must be reducible; for this and related results, see \cite{EL} and the references therein. 

For irreducible ppav's it is believed however that the situation should be substantially better. One has the following folklore:

\begin{conjecture}\label{conj_theta_sing}
Let $(A,\Theta)$ be an irreducible ppav of dimension $g$. Then
$${\rm mult}_x (\Theta) \le \frac{g+1}{2}, \,\,\,\,\,\,{\rm for~all}\,\,\,\, x \in \Theta.$$
\end{conjecture}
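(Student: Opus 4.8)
The plan is to argue by contradiction, extracting from a point of large multiplicity the nontriviality of a low-index Hodge ideal, and then deriving a splitting of $(A,\Theta)$ that irreducibility forbids. Suppose $x\in\Theta$ has $m:={\rm mult}_x(\Theta)>\frac{g+1}{2}$. A ppav is automatically log-canonical by the theorem of Koll\'ar (see \cite{Kollar2}, \cite{EL}), so $I_0(\Theta)=\shO_A$ and we may work one level up. Taking $W=\{x\}$, of codimension $g$, Corollary~\ref{cor2_criterion_nontriviality2} already gives $I_1(\Theta)\subseteq\mathfrak{m}_x$ as soon as $m\geq\frac{g+1}{2}$, and the sharper estimate in Example~\ref{specific_I1} upgrades this to $I_1(\Theta)\subseteq\mathfrak{m}_x^2$ once $m\geq\frac{g+2}{2}$ (in particular when $m>\frac{g+1}{2}$ and $g\geq 4$). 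Thus a point violating the conjectured bound produces a proper Hodge ideal vanishing to order two at $x$, and the task is to show that on an \emph{irreducible} ppav no such obstruction can exist.

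The positivity input is Theorem~\ref{strong_vanishing}: both $\shO_A(2\Theta)\otimes I_1(\Theta)$ and $\gr_1^F\shO_A(*\Theta)$ satisfy $\mathrm{IT}_0$, hence are $M$-regular and therefore continuously globally generated in the sense of generic vanishing theory (cf. \cite{PS}). The model to imitate is the case $k=0$: there, nontriviality of $I_0(\Theta)=\I\big(A,(1-\epsilon)\Theta\big)$, combined with Nadel vanishing (Proposition~\ref{Nadel}), generic vanishing, and the rigidity $h^0(\shO_A(\Theta))=1$, yields — this is the heart of \cite{EL} — a decomposition $(A,\Theta)\simeq(A_1,\Theta_1)\times(A_2,\Theta_2)$. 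I would seek the analogous implication one level up: that a two-jet of nontriviality of $I_1(\Theta)$ at $x$ propagates, through the continuous global generation of $\gr_1^F\shO_A(*\Theta)$ and the inclusion $I_1(\Theta)\subseteq I_0(\Theta)$ of Proposition~\ref{inclusion_between_ideals}, to a cohomological obstruction concentrated on a translate of a proper abelian subvariety, forcing a product structure and contradicting irreducibility.

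To reach a manageable geometry I would first cut down the support of the obstruction. Using the restriction theorem and inversion of adjunction for $k$-log-canonicity (Theorem~\ref{restriction_hypersurfaces} and Remark~\ref{rem_restriction_hypersurfaces}), I would restrict $\Theta$ to general members of $|n\Theta|$ through $x$ so as to isolate the point $x$ while preserving ${\rm mult}_x$, and then transport the jet-separation mechanism behind Theorem~\ref{Deligne}(3) — available on abelian varieties by Theorem~\ref{strong_vanishing} — to convert the vanishing $I_1(\Theta)\subseteq\mathfrak{m}_x^2$ into a numerical incompatibility with $\chi(\shO_A(2\Theta))=2^g$ that is sharp only in the irreducible case. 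The already-established isolated case, Theorem~\ref{intro_theta_isolated}, would serve both as the base of an induction on $\dim({\rm Sing}\,\Theta)$ and as a consistency check for the constant $\frac{g+1}{2}$.

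The main obstacle — and the reason only the isolated case is presently within reach — is exactly the step invoking irreducibility. A crude Euler-characteristic or colength count with $\mathrm{IT}_0$ recovers only Koll\'ar's bound ${\rm mult}_x(\Theta)\leq g$ and never detects the splitting of $A$; the improvement to $\frac{g+1}{2}$ must genuinely use that $(A,\Theta)$ does not decompose, and for $k\geq 1$ there is as yet no analogue of the Ein--Lazarsfeld/Koll\'ar splitting principle. Equally serious is the passage from a point to a positive-dimensional singular locus: along a subvariety $W$ the Hodge ideal and the symbolic powers of $I_W$ diverge (Proposition~\ref{estimate_J_ideals}), so the contribution of $I_k(\Theta)$ to cohomology is no longer governed by the multiplicity alone. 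A successful proof would therefore hinge on a decomposition theorem for the generic-vanishing sheaf $\gr_k^F\shO_A(*\Theta)$ that recognizes reducibility of $(A,\Theta)$, presumably through the Gauss map; establishing such a statement is the crux.
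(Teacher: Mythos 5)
You are attempting a statement that the paper records only as Conjecture~\ref{conj_theta_sing}: it is proved there solely under the hypothesis that $\Theta$ has isolated singularities (Theorem~\ref{theta_isolated}), and your proposal, as you yourself concede in its final paragraph, does not close the general case either. Your opening reductions are correct and coincide exactly with the paper's isolated-case argument: ${\rm mult}_x(\Theta)\geq\frac{g+2}{2}$ gives $I_1(\Theta)\subseteq\mathfrak{m}_x^2$ by Theorem~\ref{new_criterion_nontriviality} (Example~\ref{specific_I1}), and Theorem~\ref{strong_vanishing} gives $\mathrm{IT}_0$ for $\shO_X(2\Theta)\otimes I_1(\Theta)$. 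But the decisive step in the paper is the transfer of this vanishing to $\shO_X(2\Theta)\otimes\mathfrak{m}_x^2\otimes\alpha$ via the sequence
$$0 \to \shO_X(2\Theta)\otimes I_1(\Theta)\otimes\alpha \to \shO_X(2\Theta)\otimes\mathfrak{m}_x^2\otimes\alpha \to \shO_X(2\Theta)\otimes\bigl(\mathfrak{m}_x^2/I_1(\Theta)\bigr)\otimes\alpha \to 0,$$
and this works only because isolatedness forces $\mathfrak{m}_x^2/I_1(\Theta)$ to have zero-dimensional support, killing its higher cohomology. When ${\rm Sing}(\Theta)$ is positive-dimensional this is precisely what fails, and nothing in your plan replaces it; this is the genuine gap, and it is the same one that confines the paper to the isolated case.

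Beyond the admitted gap, two of your concrete proposed repairs would fail as stated. First, the paper's contradiction is geometric, not numerical: once $|2\Theta|$ twisted by all $\alpha\in\Pic^0(X)$ separates tangent vectors at every point, this contradicts the classical fact that for irreducible $(X,\Theta)$ the map defined by $|2\Theta|$ is $2{:}1$ onto the Kummer variety and ramified at the $2$-torsion points. A colength count against $\chi(\shO_X(2\Theta))=2^g$, as you propose, yields nothing, since the colength of $\mathfrak{m}_x^2$ is only $g+1\ll 2^g$; there is no ``numerical incompatibility'' in the relevant range, which is why this mechanism never improves on Koll\'ar's bound. Second, cutting by general members of $|n\Theta|$ through $x$ takes you out of the abelian category: Theorem~\ref{strong_vanishing} and its $\Pic^0$-twisting are unavailable on the hypersurface section, and Theorem~\ref{restriction_hypersurfaces} transports nontriviality in the direction $I_k(D|_H)\subseteq I_k(D)\cdot\shO_H$, so on $H$ you would need a vanishing theorem you do not possess. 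Note finally that even in the isolated case the paper uses irreducibility only mildly (for $g\geq 3$ it is automatic, and it serves to identify the $|2\Theta|$-map with the Kummer map), not through any splitting principle; the Ein--Lazarsfeld-style decomposition statement for $\gr_k^F\shO_X(*\Theta)$ on which your general strategy rests is not established here or in the literature, so the proposal should be regarded as a plausible research program, not a proof.
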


The conjecture is known in dimension up to five, and more generally for Prym varieties associated to double covers of 
irreducible stable curves; see \cite[Theorem 3]{Casalaina}.
Here we make a first step towards the general result, by proving the conjecture for theta divisors with isolated singularities. Note that the main tool in \cite{EL} is the triviality of the multiplier ideal $I_0 (\Theta)$. We rely in turn on our study of the Hodge ideal $I_1 (\Theta)$, though not via its triviality, which in general does not hold.
Note that better results hold for $g \gg 0$; see Remark \ref{theta_comments}(1).

\begin{theorem}\label{theta_isolated}
Let $(X, \Theta)$ be an irreducible ppav of dimension $g$, such that $\Theta$ has isolated singularities. 
Then:
\begin{enumerate}
\item For every $x \in \Theta$ we have ${\rm mult}_x (\Theta) \le \frac{g + 1}{2}$.
\item Moreover, there can be at most one $x \in \Theta$ such that ${\rm mult}_x (\Theta) = \frac{g + 1}{2}$.
\end{enumerate}
\end{theorem}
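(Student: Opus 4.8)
The plan is to combine the Hodge ideal $I_1(\Theta)$ with the strong vanishing theorem on abelian varieties, Theorem~\ref{strong_vanishing}, together with the multiplicity bound coming from our nontriviality criterion, Theorem~\ref{new_criterion_nontriviality} (or its specialization in Example~\ref{specific_I1}). The starting point is the observation, due to Koll\'ar and Ein--Lazarsfeld, that $(X,\Theta)$ is log-canonical, so $I_0(\Theta)=\shO_X$ and the pair is $0$-log-canonical; thus $I_1(\Theta)$ is defined and all the machinery of \S\ref{birational_behavior}--\S\ref{order_closed_subset} applies with $k=1$. Since the singularities of $\Theta$ are isolated, the subscheme $Z_1$ cut out by $I_1(\Theta)$ is supported on a finite set of points, and at each singular point $x$ of multiplicity $m={\rm mult}_x(\Theta)$, Example~\ref{specific_I1} gives $I_1(\Theta)\subseteq\mathfrak{m}_x^q$ whenever $m\ge\max\{q+1,(g+q)/2\}$.

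For part (1), suppose for contradiction that some $x\in\Theta$ has ${\rm mult}_x(\Theta)=m>\frac{g+1}{2}$. First I would apply the strong vanishing of Theorem~\ref{strong_vanishing} with $k=1$: since $\Theta$ is ample and $(X,\Theta)$ is $0$-log-canonical, we get
$$H^i\big(X,\shO_X(2\Theta)\otimes I_1(\Theta)\otimes\alpha\big)=0\quad\text{for all }i>0\text{ and all }\alpha\in\Pic^0(X).$$
Then from the defining exact sequence $0\to\shO_X(2\Theta)\otimes I_1(\Theta)\to\shO_X(2\Theta)\to\shO_{Z_1}(2\Theta)\to 0$ (using that $\shO_X(2\Theta)=\omega_X(2\Theta)$ as $\omega_X\simeq\shO_X$ on an abelian variety), the vanishing of $H^1$ yields a surjection $H^0(X,\shO_X(2\Theta))\twoheadrightarrow H^0(Z_1,\shO_{Z_1}(2\Theta))$. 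The key numerical input is that $h^0(X,\shO_X(2\Theta))=2^g$ by Riemann--Roch for the principal polarization, so the isolated part of $Z_1$ imposes independent conditions and its total length is at most $2^g$. On the other hand, at the point $x$ we have $I_1(\Theta)\subseteq\mathfrak{m}_x^q$ with $q=2m-g$ (forced by $m\ge(g+q)/2$ when $m>\frac{g+1}{2}$, so $q\ge 2$), whence the length of $Z_1$ at $x$ is bounded below by the colength $\binom{q+g-1}{g}$ of $\mathfrak{m}_x^q$. Comparing $\binom{2m-g+g-1}{g}=\binom{2m-1}{g}$ against the global bound $2^g$ should force $2m-1\le g$ by a direct estimate, i.e. $m\le\frac{g+1}{2}$, contradicting the assumption.

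For part (2), suppose two distinct points $x_1,x_2$ both attain ${\rm mult}(\Theta)=\frac{g+1}{2}$; then $g$ is odd and at each point Example~\ref{specific_I1} gives $I_1(\Theta)\subseteq\mathfrak{m}_{x_i}$, i.e. $q=1$ at each. The same surjectivity argument shows that the two reduced points $x_1,x_2$ impose independent conditions on $H^0(X,\shO_X(2\Theta))$, which is compatible, so the contradiction must be extracted more delicately: I would sharpen the length count by showing that the borderline multiplicity $m=\frac{g+1}{2}$ already forces the local length at each $x_i$ to be so large that two such points overflow the global bound $2^g$, or alternatively invoke the geometry of $|2\Theta|$ and the fact that the linear system $|2\Theta|$ is the pullback of the Kummer embedding, which distinguishes at most one point of maximal contact. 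This second part is where I expect the main obstacle to lie: the crude binomial length estimate is tight precisely at $m=\frac{g+1}{2}$, so ruling out two extremal points requires either an exact computation of the length of $Z_1$ at an ordinary (or worst-case) singularity of this multiplicity via Proposition~\ref{prop_case_larger_k}, or a more refined use of the $IT_0$ property and the Fourier--Mukai transform to see that the scheme-theoretic support of $Z_1$ cannot contain two such fat points. The cleanest route is probably to deform to the ordinary-singularity model, where Theorem~\ref{thm_ordinary_singularities} and Proposition~\ref{smooth_tangent_cone} pin down the ideal and its colength exactly, and then argue that the total length of two maximal points strictly exceeds $2^g$.
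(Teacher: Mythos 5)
Your setup --- log-canonicity of $(X,\Theta)$, the containment $I_1(\Theta)\subseteq\mathfrak{m}_x^q$ from Example~\ref{specific_I1}, and the vanishing of Theorem~\ref{strong_vanishing} --- is exactly the right toolkit, and it is the one the paper uses; but the way you combine these ingredients fails at the decisive step. Your part (1) rests on the claim that $\binom{2m-1}{g}>2^g$ whenever $2m-1>g$, so that the colength of $\mathfrak{m}_x^{2m-g}$ overflows $h^0\big(X,\shO_X(2\Theta)\big)=2^g$. This is false: at the first relevant multiplicity $m=\frac{g+2}{2}$ one has $q=2m-g=2$ and the colength is $\binom{g+1}{g}=g+1$, which is far smaller than $2^g$; the binomial only exceeds $2^g$ when $m$ is comparatively close to $g$, so the length count gives no contradiction precisely in the range you need. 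Moreover this cannot be repaired by sharper local computations: for an ordinary singularity with $\frac{g}{2}\le m\le g-1$, Proposition~\ref{smooth_tangent_cone}(2) gives $I_1(\Theta)=\mathfrak{m}_x^{2m-g}$ exactly, so the length of $Z_1$ at $x$ really is $\binom{2m-1}{g}$ and is numerically consistent with the bound $2^g$; your proposed deformation-to-ordinary-singularities fix for part (2) therefore runs into the same wall. Note also that you state the vanishing for all twists $\alpha\in\Pic^0(X)$ but then only ever use $\alpha=\shO_X$; the twists are where the actual strength lies.

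What the paper does instead is extract geometric, not numerical, information from the vanishing. From $I_1(\Theta)\subseteq\mathfrak{m}_x^2$ and the fact that $I_1(\Theta)$ has finite co-support, Theorem~\ref{strong_vanishing} gives $H^1\big(X,\shO_X(2\Theta)\otimes\mathfrak{m}_x^2\otimes\alpha\big)=0$ for \emph{every} $\alpha\in\Pic^0(X)$; since the line bundles $\shO_X(2\Theta)\otimes\alpha$ are exactly the translates $t_a^*\shO_X(2\Theta)$, this says that the single linear system $|2\Theta|$ separates tangent vectors at \emph{every} point of $X$. That contradicts the fact that for irreducible $\Theta$ the map defined by $|2\Theta|$ is the $2:1$ Kummer map, ramified at the $2$-torsion points, where it separates no tangent vectors. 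For part (2), two distinct points $x,y$ of multiplicity $\frac{g+1}{2}$ give $I_1(\Theta)\subseteq\mathfrak{m}_x\mathfrak{m}_y$, whence by the same argument $|2\Theta|$ would separate $x-a$ from $y-a$ for every $a\in X$; choosing $a$ with $2a=x+y$ makes these two points of the form $z$ and $-z$ with $z\neq 0$, which the Kummer map never separates. Your passing mention of the Kummer embedding points in the right direction, but the actual mechanism --- converting the $\Pic^0$-twisted vanishing into separation statements for $|2\Theta|$ itself via translation, and then contradicting the $\pm 1$-symmetry --- is the idea missing from your proposal.
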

\begin{proof}
Note in passing that for $g \ge 3$ irreducibility follows automatically from the assumption on isolated singularities. 
Indeed, if $(A,\Theta)$ splits as a product 
of ppav's, then we have $\dim {\rm Sing}(\Theta) = g-2$. For $g=2$ it is necessary to assume it, as the bound fails for a product of elliptic curves.

Assuming that ${\rm mult}_x (\Theta) \ge  \frac{g + 2}{2}$, by 
Theorem \ref{new_criterion_nontriviality} (see also Example \ref{specific_I1}) we obtain that 
$$I_1 (\Theta) \subseteq \mathfrak{m}_x^2.$$
Consider the short exact sequence 
$$0 \longrightarrow \shO_X (2\Theta) \otimes I_1 (\Theta) \longrightarrow  \shO_X (2\Theta)  \otimes \mathfrak{m}^2_x 
\longrightarrow \shO_X(2\Theta) \otimes \mathfrak{m}_x^2 / I_1(\Theta) \longrightarrow 0.$$
For every $\alpha \in \Pic^0 (X)$, by tensoring the exact sequence with $\alpha$ and 
using (1) in Theorem \ref{strong_vanishing} and the fact that $I_1 (\Theta)$ has finite co-support, we obtain 
$$H^i \big(X, \shO_X (2\Theta) \otimes \mathfrak{m}_x^2 \otimes \alpha \big) = 0 \,\,\,\,\,\, {\rm for ~all~} i > 0.$$
In particular, as $\shO_X(2\Theta)$ is globally generated, the vanishing of $H^1$ implies that the linear system $|2 \Theta|$ separates tangent vectors at each point of $X$. To see this, note that the collection of line bundles $\shO_X (2\Theta) \otimes \alpha$ is, as $\alpha$ varies in $\Pic^0 (X)$, the same as the collection of line bundles $t_a^* \shO_X (2\Theta)$ as $a$ varies in $X$, where $t_a$ denotes translation by $a$.
But this is a contradiction; indeed, it is well known that when $\Theta$ is irreducible this linear system provides a $2:1$ map which is ramified at the $2$-torsion points (and more precisely factors through the Kummer variety of $X$). This proves (1).

For (2), assume that there are two distinct points $x, y\in \Theta$ having multiplicity $\frac{g+1}{2}$. According again to Example \ref{specific_I1}, it follows that 
$$I_1 (\Theta) \subseteq \mathfrak{m}_x \otimes \mathfrak{m}_y.$$
Using the same argument as in (1), we obtain 
$$H^i \big(X, \shO_X (2\Theta) \otimes \mathfrak{m}_x \otimes \mathfrak{m}_y \otimes \alpha \big) = 0 \,\,\,\,\,\, {\rm for ~all~} i > 0,$$
and therefore conclude that the linear system $|2 \Theta|$ separates all points of the form $x - a$ and $y -a$ with $a \in A$. Note however that the equation 
$$x - a = a - y$$
does have solutions, which contradicts the fact that $|2 \Theta|$ does not separate nonzero points of the form $z$ and $-z$ (both mapping to the same point on the Kummer variety).
\end{proof}

\begin{remark}
(1) Mumford \cite{Mumford} showed, developing ideas of Andreotti-Mayer, that the locus $N_0$ of ppav's such that ${\rm Sing}(\Theta) \neq \emptyset$
is a divisor in the moduli space of ppav's, and moreover that for the general point in every irreducible component of $N_0$, $\Theta$ 
has isolated singularities. Thus Theorem \ref{theta_isolated} applies on a dense open set of each component of $N_0$. These open sets are in fact large: Ciliberto-van der Geer \cite{CvdG} have shown that their complements have codimension at least two in $N_0$.

\noindent
(2) Equality in Conjecture \ref{conj_theta_sing} is known to be achieved for certain points on Jacobians of hyperelliptic curves and
on the intermediate Jacobian of the cubic threefold. The latter example also shows optimality in Theorem \ref{theta_isolated}: the theta divisor on the intermediate Jacobian of a smooth cubic threefold 
(a ppav of dimension $5$) has a unique singular point, the origin, which is of multiplicity $3$.
Note also that in this case we have 
$$I_1 (\Theta)_0 = \mathfrak{m}_0 \subsetneq \shO_{X, 0}$$
according to Example \ref{I1_smooth_tangent_cone}, as the projectivized tangent cone to $\Theta$ at $0$ is isomorphic to the original cubic threefold.
\end{remark}

A similar argument involving the higher ideals $I_k (D)$ can be used to give bounds on multiplicity in terms of effective jet separation. 
In particular, it leads to an asymptotic bound in terms of the Seshadri constant. Given $x \in X$, we denote by $s (\ell, x)$ the largest 
integer $s$ such that the linear system $|\ell \Theta|$ separates $s$-jets at $x$, i.e. such that the restriction map
$$H^0 \big(X, \shO_X (\ell \Theta) \big) \longrightarrow H^0 \big(X, \shO_X (\ell \Theta) \otimes \shO_X /\mathfrak{m}_x^{s+1} \big)$$ 
is surjective. It is a fundamental property of the Seshadri constant of $\Theta$ at $x$ that 
\begin{equation}\label{seshadri_inequality}
\frac{s (\ell, x)}{\ell} \le \epsilon (\Theta, x),
\end{equation}
and that in fact $\epsilon (\Theta, x)$ is the limit of these quotients as $\ell \to \infty$; see \cite[Theorem 5.1.17]{Lazarsfeld} and its proof. Due to the homogeneity of $X$, $\epsilon (\Theta, x)$ does not in fact depend on $x$, so we will denote it 
$\epsilon (\Theta)$. We denote also 
$$s_{\ell} : = {\rm min}~\{ s(\ell, x) ~|~ x \in X\}.$$

\begin{theorem}\label{seshadri_bound}
Let $(X, \Theta)$ be ppav of dimension $g$, such that $\Theta$ has isolated singularities. 
Then for every $x \in \Theta$ and every $k \ge 1$ we have
$${\rm mult}_x (\Theta) <  2 + \frac{s_{k+1}}{k+1} + \frac{g}{k+1}.$$
In particular, for every $x \in \Theta$ we have 
$${\rm mult}_x (\Theta) \le \epsilon (\Theta) + 2 \le \sqrt[g]{g!} + 2.$$
Hence, if $g \gg 0$, then ${\rm mult}_x (\Theta)$ is at most roughly $\frac{g}{e} + 2$.
\end{theorem}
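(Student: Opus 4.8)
The plan is to argue by contradiction, turning a large multiplicity into an impossible amount of jet separation for the linear system $\lvert (k+1)\Theta\rvert$. Fix $k\geq 1$ and a point $x\in\Theta$, write $m={\rm mult}_x(\Theta)$, and suppose that $m\geq 2+\frac{s_{k+1}}{k+1}+\frac{g}{k+1}$. The first step is to feed this into the nontriviality criterion of Corollary~\ref{cor1_criterion_nontriviality1}, applied to the point $W=\{x\}$ (so $r=g\geq 2$) with the calibrated choice $j=s_{k+1}+2\geq 0$: the standing assumption is exactly the inequality $m\geq 2+\frac{j+g-2}{k+1}$, so the corollary yields
$$I_k(\Theta)\subseteq I_{\{x\}}^{(s_{k+1}+2)}=\mathfrak{m}_x^{s_{k+1}+2}.$$
I would stress that it is essential to use this (a priori non-sharp) criterion rather than Theorem~\ref{new_criterion_nontriviality}: its numerology, with the ``$+2$'' and the implicit ``$-2k$'', is precisely what makes the contradiction below close uniformly in $k$, whereas the sharper symbolic-power bound $q=\min\{m-1,(k+1)m-g\}$ forces an awkward case split (the $m-1$ branch) that does not match $s_{k+1}$ on the nose.

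Next I would convert this inclusion into a vanishing statement, exactly as in the proof of Theorem~\ref{theta_isolated}. Since $\Theta$ has isolated singularities, $I_k(\Theta)=\shO_X$ away from finitely many points, so the quotient $\mathfrak{m}_x^{s_{k+1}+2}/I_k(\Theta)$ has finite support. Tensoring the inclusion $I_k(\Theta)\hookrightarrow\mathfrak{m}_x^{s_{k+1}+2}$ by $\shO_X((k+1)\Theta)\otimes\alpha$ for $\alpha\in\Pic^0(X)$ and combining the vanishing of $H^i\big(X,\shO_X((k+1)\Theta)\otimes I_k(\Theta)\otimes\alpha\big)$ from Theorem~\ref{strong_vanishing}(1) with the vanishing of higher cohomology of a finite-length sheaf, I obtain
$$H^i\big(X,\shO_X((k+1)\Theta)\otimes\mathfrak{m}_x^{s_{k+1}+2}\otimes\alpha\big)=0\quad\text{for all}\quad i>0.$$
The case $i=1$ says that $\lvert(k+1)\Theta\rvert\otimes\alpha$ separates $(s_{k+1}+1)$-jets at $x$.

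The key step is then the homogeneity argument: as $\alpha$ ranges over $\Pic^0(X)$ the bundles $\shO_X((k+1)\Theta)\otimes\alpha$ are precisely the translates $t_a^*\shO_X((k+1)\Theta)$, using that the polarization map $\phi_\Theta\colon X\to\Pic^0(X)$ is an isomorphism and that multiplication by $k+1$ is surjective on $\Pic^0(X)$. Hence separation of $(s_{k+1}+1)$-jets at the fixed point $x$ for all these twists is equivalent to separation of $(s_{k+1}+1)$-jets at \emph{every} point of $X$ by $\lvert(k+1)\Theta\rvert$, so $s_{k+1}=\min_y s(k+1,y)\geq s_{k+1}+1$, a contradiction. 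This self-referential contradiction is the heart of the proof and the step I expect to be most delicate, precisely because the numerical bookkeeping of the first step must be dovetailed exactly with the definition of $s_{k+1}$; it establishes the first inequality.

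Finally, for the ``in particular'' statements I would pass to the limit $k\to\infty$ in $m<2+\frac{s_{k+1}}{k+1}+\frac{g}{k+1}$. Here $\frac{g}{k+1}\to 0$, while by the quoted property of Seshadri constants $\frac{s_{k+1}}{k+1}\to\epsilon(\Theta)$ (recall that $\epsilon(\Theta)$ is independent of the point by homogeneity), which gives ${\rm mult}_x(\Theta)\leq\epsilon(\Theta)+2$. The bound $\epsilon(\Theta)\leq\sqrt[g]{g!}$ is the standard upper estimate for a Seshadri constant arising from $\Theta^g=g!$, and $\sqrt[g]{g!}\sim g/e$ by Stirling's formula yields the concluding asymptotic remark. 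The only routine verifications left are the finite-support claim for the quotient sheaf and the translation identity for jet separation, both of which are as in Theorem~\ref{theta_isolated}.
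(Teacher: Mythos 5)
Your proposal is correct and takes essentially the same route as the paper's own proof: the same application of Corollary~\ref{cor1_criterion_nontriviality1} with $W=\{x\}$, $r=g$, $j=s_{k+1}+2$ to get $I_k(\Theta)\subseteq\mathfrak{m}_x^{s_{k+1}+2}$, the same vanishing-plus-translation argument borrowed from Theorem~\ref{theta_isolated} via Theorem~\ref{strong_vanishing} to force separation of $(s_{k+1}+1)$-jets everywhere (contradiction), and the same limiting step using (\ref{seshadri_inequality}) together with $\epsilon(\Theta)\le\sqrt[g]{g!}$ and Stirling. Your side remark explaining why Corollary~\ref{cor1_criterion_nontriviality1} must be used instead of the sharper Theorem~\ref{new_criterion_nontriviality} (the $\min$ with $m-1$ breaks the matching with $s_{k+1}$) is exactly the point recorded in Remark~\ref{theta_comments}(3) of the paper.
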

\begin{proof} 
We assume that 
$${\rm mult}_x (\Theta) \ge  2 + \frac{s_{k+1}}{k+1} + \frac{g}{k+1}$$
and aim for a contradiction. Under this assumption, according to Corollary \ref{cor1_criterion_nontriviality1} it follows that 
$$I_k (D) \subseteq \mathfrak{m}_x^{ 2 + s_{k+1}}.$$
An argument identical to that in Theorem \ref{theta_isolated} then shows that for all $\alpha \in \Pic^0 (X)$ one has
$$H^i \big(X, \shO_X \big((k+1)\Theta\big) \otimes \mathfrak{m}_x^{2 + s_{k+1}} \otimes \alpha \big) = 0 \,\,\,\,\,\, {\rm for ~all~} i > 0,$$
and so the linear system $|(k+1)\Theta|$ separates $(1 + s_{k+1})$-jets at all points of $X$, a contradiction. 

The statement ${\rm mult}_x (\Theta) \le \epsilon (\Theta) + 2 $ now follows using ($\ref{seshadri_inequality}$) and letting $k \to \infty$.
On the other hand, the definition of the Seshadri constant automatically implies $\epsilon (\Theta) \le \sqrt[g]{g!}$; see \cite[Proposition 5.1.9]{Lazarsfeld}. The last assertion follows from the well-known fact that $\lim_{g\to \infty} \frac{\sqrt[g]{g!}}{g} = \frac{1}{e}$.
\end{proof}

\begin{remark}\label{theta_comments}
(1) The last assertion in Theorem \ref{seshadri_bound} becomes better than that given by Theorem \ref{theta_isolated} for $g$ very large. Grushevsky (together with Codogni and Sernesi \cite{CGS}), and independently Lazarsfeld, have communicated to us that they can show a similar, but slightly stronger statement, using methods from intersection theory. In \cite{CGS} it is shown that if $m$ is the multiplicity 
of an isolated point on $\Theta$, then 
$$m(m-1)^{g-1} \le g! - 4.$$

\noindent
(2) If $m = {\rm mult}_x (\Theta)$, the numerical bound $m \le \sqrt[g]{g!} + 2$ in the statement above can be deduced directly, at least asymptotically, from the surjectivity of the mapping 
$$H^0 \big(X, \shO_X ((k+1) \Theta) \big) \longrightarrow H^0 \big(X, \shO_X ( (k+1) \Theta) \otimes \shO_X /\mathfrak{m}_x^{s+2} \big),$$
for $s = (k+1)( m -2) - g$, by comparing the dimensions of the two spaces. Thus a completely similar argument shows that if $m_1, \ldots, m_r$ are the multiplicities of all the singular points of $\Theta$, then
$$\sum_{i=1}^r (m_i - 2)^g \le g!.$$
We thank Sam Grushevsky for this observation; the same holds in \cite{CGS}, for all $g$, with the better bound above.

\noindent
(3) Theorem \ref{seshadri_bound} is weaker for $k=1$ than Theorem \ref{theta_isolated}, due to 
the fact that asymptotically we need to use Corollary \ref{cor1_criterion_nontriviality1} instead of Theorem \ref{new_criterion_nontriviality}.

\noindent
(4) Theorem \ref{seshadri_bound} holds, with the same proof, for any ample divisor $D$ with isolated singularities on an abelian variety, replacing $g!$ with $D^g$.
\end{remark}

\subsection{Singular points on ample divisors on abelian varieties}\label{abelian_singularities}
We conclude by noting that the results in \S\ref{hypersurface_singularities} have immediate analogues for isolated singular points on hypersurfaces in abelian varieties. We fix a complex abelian variety 
$X$ of dimension $g$, and a reduced effective divisor $D$ on $X$. We assume that $g \ge 3$, since again the case of curves on abelian surfaces can always be treated by using multiplier or adjoint ideals.

\begin{corollary}
Let $S_m$ be the set of isolated singular points on $D$ of multiplicity at least $m \ge 2$. Then $S_m$ imposes 
independent conditions on $|p D|$, for  $p \ge  \left[ \frac{g}{m} \right] + 1$. 
\end{corollary}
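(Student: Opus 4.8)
The plan is to run the argument of Corollary~\ref{corH} and Theorem~\ref{Deligne}(3) on the abelian variety $X$, substituting the vanishing statement Theorem~\ref{strong_vanishing} for its projective-space counterpart Theorem~\ref{vanishing_Pn}. Throughout we work with $D$ ample, as in the title of this subsection, so that $L:=\shO_X(pD)$ is an ample line bundle and Mumford's vanishing gives $H^i(X,L)=0$ for all $i>0$. Set $k=\left[\frac{g}{m}\right]$ and $p=k+1$, and note that $S_m$ is a finite set, being a collection of isolated points of the closed set $D_{\mathrm{sing}}$.

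First I would record the nontriviality input. For each $x\in S_m$ we have $\mathrm{mult}_x(D)\geq m$, so Corollary~\ref{cor2_criterion_nontriviality2} (equivalently Example~\ref{same_bound}, with $n$ replaced by $g$) gives $I_k(D)_x\subseteq\mathfrak{m}_x$; here one checks that $\frac{g+1-m''}{m''}\leq\left[\frac{g}{m}\right]$ for every multiplicity $m''\geq m$, so the single value $k=[g/m]$ serves all of $S_m$. Since $I_k(D)=\shO_X$ wherever $D$ is smooth (Proposition~\ref{description_SNC_case}), the subscheme $Z_k:=V(I_k(D))$ is supported on $D_{\mathrm{sing}}$; as each $x\in S_m$ is isolated in $D_{\mathrm{sing}}$, it is an isolated point of $Z_k$ and hence lies in the $0$-dimensional part $Z'_k$. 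Because $\shO_{X,x}/I_k(D)_x$ surjects onto $\CC$ at such a point, the finite-length sheaf $\shO_{Z'_k}$ surjects onto $\shO_{S_m}=\bigoplus_{x\in S_m}\CC$.

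The heart of the matter is to produce a surjection $H^0(X,L)\twoheadrightarrow\shO_{Z'_k}$. Since the isolated points of $Z_k$ are disjoint from its positive-dimensional components, primary decomposition gives $I_k(D)=J_1\cap J_2$ with $J_1+J_2=\shO_X$, where $\shO_X/J_1\cong\shO_{Z'_k}$ and $J_2$ cuts out the positive-dimensional part; in particular $J_1/I_k(D)\cong\shO_X/J_2$. Tensoring $0\to J_1\to\shO_X\to\shO_{Z'_k}\to 0$ by $L$, the desired surjectivity follows once $H^1(X,J_1\otimes L)=0$. This I would extract by chasing two exact sequences. Theorem~\ref{strong_vanishing}(1) with $\alpha=\shO_X$ gives $H^1(X,I_k(D)\otimes L)=H^2(X,I_k(D)\otimes L)=0$; feeding this into $0\to I_k(D)\otimes L\to L\to(\shO_{Z'_k}\oplus\shO_X/J_2)\otimes L\to 0$ identifies $H^1(X,L)$ with $H^1(X,(\shO_X/J_2)\otimes L)$, while feeding it into $0\to I_k(D)\otimes L\to J_1\otimes L\to(\shO_X/J_2)\otimes L\to 0$ identifies $H^1(X,J_1\otimes L)$ with the same group. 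Thus $H^1(X,J_1\otimes L)\cong H^1(X,L)=0$ by ampleness of $\shO_X(pD)$.

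Finally I would compose $H^0(X,\shO_X(pD))\twoheadrightarrow\shO_{Z'_k}\twoheadrightarrow\shO_{S_m}$ to conclude that $S_m$ imposes independent conditions on $|pD|$ with $p=[g/m]+1$. The only genuinely delicate point is the bookkeeping that separates the $0$-dimensional locus $Z'_k$ from the positive-dimensional part of $Z_k$ (and that confirms each $x\in S_m$ lands in $Z'_k$); everything else is a direct transcription of the projective-space argument, with Mumford vanishing on $X$ playing the role that the vanishing of $H^1\bigl(\PP^n,\shO_{\PP^n}(\ell)\bigr)$ plays there.
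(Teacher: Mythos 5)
Your route is, in substance, the paper's own: the paper's proof of this corollary is literally "identical to that of Corollary~\ref{corH}, using Theorem~\ref{strong_vanishing}," i.e.\ nontriviality of $I_k(D)$ at the points of $S_m$ (Corollary~\ref{cor2_criterion_nontriviality2}/Example~\ref{same_bound}) combined with the abelian-variety vanishing theorem, and your primary-decomposition bookkeeping is precisely the detail that the paper leaves implicit in Theorem~\ref{Deligne}(3). That part of your argument is correct, though it can be shortened: since $J_1+J_2=\shO_X$, one has $\shO_X/I_k(D)\cong\shO_{Z'_k}\oplus\shO_X/J_2$, so the single vanishing $H^1\big(X,I_k(D)\otimes L\big)=0$ already makes $H^0(X,L)\to H^0\big(X,(\shO_X/I_k(D))\otimes L\big)$ surjective, and you may simply project onto the direct summand $H^0\big(X,\shO_{Z'_k}\otimes L\big)$; the detour through $J_1$ and the use of $H^2$ are unnecessary.

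There is, however, one genuine gap: you fix $p=\left[\frac{g}{m}\right]+1$ at the outset, whereas the corollary asserts independent conditions for \emph{every} $p\geq\left[\frac{g}{m}\right]+1$, and imposing independent conditions on $|pD|$ for one value of $p$ does not formally imply it for larger $p$ (your argument needs $H^1\big(X,I_k(D)\otimes\shO_X(pD)\big)=0$, which Theorem~\ref{strong_vanishing} does not give when $p>k+1$, as $\shO_X((p-k-1)D)$ is not in $\Pic^0(X)$). The fix is immediate with tools you already cite: for a given $p\geq\left[\frac{g}{m}\right]+1$, run your argument with $k=p-1$ in place of $\left[\frac{g}{m}\right]$. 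Indeed, Example~\ref{same_bound} gives $I_{p-1}(D)_x\subseteq\mathfrak{m}_x$ for every $x\in S_m$, since $p-1\geq\left[\frac{g}{m}\right]\geq\left[\frac{g}{m''}\right]$ for each multiplicity $m''\geq m$; the zero locus of $I_{p-1}(D)$ is still contained in $D_{\rm sing}$ by Proposition~\ref{description_SNC_case}, so each $x\in S_m$ remains an isolated point of it; and Theorem~\ref{strong_vanishing} applies at level $p-1$ to give $H^i\big(X,\shO_X(pD)\otimes I_{p-1}(D)\big)=0$ for $i>0$. (Alternatively, keep $k=\left[\frac{g}{m}\right]$ and invoke the Remark following Theorem~\ref{strong_vanishing}, which yields $H^i\big(X,\shO_X((k+1)D)\otimes L\otimes I_k(D)\big)=0$ for $L$ ample, applied with $L=\shO_X((p-k-1)D)$.) With either patch your proof is complete and agrees with the paper's.
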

\begin{proof}
The proof is identical to that of Corollary \ref{corH}.
We use Theorem \ref{strong_vanishing} for the ideals $I_k(D)$, and the same $I_k$ as in that corollary. 
\end{proof}

\begin{remark}
A statement analogous to Corollary \ref{jets} can be formulated as well.
Moreover, in the result above  one can say more generally that the respective finite set imposes independent conditions on all linear systems $|p D'|$, where $D'$ is a divisor numerically equivalent to $D$. The reason is that Theorem \ref{strong_vanishing} allows for twisting with arbitrary $\alpha \in \Pic^0 (X)$.
\end{remark}





\section{Appendix: Higher direct images of forms with log poles}

In the appendix we establish a few local vanishing statements for higher direct images of bundles of forms with log poles. In this paper they are needed for the birational study of Hodge ideals, but they are statements of general interest.

\subsection{The case of SNC divisors on the base}
We first compute direct images of bundles of forms with log poles via birational morphisms that dominate 
log-smooth pairs.

\begin{theorem}\label{case_SNC_pair}
Let $X$ be a smooth variety and $D$ a reduced simple normal crossing divisor on $X$. Suppose that $f\colon Y\to X$ is a proper, birational morphism,
with $Y$ smooth, and consider  $E =\widetilde{D}+F$,
where $\widetilde{D}$ is the strict transform of $D$ and $F$ is the reduced exceptional divisor. We assume that $E$ has simple normal crossings.
\begin{enumerate}
\item[i)] If $f$ is an isomorphism over $X\smallsetminus D$ ${\rm (}$so that $E=(f^*D)_{\rm red}$${\rm )}$, then 
$$f_* \Omega_Y^p({\rm log}\,E) \simeq \Omega_X^p ({\rm log}\,D)\quad
\text{for ~all~}\quad p \ge 0, \quad\text{and}$$
$$R^if_* \Omega_Y^p({\rm log}\,E) =0 \quad {\rm for ~all~}\quad i>0, \,p\geq 0.$$
\item[ii)] For every $f$, we have
$$f_* \Omega_Y^1({\rm log}\,E) \simeq \Omega_X^1 ({\rm log}\,D)$$ 
and 
$$R^if_* \Omega_Y^1({\rm log}\,E) =0 \quad {\rm for ~all~}\quad i>0.$$
\end{enumerate}
\end{theorem}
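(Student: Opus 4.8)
The plan is to treat both parts uniformly, separating the degree-zero assertions (the identification of $f_*$) from the higher vanishing, and to reduce the latter, via relative duality, to Grauert--Riemenschneider vanishing.

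\emph{The pushforwards.} In both (i) and (ii) the morphism $f$ is a local isomorphism away from its exceptional locus, whose image in $X$ has codimension $\ge 2$; in particular $f$ is a local isomorphism at the generic point of every component $D_j$ of $D$, since the strict transform $\widetilde D_j$ dominates $D_j$ birationally. Hence the natural inclusion $\Omega_X^p(\log D)\hookrightarrow f_*\Omega_Y^p(\log E)$ (both are torsion-free, and $\Omega_X^p(\log D)$ is locally free) is an isomorphism in codimension one. Since $f_*\Omega_Y^p(\log E)$ is the pushforward of a locally free sheaf under a proper morphism to a smooth variety, it is reflexive, and two reflexive sheaves agreeing in codimension one coincide; this gives $f_*\Omega_Y^p(\log E)\simeq\Omega_X^p(\log D)$, and by the identical argument $f_*\big(\Omega_Y^{n-p}(\log E)(-E)\big)\simeq\Omega_X^{n-p}(\log D)(-D)$, again locally free.

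\emph{Duality reduction.} The perfect pairing $\Omega_Y^p(\log E)\otimes\Omega_Y^{n-p}(\log E)\to\omega_Y(E)$ identifies $\Omega_Y^p(\log E)^{\vee}\otimes\omega_Y$ with $\Omega_Y^{n-p}(\log E)(-E)$. As $f$ is birational, $f^{!}\shO_X\simeq\omega_{Y/X}$ in degree zero, so relative Grothendieck duality together with the projection formula yields
$$R\shHom_X\big(\derR f_*\Omega_Y^p(\log E),\shO_X\big)\simeq \derR f_*\big(\Omega_Y^{n-p}(\log E)(-E)\big)\otimes\omega_X^{-1}.$$
By the previous step the right-hand side has locally free $H^0$, namely $\Omega_X^{n-p}(\log D)(-D)\otimes\omega_X^{-1}$. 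Thus once we know that this right-hand side is concentrated in degree zero, it is a single locally free sheaf; dualizing back, its $R\shHom$ into $\shO_X$ has no higher terms, forcing $\derR f_*\Omega_Y^p(\log E)$ to be concentrated in degree zero as well, i.e. $R^if_*\Omega_Y^p(\log E)=0$ for $i>0$. The whole higher-vanishing statement therefore reduces to the \emph{dual vanishing}
$$R^if_*\big(\Omega_Y^{q}(\log E)(-E)\big)=0\qquad (i>0),\qquad 0\le q\le n.$$

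\emph{The dual vanishing.} This is anchored at $q=n$, where $\Omega_Y^n(\log E)(-E)=\omega_Y(E)(-E)=\omega_Y$, and $R^if_*\omega_Y=0$ for $i>0$ is exactly Grauert--Riemenschneider. For $q<n$ I would induct, peeling off one boundary component $E_1$ at a time via the residue sequence
$$0\to\Omega_Y^q(\log(E-E_1))\to\Omega_Y^q(\log E)\xrightarrow{\;\mathrm{Res}\;}\Omega_{E_1}^{q-1}\big(\log (E-E_1)|_{E_1}\big)\to 0,$$
twisted by $\shO_Y(-E)$. The twisted quotient is a sheaf of log forms on the smooth stratum $E_1$, carrying its induced SNC boundary and an extra conormal twist by $\shO_{E_1}(-E_1)$, so the assertion descends to one of the same shape for $f|_{E_1}$ in lower dimension, while the twisted subobject removes one component from $E$. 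To preserve the relative positivity that keeps the successive Grauert--Riemenschneider/Kawamata--Viehweg vanishings applicable, I would realize $f$ (locally on $X$, using Zariski's factorization as in \cite[Lemma~2.45]{KollarMori}) as a composition of blow-ups along smooth centers having simple normal crossings with the successive boundaries, at each of which the new exceptional divisor is relatively ample, and then assemble the single-blow-up computations through the Leray spectral sequence. For part (ii) only $p=1$, equivalently $q=n-1$, is required, so this induction runs a single step beyond the Grauert--Riemenschneider base case and the general (not necessarily iso over $X\smallsetminus D$) hypothesis on $f$ causes no extra difficulty.

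\emph{Main obstacle.} The degree-zero identifications and the duality reduction are formal; the genuine content is the dual vanishing, and within it the delicate point is arranging that every twist produced by the inductive residue sequences is relatively nef, so that a Grauert--Riemenschneider--type vanishing applies at each stage. This is precisely where the hypothesis $E=(f^*D)_{\rm red}$ in (i), and the blow-up factorization in general, must be used; controlling these positivity conditions is the crux, and it is what encodes the cohomological cancellations that are invisible at the level of the non-logarithmic sheaves $\Omega_Y^p$.
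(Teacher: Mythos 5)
Your duality reduction is correct as formal machinery, but you have misidentified where the content of the theorem sits, and the step you call ``formal'' is the one that fails. For the biduality argument to force $\derR f_*\Omega_Y^p(\log E)$ into degree zero, you need $f_*\bigl(\Omega_Y^{n-p}(\log E)(-E)\bigr)$ to be \emph{locally free}, and your justification --- pushforwards of locally free sheaves are reflexive, and reflexive sheaves agreeing in codimension one coincide --- is false: for the blow-up $f\colon Y\to X=\AAA^2$ of a point, $f_*\shO_Y(-F)=\mathfrak{m}_x$ is the pushforward of a line bundle and is not reflexive. This is not a repairable cosmetic slip in case ii): the identification $f_*\bigl(\Omega_Y^{q}(\log E)(-E)\bigr)\simeq\Omega_X^{q}(\log D)(-D)$ needs a comparison map $f^*\bigl(\Omega_X^{q}(\log D)(-D)\bigr)\to\Omega_Y^{q}(\log E)(-E)$, i.e.\ pullbacks of twisted log forms must vanish along \emph{all} components of $E$, and this fails along exceptional components not lying over $D$ (blow up the curve $\{x=y=0\}$ in $\AAA^3$: $f^*dz\notin\Omega_Y^1(\log F)(-F)$). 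Indeed the identification is simply false for small $q$ in case ii) ($q=0$ gives $f_*\shO_Y(-E)=\mathfrak{m}_x\neq\shO_X$), and Grothendieck duality converts exactly this failure into the paper's counterexample $R^1f_*\omega_Y(F)\simeq\CC$, even though the dual vanishing $R^1f_*\shO_Y(-F)=0$ \emph{does} hold there. So the dual vanishing can hold while the theorem fails; the entire distinction between i) and ii), and the restriction to $p=1$ in ii), is carried by the degree-zero identification you treat as automatic. Even for the case you need, $q=n-1$, the existence of the comparison map (equivalently $f_*\bigl(T_Y(-\log E)\otimes\omega_Y\bigr)\simeq T_X(-\log D)\otimes\omega_X$) is a genuine lemma your outline does not prove, and the reasoning you offer for it would equally ``prove'' the false $q=0$ statement.

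The second gap is that the dual vanishing itself is only gestured at, and the sketch cannot be completed as written. Peeling a component $E_1$ via the residue sequence puts the extra twist $\shO(-E_1)$ on \emph{both} the sub and the quotient, so neither term is of the same shape as the inductive statement; and when $E_1$ is $f$-exceptional, $f|_{E_1}$ is a fibration rather than a birational morphism, so the assertion does not descend to a lower-dimensional instance of itself --- one needs a different, relative Kodaira-type vanishing there, whose input is positivity of the accumulated conormal twists. To supply that positivity you invoke a factorization of $f$ into blow-ups along smooth SNC centers via \cite[Lemma~2.45]{KollarMori}, but that lemma produces a tower of blow-ups reaching a single divisorial valuation, not a factorization of $f$; proper birational morphisms of smooth varieties do not factor through smooth blow-ups in dimension $\ge 3$, even locally on $X$. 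The paper avoids precisely this: it never factors $f$, but instead dominates it by $h=f\circ g\colon Z\to X$ which \emph{is} a composition of SNC-center blow-ups, proves the statement for a single such blow-up directly (residue sequences plus the identity $f^*\Omega_X^p(\log D')\simeq\Omega_Y^p(\log E')$ after enlarging $D$, then the projection formula with $\derR f_*\shO_Y\simeq\shO_X$), and transfers the vanishing from $h$ to $f$ by a Leray spectral sequence argument with induction on $i$. That dominating-morphism transfer is the idea missing from your proposal, and without it neither your induction nor the duality route closes.
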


The assertion in i) is contained in \cite[Lemmas~1.2 and 1.5]{EV}, where the vanishing statement is deduced from a theorem of Deligne.\footnote{We thank H. Esnault for pointing this out.} We give a self-contained proof below, since it also applies in case ii), which is needed in the paper. First, one useful corollary of the theorem is the following:

\begin{corollary}\label{log_independence}
Let $X$ be a smooth variety and $D$ an effective Cartier divisor on $X$. If  $f\colon Y \rightarrow X$ is a log resolution
of the pair $(X, D)$ which is an isomorphism over $X\smallsetminus D$,
and $E = (f^*D)_{\rm red}$, then the sheaves
$R^i f_* \Omega_Y^p  (\log E)$ are independent of the choice of log resolution for all $i$ and $p$.
\end{corollary}
\begin{proof}
We consider a log resolution as in the statement, and to keep track of it we use the notation $E_Y$
instead of $E$.  Take another log resolution $g\colon Z \rightarrow X$, with the divisor $E_Z$. 
They can both be dominated by a third log resolution $h \colon W \rightarrow X$, with the corresponding divisor $E_W$.
We denote by $\varphi \colon W \rightarrow Y$ the induced morphism. Note that $E_W=(\varphi^*E_Y)_{\rm red}$.
We deduce from Theorem \ref{case_SNC_pair} i) and the Leray spectral sequence that 
$$R^i f_* \Omega_Y^p  (\log E_Y)\simeq  R^i h_* \Omega_W^p (\log E_W).$$
By symmetry, we also have $R^ig_*\Omega_Z^p(\log E_Z)\simeq R^i h_* \Omega_W^p (\log E_W)$,
and we obtain the assertion in the corollary.
\end{proof} 

Going back to the statement of Theorem \ref{case_SNC_pair},
it is easy to see that we have a canonical morphism $f^* \Omega^p_X({\rm log}\,D)\to \Omega^p_Y({\rm log}\,E)$,  inducing in turn a canonical morphism
$$\Omega^p_X({\rm log}\,D)\longrightarrow f_* \Omega^p_Y({\rm log}\,E)$$ 
which is generically an isomorphism. The first assertions in each of the two statements in the theorem say that this map
is an isomorphism when $p=1$, in case ii), and for all $p$, in case i).

We first prove the theorem in a special case. 

\begin{proposition}\label{lem_case_SNC_pair}
The assertions in Theorem~\ref{case_SNC_pair} hold for  the blow-up $f$ of $X$ along a smooth subvariety $W$ of $X$
having simple normal crossings with $D$.  
\end{proposition}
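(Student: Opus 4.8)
The plan is to reduce to an explicit local model and then treat the two assertions separately, disposing of the isomorphism on pushforwards by a reflexivity argument and proving the vanishing of the higher direct images by a fibrewise cohomology computation.

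\textbf{Reductions.} If $\codim(W,X)=1$ then $f$ is the identity and there is nothing to prove, so I may assume $c:=\codim(W,X)\geq 2$; in particular $f$ is an isomorphism over $X\smallsetminus W$, and the sheaves $R^if_*\Omega_Y^p(\log E)$ with $i>0$ are supported on $W$. Since formation of $f_*$ and $R^if_*$ commutes with flat base change and every assertion is local on $X$, I would pass to \'etale (or formal) local coordinates $x_1,\dots,x_n$ in which $D=V(x_1\cdots x_r)$ and $W=V(x_j\colon j\in J)$ for a subset $J$ with $|J|=c$; the hypothesis that $W$ meets $D$ with simple normal crossings is precisely that both be coordinate subschemes of this form. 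In case i) the assumption that $f$ is an isomorphism over $X\smallsetminus D$ forces $W\subseteq\Supp(D)$, i.e. $J\cap\{1,\dots,r\}\neq\emptyset$, a feature I will use below.

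\textbf{Pushforwards.} There is a natural map $\Omega_X^p(\log D)\to f_*\Omega_Y^p(\log E)$ which is injective, the source being torsion free and the map generically an isomorphism, and which is an isomorphism over $X\smallsetminus W$. The source is locally free, hence reflexive, and the target is reflexive as well, being the pushforward of a locally free sheaf under a proper birational morphism to a smooth variety (alternatively, this isomorphism also follows from the $R^0$ case of the fibrewise computation below). A morphism of reflexive sheaves that is an isomorphism off a closed subset of codimension $\geq 2$ is an isomorphism, and $\codim(W,X)\geq 2$, so $f_*\Omega_Y^p(\log E)\simeq\Omega_X^p(\log D)$ for all $p$ in case i) and for $p=1$ in case ii).

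\textbf{Higher direct images.} This is the heart of the matter. The exceptional divisor $F=\PP(N_{W/X})$ is a $\PP^{c-1}$-bundle $\pi\colon F\to W$ with $\shO_F(F)\simeq\shO_F(-1)$ the relative tautological bundle, and for $i>0$ the sheaf $R^if_*\Omega_Y^p(\log E)$ is computed, via the theorem on formal functions, from the cohomology along the fibres $\PP^{c-1}$. The strategy is to assemble, from the conormal sequence of $F\subseteq Y$ and the residue sequences $0\to\Omega_Y^p(\log(E-G))\to\Omega_Y^p(\log E)\xrightarrow{\operatorname{res}}\Omega_G^{p-1}\bigl(\log (E-G)|_G\bigr)\to 0$ attached to the smooth components $G$ of $E$ (the divisor $F$ and the strict transforms $\widetilde D_i$), a filtration of $\Omega_Y^p(\log E)|_{\PP^{c-1}}$ whose graded pieces are twists $\Omega_{\PP^{c-1}}^q(j)$ of bundles of differentials on the fibre. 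Applying Bott's vanishing on $\PP^{c-1}$ to each graded piece then forces the higher cohomology to vanish, whence $R^if_*\Omega_Y^p(\log E)=0$ for $i>0$, while the $R^0$ reading recovers the pushforward statement independently.

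\textbf{Main obstacle.} The delicate part is the bookkeeping of the twists $j$ in this filtration, in particular the way the log pole along $\widetilde D$ shifts them. When $W\subseteq\Supp(D)$ a component $\widetilde D_i$ meets every fibre and contributes an extra favourable twist, and this is exactly what pushes the graded pieces into the range where Bott vanishing applies for every $p$, giving case i); without it one controls only the $p=1$ situation, which is case ii). Carrying out this verification, keeping precise track of the contribution of $\shO_F(F)\simeq\shO_F(-1)$ and of the components of $D$ passing through $W$ to each $j$, is the computation I expect to require the most care.
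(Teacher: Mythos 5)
Your proposal leaves a genuine gap precisely at what you yourself call the heart of the matter: the vanishing of $R^if_*\Omega_Y^p(\log E)$ for $i>0$ is never proved, only planned, and the plan as described would not go through. You propose to filter $\Omega_Y^p(\log E)\vert_{\PP^{c-1}}$ by means of the conormal sequence of $F$ and the residue sequences, and then to conclude by applying Bott vanishing to each graded piece. But that inference requires every graded piece to have vanishing higher cohomology, and the filtrations arising from residue-type sequences do not have this property. Concretely, take $D=0$ and $W$ a point in $X=\AAA^n$ (an instance of case ii) with $p=1$): the residue sequence restricted to $F\simeq\PP^{n-1}$ reads
\[
0 \longrightarrow \Omega_F^1 \longrightarrow \Omega_Y^1(\log F)\vert_F \longrightarrow \shO_F \longrightarrow 0,
\]
and $H^1(F,\Omega_F^1)\neq 0$, so termwise vanishing fails; the cohomology of the middle term nevertheless vanishes in all degrees, but only because this extension is non-split — it is the Euler sequence, i.e. $\Omega_Y^1(\log F)\vert_F\simeq \shO_F(-1)^{\oplus n}$, coming from the global isomorphism $\Omega_Y^1(\log F)\simeq f^*\Omega_X^1\otimes\shO_Y(F)$. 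So what is really needed is an identification of the log bundle (not a filtration of its restriction), and once one has such an identification the fibrewise apparatus — including the infinitesimal thickenings required by the theorem on formal functions, which you also do not address — becomes superfluous. A secondary problem: your pushforward argument invokes the claim that the pushforward of a locally free sheaf under a proper birational morphism to a smooth variety is reflexive. This is false: for the blow-up of a point on a surface, $f_*\shO_Y(-F)$ is the ideal sheaf of the point, whose double dual is $\shO_X$. (The conclusion is salvageable without it: both sheaves are subsheaves of the sheaf of rational $p$-forms, they agree off the codimension $\geq 2$ set $W$, and the source $\Omega_X^p(\log D)$ is locally free, so a section of $f_*\Omega_Y^p(\log E)$ extends uniquely to a section of $\Omega_X^p(\log D)$ and must coincide with it by torsion-freeness.)

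For contrast, the paper's proof turns the non-split-extension phenomenon above into the central device rather than an obstacle, and uses no fibrewise cohomology at all. Working locally in coordinates where $W=V(x_1,\ldots,x_r)$ and every component of $D$ is a coordinate hyperplane, it enlarges $D$ to $D'$ by adding all hyperplanes $V(x_i)$, $i\leq r$, not already in $D$; for this $D'$ a direct local computation gives $f^*\Omega_X^p(\log D')\simeq \Omega_Y^p(\log E')$, and both assertions follow at once from the projection formula and $\derR f_*\shO_Y\simeq\shO_X$. It then removes the added components one at a time: if $T\supseteq W$ is a prime divisor such that $D+T$ is reduced and SNC with respect to $W$, the residue sequences comparing $\log(E+\widetilde{T})$ with $\log E$, together with induction on dimension (the induced map $\widetilde{T}\to T$ is again a blow-up of the same kind), show that the proposition for $D+T$ implies it for $D$; here the needed cancellations are carried by the connecting map, identified with the residue $\Omega_X^p(\log D')\to\Omega_T^{p-1}(\log D\vert_T)$. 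If you wish to salvage your route, the repair is exactly to prove the bundle identification for the enlarged divisor and then perform this peeling-off — at which point you will have reproduced the paper's argument.
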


Recall that if $X$ is smooth and $Z_1,\ldots,Z_r$ are subschemes of $X$, we say that $Z_1,\ldots,Z_r$ have simple normal crossings if locally on $X$ there are algebraic coordinates $x_1,\ldots,x_n$ such that the ideal of each $Z_j$ is generated by a subset of $\{x_1,\ldots,x_n\}$. We say that a divisor $D$ and $W$ have simple normal crossings if $W$ and the components of $D$ have simple normal crossings.

\begin{proof}
We argue by induction on $\dim X$, the assertion being trivial if this is equal to $1$, when $f$ is an isomorphism and $E=D$.
Note first that if $T$ is a prime divisor on $X$ containing $W$, such that $D' =D+T$ is a reduced 
divisor having simple normal crossings with W, then if the proposition holds for
$(X,D')$,  it also holds for $(X,D)$. Here is the only place where we have to distinguish between cases i) and ii).

Suppose first that we are in case i), when by assumption $W\subseteq {\rm Supp}(D)$. We have
$E=(f^*D)_{\rm red}$ and let 
$E'=(f^*D')_{\rm red}$.
Therefore $E'=E+\widetilde{T}$, where $\widetilde{T}$ is the strict transform of $T$.
Note that if $g\colon\widetilde{T}\to T$ is the induced map, then $g$ is the blow-up of $T$ along $W\subseteq D\vert_T$ and
$E\vert_{\widetilde{T}}=(g^*D\vert_T)_{\rm red}$, hence we may apply the induction hypothesis for $g$ and $E\vert_{\widetilde{T}}$. 
We have an exact sequence
$$
0\longrightarrow\Omega_Y^p({\rm log}\,E)\longrightarrow \Omega_Y^p({\rm log}\,E')\longrightarrow\Omega^{p-1}_{\widetilde{T}}({\rm log}\,E\vert_{\widetilde{T}})\longrightarrow 0. 
$$
Since we are assuming that the proposition holds for $D'$, and we also know that it holds for $(T,D\vert_T)$, we conclude using the long exact sequence in cohomology that
$$R^if_* \Omega_Y^p({\rm log}\,E)=0 \,\,\,\,\,\, {\rm for} \,\,\, i\geq 2,$$ 
and we have an exact sequence
$$
0\to f_* \Omega_Y^p({\rm log}\,E)\to f_* \Omega_Y^p({\rm log}\,E')\overset{\gamma}\to f_*\Omega^{p-1}_{\widetilde{T}}({\rm log}\,E\vert_{\widetilde{T}})\to R^1f_* \Omega_Y^p({\rm log}\,E)\to 0.
$$
Moreover, $\gamma$ can be identified to $\Omega_X^p({\rm log}\,D')\to \Omega_T^{p-1}({\rm log}\,D\vert_T)$, hence 
$${\rm ker}(\gamma)\simeq\Omega_X^p({\rm log}\,D) \,\,\,\, {\rm  and} \,\,\,\, {\rm Coker}(\gamma)=0.$$ 
This shows that the proposition holds for $(X,D)$.

Suppose now that we are in the setting of ii). We have $E=\widetilde{D}+F$ and let  $E'=\widetilde{D}+\widetilde{T}+F$.
It is not true in general that $E\vert_{\widetilde{T}}$ is the required divisor
for the pair $(T,D\vert_T)$ and the morphism $\widetilde{T}\to T$ (trouble occurs precisely when ${\rm codim}(W,X)=2$, in which case $\widetilde{T}\to T$
is an isomorphism, with $E\vert_{\widetilde{T}}$ corresponding to $D\vert_T+W$). However, this issue does not come up when $p=1$, when we only
need to consider the exact sequence
$$0\longrightarrow\Omega^1_Y(\log E)\longrightarrow \Omega^1_Y(\log E')\longrightarrow \shO_{\widetilde{T}}\longrightarrow 0.$$
We obtain an induced exact sequence
$$0\longrightarrow f_*\Omega^1_Y(\log E)\longrightarrow f_*\Omega^1_Y(\log E')\overset{\varphi}\longrightarrow f_*\shO_{\widetilde{T}}
\longrightarrow R^1f_*\Omega_Y(\log E)\longrightarrow 0$$
and isomorphisms
$$R^if_*\Omega^1_Y(\log E)\simeq R^if_*\Omega^1_Y(\log E')\quad\text{for all}\quad i\geq 2.$$
Since the morphism $\varphi$ can be identified with
$$\Omega_X^1(\log D')\longrightarrow\shO_T,$$
and $D'$ satisfies the conclusion of Proposition~\ref{lem_case_SNC_pair}, it follows that $D$ satisfies it, too.
This completes the proof of the fact that if the proposition holds for $D'$, then it also holds for $D$. From now on,
the proof proceeds in the same way in both cases i) and ii).

Since the assertion to be proved is local on $X$, we may assume that we have algebraic coordinates $x_1,\ldots,x_n$ on $X$ such that 
$W$ is defined by $(x_1,\ldots,x_r)$ and each irreducible component of $D$ is defined by some $(x_i)$. 
Let $I\subseteq \{1,\ldots,r\}$ consist of those $i$ such that the divisor $D_i$ defined by $(x_i)$ is not contained in $D$. 
Let 
$$D'=D+\sum_{i\in I}D_i.$$ 
Applying repeatedly the observation at the beginning of the proof, we see that in order to show that $(X,D)$ satisfies the proposition,
it is enough to show that the same holds for $(X,D')$. It is  easy to see by a local calculation in the coordinates $x_1,\ldots,x_n$ that
$f^* \Omega_X^1 ({\rm log}\,D') \simeq \Omega_Y^1 ({\rm log}\,E')$, hence 
$$f^* \Omega_X^p({\rm log}\,D')\simeq \Omega_Y^p({\rm log}\,E') \,\,\,\,\,\, {\rm for ~all~} \,\,\, p\geq 0.$$ 
The fact that $(X,D')$ satisfies the proposition is now an immediate consequence of the projection formula, combined with the
fact that $\derR f_* \shO_Y \simeq \shO_X$. This completes the proof of the proposition. 
\end{proof}

\begin{proof}[Proof of Theorem~\ref{case_SNC_pair}]
The same argument applies in both cases i) and ii).
By considering a log resolution of the ideal on $X$ defining the indeterminacy locus of $f^{-1}$, we obtain a morphism $h\colon Z\to X$ with the following properties:
\begin{enumerate}
\item[a)] The birational map $g=f^{-1}\circ h$ is a morphism.
\item[b)] $h$ is a composition of smooth blow-ups, with each blow-up center having simple normal crossings with the sum of the strict transform of $D$
and the exceptional divisor over $X$. Moreover, if we are in case i), then the blow-up center is contained in the inverse image of $D$.
\end{enumerate}
In particular, it follows from b) that $Z$ is smooth. Note also that if $G$ is the sum of the strict transform of $D$ on $Z$ with the $h$-exceptional divisor, then
$G$ has simple normal crossings and it is equal to the sum of the strict transform of $E$ on $Z$ with the $g$-exceptional divisor.
In particular, whatever we prove for $f$ and $D$, it will also apply to $g$ and $E$.

In order to fix ideas, suppose first that we are in the setting of i).
By applying Proposition~\ref{lem_case_SNC_pair}
to each of the blow-ups whose composition is $h$, we deduce that for every $p$, we have 
$$h_*\Omega_Z^p({\rm log}\,G) \simeq \Omega_X^p({\rm log}\,D), \,\,\,\,\, {\rm and} \,\,\,\,\, R^ih_* \Omega_Z^p({\rm log}\,G) =0 \,\,\,\,{\rm  for}~~ 
i\geq 1.$$ 

We first deduce that the canonical morphism $j\colon \Omega_X^p({\rm log}\,D)\to f_* \Omega^p_Y({\rm log}\,E)$ is an isomorphism. Indeed, we know that the composition
$$\Omega_X^p({\rm log}\,D)\overset{j}\longrightarrow f_* \Omega^p_Y({\rm log}\,E) \longrightarrow 
h_* \Omega^p_Z({\rm log}\,G)$$
is an isomorphism, and therefore $j$ is a split monomorphism. Since it is generically an isomorphism and $f_* \Omega_Y^p 
({\rm log}\,E)$ is torsion-free, we conclude that 
it is an isomorphism. By applying this to $g$ as well, we conclude that 
$$g_* \Omega^p_Z({\rm log}\,G) \simeq\Omega_Y^p({\rm log}\,E).$$

We now consider the Leray spectral sequence 
$$E_2^{k\ell}=R^kf_* \big(R^{\ell}g_*\Omega_Z^p({\rm log}\,G)\big)\Rightarrow R^{k+\ell}h_* \Omega_Z^p({\rm log}\,G).$$
Since we know that $h$ satisfies the conclusion of the theorem, we deduce that $E_{\infty}^{k\ell}=0$ for every $(k,\ell)\neq (0,0)$.
We prove by induction on $i\geq 1$ that 
$$R^if_* \Omega_Y^p({\rm log}\,E) =0$$ 
for every $f$ as above; in particular, we will be able to use the inductive assumption for $g$ as well.
Now given $r\geq 2$, we can identify $E_{r+1}^{i,0}$ to the cohomology of
$$E_{r}^{i-r,r-1}\to E_r^{i,0}\to E_r^{i+r,1-r}=0.$$
Since $E_{r}^{i-r,r-1}$ is a subquotient of $E_2^{i-r,r-1}$, this is $0$ for $r\leq i$ since 
$$R^{r-1}g_* \Omega_Z^p({\rm log}\,G) =0$$ 
by the inductive assumption.
On the other hand, we have $E_r^{i-r,r-1}=0$ for $r>i$ since this is a first quadrant spectral sequence. Therefore, recalling that $i \ge 1$, we
obtain $E_2^{i,0}=E_{\infty}^{i,0}=0$.
Since 
$$E_2^{i,0}=R^if_*\Omega_Y^p({\rm log}\,E),$$ 
this completes the induction step and with this the proof of case i) in the theorem.
The proof of case ii) follows verbatim for $p=1$.
\end{proof}

\begin{remark}
The assertion in Theorem~\ref{case_SNC_pair} ii) can fail (even when $D=0$) for $p>1$.
For example, suppose that $X=\AAA^2$ and $f\colon Y\to X$ is the blow-up of the origin, with exceptional divisor $F$. It is easy to see that in this case
$$R^1f_*\omega_Y(F)\simeq\CC.$$ 
\end{remark}

\subsection{Akizuki-Nakano-type vanishing theorems}\label{AN}

The goal in this section is to prove the following vanishing statement for higher direct images of sheaves of differentials 
with log poles. Under a slightly more restrictive hypothesis, this was obtained by Saito in \cite{Saito-LOG} using the theory of mixed Hodge modules; here we provide a proof based on more elementary methods.\footnote{Since this paper was written, Saito
\cite{Saito-MLCT} has noted however that an even stronger statement than Theorem \ref{thm_vanishing} can be obtained using mixed Hodge module theory: besides allowing $X$ to be singular, over $X\smallsetminus D$ one may assume only that the morphism $f$ is semismall.}

\begin{theorem}\label{thm_vanishing}
Let $X$ be a variety and $D$ an effective Cartier divisor on $X$ such that $X\smallsetminus D$ is smooth. If $f\colon Y\to X$ is a log resolution of $(X,D)$
which is an isomorphism over $X\smallsetminus D$ 
and $E= (f^*D)_{{\rm red}}$, then
$$R^ p f_* \Omega^ q_Y({\rm log}\,E) =0\quad \text{if}\quad p + q >n=\dim X.$$
\end{theorem}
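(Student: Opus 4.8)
The plan is to deduce the sheaf vanishing from a global Akizuki--Nakano--Esnault--Viehweg statement on $Y$, via the projection formula and Leray. First I would make two reductions. Since the assertion is local on $X$, I may assume $X$ affine, and then, choosing a projective compactification $\bar X\supseteq X$, an extension $\bar D$ of $D$, and a projective log resolution $\bar f\colon\bar Y\to\bar X$ of $(\bar X,\bar D)$ that restricts to $f$ over $X$, it suffices to treat the case $X$ projective. Second, the vanishing is independent of the log resolution: if $f'\colon Y'\to X$ dominates $f$ through $h\colon Y'\to Y$, then Theorem~\ref{case_SNC_pair}(i) applied to $h$ (with base $Y$, divisor $E$) gives $R^th_*\Omega^q_{Y'}(\log E')=0$ for $t>0$ and $h_*\Omega^q_{Y'}(\log E')\simeq\Omega^q_Y(\log E)$, where $E'=(h^*E)_{\mathrm{red}}=(f'^*D)_{\mathrm{red}}$, so Leray yields $R^pf'_*\Omega^q_{Y'}(\log E')\simeq R^pf_*\Omega^q_Y(\log E)$. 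Thus I am free to replace $f$ by any convenient log resolution.

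Next, fix an ample line bundle $A$ on the projective variety $X$. For $m\gg 0$ the projection formula gives $R^tf_*\bigl(\Omega^q_Y(\log E)\otimes f^*A^m\bigr)\simeq R^tf_*\Omega^q_Y(\log E)\otimes A^m$, and Serre vanishing kills the higher cohomology on $X$ of each of the finitely many coherent sheaves occurring as $R^tf_*\Omega^q_Y(\log E)\otimes A^m$. Hence the Leray spectral sequence degenerates and gives
$$H^p\bigl(Y,\Omega^q_Y(\log E)\otimes f^*A^m\bigr)\simeq H^0\bigl(X,R^pf_*\Omega^q_Y(\log E)\otimes A^m\bigr).$$
Because a coherent sheaf $\mathcal G$ on $X$ is zero exactly when $H^0(X,\mathcal G\otimes A^m)=0$ for all $m\gg 0$, the theorem reduces to the global vanishing
$$H^p\bigl(Y,\Omega^q_Y(\log E)\otimes f^*A^m\bigr)=0\qquad\text{for }p+q>n,\ m\gg 0.$$

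Set $L=f^*A^m$. By Serre duality on the smooth projective $Y$, and using $\bigl(\Omega^q_Y(\log E)\bigr)^\vee\simeq\Omega^{n-q}_Y(\log E)\otimes\omega_Y(E)^{-1}$, this is equivalent to $H^{b}\bigl(Y,\Omega^{a}_Y(\log E)\otimes\mathcal{O}_Y(-E)\otimes L^{-1}\bigr)=0$ for $a+b<n$, where $a=n-q$, $b=n-p$. For $L$ ample this is precisely the logarithmic Akizuki--Nakano vanishing for the twisted complex $\Omega^\bullet_Y(\log E)\otimes\mathcal{O}_Y(-E)$ (the variant computing the extension by zero of the constant sheaf on $Y\smallsetminus E$), which is available in \cite{EV}. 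The difficulty is that $L=f^*A^m$ is only semiample and big, not ample: it is trivial along the fibers of the exceptional locus, which sits inside $\Supp E$. To pass from the ample case to this nef-and-big bundle I would use the cyclic-covering method of Esnault--Viehweg: by Kodaira's lemma write $L\equiv P+N$ numerically with $P$ ample and $N$ effective and exceptional, and---after blowing up further inside $E$, which is allowed by the independence established above---arrange $N+E$ to be a simple normal crossing divisor, so that the fractional part of $N$ is absorbed into the boundary and the Kawamata--Viehweg perturbation reduces to the ample log vanishing. The main obstacle is exactly this last step: keeping track of the non-ample locus of $f^*A^m$ and its incidence with $E$ so that the polar divisor stays simple normal crossing and the boundary coefficients remain in the admissible range. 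Everything else is a formal consequence of the projection formula, the Leray spectral sequence, and Theorem~\ref{case_SNC_pair}.
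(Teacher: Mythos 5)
Your formal skeleton---localize on $X$, compactify, use Leray plus Serre vanishing to convert $R^pf_*\Omega^q_Y(\log E)=0$ into a global vanishing on the compactified resolution, then Serre duality---is the same as the paper's, and your remarks on independence of the resolution via Theorem~\ref{case_SNC_pair}(i) and on the equivalence
$R^pf_*\Omega_Y^q(\log E)=0 \Leftrightarrow H^p\bigl(Y,\Omega^q_Y(\log E)\otimes f^*A^m\bigr)=0$ for $m\gg 0$
are correct. The proof nonetheless has a genuine gap, and it is located exactly at the step you flag as ``the main obstacle.'' First, your compactification throws away the one hypothesis that makes the global vanishing provable: affineness of the complement of the log divisor. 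After reducing to $X$ affine (so that $X\smallsetminus D$ is affine, $D$ being Cartier), the paper compactifies and replaces the divisor by $D'=\overline{D}+(\overline{X}\smallsetminus X)$, i.e.\ it \emph{adds the whole boundary}, precisely so that $\overline{Y}\smallsetminus E'$ remains the affine variety $X\smallsetminus D$. You keep only the closure $\overline{D}$, and in your reduced setting (``$X$ projective,'' $D$ an arbitrary Cartier divisor with smooth complement) the open set $Y\smallsetminus E$ is in general not affine. At that point the global statement you need is, by your own Leray argument, \emph{equivalent} to the theorem being proved, so any appeal to it must supply an independent proof.

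Second, the engine you invoke to supply that proof does not exist: Akizuki--Nakano-type vanishing is false for nef and big (even semiample and big) line bundles, and Kodaira's lemma plus cyclic-cover perturbation cannot establish it. For instance, if $\pi\colon Y\to\PP^3$ is the blow-up at a point and $L=\pi^*\shO_{\PP^3}(1)$, then from $0\to\pi^*\Omega^1_{\PP^3}\to\Omega^1_Y\to\Omega^1_F\to 0$ (with $F\simeq\PP^2$ the exceptional divisor, on which $L$ is trivial) and Bott vanishing one gets $H^1\bigl(Y,\Omega^1_Y\otimes L^{-1}\bigr)\simeq H^1(F,\Omega^1_F)=\CC\neq 0$, although $1+1<3$; so the ``logarithmic Akizuki--Nakano vanishing for nef and big $L$'' you cite fails already with empty log divisor. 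The perturbation trick you describe works only in top degree (Kawamata--Viehweg), where an effective exceptional divisor $N$ gives a harmless injection $\omega_Y\hookrightarrow\omega_Y(N)$; for intermediate $\Omega^q$ one must compare $\Omega^q_Y(\log(E+N))$ with $\Omega^q_Y(\log E)$ via residue sequences, whose correction terms live exactly at the borderline $p+q=n$, where the vanishing genuinely fails (the paper's Example~\ref{cone_noneq}), so the induction has no base. What actually makes the global statement true for semiample $L$ is the affineness of $Y\smallsetminus E$: in the paper's Theorem~\ref{thm_vanishing2}, semiampleness is used only to choose a smooth general member $B\in|L^{\otimes m}|$ transverse to $E$ for a cyclic cover, and the vanishing then comes from degeneration of the logarithmic Hodge-to-de Rham spectral sequence together with $H^m(Y\smallsetminus E,\CC)=0$ for $m>n$ (Artin/Andreotti--Frankel), with induction on dimension via restriction to $B$ (noting $B\smallsetminus E$ is again affine). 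Both this hypothesis and this Hodge-theoretic input are absent from your argument, and they cannot be reinstated once the compactification has been performed your way.
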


We will deduce Theorem \ref{thm_vanishing} from the following global result, closely related both in statement and proof to the
Akizuki-Nakano vanishing theorem.




\begin{theorem}\label{thm_vanishing2}
Let $Y$ be a smooth, $n$-dimensional complete variety. If $E$ is a reduced SNC divisor on $Y$
such that $Y\smallsetminus E$ is affine, then for every semiample line bundle $L$ on $Y$
we have
$$H^p \big(Y,\Omega_Y^q({\rm log}\,E)\otimes L\big)=0\quad {\rm for} \quad p + q >n.$$
\end{theorem}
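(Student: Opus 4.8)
The plan is to reformulate the statement by Serre duality so that it involves negative powers of $L$, to settle the untwisted case $L=\shO_Y$ by Deligne's logarithmic Hodge theory together with affine vanishing, and finally to descend the general semiample case from a cyclic cover to which the untwisted case applies. For the reformulation, note that since $\Omega^n_Y({\rm log}\,E)=\omega_Y(E)$, the wedge pairing $\Omega^q_Y({\rm log}\,E)\otimes\Omega^{n-q}_Y({\rm log}\,E)\to\omega_Y(E)$ is perfect, so $\Omega^q_Y({\rm log}\,E)^{\vee}\simeq\Omega^{n-q}_Y({\rm log}\,E)\otimes\omega_Y^{-1}(-E)$. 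Grothendieck--Serre duality on the smooth proper variety $Y$ then identifies $H^p\big(Y,\Omega^q_Y({\rm log}\,E)\otimes L\big)$ with the dual of $H^{n-p}\big(Y,\Omega^{n-q}_Y({\rm log}\,E)(-E)\otimes L^{-1}\big)$. Writing $p'=n-p$ and $q'=n-q$, the range $p+q>n$ becomes $p'+q'<n$, so the theorem is equivalent to
$$H^{p'}\big(Y,\Omega^{q'}_Y({\rm log}\,E)(-E)\otimes L^{-1}\big)=0\quad\text{for}\quad p'+q'<n.\qquad(\star)$$
Passing to $L^{-1}$ is precisely what makes the covering argument below work, since cyclic covers naturally produce negative powers of $L$.

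Next I would treat the case $L=\shO_Y$. The logarithmic de Rham complex $\Omega^{\bullet}_Y({\rm log}\,E)$ computes $H^{\bullet}(U,\CC)$, and its subcomplex $\Omega^{\bullet}_Y({\rm log}\,E)(-E)$ (which is a complex, because for a local equation $f$ of $E$ one has $df=f\cdot(df/f)$ with $df/f\in\Omega^1_Y({\rm log}\,E)$) computes $H^{\bullet}_c(U,\CC)$. By Deligne \cite{Deligne} the stupid-filtration Hodge--de Rham spectral sequences of both complexes degenerate at $E_1$, so $H^{q'}\big(Y,\Omega^{p'}_Y({\rm log}\,E)(-E)\big)$ is a graded piece of $H^{p'+q'}_c(U,\CC)$. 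Since $U$ is smooth affine of dimension $n$, the Andreotti--Frankel theorem (\cite[Theorem 3.1.1]{Lazarsfeld}) gives $H^{>n}(U,\CC)=0$, hence dually $H^{<n}_c(U,\CC)=0$. This yields $(\star)$ when $L=\shO_Y$.

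Finally, for general semiample $L$ I would choose $N$ with $L^{N}$ globally generated and, by Bertini, a smooth $B\in|L^{N}|$ for which $E+B$ is a reduced SNC divisor. Let $\pi\colon\tilde Y\to Y$ be the associated $N$-fold cyclic cover branched along $B$; after resolving its singularities over $B$ one obtains a smooth proper $\tilde Y$ carrying a reduced SNC divisor $\tilde E$, namely the reduced preimage of $E$ enlarged by the resolution's exceptional divisors, all lying over $B\cup E$. The key geometric point is that $\tilde Y\smallsetminus\tilde E$ is finite over $U$, hence affine, so the case already proved applies on $\tilde Y$ and gives the analogue of $(\star)$ there. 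The $\mu_N$-action splits $\pi_{*}$ of the dual logarithmic complex into eigen-subcomplexes compatibly with the differential, and by \cite{EV} the weight-one eigenpiece is, up to twists by integral parts of multiples of $\tfrac1N B$, the complex $\Omega^{\bullet}_Y({\rm log}\,E)(-E)\otimes L^{-1}$. Since a direct summand of a degenerate spectral sequence stays degenerate, taking $E_1$-graded pieces of this eigenpiece produces $(\star)$ for $L$, and therefore the theorem.

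The main obstacle will be the bookkeeping in this last step: arranging the resolution of $\tilde Y$ over $B$ so that the complement of $\tilde E$ remains affine and $\tilde E$ remains SNC, checking that the eigen-decomposition of $\pi_{*}$ is compatible with the de Rham differentials so that extracting one summand of the spectral sequence is legitimate, and verifying that the corrections coming from $B$ do not contribute outside the range $p'+q'<n$. Isolating exactly the $L^{-1}$-summand with the correct $(-E)$-twist, rather than some $L^{-i}(-E)$ carrying a fractional-part contribution of $B$, is where the argument requires the most care.
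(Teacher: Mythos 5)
Your Serre-duality reformulation $(\star)$ and your treatment of the untwisted case (compactly supported log complex, Deligne degeneration, Andreotti--Frankel plus Poincar\'e duality) are both correct, and the cyclic-covering strategy is sound --- indeed, modulo duality it is the paper's own proof. The gap is in the final step, and it is structural, not bookkeeping. First, a side issue: when $B\in|L^N|$ is smooth and $E+B$ is SNC, the $N$-fold cyclic cover $\pi\colon\tilde Y\to Y$ branched along $B$ is \emph{already smooth} (\cite[Proposition 4.1.6]{Lazarsfeld}), so ``resolving its singularities over $B$'' is unnecessary --- and harmful, since a resolution destroys the finiteness of $\pi$ on which the eigensheaf decomposition of $\pi_*$ rests. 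The real problem is your identification of the weight-one eigenpiece. Writing $B'=(\pi^*B)_{\rm red}$ and $\tilde E=\pi^*E+B'$, one computes for every $1\le i\le N-1$ that
$$\bigl(\pi_*\bigl[\Omega^{q}_{\tilde Y}(\log\tilde E)(-\tilde E)\bigr]\bigr)_i\;\simeq\;\Omega^{q}_Y\bigl(\log(E+B)\bigr)(-E)\otimes L^{-i},$$
and the answer is the same if one instead pushes forward $\Omega^q_{\tilde Y}(\log \pi^*E)(-\pi^*E)$. The reason is local: from $t^N=x_1$ one gets $dt=\tfrac{1}{N}\,t\,\tfrac{dx_1}{x_1}$, so every nonzero character eigenpiece acquires log poles along $B$. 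Since $B$ is reduced and $1\le i<N$, the integral-part corrections you invoke all vanish; in any case such corrections only alter the twisting line bundle, never the pole divisor. Hence no eigenpiece equals $\Omega^{\bullet}_Y(\log E)(-E)\otimes L^{-1}$, and your covering step proves only the weaker statement $H^{p'}\bigl(Y,\Omega^{q'}_Y(\log(E+B))(-E)\otimes L^{-1}\bigr)=0$ for $p'+q'<n$.

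To descend from that weaker vanishing to $(\star)$ you need an ingredient your proposal never invokes: the residue exact sequence $0\to\Omega^{q'}_Y(\log E)\to\Omega^{q'}_Y(\log(E+B))\to\Omega^{q'-1}_B(\log E\vert_B)\to 0$, twisted by $\shO_Y(-E)\otimes L^{-1}$, combined with \emph{induction on $\dim Y$}: the term on $B$ is handled by the theorem itself (in dual form) on the $(n-1)$-dimensional variety $B$, which satisfies all hypotheses ($E\vert_B$ is reduced SNC because $E+B$ is, $B\smallsetminus E$ is affine being closed in $Y\smallsetminus E$, and $L\vert_B$ is semiample), in the range $(p'-1)+(q'-1)<n-1$. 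This residue-plus-dimension-induction step is precisely how the paper closes its undualized argument: there the relevant eigenpiece gives vanishing for $\Omega^{q}_Y(\log(E+B))\otimes L^{1-N}=\Omega^{q}_Y(\log(E+B))(-B)\otimes L$, and the residue sequence along $B$ together with the inductive hypothesis on $B$ removes the extra poles. Once this is added, your proof is complete, but it then coincides with the paper's proof read backwards through Serre duality, so the duality detour can be dropped. A last simplification: since $\pi$ is finite, $H^{p'}(\tilde Y,-)=H^{p'}(Y,\pi_*(-))$, so after establishing vanishing on $\tilde Y$ you only need the eigensheaf decomposition of $\pi_*$ as coherent sheaves; no compatibility with differentials and no ``direct summand of a degenerate spectral sequence'' argument is required.
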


\begin{proof}
We argue by induction on $n$. Note first that the assertion is clear on curves. It is also standard in general when $L=\shO_Y$. 
Indeed, recall that the Hodge-to-de Rham spectral sequence
$$E_1^{k\ell}=H^{\ell}\big(Y,\Omega^k({\rm log}\,E)\big)\Rightarrow {\mathbf H}^{k+\ell}(Y, \Omega_Y^{\bullet}({\rm log}\, E))$$
degenerates at the $E_1$ term, hence
$$\sum_{p + q=m}h^p\big(Y,\Omega^q({\rm log}\,E)\big)=\dim_{\CC}{\mathbf H}^m\big(Y, \Omega_Y^{\bullet}({\rm log}\, E)\big).$$
On the other hand, we have
$${\mathbf H}^{m}\big(Y, \Omega_Y^{\bullet}({\rm log}\, E)\big)\simeq H^{m}(Y\smallsetminus E;\CC)$$
and this is $0$ for $m>n$ since $Y\smallsetminus E$ is an $n$-dimensional affine variety. This gives
$$H^p\big(Y,\Omega_Y^q({\rm log}\,E)\big)=0\,\,\,\,\,\,\text{for}\,\,\,\,p + q>n.$$

Since $L$ is semiample, for some $m\ge 1$ we may choose $B\in |L^{\otimes m}|$ general such that $E+B$ is reduced, 
with simple normal crossings.  We consider $\pi \colon Y' \to Y$ the $m$-fold cyclic cover of $Y$ branched along $B$. 
Denoting $L' = \pi^* L$, there exists a divisor $B' \in |L'|$ mapping isomorphically onto $B$, such that $\pi^* B = m B'$. Moreover, $Y'$ 
is smooth and $\pi^* E + B'$ is an SNC divisor as well; see e.g. \cite[Proposition 4.1.6 and Remark 4.1.8]{Lazarsfeld}.
Since $Y\smallsetminus E$ is affine, it follows that $Y\smallsetminus (E+B)$ is affine, and since $\pi$ is finite,
we conclude  that $Y'\smallsetminus (\pi^*E+B')$ is affine as well. As we have seen at the beginning, this implies that
\begin{equation}\label{eq1_thm_vanishing2}
H^p\big(Y',\Omega^q_{Y'}\big(\log (\pi^*E+B')\big)\big)=0\quad\text{for}\quad p+q>n.
\end{equation}
On the other hand, we have
$$\Omega_{Y'}^q\big(\log (\pi^*E+B')\big)\simeq \pi^*\Omega_Y^q\big(\log (E+B)\big),$$
and therefore
$$\pi_*\Omega_{Y'}^q\big(\log (\pi^*E+B')\big)\simeq \Omega_Y^q\big(\log (E+B)\big)\otimes\pi_*\shO_{Y'}\simeq$$
$$\simeq \bigoplus_{j=0}^{m-1}\Omega_Y^q\big(\log (E+B)\big)\otimes L^{\otimes(-j)}.$$
We thus conclude from (\ref{eq1_thm_vanishing2}) that
\begin{equation}\label{eq2_thm_vanishing2}
H^p\big(Y,\Omega_Y^q\big(\log (E+B)\big)\otimes L^{\otimes (1-m)}\big)=0\quad\text{for}\quad p+q>n.
\end{equation}

If $q=0$, then $p>n$ and the assertion we need to prove is trivial. Suppose now that $q>0$ and consider
the short exact sequence on $Y$
$$0\to\Omega_Y^q\big(\log (E+B)\big)\otimes_{\shO_Y}\shO_Y(-B)\to \Omega_Y^q(\log E)\to \Omega_B^{q}(\log E\vert_B)\to 0.$$
By tensoring with $L$ and taking the long exact sequence in cohomology, we obtain an exact sequence
$$H^p\big(Y,\Omega_Y^q\big(\log (E+B)\big)\otimes L^{\otimes (1-m)}\big)\to H^p\big(Y, \Omega_Y^q(\log E)\otimes L\big)$$
$$\to H^p\big(Y,  \Omega_B^{q}(\log E\vert_B)\otimes L\vert_B).$$
If $p+q>n$, then the first term vanishes by (\ref{eq2_thm_vanishing2}), while the third term vanishes by the inductive assumption. We thus obtain the vanishing of the second term,
which completes the proof of the theorem.
\end{proof}

We can now prove the relative vanishing statement.

\begin{proof}[Proof of Theorem~\ref{thm_vanishing}]
The assertion is local on $X$, hence we may also assume that $X$ is affine. Since $D$ is a Cartier divisor on $X$, it follows that $X\smallsetminus D$ is affine as well.
We choose an open embedding $X\hookrightarrow \overline{X}$, with $\overline{X}$ projective, smooth, and such that $\overline{X}\smallsetminus X$ is a (reduced) SNC divisor. If $\overline{D}$ is the closure of $D$ in $\overline{X}$, then we denote
$$D':=\overline{D}+(\overline{X}\smallsetminus X).$$
We can also find an open embedding $Y\hookrightarrow \overline{Y}$ such that we have a morphism $g\colon \overline{Y}\to \overline{X}$ 
which is identified with $f$ over $X\smallsetminus D$. We may further assume that $\overline{Y}$ is smooth and if $\overline{E}$ is the closure of $E$ in $\overline{Y}$,
then 
$$E':=\overline{E}+(\overline{Y}\smallsetminus Y)$$ 
is an SNC divisor. Note that $E'=(g^* D')_{\rm red}$. It is of course enough to show that
\begin{equation}\label{eq1_thm_vanishing}
R^p g_* \Omega_{\overline{Y}}^q ({\rm log}\,E') =0\,\,\,\,\,\, \text{for}\,\,\,\, p + q >n.
\end{equation} 
Let $L$ be an ample line bundle on $\overline{X}$. A standard argument using the Leray spectral sequence for $g$ and 
$\Omega_{\overline{Y}}^q({\rm log}\,E')\otimes g^* L^j $ shows that (\ref{eq1_thm_vanishing}) holds if and only if
\begin{equation}\label{eq2_thm_vanishing}
H^p \big(\overline{Y},\Omega_{\overline{Y}}^q ({\rm log}\,E')\otimes g^*L^j \big)=0\,\,\,\,\,\, \text{for }\,\,\,\,p + q >n\,\,\text{and}\,\,j\gg 0.
\end{equation}
Since $\overline{Y}\smallsetminus E'=Y\smallsetminus E\simeq X\smallsetminus D$ is affine, the vanishing in (\ref{eq2_thm_vanishing})
follows from Theorem~\ref{thm_vanishing2}. This completes the proof.
\end{proof}

\section*{References}
\begin{biblist}

\bib{BW}{article}{
    author={Br{\"u}ckmann, Peter},
   author={Winkert, Patrick},
   title={$T$-symmetrical tensor differential forms with logarithmic poles
   along a hypersurface section},
   journal={Int. J. Pure Appl. Math.},
   volume={46},
   number={1},
   date={2008},
   pages={111--136},
   }

\bib{Casalaina}{article}{
        author={Casalaina-Martin, Sebastian},
        title={Cubic threefolds and abelian varieties of dimension five II},
        journal={Math. Z.}, 
        volume={260},
        number={1}, 
        date={2008},
        pages={115--125},
}
\bib{CvdG}{article}{
      author={Ciliberto, Ciro}, 
      author={van der Geer, Gerard}, 
      title={Andreotti-Mayer loci and the Schottky problem}, 
      journal={Doc. Math.},
      volume={13},
      date={2008}, 
      pages={453--504},
}
\bib{CGS}{article}{
      author={Codogni, Giulio}, 
      author={Grushevsky, Sam}, 
      author={Sernesi, Eduardo},
      title={The degree of the Gauss map of the theta divisor},
      journal={preprint}, 
      date={2016}, 
}
\bib{dFEM}{article}{
author={de Fernex, Tommaso},
author={Ein, Lawrence},
author={Musta\c{t}\u{a}, Mircea},
title={Bounds for log canonical thresholds with applications to birational rigidity},
journal={Math. Res. Lett.},
volume={10},
number={2--3},
date={2003},
pages={219--236},
}
\bib{Deligne}{book}{
       author={Deligne, Pierre},
       title={Equations diff\'erentielles \`a points singuliers r\'eguliers},  
       series={Lect. Notes in Math.},  
       volume={163},
       publisher={Springer, Berlin},
       date={1970},
}      
\bib{Demailly}{article}{ 
      author={Demailly, Jean-Pierre}, 
      title={A numerical criterion for very ample line bundles}, 
      journal={J. Diff. Geom.},
      volume={37},
      number={2},
      date={1993}, 
      pages={323--374},
}      
\bib{DD}{article}{
      author={Deligne, Pierre}, 
      author={Dimca, Alexandru}, 
      title={Filtrations de Hodge et par l'ordre du p\^ole pour les hypersurfaces singuli\`eres}, 
      journal={Ann. Sci. \'Ecole Norm. Sup.},
      volume={23},
      number={4},
      date={1990}, 
      pages={645--656},
}
\bib{Dimca}{book}{
       author={Dimca, Alexandru},
       title={Sheaves in topology},  
       series={Universitext},
       publisher={Springer-Verlag, Berlin},
   date={2004},
}      
\bib{Dimca2}{article}{
      author={Dimca, A.}, 
      title={On the syzygies and Hodge theory of nodal hypersurfaces}, 
      journal={preprint arXiv:1310.5344},
      date={2013}, 
}
\bib{DSW}{article}{
      author={Dimca, Alexandru}, 
      author={Saito, Morihiko},
      author={Wotzlaw, Lorenz}, 
      title={A generalization of Griffiths' theorem on rational integrals, II}, 
      journal={Michigan Math. J.},
      volume={58},
      date={2009}, 
      pages={603--625},
}
\bib{DS1}{article}{
      author={Dimca, Alexandru}, 
      author={Sticlaru, Gabriel}, 
      title={On the syzygies and Alexander polynomial of nodal hypersurfaces}, 
      journal={Math. Nachr.},
      volume={285},
      date={2012}, 
      pages={2120--2128},
}
\bib{DS2}{article}{
      author={Dimca, Alexandru}, 
      author={Sticlaru, G.}, 
      title={Koszul complexes and pole order filtrations}, 
      journal={Proc. Edinburgh Math. Soc.},
      volume={58},
      date={2015}, 
      pages={333-354},
}
\bib{EL}{article}{
      author={Ein, Lawrence}, 
      author={Lazarsfeld, Robert}, 
      title={Singularities of theta divisors and the birational geometry of irregular varieties}, 
      journal={J. Amer. Math. Soc.},
      volume={10},
      number={1},
      date={1997}, 
      pages={243--258},
}
\bib{ELN}{article}{
      author={Ein, Lawrence}, 
      author={Lazarsfeld, Robert}, 
      author={Nakamaye, Michael},
      title={Zero-estimates, intersection theory, and a theorem of Demailly}, 
      conference={
      title={Higher-dimensional complex varieties},
      address={Trento},
      date={1994},
   },
   book={
      publisher={de Gruyter, Berlin},
   },
   date={1996},
      }
      
      \bib{EV}{article}{
   author={Esnault, H{\'e}l{\`e}ne},
   author={Viehweg, Eckart},
   title={Rev\^etements cycliques},
   conference={
      title={Algebraic threefolds},
      address={Varenna},
      date={1981},
   },
   book={
      series={Lecture Notes in Math.},
      volume={947},
      publisher={Springer, Berlin-New York},
   },
   date={1982},
   pages={241--250},
      }
      
\bib{Fulton}{book}{
   author={Fulton, William},
   title={Introduction to toric varieties},
   series={Annals of Mathematics Studies},
   volume={131},
   note={The William H. Roever Lectures in Geometry},
   publisher={Princeton University Press, Princeton, NJ},
   date={1993},
}

\bib{EGA}{article}{
   author={Grothendieck, A.},
   title={\'El\'ements de g\'eom\'etrie alg\'ebrique. IV. \'Etude locale des
   sch\'emas et des morphismes de sch\'emas. III},
   journal={Inst. Hautes \'Etudes Sci. Publ. Math.},
   number={28},
   date={1966},
   pages={255 pp},
}


\bib{HTT}{book}{
   author={Hotta, R.},
   author={Takeuchi, K.},
   author={Tanisaki, T.},
   title={D-modules, perverse sheaves, and representation theory},
   publisher={Birkh\"auser, Boston},
   date={2008},
}
\bib{Kollar2}{book}{
   author={Koll{\'a}r, Janos},
   title={Shafarevich maps and automorphic forms},
   publisher={Princeton Univ. Press},
   date={1995},
}  
\bib{Kollar}{article}{
   author={Koll{\'a}r, J.},
   title={Singularities of pairs},
   conference={
      title={Algebraic geometry--Santa Cruz 1995},
   },
   book={
      series={Proc. Sympos. Pure Math.},
      volume={62},
      publisher={Amer. Math. Soc., Providence, RI},
   },
   date={1997},
   pages={221--287},
}
\bib{KollarMori}{book}{
   author={Koll{\'a}r, J.},
   author={Mori, S.},
   title={Birational geometry of algebraic varieties},
   series={Cambridge Tracts in Mathematics},
   volume={134},
   note={With the collaboration of C. H. Clemens and A. Corti;
   Translated from the 1998 Japanese original},
   publisher={Cambridge University Press, Cambridge},
   date={1998},
}
\bib{Lazarsfeld}{book}{
       author={Lazarsfeld, Robert},
       title={Positivity in algebraic geometry II},  
       series={Ergebnisse der Mathematik und ihrer Grenzgebiete},  
       volume={49},
       publisher={Springer-Verlag, Berlin},
       date={2004},
}      
\bib{LN}{article}{
   author={Lorenz, Benjamin},
   author={Nill, Benjamin},
   title={On smooth Gorenstein polytopes},
   journal={Tohoku Math. J. (2)},
   volume={67},
   date={2015},
   number={4},
   pages={513--530},
}

\bib{Mumford}{article}{
   author={Mumford, D.},
   title={On the Kodaira dimension of the Siegel modular variety},
   conference={
      title={Algebraic geometry--open problems, Ravello 1982},
   },
   book={
      series={Lecture Notes in Math.},
      volume={997},
      publisher={Springer, Berlin},
   },
   date={1983},
   pages={348--375},
}
\bib{MP}{article}{
      author={Musta\c t\u a, Mircea},
      author={Popa, Mihnea},
	title={Restriction, subadditivity, and semicontinuity theorems for Hodge ideals},
	journal={preprint arXiv:1606.05659, to appear in Int. Math. Res. Not.}, 
	date={2016}, 
}
\bib{OSS}{book}{
       author={Okonek, Christian},
       author={Schneider, Michael},  
       author={Spindler, Heinz},
       title={Vector bundles on complex projective spaces},  
       series={Progress in Mathematics},  
       publisher={ Birkh\"auser, Boston},
       date={1980},
}       
\bib{PW}{article}{
      author={Park, Jihun}, 
      author={Woo, Youngho}, 
      title={A remark on hypersurfaces with isolated singularities}, 
      journal={Manuscripta Math.},
      volume={121},
      date={2006}, 
      pages={451--456},
}
\bib{Popa}{article}{
      author={Popa, Mihnea},
	title={Kodaira-Saito vanishing and applications},
	journal={preprint arXiv:1407.3294, to appear in L'Enseignement Math.},
	date={2014},
}
\bib{Popa2}{article}{
      author={Popa, M.},
	title={Positivity for Hodge modules and geometric applications},
	journal={preprint arXiv:1605.08093}, 
	date={2016}, 
}
\bib{PS}{article}{
      author={Popa, Mihnea}, 
      author={Schnell, Christian}, 
      title={Generic vanishing theory via mixed Hodge modules}, 
      journal={Forum Math. Sigma},
      volume={1},
      date={2013}, 
      pages={60 pp},
}
\bib{Saito-MHP}{article}{
   author={Saito, Morihiko},
   title={Modules de Hodge polarisables},
   journal={Publ. Res. Inst. Math. Sci.},
   volume={24},
   date={1988},
   number={6},
   pages={849--995},
}
\bib{Saito-Fourier}{article}{
     author={Saito, M.},
     title={On the structure of Brieskorn lattice}, 
     journal={Ann. Institut Fourier (Grenoble)},
     volume={39},
     date={1989},
     number={1},
     pages={27--72},
}
\bib{Saito-MHM}{article}{
   author={Saito, Morihiko},
   title={Mixed Hodge modules},
   journal={Publ. Res. Inst. Math. Sci.},
   volume={26},
   date={1990},
   number={2},
   pages={221--333},
}
\bib{Saito-B}{article}{
   author={Saito, M.},
   title={On $b$-function, spectrum and rational singularity},
   journal={Math. Ann.},
   volume={295},
   date={1993},
   number={1},
   pages={51--74},
}
\bib{Saito-LOG}{article}{
   author={Saito, Morihiko},
   title={Direct image of logarithmic complexes and infinitesimal invariants of cycles},
   conference={
      title={Algebraic cycles and motives. Vol. 2},
   },
   book={
      series={London Math. Soc. Lecture Note Ser.},
      volume={344},
      publisher={Cambridge Univ. Press, Cambridge},
   },
   date={2007},
   pages={304--318},
   }
   \bib{Saito-HF}{article}{
   author={Saito, M.},
   title={On the Hodge filtration of Hodge modules},
   journal={Mosc. Math. J.},
   volume={9},
   date={2009},
   number={1},
   pages={161--191},
}
\bib{Saito-YPG}{article}{
      author={Saito, Morihiko},
	title={A young person's guide to mixed Hodge modules},
	journal={preprint arXiv:1605.00435}, 
	date={2016}, 
}
\bib{Saito-MLCT}{article}{
      author={Saito, M.},
	title={Hodge ideals and microlocal $V$-filtration},
	journal={preprint arXiv:1612.08667}, 
	date={2016}, 
}
\bib{Schnell-MHM}{article}{
	author={Schnell, Christian},
	title={An overview of Morihiko Saito's theory of mixed Hodge modules},
	journal={preprint arXiv:1405.3096},
	year={2014},
}	
\bib{Schnell}{article}{
      author={Schnell, C.},
	title={On Saito's vanishing theorem},
	journal={preprint arXiv:1407.3295, to appear in Math. Res. Lett.}, 
	date={2014}, 
}
\bib{Severi}{article}{
   author={Severi, Francesco},
   title={Sul massimo numero di nodi di una superficie de dato ordine dello spazio ordinario o
   di una forma di un iperspazio},
   journal={Ann. Mat. Pura Appl.},
   volume={25},
   date={1946},
   pages={1--41},
}
\bib{Stacks}{article}{
  author= {The Stacks Project Authors},
  title= {\itshape Stacks Project},
  eprint={http://stacks.math.columbia.edu},
  date= {2016},
 } 
\bib{Yano}{article}{
author={Yano, Tamaki},
   title={On the theory of $b$-functions},
   journal={Publ. Res. Inst. Math. Sci.},
   volume={14},
   date={1978},
   number={1},
   pages={111--202},
}

\end{biblist}

\end{document}